\newcommand{\pl}[1]{\foreignlanguage{polish}{#1}}
\newtheorem{theorem}{Theorem}[section]
\newtheorem{proposition}[theorem]{Proposition}
\newtheorem{lemma}[theorem]{Lemma}
\newtheorem{claim}[theorem]{Claim}
\newtheorem{corollary}[theorem]{Corollary}
\newcounter{thm}
\newtheorem{main_theorem}[thm]{Theorem}
\theoremstyle{definition}
\newtheorem{remark}[theorem]{Remark}
\newtheorem{example}[theorem]{Example}
\numberwithin{equation}{section}
\newcommand{\RR}{\mathbb{R}}
\newcommand{\NN}{\mathbb{N}} 
\newcommand{\EE}{\mathbb{E}}
\newcommand{\PP}{\mathbb{P}}
\newcommand{\CC}{\mathbb{C}}
\newcommand{\calA}{\mathcal{A}}
\newcommand{\calB}{\mathcal{B}}
\newcommand{\calF}{\mathcal{F}}
\newcommand{\calL}{\mathcal{L}}
\newcommand{\scrX}{\mathscr{X}}
\newcommand{\bfT}{\mathbf{T}}
\newcommand{\vphi}{\varphi}
\newcommand{\rmd}{{\: \rm d}}
\newcommand{\abs}[1]{{\lvert {#1} \rvert}}
\DeclareMathOperator{\diam}{diam}
\DeclareMathOperator{\supp}{supp}
\def\({\left(} 
\def\){\right)} 
\def\<{\langle} 
\def\>{\rangle}
\newcommand{\ind}[1]{{\mathds{1}_{{#1}}}}
\newcommand{\WUSC}[3]{\textrm{\rm WUSC}(#1,#2,#3)}
\newcommand{\WLSC}[3]{\textrm{\rm WLSC}(#1,#2,#3)}
\begin{document}
\selectlanguage{english}

\title{Transition densities of subordinators of positive order}
\author{Tomasz Grzywny}
\address{
        \pl{
        Tomasz Grzywny\\
        Wydzia{\lll} Matematyki,
        Politechnika Wroc{\lll}awska\\
        Wyb. Wyspia\'{n}skiego 27\\
        50-370 Wroc{\lll}aw\\
        Poland}
}
\email{tomasz.grzywny@pwr.edu.pl}

\author{\pl{\L{}ukasz Le\.{z}aj}}
\address{
        \pl{
		\L{}ukasz Le\.{z}aj\\
        Wydzia{\lll} Matematyki,
        Politechnika Wroc{\lll}awska\\
        Wyb. Wyspia\'{n}skiego 27\\
        50-370 Wroc{\lll}aw\\
        Poland}
}
\email{lukasz.lezaj@pwr.edu.pl}

\author{Bartosz Trojan}
\address{
        Bartosz Trojan\\
        Instytut Matematyczny
        Polskiej Akademii Nauk\\
        ul. \pl{{\'S}niadeckich} 8\\
        00-656 Warszawa\\
        Poland}
\email{btrojan@impan.pl}

\thanks{{\bf Keywords:} subordination, heat kernel, Green function {\bf MSC2010:} 60J35, 60J75, 47D03 \\ The authors were partially supported by the National Science Centre (Poland): grant 2016/23/B/ST1/01665.}

\begin{abstract}
	We prove existence and asymptotic behavior of the transition density for a large class of subordinators whose Laplace exponents satisfy lower scaling condition at infinity. Furthermore, we present lower and upper bounds for the density. Sharp estimates are provided if additional upper scaling condition on the Laplace exponent is imposed. In particular, we cover the case when the (minus) second derivative of the Laplace exponent is a function regularly varying at infinity with regularity index bigger than $-2$.
\end{abstract}

\maketitle

\section{Introduction}
Asymptotic behavior as well as estimates of heat kernels have been intensively studied in the last decades. The first 
results obtained by P\`{o}lya \cite{1923P}, and Blumenthal and Getoor \cite{MR0119247} for isotropic $\alpha$-stable
process in $\RR^d$ provided the basis to studies of more complicated processes, e.g. subordinated Brownian motions
\cite{MR3570240, MR2045955}, isotropic unimodal L\'{e}vy processes \cite{MR3165234, MR3646773, GRT2017} and even more general
symmetric Markov processes \cite{CKK11, CK08}. One may, among others, list the articles on heat kernel estimates for jump processes of finite range \cite{CKK08} or with lower intensity of higher jumps \cite{MR2886459, MR3604626}. While a great many of articles with explicit results
is devoted to symmetric processes or those which are, in appropriate sense, similar to the symmetric ones, the nonsymmetric case is in general harder to handle due to lack of familiar
structure. This problem was approached in many different ways, see e.g. \cite{BSK2017, MR1301283, MR1829978, MR3089797,
MR3357585, MR3235175, MR3139314, MR1486930, MR1449834, MR2794975}. For more specific class of stable processes, see
\cite{MR1287843, MR0250372, MR2286060}. Overall, one has to impose some control on the nonsymmetry in order to obtain
estimates in an easy-to-handle form. This idea was applied in the recent paper \cite{GS2017} where the authors considered
the case of the L\'{e}vy measure being comparable to some unimodal L\'{e}vy measure. The methods developed in
\cite{GS2017, GS2019} contributed significantly to this paper. See also \cite{MR3357585, MR1486930} and the references therein.

In this article the central object is a subordinator, that is a one-dimensional L\'{e}vy process with
nondecreasing paths starting at $0$, see Section \ref{sec:3} for the precise definition. The abstract introduction of the
subordination dates back to 1950s and is due to Bochner \cite{MR0030151} and Philips \cite{MR0050797}. In the language of
the semigroup theory, for a Bernstein function $\phi$ and a bounded $C_0$-semigroup $\big( e^{-t\calA} \colon t \geq 0\big)$
with $-\calA$ being its generator on some Banach space $\scrX$, via Bochner integral one can define an operator
$\calB=\phi(\calA)$ such that $-\calB$ also generates a bounded $C_0$-semigroup $\big( e^{-t\calB} \colon t \geq 0\big)$ on
$\scrX$. The semigroup $\big( e^{-t\calB} \colon t \geq 0\big)$ is then said to be \emph{subordinated} to
$\big( e^{-t\calA} \colon t \geq 0\big)$, and although it may be very different from the original one, its properties clearly
follow from properties of both the \emph{parent} semigroup and the involved Bernstein function. See for example
\cite{MR3383800} and the references therein. From probabilistic point of view, due to positivity and monotonicity,
subordinators naturally appear as a random time change functions of L\'{e}vy processes, or more generally, Markov
processes. Namely, if $(X_t \colon t \geq 0)$ is a Markov process and $(T_t \colon t \geq 0)$ is an independent subordinator
then $Y_t=X_{T_t}$ is again a Markov process with a transition function given by
\[
	\PP^x(Y_t \in A) = \int_{[0, \infty)} \PP^x(X_s \in A) \:\PP(T_t \in {\rm d}s).
\]
The procedure just described is called a subordination of a Markov process and can be interpreted as a probabilistic form of the equality $\calB=\phi(\calA)$. Here $\calA$ and $\calB$ are (minus) generators of semigroups associated to processes $X_t$ and $Y_t$, respectively. From analytical point of view, the transition density of $Y_t$ (the integral kernel of $e^{-t\calB}$) can be obtained as a time average of transition density of $X_t$ with respect to distribution of $T_t$. Yet another approach is driven by PDEs, as the transition density is a heat kernel of a generalized heat equation. The generalization can be twofold: either by replacing the Laplace operator with another, possibly nonlocal operator, or by introducing a
more general fractional-time derivative instead of the classical one. The latter case was recently considered
in \cite{MR2074812, MR3672008, MR3849640}. Here the solutions are expressed in terms of corresponding
(inverse) subordinators and thus their analysis is essential.

By taking $\calA=-\Delta$ and changing the time of (i.e. subordinating)
Brownian motion one can obtain a large class of subordinated Brownian motions. A principal example here is an 
$\alpha$-stable subordinator with the Laplace exponent $\phi(\lambda)=\lambda^{\alpha}$, $\alpha \in (0,1)$, which gives
rise to the symmetric, rotation-invariant $\alpha$-stable process and corresponds to the special case of fractional powers
of semigroup $\big(e^{-t\calA^{\alpha}} \colon t \geq 0\big)$. For this reason,
distributional properties of subordinators were often studied with reference to heat kernel estimates of subordinated
Brownian motions (see e.g. \cite{MR3835470, Fahrenwaldt2018}). In \cite{MR0282413} Hawkes investigated the growth of
sample paths of a stable subordinator and obtained the asymptotic behavior of its distribution function. Jain and Pruitt \cite{JainPruitt87} considered tail probability estimates for subordinators and, in the discrete case, nondecreasing random walks. In a more
general setting some related results were obtained in \cite{MR0329049, MR3575536, MR1486930, MR952835}. In
\cite{MR3231629} new examples of families of subordinators with explicit transition densities were given. Finally, in the
recent paper \cite{Fahrenwaldt2018} the author under very restrictive assumptions derives explicit approximate expressions
for the transition density of approximately stable subordinators. 

The result of the paper is asymptotic behavior as well as upper and lower estimates of transition densities of
subordinators satisfying scaling condition imposed on the second derivative of the Laplace exponent $\phi$. 
Our standing assumption on $-\phi''$ is the weak lower
scaling condition at infinity with scaling parameter $\alpha-2$,  for some $\alpha >0$ (see \eqref{wlsc} for definition).
It is worth highlighting that we do not state our assumptions and results in terms of the Laplace exponent $\phi$, as one
could suspect, but in terms of its second derivative and related function $\varphi(x)=x^2(-\phi''(x))$ (see Theorems
\ref{thm:3}, \ref{thm:6} and \ref{thm:4}). Usually the transition density of a L\'{e}vy process is described by the
generalized inverse of the real part of the characteristic exponent $\psi^{-1}(x)$ (e.g. \cite{GS2019}, \cite{MR3139314}),
but in our setting one can show that the lower scaling property implies that $\varphi^{-1}(x) \approx \psi^{-1}(x)$ for 
$x$ sufficient large (see Proposition \ref{prop:7}). In some cases, however, $\varphi$ may be significantly different from
the Laplace exponent $\phi$. However, if one assumes additional upper scaling condition with scaling parameter $\beta-2$
for $\beta$ strictly between $0$ and $1$, then these two objects are comparable (see Proposition \ref{prop:1}).

The main results of this paper are covered by Theorems \ref{thm:3}, \ref{thm:6}, \ref{thm:4}, \ref{thm:2}, and \ref{thm:5}. 
Theorem \ref{thm:3} is essential for the whole paper because it provides not only the existence
of the transition density but also its asymptotic behavior, which is later used in derivation of upper and lower
estimates. The key argument in the proof is the lower estimate on the holomorphic extension of the Laplace exponent $\phi$
(see Lemma \ref{lem:1}) which justifies the inversion of the Laplace transform and allows us to perform the saddle point
type approximation. In Theorem \ref{thm:3} we only use the weak lower scaling property on the second derivative of the
Laplace exponent. In particular, we do not assume the absolute continuity of $\nu({\rm d} x)$. Furthermore, the asymptotic
is valid in some region described in terms of both space and time variable. By freezing one of them, we obtain as corollaries
the results similar to \cite{Fahrenwaldt2018}, see e.g. Corollary \ref{cor:5}. It is also worth highlighting that we
obtain a version of upper estimate on the transition density with no additional assumptions on the L\'{e}vy measure
$\nu(\rm{d}x)$, see Theorem \ref{thm:6}. Clearly, putting some restrictions on $\nu(\rm{d}x)$ results in sharper estimates
(Theorem \ref{thm:4}), but it is interesting that the scaling property sole is enough to get some information.
Our starting point and the main object to work with is the Laplace exponent $\phi$. However, in many cases the primary
object is the L\'{e}vy measure $\nu({\rm d}x)$ and results are presented in terms of or require its tail decay. Therefore,
it would be convenient to have a connection between those two objects. In Proposition \ref{prop:8} we prove that one can
impose scaling conditions on the tail of the L\'{e}vy measure $\nu((x,\infty))$ instead, as they imply the scaling condition on $-\phi''$. 

We also note that the main results of the article hold true when $-\phi''$ is a function regularly varying at infinity with regularity index $\alpha-2$, where $\alpha>0$. This follows easily by Potter bounds for regularly varying functions (see \cite[Theorem 1.5.6]{MR1015093}), which immediately imply both lower and upper scaling properties. Moreover, if additionally $\alpha<1$ then, by Karamata's theorem and monotone density theorem, regular variation of $-\phi''$ with index $\alpha-2$ is equivalent to regular variation of $\phi$ with index $\alpha$. This is not the case for the case $\alpha=1$ where, in general, only the first direction holds true.

Below we present the special case when global upper and lower scaling conditions are imposed with 
$0 < \alpha \leq \beta < 1$, see Theorem \ref{thm:5}.
\begin{main_theorem}
	\label{main_thm:1}
	Let $\bfT$ be a subordinator with the Laplace exponent $\phi$. Suppose that for some $0 < \alpha \leq \beta < 1$,
	the functions
	\[
		(0, \infty) \ni x \mapsto x^{-\alpha} \phi(x),
	\qquad\text{and}\qquad
		(0, \infty) \ni x \mapsto x^{-\beta} \phi(x)
	\]
	are almost increasing and almost decreasing, respectively. We also assume that the L\'{e}vy measure $\nu({\rm d}x)$
	has an almost monotone density $\nu(x)$. Then the probability distribution of $T_t$ has a density $p(t, \:\cdot\:)$.
	Moreover, for all $t \in (0, \infty)$ and $x > 0$,
	\[
		p(t,x) \approx 
		\begin{cases}
		\big(t \big(  -\phi''(w)  \big)\big)^{-1/2} 
		\exp \left\lbrace -t \big( \phi(w) - w\phi'(w) \big) \right\rbrace, 
		&\text{if } 0 < x\phi^{-1}(1/t) \leq 1, \\
		tx^{-1}\phi(1/x), &\text{if } 1 < x\phi^{-1}(1/t),
		\end{cases} 
	\]
	where $w=(\phi')^{-1}(x/t)$.
\end{main_theorem}
We note that a similar result to Theorem \ref{main_thm:1} appeared in \cite{CKKW20} in around the same time as our preprint. Our assumptions, however, are weaker, as we assume almost monotonicity of the L\'{e}vy density instead of monotonicity of the function $t \mapsto t\nu(t)$. Moreover, our estimates are genuinely sharp, i.e. the constants appearing in the exponential factors are the same on both sides of the estimate, while estimates obtained in \cite{CKKW20} are qualitatively sharp, that is the constants in the exponential factors are different.

As a corollary, under the assumption of Theorem \ref{main_thm:1}, we obtain global two-sided estimate on the Green function. Namely, for all $x>0$,
\[
G(x) \approx \frac{1}{x\phi(1/x)}.
\]
See Section \ref{sec:5} and Theorem \ref{thm:8} for details. 

The article is organized as follows: In Section \ref{sec:3} we introduce our framework and collect some facts
concerning Bernstein functions and their scaling properties. Section \ref{sec:1} is devoted to the proof of Theorem
\ref{thm:3} and its consequences. In Section \ref{sec:4} we provide both upper and lower estimates on the transition
density and discuss when these estimates coincide. Some applications of our results to subordination beyond the familiar
$\mathbb{R}^d$ setting  and Green function estimates are presented in Section \ref{sec:5}.

\subsection*{Acknowledgment}
	We thank professor Jerzy Zabczyk for drawing our attention to the problem considered in this paper. The main
	results of this article were presented at the XV Probability Conference held from May 21 to 25, 2018, in B\k{e}dlewo,
	Poland, and at the Semigroups of Operators: Theory and Applications Conference held from September 30 to October 5,
	2018, in Kazimierz Dolny, Poland. We thank the organizers for the invitations. 

\subsection*{Notation}
By $C_1, c_1, C_2, c_2, \ldots$ we denote positive constants which may change from line to line. For two functions
$f,g \colon (0,\infty) \rightarrow [0,\infty)$ we write $f(x) \gtrsim g(x)$, if there is $c>0$ such that $f(x) \geq cg(x)$ for all $x>0$. An analogous rule is applied to the symbol $\lesssim$. We also have  $f(x) \approx g(x)$, if there exists $C \geq 1$ such that
$C^{-1} f(x) \leq g(x) \leq C f(x)$ for all $x > 0$. Finally, we set $a \wedge b=\min \{ a,b \}$ and
$a \vee b = \max \{a,b\}$. 

\section{Preliminaries}
\label{sec:3}
Let $(\Omega, \calF, \PP)$ be a probability space. Let $\bfT = (T_t \colon t \geq 0)$ be a subordinator, that is a L\'{e}vy
process in $\RR$ with nondecreasing paths. Recall that a L\'{e}vy process is a c\`{a}dl\`{a}g stochastic process with
stationary and independent increments such that $T_0 = 0$ almost surely. There is a function
$\psi \colon \RR \rightarrow \CC$, called the \emph{L\'{e}vy--Khintchine exponent} of $\bfT$, such that
for all $t \geq 0$ and $\xi \in \RR$, 
\[
	\EE \big( e^{i \xi T_t} \big) = e^{-t\psi(\xi)}. 
\]
Moreover, there are $b \geq 0$ and $\sigma$-finite measure $\nu$ on $(0, \infty)$ satisfying
\begin{equation}\label{eq:13}
    \int_{(0, \infty)} \min{\{1, s\}} \: \nu({\rm d}s) < \infty,
\end{equation}
such that for all $\xi \in \RR$,
\begin{equation}\label{eq:13a}\begin{aligned}
	\psi(\xi) &= - i \xi b - 
	\int_{(0, \infty)} \big(e^{i \xi x} - 1\big)
	\: \nu ({\rm d} x) \\
	&= - i \xi \bigg( b + \int_{(0,1)} x \: \nu ({\rm d} x)\bigg) -
	\int_{(0, \infty)} \big(e^{i \xi x} - 1 - i\xi x \ind{\{x<1\}}\big)
	\: \nu ({\rm d} x),
\end{aligned}
\end{equation}
which is valid thanks to \eqref{eq:13}. By $\phi\colon [0, \infty) \rightarrow [0, \infty)$ we denote the \emph{Laplace exponent} of $\bfT$, namely
\[
	\EE\big(e^{-\lambda T_t} \big) = e^{-t \phi(\lambda)}
\]
for all $t \geq 0$ and $\lambda \geq 0$. Let $\psi^*$ be the symmetric continuous and nondecreasing majorant of
$\Re \psi$, that is
\begin{align*}
	\psi^*(r) = \sup_{|z| \leq r} \Re \psi(z), \quad r>0.
\end{align*}
Notice that
\[
	\psi^* \big( \psi^{-1}(s) \big)=s, \qquad\text{and}\qquad \psi^{-1} \big( \psi^*(s) \big) \geq s,
\]
where $\psi^{-1}$ is the generalized inverse function defined as
\[
	\psi^{-1}(s) = \sup\big\{ r>0 \colon \psi^*(r)=s \big\}.
\]
To study the distribution function of the subordinator $\bfT$, it is convenient to introduce two concentration functions
$K$ and $h$. They are defined as
\begin{equation}
	\label{eq:30}
	K(r)=\frac{1}{r^2} \int_{(0, r)} s^2 \: \nu({\rm d} s), \quad r > 0,
\end{equation}
and
\begin{equation}
	\label{eq:31}
	h(r) = \int_{(0, \infty)} \min{\big\{ 1 , r^{-2} s^2\big\}} \: \nu({\rm d} s), \quad r > 0.
\end{equation}
Notice that $h(r) \geq K(r)$. Moreover, by the Fubini--Tonelli theorem, we get
\begin{align}
    \label{eq:25}
    h(r)=2 \int_r^\infty K(s) s^{-1} {\: \rm d} s.
\end{align}
In view of \cite[Lemma 4]{MR3225805}, we have
\begin{align}
	\label{eq:27}
	\frac{1}{24} h(r^{-1} ) \leq \psi^*(r) \leq 2h(r^{-1} ).
\end{align}
A function $f\colon [0, \infty) \rightarrow [0, \infty)$ is \emph{regularly varying at infinity} of index $\alpha$,
if for all $\lambda \geq 1$,
\[
	\lim_{x \to \infty} \frac{f(\lambda x)}{f(x)} = \lambda^\alpha.
\]
Analogously, $f$ is \emph{regular varying at the origin} of index $\alpha$, if for all $\lambda \geq 1$,
\[
	\lim_{x \to 0^+} \frac{f(\lambda x)}{f(x)} = \lambda^\alpha.
\]
If $\alpha = 0$ the function $f$ is called \emph{slowly varying}.

We next introduce a notion of scaling conditions frequently used in this article. We say that a function 
$f \colon [0,\infty) \rightarrow [0,\infty)$ has the \emph{weak lower scaling property} at infinity, if there are 
$\alpha \in \RR$, $c \in (0,1]$, and $x_0 \geq 0$ such that for all $\lambda \geq 1$ and $x > x_0$, 
\begin{align}
	\label{wlsc}
	f(\lambda x) \geq c \lambda^{\alpha} f(x).
\end{align}
We denote it briefly as $f \in \WLSC{\alpha}{c}{x_0}$. Observe that
if $\alpha > \alpha'$ then $\WLSC{\alpha}{c}{x_0} \subsetneq \WLSC{\alpha'}{c}{x_0}$.
Analogously, $f$ has the \emph{weak upper scaling property} at infinity, if there are $\beta \in \RR$, $C \geq 1$,
and $x_0 \geq 0$ such that for all $\lambda \geq 1$ and $x > x_0$,
\begin{align}
	\label{wusc}
	f(\lambda x) \leq C \lambda^{\beta}f(x).
\end{align}
In this case we write $f \in \WUSC{\beta}{C}{x_0}$. 

We say that a function $f\colon [0, \infty) \rightarrow [0, \infty)$ has \emph{doubling property} on $(x_0, \infty)$
for some $x_0 \geq 0$, if there is $C \geq 1$ such that for all $x > x_0$,
\[
	C^{-1} f(x) \leq f(2x) \leq C f(x).
\]
Notice that a nonincreasing function with the weak lower scaling has doubling property.
Analogously, a nondecreasing function with the weak upper scaling.

A function $f\colon [0, \infty) \rightarrow [0, \infty)$ is \emph{almost increasing} on $(x_0, \infty)$
for some $x_0 \geq 0$, if there is $c \in (0, 1]$ such that for all $y \geq x > x_0$,
\[
	c f(x) \leq f(y).
\]
It is \emph{almost decreasing} on $(x_0, \infty)$, if there is $C \geq 1$ such that for all $y \geq x > x_0$,
\[
	C f(x) \geq f(y).
\]
In view of \cite[Lemma 11]{MR3165234}, $f \in \WLSC{\alpha}{c}{x_0}$ if and only if the function
\[
	(x_0, \infty) \ni x \mapsto x^{-\alpha} f(x)
\]
is almost increasing. Similarly, $f \in \WUSC{\beta}{C}{x_0}$ if and only if the function
\[
	(x_0, \infty) \ni x \mapsto x^{-\beta} f(x)
\]
is almost decreasing. For a function $f\colon [0,\infty) \rightarrow \CC$ its Laplace transform is defined as
\[
	\calL f(\lambda) = \int_0^\infty e^{-\lambda x} f(x) \rmd x.
\]

\subsection{Bernstein functions}
\label{sec:2}
In this section we recall some basic facts about Bernstein functions. A general reference here is the book
\cite{MR2978140}.

A function $\phi\colon (0, \infty) \rightarrow [0,\infty)$ is \emph{completely monotone} if it is smooth and
\[
	(-1)^n \phi^{(n)} \geq 0
\]
for all $n \in \NN_0$. It is a \emph{Bernstein} function if $\phi$ is a nonnegative smooth function such that $\phi'$
is completely monotone. 

Let $\phi$ be a Bernstein function. In view of \cite[Lemma 3.9.34]{MR1873235}, for all $n \in \NN$ we have 
\begin{equation}
	\label{eq:20}
	\phi(\lambda) \geq \frac{(-1)^{n+1}}{n!} \lambda^n \phi^{(n)}(\lambda), \quad \lambda>0.
\end{equation}
Since $\phi$ is concave, for each $\lambda \geq 1$ and $x > 0$ we have
\[
	\phi(\lambda x) 
	\leq \phi'(x) (\lambda - 1) x + \phi(x),
\]
thus, by \eqref{eq:20},
\begin{equation}
	\label{eq:28}
	\phi(\lambda x) \leq \lambda \phi(x).
\end{equation}
By \cite[Theorem 3.2]{MR2978140}, there are two nonnegative numbers $a$ and $b$, and a Radon measure $\mu$ on
$(0, \infty)$ satisfying 
\[
    \int_{(0, \infty)} \min{\{1, s \}} \: \mu({\rm d} s) < \infty,
\]
and such that
\begin{equation}
    \label{eq:1}
    \phi(\lambda) = a+b\lambda+ \int_{(0, \infty)} \big(1 - e^{-\lambda s}\big) \: \mu({\rm d} s).
\end{equation}
A Bernstein function $\phi$ is called \emph{complete} Bernstein function if the measure $\mu$ has a completely monotone
density with respect to Lebesgue measure.
\begin{proposition}
	\label{prop:2}
	Let $f$ be a completely monotone function. Suppose that $f$ has a doubling property on $(x_0,\infty)$ for some
	$x_0\geq 0$. Then there is $C > 0$ such that for all $x > x_0$,
	\[
		f(x) \geq C x |f'(x)|.
	\]
\end{proposition}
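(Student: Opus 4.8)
The plan is to use the Bernstein-type integral representation of a completely monotone function together with the doubling hypothesis to compare $f(x)$ with $x|f'(x)|$. By Bernstein's theorem, write $f(x) = \int_{[0,\infty)} e^{-xs}\,\rho({\rm d}s)$ for some nonnegative measure $\rho$; then $|f'(x)| = -f'(x) = \int_{[0,\infty)} s\, e^{-xs}\,\rho({\rm d}s)$. The inequality to prove, $f(x) \geq Cx|f'(x)|$, thus amounts to bounding $\int s e^{-xs}\,\rho({\rm d}s)$ by a constant times $x^{-1}\int e^{-xs}\,\rho({\rm d}s)$. The heuristic is that the factor $s e^{-xs}$ is dominated by $e^{-xs/2}$ up to a constant times $x^{-1}$ (since $\sup_{s\ge 0} s e^{-xs/2} = 2/(ex)$), which would give $x|f'(x)| \lesssim f(x/2)$; the doubling property on $(x_0,\infty)$ then converts $f(x/2)$ back into a constant multiple of $f(x)$ for $x > x_0$ (one may need $x > 2x_0$ and then absorb the compact interval $(x_0, 2x_0]$ using continuity and positivity, or simply note that doubling at scale $2$ iterates). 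That is essentially the whole argument.

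First I would record the representation $f(x) = \int_{[0,\infty)} e^{-xs}\,\rho({\rm d}s)$ and differentiate under the integral sign (justified by complete monotonicity on $(0,\infty)$). Second, I would establish the pointwise bound $s e^{-xs} \le \frac{2}{ex}\, e^{-xs/2}$ for all $s \ge 0$ and $x > 0$, by maximizing $g(s) = s e^{-xs/2}$ over $s$. Third, integrating this against $\rho$ gives
\[
	x |f'(x)| = x\int_{[0,\infty)} s e^{-xs}\,\rho({\rm d}s) \le \frac{2}{e}\int_{[0,\infty)} e^{-xs/2}\,\rho({\rm d}s) = \frac{2}{e}\, f(x/2).
\]
Fourth, invoking the doubling property: for $x > 2x_0$ we have $x/2 > x_0$, so $f(x/2) \le C_0 f(x)$ with $C_0$ the doubling constant, yielding $x|f'(x)| \le \frac{2C_0}{e} f(x)$ on $(2x_0, \infty)$. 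To extend the conclusion down to $(x_0, \infty)$, I would observe that on the compact interval $[x_0 \vee \varepsilon,\, 2x_0]$ (for small $\varepsilon > 0$) the continuous function $x \mapsto x|f'(x)|/f(x)$ is bounded — here using that $f > 0$, which holds because $f$ is completely monotone and not identically zero (the case $f\equiv 0$ being trivial) — and if $x_0 = 0$ one argues near the origin separately, again via the explicit bound and doubling near $0$; in all cases one gets a single constant $C > 0$ working for all $x > x_0$.

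The main obstacle, such as it is, is purely bookkeeping at the edges: making sure the doubling constant from $(x_0,\infty)$ is genuinely applicable after halving the argument (which forces $x > 2x_0$ in the direct estimate) and then patching the leftover interval $(x_0, 2x_0]$ without introducing a hidden dependence. A clean way to avoid the patching is to note that the doubling property on $(x_0,\infty)$ implies $f(x/2) \le C_0 f(x)$ whenever $x/2 > x_0$; and for $x_0 < x \le 2x_0$ one instead writes, using the same pointwise trick with a gentler exponent split $s e^{-xs} \le \frac{1}{\theta e x} e^{-(1-\theta)xs}$ and choosing $\theta$ so small that $(1-\theta)x > x_0$ still forces nothing — but in fact the simplest fix is just: by doubling, $f$ restricted to any $(x_1,\infty)$ with $x_1 \ge x_0$ still has the same doubling behavior, so replacing $x_0$ by $2x_0$ at the outset loses nothing in the statement, and one may as well assume the constant $C$ is claimed only for $x$ large, then note the statement as written allows $x_0 = 0$ to be replaced by any larger value. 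I would present the argument for $x > 2x_0$ and remark that enlarging $x_0$ is harmless. Everything else is routine.
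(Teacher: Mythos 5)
Your proof is correct, and its skeleton is the same as the paper's: reduce everything to an inequality of the form $x|f'(x)| \lesssim f(x/2)$, convert $f(x/2)$ into $f(x)$ via the doubling hypothesis for $x > 2x_0$, and absorb the leftover interval $[x_0, 2x_0]$ (when $x_0 > 0$) by continuity and positivity of $x \mapsto x|f'(x)|/f(x)$. The one genuine difference is how the key inequality is obtained. The paper does it in one line without any representation theorem: since $f'$ is nondecreasing ($f'' \geq 0$) and $f \geq 0$,
\[
	f(x/2) \;\geq\; f(x/2) - f(x) \;=\; -\int_{x/2}^{x} f'(s)\,\rmd s \;\geq\; \tfrac{1}{2}\, x\,|f'(x)|,
\]
so only convexity and monotonicity of $f$ are used. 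You instead invoke Bernstein's theorem, write $f(x)=\int_{[0,\infty)} e^{-xs}\rho({\rm d}s)$, and use the pointwise bound $s e^{-xs} \leq \tfrac{2}{ex} e^{-xs/2}$ to get $x|f'(x)| \leq \tfrac{2}{e} f(x/2)$. Both are valid; yours buys a marginally better constant at the cost of heavier machinery, while the paper's version is more elementary and works verbatim for any nonnegative convex nonincreasing $C^1$ function. One small remark: your closing worry about the case $x_0=0$ is unnecessary — when $x_0=0$ the condition $x > 2x_0$ is just $x>0$, so the direct estimate already covers the whole range and no separate argument near the origin (and no enlargement of $x_0$) is needed; the compact-interval patch is only required when $x_0>0$.
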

\begin{proof}
	Without loss of generality we can assume $f \not\equiv 0$. Clearly,
	\[
		f(x)-f(x/2)=\int^x_{x/2}f'(s) \rmd s \leq \tfrac{1}{2} x f'(x).
	\]
	Since $f$ is completely monotone, it is a positive function and
	\[
		f(x/2) \geq \tfrac{1}{2} x|f'(x)|,
	\]
	which together with the doubling property, gives
	\[
		f(x)\geq C x|f'(x)|
	\]
	for $x > 2 x_0$. Hence, we obtain our assertion in the case $x_0 = 0$. If $x_0 > 0$ we observe that the function 
	\[
		[x_0, 2 x_0] \ni x \mapsto \frac{x |f'(x)|}{f(x)}
	\]
	is continuous and positive, thus bounded. This completes the proof.
\end{proof}
\begin{proposition}
	\label{prop:CM-WLSC}
	Let $f$ be a completely monotone function. Suppose that $-f'\in \WLSC{\tau}{c}{x_0}$ for some $c \in (0, 1]$,
	$x_0 \geq 0$, and $\tau \leq -1$. Then $f\in \WLSC{1 + \tau}{c}{x_0}$. 

	Analogously, if $-f' \in \WUSC{\tau}{C}{x_0}$ for some $C \geq 1$, $x_0 \geq 0$, and $\tau \leq -1$, then 
	$(f-f(\infty)) \in \WUSC{\tau}{C}{x_0}$.
\end{proposition}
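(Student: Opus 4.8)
The plan is to deduce both assertions from a single change‑of‑variables identity for the tail integral of $-f'$. First I would record the consequences of complete monotonicity: since $f \ge 0$ and $-f' \ge 0$, the function $f$ is nonincreasing and bounded below, so $f(\infty) := \lim_{x \to \infty} f(x)$ exists in $[0, \infty)$; moreover $f$ is $C^1$ with $-f' \ge 0$, so the fundamental theorem of calculus together with monotone convergence gives, for every $x > 0$,
\[
	f(x) - f(\infty) = \int_x^\infty \big(-f'(s)\big) \rmd s < \infty .
\]
Equivalently, complete monotonicity rules out an affine part, so no linear term survives. This representation is the only place where complete monotonicity — as opposed to mere monotonicity of $f$ — is genuinely needed.

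Next, fixing $\lambda \ge 1$ and $x > x_0$, I would write the same identity at $\lambda x$ and substitute $s = \lambda u$ to get
\[
	f(\lambda x) - f(\infty) = \int_{\lambda x}^\infty \big(-f'(s)\big) \rmd s = \lambda \int_x^\infty \big(-f'(\lambda u)\big) \rmd u .
\]
Every $u$ in the domain of integration satisfies $u > x_0$, so the scaling hypothesis on $-f'$ can be inserted pointwise under the integral sign. If $-f' \in \WLSC{\tau}{c}{x_0}$ then $-f'(\lambda u) \ge c\lambda^{\tau}(-f'(u))$, which gives
\[
	f(\lambda x) - f(\infty) \ge c\lambda^{1+\tau} \big( f(x) - f(\infty) \big);
\]
symmetrically, if $-f' \in \WUSC{\tau}{C}{x_0}$ the reversed pointwise inequality yields $f(\lambda x) - f(\infty) \le C\lambda^{1+\tau}(f(x) - f(\infty))$, which is exactly $f - f(\infty) \in \WUSC{1+\tau}{C}{x_0}$.

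It remains to reabsorb the constant $f(\infty)$ in the lower‑scaling statement. Since $c \in (0,1]$, $\lambda \ge 1$ and $1 + \tau \le 0$, we have $0 \le c\lambda^{1+\tau} \le 1$, hence $f(\infty) \ge c\lambda^{1+\tau} f(\infty)$; adding this to the lower bound above gives
\[
	f(\lambda x) \ge c\lambda^{1+\tau} f(\infty) + c\lambda^{1+\tau}\big(f(x) - f(\infty)\big) = c\lambda^{1+\tau} f(x),
\]
that is, $f \in \WLSC{1+\tau}{c}{x_0}$, with the same constant $c$. I do not expect a genuine obstacle here: the two points requiring attention are the justification of the integral representation of $f$ (finiteness of $f(\infty)$ and absence of a linear term, both forced by complete monotonicity) and the sign bookkeeping in the last display, which succeeds precisely because the target exponent $1+\tau$ is nonpositive. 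Note that the same computation delivers the exponent $1+\tau$ for $f - f(\infty)$ in the upper‑scaling case, mirroring the lower‑scaling one.
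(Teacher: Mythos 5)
Your argument is correct and is essentially the paper's: the paper writes $f(\lambda x)-f(\lambda y)=-\lambda\int_x^y f'(\lambda s)\,\rmd s$, applies the scaling of $-f'$ under the integral and lets $y\to\infty$, while you pass to the limit first and work with the tail integral $f(x)-f(\infty)=\int_x^\infty(-f'(s))\,\rmd s$; the absorption of $f(\infty)$ via $c\lambda^{1+\tau}\leq 1$ is the same in both. One remark on the second assertion: the proposition as printed claims $(f-f(\infty))\in\WUSC{\tau}{C}{x_0}$, whereas your computation (and the paper's ``much the same way'' argument) yields the exponent $1+\tau$; your exponent is the correct one, since for $\tau<-1$ the completely monotone function $f(x)=x^{1+\tau}$ has $-f'\in\WUSC{\tau}{1}{0}$ but $f-f(\infty)=f\notin\WUSC{\tau}{C}{0}$ for any $C$, so the $\tau$ in the stated conclusion is evidently a misprint for $1+\tau$.
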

\begin{proof}
	Let $\lambda>1$. For $y > x > x_0$, we have
	\begin{align*}
		f(\lambda x)-f(\lambda y)
		=
		-\int^{\lambda y}_{\lambda x} f'(s) \rmd s 
		&=
		-\lambda \int^y_x f'(\lambda s) \rmd s \\
		&\geq - c \lambda^{1+\tau} \int^y_x f'(s) \rmd s
		= c \lambda^{1+\tau} (f(x)-f(y)),
	\end{align*}
	thus
	\[
		f(\lambda x) \geq c \lambda^{1+\tau} f(x) + f(\lambda y) - c \lambda^{1+\tau} f(y).
	\]
	Since $f$ is nonnegative and nonincreasing, we can take $y$ approaching infinity to get
	\begin{align*}
		f(\lambda x) 
		&\geq c \lambda^{1+\tau} f(x) + \big(1 - c \lambda^{1+\tau}\big) \lim_{y \to \infty} f(y) \\
		&\geq c \lambda^{1+\tau} f(x),
	\end{align*}
	where in the last inequality we have also used that $1 \geq c \lambda^{1+\tau}$. The second part of
	the proposition can be proved in much the same way.
\end{proof}

\begin{proposition}
	\label{prop:WLSC}
	Let $\phi$ be a Bernstein function with $\phi(0)=0$. Then $\phi \in \WLSC{\alpha}{c}{x_0}$ for some
	$c \in (0, 1]$, $x_0 \geq 0$, and $\alpha >0$ if and only if $\phi' \in \WLSC{\alpha-1}{c'}{x_0}$ for some
	$c' \in (0,1]$. Furthermore, if $\phi \in \WLSC{\alpha}{c}{x_0}$ then there is $C \geq 1$ such that for all
	$x > x_0$,
	\begin{align}\label{eq:132}
		x \phi'(x) \leq \phi(x) \leq C x \phi'(x).
	\end{align}
\end{proposition}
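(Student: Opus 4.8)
The plan is to prove the two directions of the equivalence separately and then deduce \eqref{eq:132} from the forward direction together with the concavity estimate \eqref{eq:20}.

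First, suppose $\phi' \in \WLSC{\alpha-1}{c'}{x_0}$. Since $\phi(0)=0$ and $\phi$ is nonnegative and nondecreasing, for $x>x_0$ we may write $\phi(\lambda x) - \phi(x) = \int_x^{\lambda x}\phi'(s)\rmd s$, and after the substitution $s = \lambda u$ this becomes $\lambda\int_x^x \phi'(\lambda u)\rmd u$; more usefully, writing $\phi(\lambda x) = \phi(x) + \int_1^\lambda x\phi'(tx)\rmd t$ and using $\phi'(tx)\geq c' t^{\alpha-1}\phi'(x)$ for $t\geq 1$, $x>x_0$, gives $\phi(\lambda x)\geq \phi(x) + c'\phi'(x) x\int_1^\lambda t^{\alpha-1}\rmd t = \phi(x) + \tfrac{c'}{\alpha}x\phi'(x)(\lambda^\alpha - 1)$. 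Since $\phi(x)\geq c''x\phi'(x)$ for some $c''>0$ by Proposition \ref{prop:2} applied to the completely monotone function $\phi'$ (which has the doubling property on $(x_0,\infty)$, being nonincreasing with weak lower scaling), and since $\phi(x)\geq 0$, one easily combines these to bound $\phi(\lambda x)$ below by $c\lambda^\alpha\phi(x)$ for a suitable $c\in(0,1]$; the only care needed is to treat the ranges $\lambda^\alpha\leq 1$ trivially and handle the additive term. Conversely, suppose $\phi\in\WLSC{\alpha}{c}{x_0}$. Here I would exploit that $\phi'$ is nonincreasing (completely monotone): for $\lambda\geq 1$ and $x>x_0$,
\[
	\phi(\lambda x) - \phi(x) = \int_x^{\lambda x}\phi'(s)\rmd s \leq (\lambda-1)x\,\phi'(x),
\]
while from the lower scaling and \eqref{eq:20} (with $n=1$, giving $\phi(y)\geq y\phi'(y)$) one gets a lower bound on $\phi(\lambda x) - \phi(x)$ of the form $(c\lambda^\alpha - 1)\phi(x)$. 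Choosing $\lambda$ a fixed constant $>1$ large enough that $c\lambda^\alpha > 1$, these two inequalities together yield $\phi'(x)\gtrsim \phi(x)/x \gtrsim \phi'(\lambda x)\lambda x/x = \lambda\phi'(\lambda x)$-type control; more directly, combining $\phi(\lambda x)\geq c\lambda^\alpha\phi(x)$ with $\phi(\lambda x)\leq \lambda\phi(x)$ from \eqref{eq:28} is not quite enough, so instead I would use the Mean Value–type sandwich: $\phi'(\lambda x)\cdot(\lambda x - x)\leq \phi(\lambda x)-\phi(x)$ is false for concave $\phi$, so the correct route is to bound $\phi'(\lambda x) \leq \frac{2}{\lambda x}(\phi(\lambda x) - \phi(\lambda x/2))\leq \frac{2}{\lambda x}\phi(\lambda x)$ and $\phi'(x)\geq \frac{1}{x}(\phi(x) - \phi(x/2))$, then use the doubling of $\phi$ (a consequence of $\WLSC{\alpha}{c}{x_0}$ together with \eqref{eq:28}) to compare $\phi(x)-\phi(x/2)$ with $\phi(x)$. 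This gives $x\phi'(x)\approx \phi(x)$ on $(2x_0,\infty)$, which is exactly \eqref{eq:132} (the remaining interval $(x_0,2x_0]$ is handled by continuity and positivity as in Proposition \ref{prop:2}), and then the scaling of $\phi'$ follows immediately by transporting the scaling of $\phi$ through this comparison.

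For the "furthermore" part, once \eqref{eq:132} is available for $x>x_0$ the estimate is essentially done: the left inequality $x\phi'(x)\leq\phi(x)$ is just \eqref{eq:20} with $n=1$, valid for all $x>0$, and the right inequality $\phi(x)\leq Cx\phi'(x)$ is the content of the comparison extracted above. So in the writeup I would first establish \eqref{eq:132} directly from $\phi\in\WLSC{\alpha}{c}{x_0}$ (via doubling of $\phi$ and the argument of Proposition \ref{prop:2}), and then use it as the bridge for the "only if" direction of the equivalence; the "if" direction I would do by the direct integration argument sketched above, which needs only Proposition \ref{prop:2}.

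I expect the main obstacle to be the "only if" direction: passing from scaling of $\phi$ to scaling of $\phi'$ cannot be done by naive differentiation and instead requires the two-sided comparison $\phi(x)\approx x\phi'(x)$, whose proof leans on the doubling property of $\phi$. Establishing that doubling property cleanly — it follows from $\WLSC{\alpha}{c}{x_0}$ for the lower bound and from \eqref{eq:28}, i.e. $\phi(\lambda x)\leq\lambda\phi(x)$, for the upper bound — and then the bookkeeping near the threshold $x_0$ (where one replaces scaling arguments by a compactness/continuity argument exactly as in Proposition \ref{prop:2}) are the points that need genuine care; everything else is routine integration and the elementary Bernstein inequalities \eqref{eq:20} and \eqref{eq:28} already recorded.
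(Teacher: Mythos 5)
The genuine gap is in your ``if'' direction ($\phi'\in\WLSC{\alpha-1}{c'}{x_0}\Rightarrow\phi\in\WLSC{\alpha}{c}{x_0}$). From your display $\phi(\lambda x)\geq\phi(x)+\tfrac{c'}{\alpha}\,x\phi'(x)(\lambda^{\alpha}-1)$ you can only conclude $\phi(\lambda x)\geq c\lambda^{\alpha}\phi(x)$ if you also have a lower bound $x\phi'(x)\geq c''\phi(x)$, i.e.\ the nontrivial half of \eqref{eq:132}. What you invoke instead is the reverse inequality $\phi(x)\geq c''x\phi'(x)$, which is just \eqref{eq:20} with $n=1$ and points the wrong way for this purpose; moreover you attribute it to Proposition \ref{prop:2} applied to $\phi'$, which actually yields $\phi'(x)\geq Cx\,(-\phi''(x))$ --- a comparison of $\phi'$ with $\phi''$ that says nothing about $\phi$ versus $x\phi'$. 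So the bridge from your display to the scaling of $\phi$ is missing. The way to close it (and the paper's route) is to integrate the scaling of $\phi'$ downward rather than upward: $\phi(x)-\phi(x_0)=\int_{x_0}^{x}\phi'(s)\,\rmd s\leq (c')^{-1}\phi'(x)\int_{x_0}^{x}(s/x)^{\alpha-1}\,\rmd s\leq\frac{1}{c'\alpha}\,x\phi'(x)$, which for $x_0=0$ gives $x\phi'(x)\gtrsim\phi(x)$ immediately; for $x_0>0$ one must additionally prove $x\phi'(x)\geq C>0$ on $(x_0,\infty)$ (almost-increasingness of $x\phi'(x)$ from the scaling of $\phi'$, plus continuity and positivity on $[x_0,2x_0]$) in order to absorb the constant $\phi(x_0)$ --- this is genuinely needed, not just ``handling the additive term'' in passing.

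Your ``only if'' direction is essentially the paper's argument and is fine in its first form: the sandwich $(\lambda-1)x\phi'(x)\geq\phi(\lambda x)-\phi(x)\geq(c\lambda^{\alpha}-1)\phi(x)$ with $\lambda$ fixed so that $c\lambda^{\alpha}\geq 2$ gives $x\phi'(x)\gtrsim\phi(x)$, hence \eqref{eq:132} via \eqref{eq:20}, and then the scaling of $\phi'$ by transporting the scaling of $\phi$ through \eqref{eq:132}. However, the subsequent detour should be deleted: the inequality $\phi'(\lambda x)(\lambda x-x)\leq\phi(\lambda x)-\phi(x)$ is in fact \emph{true} for concave $\phi$ (not false, as you claim), and your replacement $\phi'(x)\geq\frac{1}{x}\big(\phi(x)-\phi(x/2)\big)$ is false in general, since $\phi'$ is nonincreasing and one only gets $\phi'(x)\leq\frac{2}{x}\big(\phi(x)-\phi(x/2)\big)$; the correct difference-quotient lower bound would involve $\phi(2x)-\phi(x)$. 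Once the ``if'' direction is repaired as above, your overall architecture coincides with the paper's proof.
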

\begin{proof}
	Assume first that $\phi'\in \WLSC{\alpha-1}{c}{x_0}$. Without loss of generality we can assume $\phi' \not\equiv 0$.
	We claim that \eqref{eq:132} holds true. In view of \eqref{eq:20}, it is enough to show that there is $C \geq 1$
	such that for all $x > x_0$,
	\[
		\phi(x) \leq C x \phi'(x).
	\]
	First, let us observe that, by the weak lower scaling property of $\phi'$,
	\begin{align}
		\nonumber
		\phi(x) -\phi(x_0) &= 
		\int_{x_0}^{x} \phi'(s) \rmd s \\
		\nonumber
		&\leq c^{-1} \phi'(x)  \int_{x_0}^x \big(s/x\big)^{-1+\alpha} \rmd s \\
		\label{eq:4}
		&\leq \frac{1}{c \alpha} x \phi'(x).
	\end{align}
	Thus we get the assertion in the case $x_0 = 0$. If $x_0 > 0$, it is enough to show that there is $C > 0$ such that 
	for all $x > x_0$,
	\begin{equation}
		\label{eq:3}
		x \phi'(x) \geq C.
	\end{equation}
	Since $\phi' \in \WLSC{\alpha-1}{c}{x_0}$, the function
	\[
		(x_0, \infty) \ni x \mapsto x \phi'(x)
	\]
	is almost increasing. Hence, for $x \geq 2 x_0$ we have
	\[
		x \phi'(x) \geq c 2 x_0 \phi'(2 x_0).
	\]
	To conclude \eqref{eq:3}, we notice that $\phi'(x)$ is positive and continuous in $[x_0, 2x_0]$.
	Now, by \eqref{eq:3} we get
	\[
		x \phi'(x) \geq C \phi(x_0)
	\]
	for all $x > x_0$, which together with \eqref{eq:4}, implies \eqref{eq:132} and the scaling property of $\phi$ follows.
	
	Now assume that $\phi \in \WLSC{\alpha}{c}{x_0}$. By monotonicity of $\phi'$, for $0 < s < t$,
	\[
	\frac{\phi(tx)-\phi(sx)}{\phi(x)} \leq \frac{x(t-s)\phi'(sx)}{\phi(x)}.
	\]
	For $s=1$, by the lower scaling we get
	\[
	\frac{x(t-1)\phi'(x)}{\phi(x)} \geq \frac{\phi(tx)}{\phi(x)}-1 \geq ct^{\alpha}-1,
	\]
	for all $x>x_0$. Thus, for $t = 2^{1/\alpha} c^{-1/\alpha}$, we obtain that 
	$x\phi'(x) \gtrsim \phi(x)$ for all $x>x_0$. Invoking \eqref{eq:20}, we conclude \eqref{eq:132}. In particular, $\phi'$ has the weak lower scaling property.
	This completes the proof.
\end{proof}

\begin{proposition}
	\label{prop:WUSC}
	Let $\phi$ be a Bernstein function. Suppose that $-\phi'' \in \WUSC{\beta-2}{C}{x_0}$ for some $C \geq 1$, $x_0 \geq 0$, and $\beta <1$. Then 
	for all $x > x_0$,
	\[
		\phi'(x) \leq \frac{C}{1-\beta} x(-\phi''(x)) + b.
	\]
\end{proposition}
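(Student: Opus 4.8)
The plan is to express $\phi'(x)-b$ as the tail integral of $-\phi''$ and then substitute the weak upper scaling bound under the integral sign. Recall that by the representation \eqref{eq:1}, for every $\lambda>0$,
\[
	\phi'(\lambda) = b + \int_{(0,\infty)} s\, e^{-\lambda s}\, \mu({\rm d} s),
	\qquad
	-\phi''(\lambda) = \int_{(0,\infty)} s^2\, e^{-\lambda s}\, \mu({\rm d} s) \geq 0,
\]
so that $\phi'$ is nonincreasing. Moreover, for $\lambda\geq 1$ we have $s\, e^{-\lambda s}\leq \min\{1,s\}$ for every $s>0$ (indeed $s\,e^{-\lambda s}\leq s$ when $s<1$, and $s\,e^{-\lambda s}\leq s\,e^{-s}\leq 1$ when $s\geq 1$), so dominated convergence (with dominating function $s\mapsto\min\{1,s\}$, which is $\mu$-integrable by the finiteness condition stated just before \eqref{eq:1}) yields $\lim_{\lambda\to\infty}\phi'(\lambda)=b$. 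Since $-\phi''\geq 0$, I would record that for every $x>0$,
\[
	\phi'(x) - b = \int_x^\infty \big(-\phi''(s)\big) \rmd s,
\]
the integral being finite as it equals $\phi'(x)-b$.

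Next I would fix $x>x_0$ and, for $s>x$, apply the weak upper scaling \eqref{wusc} to $f=-\phi''$ with $\lambda=s/x\geq 1$ and exponent $\beta-2$, giving $-\phi''(s)\leq C\,(s/x)^{\beta-2}\big(-\phi''(x)\big)$. Inserting this into the identity above and pulling $-\phi''(x)$ outside the integral,
\[
	\phi'(x)-b \leq C\,\big(-\phi''(x)\big)\,x^{2-\beta}\int_x^\infty s^{\beta-2} \rmd s.
\]
Here the hypothesis $\beta<1$ is used: since $\beta-2<-1$, the integral converges and equals $x^{\beta-1}/(1-\beta)$, so that $x^{2-\beta}\int_x^\infty s^{\beta-2}\rmd s = x/(1-\beta)$, and therefore
\[
	\phi'(x) \leq \frac{C}{1-\beta}\, x\big(-\phi''(x)\big) + b,
\]
which is the assertion.

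Once the identity $\phi'(x)-b=\int_x^\infty\big(-\phi''(s)\big)\rmd s$ is available the rest is a one-line integration, so there is no real obstacle; the only step that deserves a word of justification is the limit $\lim_{\lambda\to\infty}\phi'(\lambda)=b$, which is a standard property of Bernstein functions with representation \eqref{eq:1} (see \cite{MR2978140}). I also note that, just as the condition $\alpha>0$ secures the convergence of $\int_0^x s^{\alpha-1}\rmd s$ in the proof of Proposition \ref{prop:WLSC}, here $\beta<1$ is precisely what makes the tail integral $\int_x^\infty s^{\beta-2}\rmd s$ finite; no absolute continuity or further scaling of $\mu$ is needed.
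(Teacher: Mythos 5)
Your proof is correct and follows essentially the same route as the paper: both write $\phi'(x)-b=\int_x^\infty(-\phi''(s))\,\rmd s$, insert the weak upper scaling bound $-\phi''(s)\leq C(s/x)^{\beta-2}(-\phi''(x))$ for $s>x>x_0$, and use $\beta<1$ to evaluate the convergent tail integral. Your extra justification of $\lim_{\lambda\to\infty}\phi'(\lambda)=b$ is a welcome but minor elaboration of a step the paper leaves implicit.
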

\begin{proof}
	Without loss of generality we can assume $\phi'' \not\equiv 0$.	By the scaling property, for $x > x_0$ we have 
	\begin{align*}
		\frac{\phi'(x)-b}{x(-\phi''(x))} 
		&= 
		\int_x^{\infty} \frac{t}{x} \frac{(-\phi''(t))}{(-\phi''(x))} \frac{{\rm d} t}{t} \\
		&\leq 
		C \int_x^{\infty} \bigg( \frac{t}{x} \bigg)^{-1+\beta} \frac{{\rm d} t}{t}
		= C \frac{1}{1-\beta},
	\end{align*}
	which concludes the proof.
\end{proof}

\begin{remark}
	\label{rem:2}
	Let $\phi$ be a Bernstein function such that $\phi(0) = 0$. Suppose that $-\phi'' \in \WLSC{\alpha-2}{c}{x_0}$, 
	for some $c \in (0, 1]$, $x_0 \geq 0$, and $\alpha \in (0, 1]$. Since $\phi'$ is completely monotone, by Proposition
	\ref{prop:CM-WLSC}, $\phi' \in \WLSC{\alpha-1}{c}{x_0}$. Therefore, by Proposition \ref{prop:WLSC},
	we conclude that $\phi \in \WLSC{\alpha}{c_1}{x_0}$ for some $c_1 \in (0, 1]$.
\end{remark}

\begin{proposition}\label{prop:11}
	Let $f$ be a completely monotone function. Suppose that 
	\[
		f \in \WLSC{\alpha-1}{c}{x_0} \cap \WUSC{\beta-1}{C}{x_0}
	\]
	for some $c \in (0,1]$, $C \geq 1$, $x_0 \geq 0$ and $0<\alpha \leq \beta <1$. Then
	\[
		-f' \in \WLSC{\alpha-2}{c'}{x_0} \cap \WUSC{\beta-2}{C'}{x_0}
	\]
	for some $c' \in (0,1]$ and $C' \geq 1$.
\end{proposition}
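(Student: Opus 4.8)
The plan is to show that $-f'(x) \approx f(x)/x$ for large $x$ and then to transport the scaling of $f$ to $-f'$ through the trivial scaling of $x \mapsto 1/x$. Set $g = -f'$. Since $f$ is completely monotone so is $g$, hence $g$ is nonnegative and nonincreasing. We may assume $f \not\equiv 0$, so that $f > 0$; then $f$ cannot be constant (a positive constant violates $\WUSC{\beta-1}{C}{x_0}$ because $\beta-1<0$), hence $g$ is a nonzero completely monotone function and therefore $g > 0$ on $(0,\infty)$. Finally, $\WUSC{\beta-1}{C}{x_0}$ with $\beta<1$ gives $f(\lambda x) \leq C\lambda^{\beta-1} f(x) \to 0$ as $\lambda \to \infty$, so, $f$ being nonincreasing, $\lim_{x\to\infty} f(x) = 0$ and consequently
\[
	f(x) = \int_x^\infty g(s) \rmd s, \qquad x > 0.
\]

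First I would establish the lower bound $g(x) \gtrsim f(x)/x$ for $x > x_0$. For $R \geq 1$ and $x > x_0$, monotonicity of $g$ yields
\[
	f(x) - f(Rx) = \int_x^{Rx} g(s) \rmd s \leq (R-1) x\, g(x),
\]
while the upper scaling gives $f(Rx) \leq C R^{\beta-1} f(x)$. Fixing $R$ (depending only on $C$ and $\beta$) so large that $C R^{\beta-1} \leq \tfrac12$, we obtain $f(x) - f(Rx) \geq \tfrac12 f(x)$, whence $g(x) \geq \frac{1}{2(R-1)} \cdot \frac{f(x)}{x}$.

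For the matching upper bound, monotonicity of $g$ also gives
\[
	f(x/2) \geq \int_{x/2}^x g(s) \rmd s \geq \tfrac{x}{2}\, g(x),
\]
so $g(x) \leq 2 f(x/2)/x$; and since $f$ is nonincreasing and lies in $\WLSC{\alpha-1}{c}{x_0}$ it has the doubling property on $(x_0,\infty)$, so $f(x/2) \lesssim f(x)$ for $x > 2x_0$. Altogether $g(x) \approx f(x)/x$ on $(2x_0,\infty)$. By \cite[Lemma 11]{MR3165234} the hypotheses say precisely that $x^{1-\alpha} f(x)$ is almost increasing and $x^{1-\beta} f(x)$ is almost decreasing on $(x_0,\infty)$; multiplying the comparison $g(x) \approx f(x)/x$ by $x^{2-\alpha}$, respectively $x^{2-\beta}$, we get that $x^{2-\alpha} g(x)$ is almost increasing and $x^{2-\beta} g(x)$ is almost decreasing on $(2x_0,\infty)$, i.e. $g \in \WLSC{\alpha-2}{c'}{2x_0} \cap \WUSC{\beta-2}{C'}{2x_0}$. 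If $x_0 = 0$ this is the claim; if $x_0 > 0$ one replaces $2x_0$ by $x_0$ exactly as in the proofs of Propositions \ref{prop:2} and \ref{prop:WLSC}, using that $g$ is continuous and strictly positive, hence bounded above and below, on the compact interval $[x_0, 2x_0]$.

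All the estimates are elementary; the only point worth noting is that both halves of the hypothesis genuinely enter: the upper scaling of $f$ is what forces $f(x) \to 0$ and hence underlies $g \gtrsim f/x$ (for merely monotone $f$ one could have $g$ much smaller than $f/x$), while the lower scaling of $f$ supplies the doubling property needed for $g \lesssim f/x$ (without it $f$ could decay too fast, e.g. $f(x) = e^{-x}$).
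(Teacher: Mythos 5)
Your proof is correct and follows essentially the same route as the paper: both establish the comparison $-f'(x)\approx f(x)/x$ for large $x$ by combining monotonicity of $-f'$ with the upper scaling of $f$ (for the lower bound on $-f'$) and the lower scaling of $f$ (for the upper bound — you phrase it via the doubling property, the paper via a direct estimate on $f(sx)/f(x)$), and then transfer the scaling of $f$ to $-f'$ through this comparison, handling $x_0>0$ by the same continuity–positivity argument.
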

\begin{proof}
	By monotonicity of $-f'$, for $0 < s < t$,
	\begin{equation}
		\label{eq:2}
		\frac{-x(t-s)f'(tx)}{f(x)} \leq \frac{f(sx)-f(tx)}{f(x)} \leq \frac{-x(t-s)f'(sx)}{f(x)}.
	\end{equation}
	Taking $s=1$ in the second inequality, the weak upper scaling property yields
	\[
	\frac{-x(t-1)f'(x)}{f(x)} \geq 1- \frac{f(tx)}{f(x)} \geq 1-ct^{\beta-1},
	\]
	for all $x>x_0$. By selecting $t > 1$ such that $ct^{\beta-1} \leq \frac12$, we obtain 
	$x\big(-f'(x)\big) \gtrsim f(x)$ for $x>x_0$. Similarly, taking $t=1$ in the first inequality in \eqref{eq:2},
	by the weak lower scaling property we get
	\[
		\frac{-x(1-s)f'(x)}{f(x)} \leq \frac{f(sx)}{f(x)}-1 \leq c^{-1}s^{\alpha-1}-1,
	\]
	for all $x>x_0/s$. By selecting $0 < s < 1$ such that $s^{\alpha-1}\geq 2c$, we obtain 
	$x\big( -f'(x)\big) \lesssim f(x)$ for $x>x_0/s$. Hence,
	\begin{align}
		\label{eq:130}
		f(x) \approx x\big( -f'(x) \big),
	\end{align}
	for all $x>x_0/s$. Therefore, lower and upper scaling properties follow from \eqref{eq:130} and the scaling properties
	of $f$. This finishes the proof for the case $x_0 = 0$. If $x_0 > 0$, we notice that both $f$ ad $-f'$ are positive
	and continuous, thus at the possible expense of worsening the constants we get \eqref{eq:130} for all $x>x_0$. 
\end{proof}

Now, by combining Propositions \ref{prop:WLSC} and \ref{prop:11}, we immediately get the following corollary.
\begin{corollary}\label{cor:6}
	Let $\phi$ be a Bernstein function such that $\phi(0)=0$.  Suppose that 
	\[
		\phi \in \WLSC{\alpha}{c}{x_0} \cap \WUSC{\beta}{C}{x_0}
	\]
	for some $c \in (0,1]$, $C \geq 1$, $x_0 \geq 0$ and $0<\alpha \leq \beta <1$. Then 
	\[
		-\phi'' \in \WLSC{\alpha-2}{c'}{x_0} \cap \WUSC{\beta-2}{C'}{x_0}
	\]
	for some $c' \in (0,1]$ and $C' \geq 1$.
\end{corollary}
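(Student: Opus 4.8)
The plan is to run the argument through the first derivative $\phi'$, which is completely monotone because $\phi$ is a Bernstein function, and then to invoke Proposition~\ref{prop:11} with $f = \phi'$.

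First I would upgrade the scaling hypotheses on $\phi$ to scaling of $\phi'$. Since $\phi \in \WLSC{\alpha}{c}{x_0}$, Proposition~\ref{prop:WLSC} provides the two-sided comparison \eqref{eq:132}, that is $x\phi'(x) \le \phi(x) \le C_1 x\phi'(x)$ for all $x > x_0$; equivalently $\phi'(x) \approx \phi(x)/x$ on $(x_0,\infty)$. Because dividing by $x$ shifts a scaling exponent by $-1$, and because the classes of almost increasing and of almost decreasing functions are each stable under the equivalence $\approx$, the hypotheses $\phi \in \WLSC{\alpha}{c}{x_0} \cap \WUSC{\beta}{C}{x_0}$ translate into
\[
	\phi' \in \WLSC{\alpha-1}{c'}{x_0} \cap \WUSC{\beta-1}{C'}{x_0}
\]
for suitable $c' \in (0,1]$ and $C' \ge 1$. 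Concretely, for $\lambda \ge 1$ and $x > x_0$ the left inequality of \eqref{eq:132}, the upper scaling of $\phi$, and then the right inequality of \eqref{eq:132} give $\phi'(\lambda x) \le \phi(\lambda x)/(\lambda x) \le C\lambda^{\beta}\phi(x)/(\lambda x) \le C C_1 \lambda^{\beta-1}\phi'(x)$; the lower scaling of $\phi'$ is in any case already the content of Proposition~\ref{prop:WLSC}.

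Finally I would apply Proposition~\ref{prop:11} to the completely monotone function $f = \phi'$: the displayed scaling of $\phi'$, together with the standing assumption $0 < \alpha \le \beta < 1$, is precisely the hypothesis of that proposition, and its conclusion reads $-\phi'' = -f' \in \WLSC{\alpha-2}{c''}{x_0} \cap \WUSC{\beta-2}{C''}{x_0}$, which is the assertion. I do not expect a genuine obstacle here: every step is either a direct citation or the elementary observation that the comparison $\phi'(x)\approx\phi(x)/x$ shifts all scaling exponents down by one, and the case $x_0 > 0$ requires no separate treatment since Proposition~\ref{prop:WLSC} and Proposition~\ref{prop:11} already incorporate it.
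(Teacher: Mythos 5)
Your proposal is correct and follows essentially the same route as the paper, which proves the corollary precisely by combining Proposition \ref{prop:WLSC} (via \eqref{eq:132}, giving $\phi'(x)\approx\phi(x)/x$ and hence the shifted scalings of $\phi'$) with Proposition \ref{prop:11} applied to the completely monotone function $f=\phi'$. The only difference is that you spell out the transfer of the upper scaling from $\phi$ to $\phi'$, which the paper leaves as immediate.
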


\begin{lemma}\label{lem:2}
	Let $\phi$ be a Bernstein function. Suppose that $-\phi'' \in \WLSC{\alpha-2}{c}{x_0}$ for some $c \in (0, 1]$, $x_0 \geq 0$, and $\alpha >0$.
	There is a constant $C > 0$ such that for all $x > x_0$,
	\[
		C (-\phi''(x)) \leq \int_{(0,1/x)}s^2\mu(ds).
	\]
	Moreover, the constant $C$ depends only on $\alpha$ and $c$.
\end{lemma}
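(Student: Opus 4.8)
The plan is to run everything through the L\'evy--Khintchine representation \eqref{eq:1} of $\phi$. Differentiating \eqref{eq:1} twice under the integral sign (which is legitimate for $\lambda > 0$ thanks to $\int_{(0,\infty)} \min\{1,s\}\, \mu(ds) < \infty$ together with the exponential decay of the weight at infinity) gives the standard identity
\[
	-\phi''(\lambda) = \int_{(0,\infty)} s^2 e^{-\lambda s}\,\mu(ds), \qquad \lambda > 0,
\]
and we may assume $\phi'' \not\equiv 0$, since the claim is trivial otherwise. Morally, the weak lower scaling of $-\phi''$ says that $-\phi''$ decays no faster than $x^{\alpha-2}$, which forbids $\mu$ from being concentrated on large values of $s$; hence a fixed proportion of the integral defining $-\phi''(x)$ must come from the region $\{s < 1/x\}$, where the weight $e^{-xs}$ is comparable to $1$ and can simply be dropped. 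The point to watch is that splitting $-\phi''(x)$ itself at $s = 1/x$ is useless, because the tail $\int_{[1/x,\infty)} s^2 e^{-xs}\,\mu(ds)$ is only bounded by $-\phi''(x)$ again, which makes the resulting estimate vacuous; the remedy is to evaluate $-\phi''$ at a dilated point $Mx$, trade the dilation back for a polynomial factor $M^{2-\alpha}$ via the scaling, and pick up an exponentially small factor $e^{-(M-1)}$ on the tail.

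In detail, fix $M \ge 1$ (to be chosen) and $x > x_0$, and write
\[
	-\phi''(Mx) = \int_{(0,1/x)} s^2 e^{-Mxs}\,\mu(ds) + \int_{[1/x,\infty)} s^2 e^{-Mxs}\,\mu(ds).
\]
The first integral is at most $\int_{(0,1/x)} s^2\,\mu(ds)$. In the second we have $xs \ge 1$, so $e^{-Mxs} = e^{-(M-1)xs} e^{-xs} \le e^{-(M-1)} e^{-xs}$, whence that integral is at most $e^{-(M-1)} \int_{(0,\infty)} s^2 e^{-xs}\,\mu(ds) = e^{-(M-1)}(-\phi''(x))$. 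Combining with the weak lower scaling hypothesis on $-\phi''$ in the form $-\phi''(x) \le c^{-1} M^{2-\alpha}(-\phi''(Mx))$ (valid for $M \ge 1$ and $x > x_0$) yields
\[
	-\phi''(x) \le \frac{M^{2-\alpha}}{c} \int_{(0,1/x)} s^2\,\mu(ds) + \frac{M^{2-\alpha}}{c}\, e^{-(M-1)}(-\phi''(x)).
\]

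To conclude, observe that $-\phi''(x)$ is finite for every $x > 0$ and that $M^{2-\alpha} e^{-(M-1)} \to 0$ as $M \to \infty$; therefore we may fix $M$, depending only on $\alpha$ and $c$, so large that $c^{-1} M^{2-\alpha} e^{-(M-1)} \le \tfrac12$. Absorbing the last term on the left then gives $-\phi''(x) \le 2 c^{-1} M^{2-\alpha} \int_{(0,1/x)} s^2\,\mu(ds)$ for all $x > x_0$, which is the assertion with $C = \tfrac{c}{2} M^{\alpha-2}$, a constant depending only on $\alpha$ and $c$. The only step requiring genuine care is this last one: choosing $M$ so that the error term is truly absorbable while keeping the dependence of the final constant on $(\alpha,c)$ transparent. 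Everything else is a one-line estimate.
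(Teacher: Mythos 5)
Your argument is correct, and the constant you produce, $C=\tfrac{c}{2}M^{\alpha-2}$ with $M=M(\alpha,c)$ chosen so that $c^{-1}M^{2-\alpha}e^{-(M-1)}\leq\tfrac12$, indeed depends only on $\alpha$ and $c$, as required. Your route is, however, not the one taken in the paper: there the starting point is the truncated second moment $f(t)=\int_{(0,t)}s^2\,\mu({\rm d}s)$ and the Laplace-transform identity $\calL f(x)=x^{-1}(-\phi''(x))$, from which one first extracts the pointwise bound $-\phi''(x)\geq e^{-s}f(s/x)$ and then splits the Laplace integral at a large parameter $u$, bounding the tail by $2e^{-u/2}(-\phi''(x/2))$ and absorbing it after two applications of the scaling (with factors $2$ and $u/2$). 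You instead split the L\'evy integral for $-\phi''(Mx)$ directly at $s=1/x$, gain the factor $e^{-(M-1)}$ on the tail from $xs\geq 1$, and use a single application of the scaling $-\phi''(x)\leq c^{-1}M^{2-\alpha}(-\phi''(Mx))$ before absorbing. Both proofs rest on the same principle — dilating the argument makes the contribution from $\{s\geq 1/x\}$ exponentially small, while scaling converts the dilation into a harmless polynomial factor — but your version avoids the Fubini/Laplace-transform detour and the double scaling step, so it is somewhat more elementary and shorter; the paper's formulation through $f$ has the side benefit of making the comparison with the concentration function $K$ (as in Remark \ref{rem:1}) visually immediate, but nothing in the paper's later use of the lemma requires that detour.
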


\begin{proof}
	Let  $f\colon (0, \infty) \rightarrow \RR$ be a function defined as
	\begin{align*}
		f(t)=\int_{(0, t)} s^2 \: \mu ({\rm d} s).
	\end{align*}
	We observe that, by the Fubini--Tonelli theorem, for $x > 0$ we have
	\begin{align*}
		\calL f(x) 
		&= \int_0^\infty e^{-x t} \int_{(0, t)} s^2 \: \mu({\rm d} s) \rmd t \\
		&= \int_{(0, \infty)} s^2 \int_s^\infty e^{-x t} \rmd t \: \mu({\rm d} s)
		= x^{-1} (-\phi''(x)).
	\end{align*}
	Since $f$ is nondecreasing, for any $s > 0$,
	\begin{align*}
		-\phi''(x)=x \calL f(x) 
		&\geq 
		\int_s^{\infty} e^{-t} f\big( t/x \big) \rmd t\\
		&\geq e^{-s} f\big(s /x \big).
	\end{align*}
	Hence, for any $u > 2$,
	\begin{align*}
		-\phi''(x)
		&=\int_0^u e^{-s} f\big( s/x \big) \rmd s 
		+ \int_u^{\infty} e^{-s} f\big(s/x \big) \rmd s \\
		&\leq f\big(u / x \big)
		+ \int_u^{\infty} e^{-s/2} (-\phi''(x/2) ) \rmd s.
	\end{align*}
	Therefore, setting $x = \lambda u > 2 x_0$, by the weak lower scaling property of $-\phi''$,
	\begin{align*}
		f(1/\lambda)
		&\geq -\phi''(u \lambda) - 2 e^{-u/2} (-\phi''(u \lambda/2)) \\
		&\geq \big(2^{\alpha-2}c - 2 e^{-u/2}\big) (-\phi''(u \lambda/2)).
	\end{align*}
	At this stage, we select $u > 2$ such that
	\[
		2^{\alpha-2}c - 2 e^{-u/2} \geq 2^{-2} c.
	\]
	Then again, by the weak lower scaling property of $-\phi''$, for $\lambda>x_0$,
	\begin{align*}
		f(1/\lambda)
        &\geq c 2^{-2} (-\phi''(u \lambda/2))\geq c^2 2^{-\alpha} u^{\alpha-2} (-\phi''(\lambda)),
	\end{align*}
	which ends the proof.
\end{proof}
\begin{lemma}\label{lem:4}
	Let $\phi$ be a Bernstein function. Suppose that $-\phi'' \in \WLSC{\alpha-2}{c}{x_0}$ for some $c \in (0, 1]$, $x_0 \geq 0$, and $\alpha >0$.
	Then there exists a complete Bernstein function $f$ such that $f\approx \phi$ for $x>0$, and $-f''\approx -\phi''$ for
	$x>x_0$.
\end{lemma}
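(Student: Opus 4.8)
The plan is to build $f$ directly from a smoothed version of the Lévy triplet of $\phi$, exploiting the fact that complete Bernstein functions are exactly those whose representing measure $\mu$ has a completely monotone density. First I would recall the representation $\phi(\lambda) = a + b\lambda + \int_{(0,\infty)}(1-e^{-\lambda s})\,\mu({\rm d}s)$ from \eqref{eq:1}. The natural candidate is to replace $\mu$ by a measure with a completely monotone density that is comparable to $\mu$ in the relevant averaged sense. A clean way to do this is to set, for $s>0$,
\[
	g(s) = \frac{1}{s}\int_{(0,s)} u \: \mu({\rm d}u) \qquad\text{or, better suited here,}\qquad
	g(s) = \int_{(1/s,\infty)} e^{-r s}\, r^2 \: \mu({\rm d}r)\cdot(\text{normalization}),
\]
but the most robust choice is to average $\mu$ against an exponential kernel: put $k(s) = \int_0^\infty e^{-s/t}\, \sigma({\rm d}t)$ where $\sigma$ is chosen so that $k$ is completely monotone (every such $k$ is, being a Laplace transform of a measure in the variable $1/s$ is not quite right — rather one writes $k(s)=\int_0^\infty e^{-sr}\rho({\rm d}r)$ directly). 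Concretely I would define
\[
	k(s) = \int_{(0,\infty)} e^{-s r}\, r \: \tilde\mu({\rm d}r),
\]
choosing $\tilde\mu$ so that $\int_{(0,s)} u^2 \tilde\mu({\rm d}u)$ matches $\int_{(0,s)} u^2 \mu({\rm d}u)$ up to constants; then $k$ is manifestly completely monotone, and one sets $f(\lambda) = a + b\lambda + \int_{(0,\infty)}(1-e^{-\lambda s})\, k(s)\,{\rm d}s$, which is a complete Bernstein function.

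The key computational identities are the two already used in the excerpt: $-\phi''(x) = x \calL\big(t\mapsto \int_{(0,t)}s^2\mu({\rm d}s)\big)(x)$, and the analogous formula for $f$, namely $-f''(x) = \int_{(0,\infty)} s^2 e^{-xs} k(s)\,{\rm d}s$ combined with Fubini. So the whole problem reduces to controlling the two quantities
\[
	F_\mu(x) := \int_{(0,1/x)} s^2\, \mu({\rm d}s)
	\qquad\text{and}\qquad
	\int_{(0,\infty)} s^2\, e^{-xs}\, k(s)\,{\rm d}s,
\]
and showing each is comparable to $-\phi''(x)$ for $x > x_0$. For $\phi$ itself this is exactly Lemma \ref{lem:2} in one direction, and the reverse inequality $F_\mu(x) \lesssim -\phi''(x)$ follows because $-\phi''(x) = x\int_0^\infty e^{-xt}\big(\int_{(0,t)}s^2\mu({\rm d}s)\big){\rm d}t \geq x F_\mu(1/x)\int_{1/x}^\infty e^{-xt}{\rm d}t \gtrsim F_\mu(1/x)$ by monotonicity of the inner integral — so in fact $-\phi''(x) \approx \int_{(0,1/x)}s^2\mu({\rm d}s)$ for $x>x_0$ with no extra work. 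Hence one only needs to pick $k$ with $\int_{(0,1/x)}s^2 k(s)\,{\rm d}s \approx \int_{(0,1/x)}s^2\mu({\rm d}s)$; the simplest device is the dyadic/geometric smoothing $k(s) = \int_0^\infty e^{-s/t}\, t^{-2}\, \big(\int_{(0,t)} u\,\mu({\rm d}u)\big)\,{\rm d}t / (\text{const})$, or even more cheaply to observe that the completely monotone function $s\mapsto \int_0^\infty e^{-s r}\, \nu_0({\rm d}r)$ with $\nu_0$ the image of $u^2\mu({\rm d}u)$ under $u\mapsto 1/u$ already does the job up to universal constants, because $\int_{(0,s)} s'^2 k(s')\,{\rm d}s'$ and $\int_{(0,s)} u^2\mu({\rm d}u)$ are interchanged by an averaging whose kernel has bounded moments.

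Once $-f''\approx -\phi''$ on $(x_0,\infty)$ is established, the comparison $f\approx\phi$ on all of $(0,\infty)$ follows from integration: by Remark \ref{rem:2} (applicable since $-\phi''\in\WLSC{\alpha-2}{c}{x_0}$ with $\alpha>0$ — note if $\alpha\geq 1$ one uses the weaker scaling $\WLSC{\alpha'-2}{c}{x_0}$ for any $\alpha'\in(0,1)$) we get $\phi\in\WLSC{\alpha_1}{c_1}{x_0}$ for some $\alpha_1>0$, hence by Proposition \ref{prop:WLSC}, $\phi(x)\approx x\phi'(x)$ for $x>x_0$, and $\phi'(x)-b = \int_x^\infty(-\phi''(s))\,{\rm d}s$; the same chain of reasoning applies to $f$ provided $f$ is normalized to share the same drift $b$ and the same value $\phi(\infty)$ behaviour. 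Comparing the tail integrals of $-f''$ and $-\phi''$ (equal up to constants on $(x_0,\infty)$) gives $\phi'\approx f'$ and then $\phi\approx f$ on $(x_0,\infty)$; on the compact-at-origin part $(0,x_0]$ both functions are continuous and positive (after arranging $f(0)=\phi(0)$ and matching the linear term), so boundedness of their ratio there completes the comparison on $(0,\infty)$.

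The main obstacle I anticipate is the bookkeeping at the origin and at infinity: one must choose the constants $a$ (which should be $\phi(0)$, possibly $0$), $b$ (the drift, which should be copied verbatim since it contributes nothing to $\phi''$ but must match for $f\approx\phi$ near $\infty$ when $b>0$), and the total mass of the smoothed measure so that $f(\infty)$ and $\phi(\infty)$ are either both finite and comparable or both infinite; and one must handle the case $x_0>0$ where scaling only starts beyond $x_0$, so the behavior of $\mu$ on $(1/x_0,\infty)$ (equivalently small $x$) is unconstrained and the comparison there is purely a continuity/compactness argument rather than a scaling argument. Getting the smoothing kernel $k$ to be genuinely completely monotone while keeping $\int_{(0,1/x)}s^2 k(s)\,{\rm d}s$ pinned to $\int_{(0,1/x)}s^2\mu({\rm d}s)$ up to universal constants is the one place where a small amount of care (rather than routine calculation) is required.
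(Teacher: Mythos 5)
Your overall strategy---replace the representing measure $\mu$ in \eqref{eq:1} by an exponentially smoothed completely monotone density and compare truncated second moments---is in spirit exactly what the paper does: its choice
\[
	f(\lambda)=a+b\lambda+\int_0^\infty \frac{\lambda u}{\lambda u+1}\,\mu({\rm d}u)
\]
is precisely the smoothing $k(s)=\int_{(0,\infty)}u^{-1}e^{-s/u}\,\mu({\rm d}u)$, because $\int_0^\infty(1-e^{-\lambda s})\,u^{-1}e^{-s/u}\,{\rm d}s=\tfrac{\lambda u}{\lambda u+1}$; complete Bernsteinness is quoted from \cite[Theorem 6.2 (vi)]{MR2978140}, the global comparability $f\approx\phi$ on $(0,\infty)$ is immediate from the pointwise bound $\tfrac{y}{1+y}\approx 1-e^{-y}$, and $-f''\approx-\phi''$ on $(x_0,\infty)$ then comes from Lemma \ref{lem:2}. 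The genuine gap in your proposal is that this key object is never correctly constructed: all three of your candidate formulas for $k$ carry unspecified ``normalizations'', and the one concrete choice you commit to ($\nu_0$ the image of $u^2\mu({\rm d}u)$ under $u\mapsto 1/u$) is mis-normalized. Indeed, with $k(s)=\int e^{-s/u}u^2\,\mu({\rm d}u)$ one gets
\[
	\int_0^\infty (1-e^{-\lambda s})\,k(s)\,{\rm d}s=\int_{(0,\infty)}\frac{\lambda u^4}{\lambda u+1}\,\mu({\rm d}u),
\]
which behaves like $\int u^3\,\mu({\rm d}u)$ for large $\lambda$; since only $\int\min\{1,u\}\,\mu({\rm d}u)<\infty$ is available, this need not even be finite, and it is in any case not comparable to $\phi$. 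So the step you yourself flag as ``requiring care''---producing a completely monotone $k$ with $k(s)\,{\rm d}s$ a L\'{e}vy measure and $\int_{(0,r)}s^2k(s)\,{\rm d}s\approx\int_{(0,r)}s^2\mu({\rm d}s)$---is exactly the content of the lemma, and it is left unproved.

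Two further points. First, even with a correct $k$, your reduction is incomplete: you still need $\int_0^\infty s^2e^{-xs}k(s)\,{\rm d}s\lesssim\int_{(0,1/x)}s^2k(s)\,{\rm d}s$ for $x>x_0$, and you cannot simply cite Lemma \ref{lem:2} for $f$ there, since its hypothesis (lower scaling of $-f''$) is part of what is being proved; one has to control the tail $s>1/x$ by hand, e.g. dyadically, using the already established comparability with $-\phi''$ and the scaling of $-\phi''$ (with extra care when $x_0>0$). Second, your argument that $-f''\approx-\phi''$ on $(x_0,\infty)$ upgrades to $f\approx\phi$ on all of $(0,\infty)$ fails near the origin: $(0,x_0]$ is not compact, and in the setting of the paper $a=\phi(0)=0$, so both functions vanish at $0$ and ``continuity and positivity'' give no bound on the ratio as $x\to0^+$. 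The clean repair is the one the paper uses: with the correct smoothing the integrands are pointwise comparable, so $f\approx\phi$ holds on all of $(0,\infty)$ with no limiting or compactness argument at all.
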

\begin{proof}
	Let us define
	\[
		f(\lambda)=a+b\lambda+\int^\infty_0\frac{\lambda u}{\lambda u +1} \mu({\rm d} u).
	\]
	By \cite[Theorem 6.2 (vi)]{MR2978140} the function $f$ is a complete Bernstein function. Since for $y > 0$,
	\[
		\frac{y}{y +1} \approx \big(1-e^{-y}\big),
	\]
	we get $f(\lambda) \approx \phi(\lambda)$. Moreover,
	\[
		f''(\lambda)=-2\int^\infty_0\frac{u^2}{\lambda u+1}\mu({\rm d}u).
	\]
	Hence, by Lemma \ref{lem:2} we obtain $-f''(\lambda)\approx -\phi''(\lambda)$ for $\lambda>x_0$.
\end{proof}

\section{Asymptotic behavior of densities}
\label{sec:1}
Let $\bfT = (T_t \colon t \geq 0)$ be a subordinator with the L\'{e}vy--Khintchine exponent $\psi$ and the Laplace
exponent $\phi$. Since $\phi$ is a Bernstein function, it admits the integral representation \eqref{eq:1}. As it may be
easily checked (see e.g. \cite[Proposition 3.6]{MR2978140}), we have $\mu = \nu$, $a = 0$, and $\psi(\xi) = \phi(-i\xi)$.
In particular, $\phi(0)=0$. 

In this section we study the asymptotic behavior of the probability density of $T_t$. In the whole section we assume that
$\phi''\not \equiv 0$, otherwise $T_t=b t$ is deterministic.  The main result is Theorem \ref{thm:3}. Let us start by
showing an estimate on the real part of the complex extension $\phi$.
\begin{lemma}
	\label{lem:1}  
	Suppose that $-\phi'' \in \WLSC{\alpha-2}{c}{x_0}$ for some $c \in (0, 1]$, $x_0 \geq 0$, and $\alpha >0$.
	Then there exists $C > 0$ such that for all $w > x_0$ and $\lambda \in \RR$,
	\[
		\Re \big( \phi(w+i\lambda)-\phi(w) \big) 
		\geq C \lambda^2 \big(-\phi''(|\lambda| \vee w)\big).
	\]
\end{lemma}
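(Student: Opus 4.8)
The plan is to pass to the Lévy--Khintchine representation of $\phi$ and reduce the claim to a lower bound on a truncated second moment of $\nu$, which is precisely the content of Lemma~\ref{lem:2}.

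Since $\mu = \nu$ and $a = 0$, formula \eqref{eq:1} reads $\phi(\lambda) = b\lambda + \int_{(0,\infty)}(1-e^{-\lambda s})\,\nu({\rm d}s)$, and because $|1 - e^{-zs}| \le \min\{2, |z| s\}$ together with $\int_{(0,\infty)}\min\{1,s\}\,\nu({\rm d}s) < \infty$, the right-hand side furnishes the holomorphic extension of $\phi$ to $\{\Re z > 0\}$ that is implicit in the statement. Taking $z = w + i\lambda$ with $w > x_0$,
\[
	\phi(w+i\lambda) - \phi(w) = ib\lambda + \int_{(0,\infty)} \big(e^{-ws} - e^{-(w+i\lambda)s}\big)\,\nu({\rm d}s),
\]
so that, after taking real parts,
\[
	\Re\big(\phi(w+i\lambda)-\phi(w)\big) = \int_{(0,\infty)} e^{-ws}\big(1-\cos(\lambda s)\big)\,\nu({\rm d}s).
\]
We may assume $\lambda \ne 0$, the case $\lambda = 0$ being trivial.

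Next I would restrict the integration to $s \in (0, 1/r)$ with $r = |\lambda|\vee w$. On this range both $ws \le 1$ and $|\lambda| s \le 1$, hence $e^{-ws} \ge e^{-1}$, while the elementary bound $1 - \cos u \ge \tfrac{2}{\pi^2} u^2$ for $|u|\le \pi$ gives $1 - \cos(\lambda s) \ge \tfrac{2}{\pi^2}\lambda^2 s^2$. Consequently
\[
	\Re\big(\phi(w+i\lambda)-\phi(w)\big) \ge \frac{2}{e\pi^2}\,\lambda^2 \int_{(0,1/r)} s^2\,\nu({\rm d}s).
\]
Since $r \ge w > x_0$, Lemma~\ref{lem:2} applied with $x = r$ bounds the last integral below by $C(-\phi''(r))$ with $C$ depending only on $\alpha$ and $c$, and combining the two displays proves the lemma with constant $2C/(e\pi^2)$.

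The argument has no genuine difficulty; the only points that need attention are the (standard) holomorphic extension of $\phi$ and the choice of truncation level $1/(|\lambda|\vee w)$, which is exactly what allows a single estimate to cover both the regime $|\lambda| \le w$ and the regime $|\lambda| > w$ without a case distinction. All of the substantive work is done by Lemma~\ref{lem:2}, which converts the truncated second moment of $\nu$ into $-\phi''$.
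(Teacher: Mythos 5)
Your proof is correct and follows essentially the same route as the paper: the integral representation of $\Re\big(\phi(w+i\lambda)-\phi(w)\big)$, the elementary bound $1-\cos u \gtrsim u^2$ on the truncated range, and Lemma~\ref{lem:2} to convert the truncated second moment of $\nu$ into $-\phi''$. The only difference is that you truncate at $1/(|\lambda|\vee w)$ in one stroke, whereas the paper truncates at $1/\lambda$ and then treats the cases $\lambda \ge w$ and $\lambda < w$ separately; this is a harmless streamlining of the same argument.
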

\begin{proof}
	By the integral representation \eqref{eq:1}, for $\lambda \in \RR$ we have
	\[
		\Re \big( \phi(w+i\lambda)-\phi(w)\big)
        = \int_{(0, \infty)} \big(1-\cos (\lambda s)\big) e^{-w s} \: \nu ({\rm d} s).
	\]
	In particular,
	\[
		\Re \big( \phi(w+i\lambda)-\phi(w)\big) =
		\Re \big( \phi(w-i\lambda)-\phi(w)\big).
	\]
	Thus it is sufficient to consider $\lambda > 0$. We can estimate	
	\begin{align}
		\nonumber
		\Re \big( \phi(w+i\lambda)-\phi(w)\big)
		&\geq
		\int_{(0, 1/\lambda)} \big(1-\cos (\lambda s)\big) e^{-w s} \: \nu ({\rm d} s) \\
		\label{eq:23}
		&\gtrsim \lambda^2 \int_{(0, 1/\lambda)} s^2 e^{-w s} \: \nu({\rm d} s).
	\end{align}
		Due to Lemma \ref{lem:2} we obtain, for $\lambda \geq w$, 
	\begin{align*}
		\Re \big( \phi(w+i\lambda)-\phi(w)\big)
		&\gtrsim
		\lambda^2 \int_{(0, 1/\lambda)} s^2  \: \nu({\rm d} s)\gtrsim
		\lambda^2 (-\phi''(\lambda)).
	\end{align*}
	If $w > \lambda > 0$ then, by \eqref{eq:23}, we have
	\begin{align*}
		\Re \big( \phi(w+i\lambda)-\phi(w)\big)
		&\gtrsim \lambda^2 \int_{(0, 1/w)} s^2 e^{-w s} \mu({\rm d} s) \\
		&\geq e^{-1} \lambda^2 \int_{(0,1/w)} s^2 \mu({\rm d} s)
	\end{align*}
	which, by Lemma \ref{lem:2}, completes the proof.
\end{proof}

\begin{remark}
	\label{rem:1}
	Suppose that $-\phi'' \in \WLSC{\alpha-2}{c}{x_0}$ for some $c \in (0, 1]$, $x_0 \geq 0$, and $\alpha>0$.
	Since
	\[
		K(1/x) \leq ex^2 (-\phi''(x)),
	\]
	by Lemma \ref{lem:2}, we obtain
	\[
		C x^2 (-\phi''(x)) \leq K(1/x) \leq ex^2 (-\phi''(x))
	\]
	for all $x > x_0$.
\end{remark}

\begin{theorem}
	\label{thm:3}
	Let $\bfT$ be a subordinator with the Laplace exponent $\phi$. Suppose that $-\phi'' \in \WLSC{\alpha-2}{c}{x_0}$
	for some $c \in (0, 1]$, $x_0 \geq 0$, and $\alpha>0$. Then the probability distribution of $T_t$ is absolutely
	continuous for all $t>0$. If we denote its density by $p(t, \: \cdot \:)$, then for each $\epsilon > 0$ there is
	$M_0 > 0$ such that 
	\[
		\left\lvert 
		p\big(t,t\phi'(w)\big) \sqrt{2\pi t(-\phi''(w))}
		\exp{\Big\{t \big( \phi(w)-w \phi'(w) \big) \Big\}} - 1 \right\rvert 
		\leq \epsilon,
	\]
	provided that $w > x_0$ and $t w^2 (-\phi''(w)) > M_0$.
\end{theorem}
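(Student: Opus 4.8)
The plan is to realize $p(t,\cdot)$ as an inverse Laplace transform along a vertical line $\{\Re\lambda=w\}$ and then to evaluate it at the point $x=t\phi'(w)$, which is exactly where the phase $\lambda\mapsto\lambda x-t\phi(\lambda)$ is stationary at $\lambda=w$; so $w$ plays the role of the saddle, and a Laplace (saddle--point) approximation should produce the claimed Gaussian factor. The only inputs are the contour representation and Lemma~\ref{lem:1}.

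First I would establish existence of the density together with a contour representation. Fix $w>x_0$. Since $T_t\ge0$ and $w>0$, the measure $e^{-wx}\,\PP(T_t\in{\rm d}x)$ is finite, of mass $e^{-t\phi(w)}$, and has characteristic function $y\mapsto e^{-t\phi(w-iy)}$. By Lemma~\ref{lem:1} and the weak lower scaling of $-\phi''$ (which, since $\phi''\not\equiv0$ forces $-\phi''>0$ on $(0,\infty)$, gives $y^2(-\phi''(y))\gtrsim|y|^\alpha$ for large $|y|$) one has $\Re\phi(w+iy)-\phi(w)\gtrsim|y|^\alpha$, so this characteristic function lies in $L^1(\RR)$ for every $t>0$. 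Fourier inversion then shows that $T_t$ has a continuous density and that, for every $w>x_0$,
\[
	p(t,x)=\frac1{2\pi}\int_{-\infty}^{\infty}e^{(w+i\xi)x-t\phi(w+i\xi)}\,{\rm d}\xi,\qquad x\in\RR.
\]

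Next, with $x=t\phi'(w)$ and $\sigma^2:=-\phi''(w)>0$, the target becomes
\[
	p\big(t,t\phi'(w)\big)\,e^{t(\phi(w)-w\phi'(w))}=\frac1{2\pi}\int_{-\infty}^{\infty}e^{-t\sigma^2\xi^2/2}\,e^{-t\Phi(\xi)}\,{\rm d}\xi,
\]
where $\Phi(\xi):=\phi(w+i\xi)-\phi(w)-i\xi\phi'(w)+\tfrac12\xi^2\phi''(w)$ is the third--order Taylor remainder. I would first record two a priori bounds: $|\phi'''(w+i\xi)|\le\phi'''(w)$ for all $\xi\in\RR$ (from $\phi'''(\lambda)=\int_{(0,\infty)}s^3e^{-\lambda s}\,\nu({\rm d}s)$), and $w\phi'''(w)\lesssim-\phi''(w)$ — the latter by applying Proposition~\ref{prop:2} to $f=-\phi''$, which is completely monotone and, being nonincreasing with weak lower scaling, has the doubling property on $(x_0,\infty)$. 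Together these give $|\Phi(\xi)|\lesssim|\xi|^3\sigma^2/w$ on all of $\RR$. Write $M:=tw^2\sigma^2=tw^2(-\phi''(w))$ and split the integral at $|\xi|=\delta:=(w/(t\sigma^2))^{1/3}$, so that $\delta\le w$, $t\sigma^2\delta^2=M^{1/3}$ and $\delta\sqrt{t\sigma^2}=M^{1/6}$ whenever $M\ge1$. On $\{|\xi|\le\delta\}$ one has $t|\Phi(\xi)|\lesssim t\delta^3\sigma^2/w\lesssim1$, hence $|e^{-t\Phi(\xi)}-1|\lesssim t|\xi|^3\sigma^2/w$, and comparing with $\int_{\RR}e^{-t\sigma^2\xi^2/2}(1+t|\xi|^3\sigma^2/w)\,{\rm d}\xi$ together with the Gaussian tail past $\delta$ gives $\sqrt{2\pi/(t\sigma^2)}\,\big(1+O(M^{-1/2})\big)$. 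On $\{|\xi|>\delta\}$ I would bound the integrand by $e^{-t\Re(\phi(w+i\xi)-\phi(w))}$ and invoke Lemma~\ref{lem:1}: for $\delta<|\xi|\le w$ it is at most $e^{-ct\sigma^2\xi^2}$, which integrates to $O\big((t\sigma^2)^{-1/2}e^{-cM^{1/3}}\big)$; for $|\xi|>w$, the weak lower scaling yields $\xi^2(-\phi''(\xi))\gtrsim\sigma^2w^2(\xi/w)^\alpha$, so the integrand is at most $e^{-cM(\xi/w)^\alpha}$, which (once $M$ is bounded below) integrates to $O\big((t\sigma^2)^{-1/2}\sqrt M\,e^{-cM}\big)$. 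Both tails are $o\big((t\sigma^2)^{-1/2}\big)$; collecting the pieces and dividing out yields $p(t,t\phi'(w))\sqrt{2\pi t(-\phi''(w))}\,e^{t(\phi(w)-w\phi'(w))}=1+O(M^{-1/2})$ with an implied constant depending only on $\alpha$ and $c$. Given $\epsilon>0$, taking $M_0$ large enough (so this error is $<\epsilon$ and the auxiliary requirements $M_0\ge1$, $\sqrt M\,e^{-cM}<\epsilon$, etc.\ hold) completes the proof.

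I expect the main obstacle to be the uniformity over the whole region $\{w>x_0,\ tw^2(-\phi''(w))>M_0\}$: the cubic Taylor error near the saddle and the two tail integrals must each be dominated by a negative power of $M=tw^2(-\phi''(w))$ with constants that do not depend on the pair $(t,w)$. This is precisely where Lemma~\ref{lem:1} (a lower bound on $\Re(\phi(w+i\xi)-\phi(w))$ valid in the whole half--plane) and the weak lower scaling of $-\phi''$ (providing $w\phi'''(w)\lesssim-\phi''(w)$ near the saddle and the growth $\xi^2(-\phi''(\xi))\gtrsim\xi^\alpha$ far from it) enter essentially, and it also explains why the threshold in the statement is phrased through $M$ rather than through $t$ alone.
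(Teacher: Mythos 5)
Your proof is correct and follows essentially the same route as the paper: inversion of the Laplace transform along $\Re\lambda=w$, justified by the lower bound of Lemma \ref{lem:1}, followed by a saddle-point expansion at $x=t\phi'(w)$ in which the Taylor remainder is controlled via $|\phi'''(w+i\xi)|\le\phi'''(w)$ and $w\phi'''(w)\lesssim-\phi''(w)$ (Proposition \ref{prop:2}), and the tails by Lemma \ref{lem:1} combined with the weak lower scaling of $-\phi''$. The only harmless overstatement is the claim that the implied constants depend only on $\alpha$ and $c$: when $x_0>0$ the bound $w\phi'''(w)\lesssim-\phi''(w)$ also involves the behaviour of $\phi$ on $[x_0,2x_0]$ (cf.\ Remark \ref{rem:3}), which does not affect the statement being proved.
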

\begin{proof}
	Let $x=t \phi'(w)$ and $M > 0$. We first show that
	\begin{equation}
		\label{eq:10}
		p(t, x) =
		\frac{1}{2\pi} \cdot \frac{e^{-t \Phi(x/t,0)}}{\sqrt{t (-\phi''(w))}}
        \int_\RR
        \exp{\Bigg\{-t \Bigg( \Phi \bigg(\frac{x}{t}, \frac{u}{\sqrt{t (-\phi''(w))}} \bigg)
        - \Phi\bigg(\frac{x}{t}, 0\bigg) \Bigg) \Bigg\}} \rmd u,
	\end{equation}
	provided that $w > x_0$ and $t w^2 (-\phi''(w)) > M$, where for $\lambda \in \RR$ we have set
	\begin{equation}
		\label{eq:7}
		\Phi \big( x/t, \lambda \big) = \phi(w+i\lambda)-\frac{x}{t}(w+i\lambda).
	\end{equation}
	To do so, let us recall that
	\[
		\EE\big(e^{-\lambda T_t} \big) = e^{-t \phi(\lambda)}, \quad \lambda \geq 0.
	\]
	Thus, by the Mellin's inversion formula, if the limit
	\begin{equation}
		\label{eq:12}
		\lim_{L \to \infty}
		\frac{1}{2\pi i } \int_{w-i L}^{w+i L} e^{-t \phi(\lambda)+\lambda x} \rmd \lambda
		\quad\text{exists,}
	\end{equation}
	then the probability distribution of $T_t$ has a density $p(t, \: \cdot \:)$ and
	\[
		p(t, x) = \lim_{L \to \infty} \frac{1}{2\pi i } \int_{w-i L}^{w+i L} e^{-t \phi(\lambda)+\lambda x} \rmd \lambda.
	\]
	Therefore, our task is to justify the statement \eqref{eq:12}. For $L > 0$, we write
	\begin{align*}
		\frac{1}{2 \pi i}
		\int_{w-i L}^{w+i L} e^{-t \phi(\lambda)+\lambda x} \rmd \lambda
		= \frac{1}{2 \pi} \int_{-L}^{L} e^{-t \Phi\left( x/t,\lambda \right)} \rmd \lambda.
	\end{align*}
	By the change of variables
	\[
		\lambda = \frac{u}{\sqrt{t (-\phi''(w))}},
	\]
	we obtain
	\begin{align*}
		\int_{-L}^{L} 
		e^{-t\Phi\left(x/t,\lambda\right)} \rmd \lambda 
		&= 
		e^{-t \Phi\left(x/t,0\right)} 
		\int_{-L}^{L} 
		\exp{\Big\{-t \Big( \Phi \big(x/t, \lambda \big)-\Phi\big(x/t, 0\big) \Big)\Big\}}
		\rmd \lambda \\ 
		&= 
		\frac{e^{-t \Phi(x/t,0)}}{\sqrt{t (-\phi''(w))}}  
		\int_{-L \sqrt{t(-\phi''(w))}}^{L \sqrt{t(-\phi''(w))}} 
		\exp{\Bigg\{-t \Bigg( \Phi \bigg(\frac{x}{t}, \frac{u}{\sqrt{t (-\phi''(w))}} \bigg) 
		- \Phi\bigg(\frac{x}{t}, 0\bigg) \Bigg) \Bigg\}} \rmd u.
	\end{align*}
	Let us note here that $-\phi''$ is nonincreasing and integrable at infinity, thus, we in fact have $\alpha \leq 1$. We claim that there is $C > 0$ not depending on $M$, such that for all $u \in \RR$,
	\begin{equation}
		\label{eq:9}
		t \Re \Bigg( \Phi \bigg(\frac{x}{t}, \frac{u}{\sqrt{t (-\phi''(w))}} \bigg)
        - \Phi\bigg(\frac{x}{t}, 0\bigg) \Bigg)
		\geq
		C \big( u^2 \wedge (\abs{u}^\alpha M^{1-\alpha/2})\big),
	\end{equation}
	provided that $w > x_0$ and $t w^2 (-\phi''(w)) > M$. Indeed, by \eqref{eq:7} and Lemma \ref{lem:1}, 
	for $w > x_0$ we get
	\begin{equation}
		\label{eq:8}
		t \Re \Bigg( \Phi \bigg(\frac{x}{t}, \frac{u}{\sqrt{t (-\phi''(w))}} \bigg)
        - \Phi\bigg(\frac{x}{t}, 0\bigg) \Bigg)
        \gtrsim
		\frac{\abs{u}^2}{\phi''(w)} 
		\phi''\bigg(\frac{\abs{u}}{\sqrt{t (-\phi''(w))}} \vee w\bigg).
	\end{equation}
	We next estimate the right-hand side of \eqref{eq:8}. If $\abs{u} \leq w \sqrt{t (-\phi''(w))}$, then
	\[
		\frac{\abs{u}^2}{\phi''(w)}
        \phi''\bigg(\frac{\abs{u}}{\sqrt{t (-\phi''(w))}} \vee w\bigg)
		=
		\abs{u}^2.
	\]
	Otherwise, since $-\phi'' \in \WLSC{\alpha-2}{c}{x_0}$, we obtain
	\begin{align*}
		\frac{\abs{u}^2}{\phi''(w)}
        \phi''\bigg(\frac{\abs{u}}{\sqrt{t (-\phi''(w))}} \vee w\bigg)
		&\geq
		c \abs{u}^2
		\bigg(\frac{\abs{u}}{\sqrt{t w^2 (-\phi''(w))}}\bigg)^{-2+\alpha} \\
		&=
		c \abs{u}^\alpha \big(t w^2 (-\phi''(w))\big)^{1-\alpha/2} \\
		&\geq
		c M^{1-\alpha/2} \abs{u}^\alpha.
	\end{align*}
	Hence, we deduce \eqref{eq:9}. To finish the proof of \eqref{eq:12}, we invoke the dominated convergence theorem.
	Consequently, by Mellin's inversion formula we obtain \eqref{eq:10}.

	Our next task is to show that for each $\epsilon > 0$ there is $M_0 > 0$ such that 
	\begin{equation}
		\label{eq:14}
		\Bigg|
		\int_\RR  
		\exp{\Bigg\{-t \Bigg( \Phi \bigg(\frac{x}{t}, \frac{u}{\sqrt{t (-\phi''(w))}} \bigg)
        - \Phi\bigg(\frac{x}{t}, 0\bigg) \Bigg) \Bigg\}}\rmd u
		-
		\int_\RR e^{-\frac{1}{2} u^2} \rmd u
		\Bigg|
		\leq \epsilon,
	\end{equation}
	provided that $w > x_0$ and $t w^2 (- \phi''(w)) > M_0$. In view of \eqref{eq:9}, by taking $M_0>1$ sufficiently
	large, we get
	\begin{equation}
		\label{eq:19a}
		\Bigg|
        \int_{\abs{u} \geq M_0^{1/4}}
        \exp{\Bigg\{-t \Bigg( \Phi \bigg(\frac{x}{t}, \frac{u}{\sqrt{t (-\phi''(w))}} \bigg)
        - \Phi\bigg(\frac{x}{t}, 0\bigg) \Bigg) \Bigg\}}
		\rmd u
		\Bigg|
		\leq
		\int_{\abs{u} \geq M_0^{1/4}}
		e^{-C |u|^\alpha} \rmd u \leq \epsilon,
	\end{equation}
	and
	\begin{equation}
		\label{eq:19b}
		\int_{\abs{u} \geq M_0^{1/4}}
		e^{-\frac{1}{2} u^2} \rmd u \leq \epsilon.
	\end{equation}
	Next, we claim that there is $C > 0$ such that 
	\begin{equation}
		\label{eq:17}
		\bigg|
		t 
		\bigg( \Phi \bigg(\frac{x}{t}, \frac{u}{\sqrt{t (-\phi''(w))}} \bigg)
        - \Phi\bigg(\frac{x}{t}, 0\bigg)
		\bigg)
		-\frac{1}{2} \abs{u}^2
		\bigg|
		\leq
		C \abs{u}^3 M_0^{-\frac{1}{2}}.
	\end{equation}
	Indeed, since
	\[
		\partial_\lambda \Phi\(\frac{x}{t}, 0\) = 0,
	\]
	by the Taylor's formula, we get
	\begin{align}
		\nonumber
		\bigg|
		t 
		\bigg( \Phi \bigg(\frac{x}{t}, \frac{u}{\sqrt{t (-\phi''(w))}} \bigg)
        - \Phi\bigg(\frac{x}{t}, 0\bigg)
		\bigg)
		-\frac{1}{2} \abs{u}^2
		\bigg|
		&=
		\bigg|
		\frac{1}{2} \partial_\lambda^2\Phi \(\frac{x}{t}, \xi\) \frac{\abs{u}^2}{-\phi''(w)} - \frac{1}{2} \abs{u}^2
		\bigg| \\
		\label{eq:16}
		&=
		\frac{\abs{u}^2}{2\abs{\phi''(w)}}
		\big|\phi''(w + i \xi) - \phi''(w) \big|, 
	\end{align}
	where $\xi$ is some number satisfying
	\begin{equation}
		\label{eq:15}
		\abs{\xi} \leq \frac{\abs{u}}{\sqrt{t(-\phi''(w))}}.
	\end{equation}
	Observe that
	\begin{align*}
		\big| \phi''(w+i\xi)- \phi''(w) \big|
		&\leq 
		\int_{(0, \infty)} s^2 e^{-w s} \big|e^{-i \xi s}-1\big|\: \nu({\rm d} s) \\
		&\leq 2
		\abs{\xi} \int_{(0, \infty)} s^3 e^{-w s} \: \nu({\rm d} s) =2 \abs{\xi} \phi'''(w).
	\end{align*}
	Since $-\phi''$ is a nonincreasing function with the weak lower scaling property, it is doubling.
	Thus, by Proposition \ref{prop:2}, for $w > x_0$,
	\[
		-\phi''(w) \gtrsim w \phi'''(w),
	\]
	which together with \eqref{eq:15} give
	\begin{align}
		\nonumber
		\big|
		\phi''(w + i \xi) - \phi''(w)
		\big|
		&\leq
		C \frac{\abs{u}}{\sqrt{t (-\phi''(w))}} \cdot \frac{-\phi''(w)}{w} \\
		\label{eq:18}
		&\leq
		C M_0^{-\frac{1}{2}} \abs{u} (-\phi''(w)),
	\end{align}
	whenever $t w^2 (-\phi''(w)) > M_0$. Now, \eqref{eq:17} easily follows by \eqref{eq:18} and \eqref{eq:16}.

	Finally, since for any $z \in \CC$, 
	\[
		\big|e^z - 1\big| \leq \abs{z} e^\abs{z},
	\]
	by \eqref{eq:17}, we obtain
	\begin{align*}
		&
		\Bigg|
        \int_{\abs{u} \leq M_0^{1/4}}
        \exp{\Bigg\{-t \Bigg( \Phi \bigg(\frac{x}{t}, \frac{u}{\sqrt{t (-\phi''(w))}} \bigg)
        - \Phi\bigg(\frac{x}{t}, 0\bigg) \Bigg) \Bigg\}} 
		-
		e^{-\frac{1}{2} \abs{u}^2} \rmd u
		\Bigg| \\
		&\qquad\qquad\leq
		C M_0^{-\frac{1}{2}} \int_{\abs{u} \leq M_0^{1/4}}
		\exp\bigg\{-\tfrac{1}{2} \abs{u}^2 + C M_0^{-\frac{1}{2}} \abs{u}^3\bigg\} \abs{u}^3 \rmd u 
		\leq \epsilon,
	\end{align*}
	provided that $M_0$ is sufficiently large, which, together with \eqref{eq:19a} and \eqref{eq:19b}, completes the proof
	of \eqref{eq:14} and the theorem follows.
\end{proof}

\begin{remark}
	\label{rem:3}
	If $x_0 = 0$ then the constant $M_0$ in Theorem \ref{thm:3} depends only on $\alpha$ and $c$. If $x_0 > 0$
	it also depends on
	\[
		\sup_{x \in [x_0, 2 x_0]} \frac{x \phi'''(x)}{-\phi''(x)}.
	\]
\end{remark}

By Theorem \ref{thm:3}, we immediately get the following corollaries.
\begin{corollary}
	\label{cor:1}
	Suppose that $-\phi'' \in \WLSC{\alpha-2}{c}{x_0}$ for some $c \in (0, 1]$, $x_0 \geq 0$, and $\alpha >0$.
	Then there is $M_0 > 0$ such that
	\[
		p(t,x) 
		\approx 
		\frac{1}{\sqrt{t (-\phi'' (w))}} \exp\Big\{-t \big(\phi(w) - w \phi'(w)\big)\Big\},
	\]
	uniformly on the set
	\[
		\Big\{
		(t, x) \in \RR_+ \times \RR \colon tb < x < t \phi'(x_0^+) \text{ and } t w^2 (-\phi''(w)) > M_0
		\Big\}
	\]
	where $w = (\phi')^{-1}(x/t)$. 
\end{corollary}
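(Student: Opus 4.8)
The plan is to deduce Corollary~\ref{cor:1} from Theorem~\ref{thm:3} by the change of variables $x = t\phi'(w)$, so the first step is to check that this substitution is legitimate. Since $\phi'' \not\equiv 0$ and $-\phi'' = -(\phi')'$ is completely monotone, it is strictly positive on $(0,\infty)$; hence $\phi'$ is strictly decreasing there. From the integral representation \eqref{eq:1} (recall $a=0$, $\mu=\nu$) one has $\phi'(\lambda) = b + \int_{(0,\infty)} s e^{-\lambda s}\,\nu({\rm d}s)$, which is $>b$ for every $\lambda$ (as $\nu \not\equiv 0$, because $-\phi''\not\equiv 0$) and decreases to $b$ as $\lambda \to \infty$. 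Consequently $w \mapsto t\phi'(w)$ is a strictly decreasing bijection of $(x_0, \infty)$ onto the open interval $\big(tb,\, t\phi'(x_0^+)\big)$, with inverse $w = (\phi')^{-1}(x/t)$; in particular the constraint $w > x_0$ is equivalent to $tb < x < t\phi'(x_0^+)$ (the upper bound being vacuous when $\phi'(x_0^+) = +\infty$, which can occur only if $x_0 = 0$).

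Next I would invoke Theorem~\ref{thm:3} with, say, $\epsilon = \tfrac12$: there is $M_0 > 0$ (depending only on $\alpha$ and $c$, and, when $x_0 > 0$, also on $\sup_{x \in [x_0, 2x_0]} x\phi'''(x)/(-\phi''(x))$, by Remark~\ref{rem:3}) such that
\[
	\tfrac12 \leq p\big(t, t\phi'(w)\big)\,\sqrt{2\pi t(-\phi''(w))}\,\exp\big\{t(\phi(w) - w\phi'(w))\big\} \leq \tfrac32
\]
whenever $w > x_0$ and $tw^2(-\phi''(w)) > M_0$. Dividing through by $\sqrt{2\pi t(-\phi''(w))}\exp\{t(\phi(w) - w\phi'(w))\}$ gives
\[
	p\big(t, t\phi'(w)\big) \approx \frac{1}{\sqrt{t(-\phi''(w))}}\,\exp\big\{-t(\phi(w) - w\phi'(w))\big\}
\]
with absolute implied constants (namely $\tfrac{1}{2\sqrt{2\pi}}$ and $\tfrac{3}{2\sqrt{2\pi}}$). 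Rewriting the left-hand side in terms of $x$ via $x = t\phi'(w)$, $w = (\phi')^{-1}(x/t)$, and using the equivalence of the two descriptions of the admissible region established in the first paragraph, yields exactly the asserted two-sided bound, uniformly over $\big\{(t,x) : tb < x < t\phi'(x_0^+) \text{ and } tw^2(-\phi''(w)) > M_0\big\}$.

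There is essentially no analytic obstacle: the content of the corollary is a pure reformulation of Theorem~\ref{thm:3}. The only points requiring care are the elementary facts collected above — that $\phi'$ is strictly monotone, so that $(\phi')^{-1}$ is well defined and the change of variables is valid, and that the interval endpoints $x = tb$ and $x = t\phi'(x_0^+)$ are precisely the images under $w \mapsto t\phi'(w)$ of $w = \infty$ and $w = x_0$, matching the range of validity in Theorem~\ref{thm:3}.
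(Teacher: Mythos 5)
Your proof is correct and is essentially the argument the paper intends: the paper states Corollary \ref{cor:1} as an immediate consequence of Theorem \ref{thm:3}, and your route (fix $\epsilon=\tfrac12$, use strict monotonicity of $\phi'$ coming from complete monotonicity of $-\phi''\not\equiv 0$ to identify $w>x_0$ with $tb<x<t\phi'(x_0^+)$, then substitute $x=t\phi'(w)$) is exactly that deduction, with the change-of-variables details made explicit.
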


\begin{corollary}
	\label{cor:5}
	Suppose that $-\phi'' \in \WLSC{\alpha-2}{c}{x_0}$ for some $c \in (0, 1]$, $x_0 \geq 0$, and $\alpha >0$. Assume
	also that $b = 0$. Then for any $x>0$,
	\[
		\lim_{t\to\infty} p(t,x) \sqrt{t (-\phi'' (w))} \exp\Big\{t \big(\phi(w) - w \phi'(w)\big)\Big\} = (2\pi)^{-1/2},
	\]
	where $w = (\phi')^{-1}(x/t)$. 
\end{corollary}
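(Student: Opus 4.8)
The plan is to read off the corollary from Theorem~\ref{thm:3} by fixing $x>0$, letting $t\to\infty$, and checking that the pair $(t,x)$ eventually enters the region where the asymptotic of Theorem~\ref{thm:3} is valid; once that is done, the conclusion follows by letting $\epsilon\to 0$.

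First I would collect elementary facts about $w=w(t)=(\phi')^{-1}(x/t)$. Since $\phi''\not\equiv 0$, we have $-\phi''(\lambda)=\int_{(0,\infty)}s^2e^{-\lambda s}\,\nu({\rm d}s)>0$ for every $\lambda>0$, so $\phi'$ is continuous, strictly decreasing, and (being completely monotone and not identically $0$) strictly positive on $(0,\infty)$; moreover $a=0$ and $b=0$ give $\phi'(\lambda)=\int_{(0,\infty)}s e^{-\lambda s}\,\nu({\rm d}s)\to 0$ as $\lambda\to\infty$, while $\phi'(0^+)\in(0,\infty]$. Hence $(\phi')^{-1}$ is a genuine inverse on $(0,\phi'(0^+))$, and as $t\to\infty$ we have $x/t\to 0^+$, which forces $w(t)\to\infty$. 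In particular $w(t)>x_0$ for all sufficiently large $t$, which is the first of the two conditions in Theorem~\ref{thm:3}.

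Next I would verify the second condition, $t\,w^2(-\phi''(w))\to\infty$. Using $t\phi'(w)=x$, i.e. $t=x/\phi'(w)$, write
\[
	t\,w^2(-\phi''(w))=x\cdot\frac{w^2(-\phi''(w))}{\phi'(w)}.
\]
The function $g(\lambda)=\lambda^2(-\phi''(\lambda))$ satisfies $g\in\WLSC{\alpha}{c}{x_0}$, which is immediate from $-\phi''\in\WLSC{\alpha-2}{c}{x_0}$; hence $\lambda\mapsto\lambda^{-\alpha}g(\lambda)$ is almost increasing on $(x_0,\infty)$, and since $g>0$ everywhere this yields $g(w)\geq c_1 w^{\alpha}$ for $w$ large, so $g(w)\to\infty$. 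On the other hand $\phi'$ is nonincreasing, so $\phi'(w)\leq\phi'(1)<\infty$ once $w\geq 1$. Combining, $t\,w^2(-\phi''(w))\geq x\,g(w)/\phi'(1)\to\infty$, so for every $M_0$ this quantity exceeds $M_0$ once $t$ is large enough.

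Finally, given $\epsilon>0$, pick the $M_0$ furnished by Theorem~\ref{thm:3}; by the previous two paragraphs there is $t_0$ such that for $t>t_0$ both $w(t)>x_0$ and $t\,w^2(-\phi''(w))>M_0$ hold, and since $t\phi'(w)=x$ the theorem gives
\[
	\Big|\,p(t,x)\sqrt{2\pi t(-\phi''(w))}\,\exp\{t(\phi(w)-w\phi'(w))\}-1\,\Big|\leq\epsilon,\qquad t>t_0.
\]
As $\epsilon>0$ was arbitrary, $p(t,x)\sqrt{2\pi t(-\phi''(w))}\exp\{t(\phi(w)-w\phi'(w))\}\to 1$, which is equivalent to the asserted limit $(2\pi)^{-1/2}$. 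There is no genuinely hard step here; the only point requiring a little care is confirming $w^2(-\phi''(w))\to\infty$, which is exactly where the weak lower scaling of $-\phi''$ with $\alpha>0$ together with the standing assumption $\phi''\not\equiv 0$ come in.
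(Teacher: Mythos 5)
Your argument is correct and is exactly the intended one: the paper states this corollary as an immediate consequence of Theorem \ref{thm:3}, and your verification that for fixed $x>0$ and $t\to\infty$ one has $w=(\phi')^{-1}(x/t)\to\infty$ (hence $w>x_0$) and $t w^2(-\phi''(w))=x\,\vphi(w)/\phi'(w)\to\infty$ via the weak lower scaling of $\vphi$ is precisely the routine check being left to the reader. Nothing further is needed.
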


By imposing on $-\phi''$ an additional condition of the weak upper scaling, we can further simplify the description of
the set where the sharp estimates on $p(t, x)$ hold.
\begin{corollary}
	\label{cor:2}
	Suppose that $\phi \in \WLSC{\alpha}{c}{x_0} \cap \WUSC{\beta}{C}{x_0}$ for some 
	$c \in (0, 1]$, $C \geq 1$, $x_0 \geq 0$, and $0 < \alpha \leq \beta < 1$. Assume also that $b = 0$. Then there is
	$\delta > 0$ such that
	\[
		p(t, x) \approx
        \frac{1}{\sqrt{t (-\phi'' (w))}} \exp\Big\{-t \big(\phi(w) - w \phi'(w)\big)\Big\},
	\]
	uniformly on the set
	\begin{equation}
		\label{eq:5}
		\Big\{
		(t, x) \in \RR_+ \times \RR \colon
		0 < x \phi^{-1}(1/t) < \delta, \text{ and } 0 \leq t \phi(x_0) \leq 1
		\Big\}
	\end{equation}
	where $w = (\phi')^{-1}(x/t)$.
\end{corollary}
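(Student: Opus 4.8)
The plan is to deduce this from Corollary \ref{cor:1} by showing that, under the two-sided scaling hypothesis on $\phi$, the region described in \eqref{eq:5} is contained (up to adjusting the constant $\delta$) in the region appearing in Corollary \ref{cor:1}. Concretely, I need to produce $\delta > 0$ so that whenever $0 < x\phi^{-1}(1/t) < \delta$ and $0 \le t\phi(x_0) \le 1$, one has simultaneously $x < t\phi'(x_0^+)$ and $tw^2(-\phi''(w)) > M_0$, where $w = (\phi')^{-1}(x/t)$ and $M_0$ is the constant from Corollary \ref{cor:1}. Once this containment is established, Corollary \ref{cor:1} gives exactly the claimed estimate on that smaller region, and since $b = 0$ the lower bound $tb < x$ is automatic.

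The first step is to translate the conditions on $\phi$ into conditions on $\phi'$ and $-\phi''$. By Corollary \ref{cor:6} (combining Propositions \ref{prop:WLSC} and \ref{prop:11}), $\phi \in \WLSC{\alpha}{c}{x_0} \cap \WUSC{\beta}{C}{x_0}$ implies $-\phi'' \in \WLSC{\alpha-2}{c'}{x_0} \cap \WUSC{\beta-2}{C'}{x_0}$, so the hypothesis of Corollary \ref{cor:1} is met; moreover \eqref{eq:132} and its upper-scaling analogue give $\phi(x) \approx x\phi'(x) \approx x^2(-\phi''(x))$ for $x > x_0$, and the function $\varphi(x) = x^2(-\phi''(x))$ then inherits $\WLSC{\alpha}{\cdot}{x_0} \cap \WUSC{\beta}{\cdot}{x_0}$. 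The second step is the key comparison: I want to show that the condition $x\phi^{-1}(1/t)$ small forces $w$ large relative to $x_0$ and forces $tw^2(-\phi''(w))$ large. Since $\phi(x) \approx x\phi'(x)$, the inverse functions $\phi^{-1}$ and $(\phi')^{-1}(\cdot/x)$ are comparable in the appropriate sense; more precisely, from $x/t = \phi'(w)$ and $\phi'(w) \approx \phi(w)/w$ one gets $\phi(w) \approx w x/t$, and inverting (using that $\phi$ and hence $\phi^{-1}$ has scaling) yields $w \approx$ something like $\phi^{-1}\big((\text{const})/t\big)$ up to the factor coming from $x\phi^{-1}(1/t)$ being small. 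The upshot should be $w \phi^{-1}(1/t) \approx (x\phi^{-1}(1/t))^{-\gamma}$ for an explicit power $\gamma > 0$ determined by $\alpha, \beta$, so that $\delta$ small makes $w\phi^{-1}(1/t)$ — and in particular $w/x_0$, using $t\phi(x_0) \le 1$ i.e. $\phi^{-1}(1/t) \ge x_0$ — large.

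Having located $w$ far out, the third step estimates $tw^2(-\phi''(w)) = t\varphi(w)$: using $\varphi(w) \approx \phi(w)$ and $\phi(w) \approx wx/t$ gives $t\varphi(w) \approx wx$, and then $wx = (w\phi^{-1}(1/t)) \cdot (x/\phi^{-1}(1/t))$; with $x \le \delta/\phi^{-1}(1/t)$ this needs a little care — rather I would write $t\varphi(w) \approx wx \gtrsim w x$ and bound $w$ from below directly, since $x/t = \phi'(w)$ small (forced by $x\phi^{-1}(1/t)<\delta$ together with $t\phi(x_0)\le 1$) together with the scaling of $\phi'$ pushes $w$ above any prescribed threshold, while $t\varphi(w) \gtrsim t \phi(w) = t\phi'(w) w \cdot(\text{const}) = x w \cdot (\text{const})$, and a matching lower bound on $xw$ in terms of a negative power of $x\phi^{-1}(1/t)$ closes the argument. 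Finally, $x < t\phi'(x_0^+)$ follows because $x/t = \phi'(w)$ with $w > x_0$ and $\phi'$ nonincreasing. The main obstacle is the bookkeeping in the second and third steps: carefully tracking how the single smallness parameter $x\phi^{-1}(1/t) < \delta$ controls both $w$ (which must be large) and the product $t\varphi(w)$ (which must exceed the fixed $M_0$), using only the two-sided scaling of $\phi$ and the constraint $t\phi(x_0) \le 1$; none of the individual inequalities is deep, but choosing $\delta$ so that all of them hold simultaneously and uniformly requires keeping the chain of $\approx$'s and the exponents $\alpha, \beta$ straight.
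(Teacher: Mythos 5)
Your proposal is correct and follows essentially the same route as the paper: reduce to Corollary \ref{cor:1} via Propositions \ref{prop:WLSC} and \ref{prop:WUSC} (so that $\phi(u)\approx u\phi'(u)\approx\vphi(u)$ for $u>x_0$), use the lower scaling of $\phi'$ to turn $x/t\lesssim\delta\,\phi'(\phi^{-1}(1/t))$ into $w>D\phi^{-1}(1/t)\geq x_0$ with $D\sim\delta^{-1/(1-\alpha)}$, and then conclude $t\,w^2(-\phi''(w))\gtrsim t\phi(w)\gtrsim(w/\phi^{-1}(1/t))^{\alpha}\gtrsim\delta^{-\alpha/(1-\alpha)}>M_0$ for $\delta$ small. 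The bookkeeping you flag in your third step is resolved exactly as in the paper, by bounding $w/\phi^{-1}(1/t)$ from below and invoking the scaling of $\phi$, so no genuine gap remains.
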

\begin{proof}
	By Proposition \ref{prop:WLSC}, there is $C_1 \geq 1$ such that for all $u > x_0$,
	\[
		\phi(u) \leq C_1 u \phi'(u),
	\]
	thus, for $(t, x)$ belonging to the set \eqref{eq:5},
	\begin{align}
		\nonumber
		\frac{x}{t} 
		< \delta \frac{1}{t \phi^{-1}(1/t)} 
		&= \delta \frac{\phi\big(\phi^{-1}(1/t) \big)}{\phi^{-1}(1/t)} \\
		\label{eq:75}
		&\leq C_1 \delta \phi'\big(\phi^{-1}(1/t)\big).
	\end{align}
	By Proposition \ref{prop:WLSC}, $\phi' \in \WLSC{-1+\alpha}{c}{x_0}$, hence for all $D \geq 1$,
	\[
		\phi'\big(D \phi^{-1}(1/t)\big)
		\geq 
		c D^{-1+\alpha} \phi'\big(\phi^{-1}(1/t)\big).
	\]
	By taking $\delta$ sufficiently small, we get
	\[
		D = \bigg(\frac{c}{C_1 \delta}\bigg)^{\frac{1}{1-\alpha}} \geq 1,
	\]
	thus, by \eqref{eq:75}, we obtain
	\[
		\frac{x}{t} <  \phi'\big(D \phi^{-1}(1/t) \big),
	\]
	which implies that
	\begin{equation}
		\label{eq:76}
		w = (\phi')^{-1} (x/t) > D \phi^{-1}(1/t).
	\end{equation}
	In particular, $w > x_0$. On the other hand, by Propositions \ref{prop:WLSC} and \ref{prop:WUSC}, there is 
	$c_1 \in (0,1]$ such that
	\[
		t w^2 (-\phi''(w)) \geq c_1 t \phi(w).
	\]
	By Remark \ref{rem:2}, $\phi \in \WLSC{\alpha}{c_2}{x_0}$ for some $c_2 \in (0,1]$. Therefore,
	\[
		t\phi(w) = \frac{\phi(w)}{\phi \big( \phi^{-1}(1/t) \big)}
		\geq c_2 \Bigg( \frac{w} {\phi^{-1}(1/t)} \Bigg)^{\alpha},
	\]
	which together with \eqref{eq:76}, gives
	\[
		t w^2 (-\phi''(w)) \gtrsim \delta^{-\frac{\alpha}{1-\alpha}} > M_0,
	\]
	for $\delta$ sufficiently small. Hence, by Corollary \ref{cor:1}, we conclude the proof.
\end{proof}

The following proposition provides a sufficient condition on the measure $\nu$ that entails the weak lower scaling 
property of $-\phi''$, and allows us to apply Theorem \ref{thm:3}.
\begin{proposition}
	\label{prop:8}
	Suppose that there are $x_0 \geq 0$, $C \geq 1$ and $\alpha > 0$ such that for all $0 < r < 1/x_0$ and
	$0 < \lambda \leq 1$,
	\begin{equation}
		\label{eq:91}
		\nu\big((r,\infty)\big) \leq C\lambda^{\alpha}\nu\big((\lambda r,\infty)\big).
	\end{equation}
	Then $-\phi'' \in \WLSC{\alpha-2}{c}{x_0}$ for some $c \in (0, 1]$. 
\end{proposition}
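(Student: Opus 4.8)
The plan is to reduce the assertion to a weak lower scaling property of the truncated second moment
\[
I(r) \;:=\; \int_{(0,r)} s^2\, \nu(\rmd s), \qquad r>0,
\]
by showing first that $-\phi''(x)\approx I(1/x)$ for $x>x_0$ and then that $I$ inherits the scaling from \eqref{eq:91}. The starting point is that \eqref{eq:91} says exactly that $r\mapsto r^{\alpha}\nu((r,\infty))$ is almost decreasing on $(0,1/x_0)$, i.e. $\nu((\rho,\infty))\geq C^{-1}(r/\rho)^{\alpha}\nu((r,\infty))$ whenever $0<\rho\leq r<1/x_0$. Applying this with $\lambda=\lambda_0:=(2C)^{-1/\alpha}\leq 1$ gives $\nu((\lambda_0 r,\infty))\geq 2\nu((r,\infty))$, hence $\nu((\lambda_0 r,r])=\nu((\lambda_0 r,\infty))-\nu((r,\infty))\geq\nu((r,\infty))$, and therefore
\[
I(2r)\;\geq\;\int_{(\lambda_0 r,\,r]}s^2\,\nu(\rmd s)\;\geq\;(\lambda_0 r)^2\,\nu((r,\infty))\;=\;\lambda_0^2\,r^2\,\nu((r,\infty)),\qquad 0<r<\tfrac1{x_0}.
\]
This ``enough $s^2$-mass just below scale $r$'' estimate, together with the self-improved form of \eqref{eq:91} above, is all I will use. (I may assume $\nu\not\equiv 0$, otherwise $-\phi''\equiv 0$; then applying \eqref{eq:91} at a point where $\nu((\cdot,\infty))$ is positive forces $\nu((\rho,\infty))\gtrsim\rho^{-\alpha}$ as $\rho\to 0^+$, and since $\int_0^1\nu((s,\infty))\,\rmd s=\int_{(0,\infty)}\min\{1,s\}\,\nu(\rmd s)<\infty$ by \eqref{eq:13}, necessarily $\alpha<1$ — in particular $\alpha<2$, which is needed below.)

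Next I would prove $-\phi''(x)\approx I(1/x)$ for $x>x_0$. Since $\phi''(x)=-\int_{(0,\infty)}s^2e^{-xs}\,\nu(\rmd s)$, restricting the integral to $(0,1/x)$ and using $e^{-xs}\geq e^{-1}$ there gives the lower bound $-\phi''(x)\geq e^{-1}I(1/x)$. For the upper bound split at $s=1/x$: the part over $(0,1/x)$ is at most $I(1/x)$, while on $[1/x,\infty)$ the elementary inequality $e^{-t}\leq t^{-2}$ gives
\[
\int_{[1/x,\infty)}s^2e^{-xs}\,\nu(\rmd s)\;\leq\;x^{-2}\,\nu\big([1/x,\infty)\big)\;\leq\;x^{-2}\,\nu\big((1/(2x),\infty)\big)\;\lesssim\; I(1/x),
\]
the last step being the mass bound above applied at $r=1/(2x)$. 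Hence $-\phi''(x)\approx I(1/x)$ on $(x_0,\infty)$.

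The core step is to show the matching weak lower scaling of $I$: for $0<\lambda\leq 1$ and $0<r<1/x_0$,
\[
I(\lambda r)\;\geq\; c\,\lambda^{2-\alpha}\, I(r),\qquad\text{equivalently}\qquad \int_{[\lambda r,\,r)}s^2\,\nu(\rmd s)\;\lesssim\;\lambda^{\alpha-2}\, I(\lambda r).
\]
Here I would decompose $[\lambda r,r)$ into the (about $\log_2(1/\lambda)$) dyadic annuli $[2^i\lambda r,2^{i+1}\lambda r)$. On the $i$-th annulus $s^2\leq(2^{i+1}\lambda r)^2$ and $\nu(\,\cdot\,)\leq\nu\big((2^{i-1}\lambda r,\infty)\big)$, and — this is the decisive use of \eqref{eq:91}, applied with base point $2^{i-1}\lambda r<1/x_0$ and ratio $2^{-i}$ — the latter is at most $C\,2^{-i\alpha}\,\nu\big((\lambda r/2,\infty)\big)$. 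Summing the geometric series $\sum_i 2^{i(2-\alpha)}\lesssim\lambda^{-(2-\alpha)}$ (this is where $\alpha<2$ enters) yields $\int_{[\lambda r,r)}s^2\,\nu(\rmd s)\lesssim\lambda^{\alpha}\,r^2\,\nu\big((\lambda r/2,\infty)\big)$, and one last application of the mass bound at scale $\lambda r/2$ bounds $r^2\nu\big((\lambda r/2,\infty)\big)\lesssim\lambda^{-2}I(\lambda r)$, completing the estimate.

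Combining everything: for $\lambda\geq 1$ and $x>x_0$, using the scaling of $I$ with parameters $1/\lambda\in(0,1]$ and $1/x<1/x_0$,
\[
-\phi''(\lambda x)\;\approx\; I\!\left(\tfrac1\lambda\cdot\tfrac1x\right)\;\gtrsim\;\lambda^{\alpha-2}\, I(1/x)\;\approx\;\lambda^{\alpha-2}\,\big(-\phi''(x)\big),
\]
which is precisely $-\phi''\in\WLSC{\alpha-2}{c'}{x_0}$ for a suitable $c'\in(0,1]$. The only genuinely delicate point is the dyadic estimate of the core step: bounding each annulus crudely by $\nu\big((\lambda r/2,\infty)\big)$ is lossy and only produces $I(\lambda r)\gtrsim\lambda^{2}I(r)$, i.e. the exponent $-2$ instead of $\alpha-2$, so the scaling rate in \eqref{eq:91} must be exploited on every dyadic scale. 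Everything else is routine, and the argument is uniform in $x_0$ since all estimates take place at scales below $1/x_0$.
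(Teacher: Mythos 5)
Your argument is correct, and it reaches the paper's conclusion by a more self-contained route. The paper also reduces everything to the comparability of $-\phi''(x)$ with a concentration function at scale $1/x$, but it works with $h$ from \eqref{eq:31}: via the Fubini--Tonelli identity $h(r)=2r^{-2}\int_0^r t\,\nu\big((t,\infty)\big)\rmd t$ the hypothesis \eqref{eq:91} transfers to $h$ by a one-line change of variables, the comparison $h\approx K$ on $(0,1/x_0)$ is then quoted from \cite[Lemma 2.3]{GS2019}, and finally $-\phi''(x)\approx x^{-2}h(1/x)$ for $x>x_0$ gives the scaling. You instead work directly with $I(r)=r^2K(r)$ and prove by hand the two ingredients the paper gets from the identity and the citation: your fixed-ratio trick $\nu\big((\lambda_0 r,r]\big)\geq\nu\big((r,\infty)\big)$ yielding $r^2\nu\big((r,\infty)\big)\lesssim I(2r)$ is exactly the content of $h\lesssim K$ (since $h(r)=r^{-2}I(r)+\nu\big([r,\infty)\big)$), and your dyadic-annuli estimate replaces the Fubini transfer of the scaling. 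Your aside that $\nu\not\equiv 0$ together with \eqref{eq:13} forces $\alpha<1$ (so the geometric series with ratio $2^{2-\alpha}$ is dominated by its top term) is a point worth making explicit and is handled correctly; the remaining steps ($-\phi''(x)\approx I(1/x)$ for $x>x_0$, and the passage from the scaling of $I$ below scale $1/x_0$ to $\WLSC{\alpha-2}{c}{x_0}$ for $-\phi''$) match the paper's. The trade-off is that the paper's proof is shorter but leans on an external lemma, whereas yours is elementary and keeps all constants traceable at the cost of the dyadic computation.
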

\begin{proof}
	Let us first notice that by the Fubini--Tonelli theorem,
	\begin{align*}
		h(r) 
		&= r^{-2} \int_{(0, \infty)} \min\big\{r^2, s^2\big\} \nu({\rm d} s) \\
		&= r^{-2} \int_0^r t \nu\big((t,\infty)\big) \rmd t.
	\end{align*}
	Thus, by \eqref{eq:91}, for all $0 < r < 1/x_0$ and $0 < \lambda \leq 1$,
	\begin{align}
		\nonumber
		C\lambda^{\alpha}h(\lambda r) 
		&= 
		\frac{2C\lambda^{\alpha}}{r^2} \int_0^r t\nu\big((\lambda t,\infty)\big) \rmd t \\
		\nonumber 
		&\geq \frac{2}{r^2}\int_0^r t\nu\big((t,\infty)\big) \rmd t \\ 
		\label{eq:113} 
		&=h(r).
	\end{align}
	Hence, by \cite[Lemma 2.3]{GS2019}, there is $C' \geq 1$ such that for all $0 < r <1/x_0$,
	\begin{align}
		\label{eq:116}
		K(r) \leq h(r) \leq C'K(r).
	\end{align}
	The integral representation of $\phi$ entails that
	\[
		e^{-1}x^{-2}K(1/x) \leq -\phi''(x) \leq e^2 2^{-2}x^{-2}h(1/x), \quad x>0,
	\]
	thus, by \eqref{eq:116}, we obtain
	\[
		-\phi''(x) \approx x^{-2}h(1/x),
	\]
	for all $x > x_0$. Now, the weak lower scaling property of $-\phi''$ is a consequence of \eqref{eq:113}.
\end{proof}

\section{Estimates on the density}
\label{sec:4}
Let $\bfT = (T_t \colon t \geq 0)$ be a subordinator with the L\'{e}vy--Khintchine exponent $\psi$ and the Laplace exponent
$\phi$. In this section we always assume that $-\phi'' \in \WLSC{\alpha-2}{c}{x_0}$ for some $c \in (0, 1]$, $x_0 \geq 0$,
and $\alpha \in (0, 1]$. In particular, by Theorem \ref{thm:3}, the probability distribution of $T_t$ has a density
$p(t, \: \cdot \:)$. To express the majorant on $p(t, \: \cdot \:)$, it is convenient to set
\[
	\vphi(x) = x^2 (-\phi''(x)), \quad x > 0.
\]
Obviously, $\vphi \in \WLSC{\alpha}{c}{x_0}$. Let $\vphi^{-1}$ denote the generalized inverse function defined as
\[
	\vphi^{-1}(x) = \sup \big\{r > 0 \colon \vphi^*(r) = x \big\}
\]
where
\[
	\vphi^*(r) = \sup_{0 < x \leq r} \vphi(x).
\]
We start by showing comparability between the two concentration functions $K$ and $h$ defined in \eqref{eq:30} and 
\eqref{eq:31}, respectively.
\begin{proposition}
	\label{prop:3}
	Suppose that $-\phi'' \in \WLSC{\alpha-2}{c}{x_0}$ for some $c \in (0, 1]$, $x_0 \geq 0$, and $\alpha>0$.
	Then there is $C \geq 1$ such that for all $0 < r < 1/x_0$,
	\[
		K(r) \leq h(r) \leq C K(r).
	\]
\end{proposition}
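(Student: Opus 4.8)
The plan is to reduce Proposition \ref{prop:3} to the already-established Lemma \ref{lem:2} together with the elementary identity \eqref{eq:25} relating $h$ and $K$. Recall $h(r) = 2\int_r^\infty K(s) s^{-1} \rmd s$ and $h(r) \geq K(r)$ for all $r > 0$, so only the upper bound $h(r) \leq C K(r)$ needs work, and only for $0 < r < 1/x_0$.

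First I would translate the content of Lemma \ref{lem:2} into a statement about $K$. By Remark \ref{rem:1} we already have $K(1/x) \approx x^2(-\phi''(x))$ for $x > x_0$, equivalently $K(r) \approx r^{-2}(-\phi''(1/r))$ for $0 < r < 1/x_0$. Since $-\phi'' \in \WLSC{\alpha-2}{c}{x_0}$, the function $x \mapsto x^2(-\phi''(x))$ lies in $\WLSC{\alpha}{c}{x_0}$, hence $r \mapsto K(r)$ satisfies a weak \emph{upper} scaling-type bound as $r$ decreases: for $0 < s \leq r < 1/x_0$ one gets $K(s) \lesssim (s/r)^{\alpha} K(r)$ up to the comparison constant from Remark \ref{rem:1}. (One must be a little careful: $\WLSC{\alpha-2}{c}{x_0}$ controls behaviour for large argument, which corresponds to small $r$, so the resulting inequality for $K$ goes the ``right'' way.) Then I would simply integrate:
\[
	h(r) = 2\int_r^{1/x_0} K(s)\,\frac{\rmd s}{s} + 2\int_{1/x_0}^\infty K(s)\,\frac{\rmd s}{s},
\]
where if $x_0 = 0$ the second integral is absent. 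For the first integral, the scaling bound gives $K(s) \leq C' (s/r)^{\alpha} K(r)$ for $s \leq r$... wait, that is the wrong monotonicity; instead for $s \geq r$ one has $K(r) \leq C'(r/s)^{?}K(s)$ is not available. Let me be precise in the write-up: the correct direction is that $s \mapsto s^{-\alpha}$ times the relevant quantity is almost monotone, yielding, for $r \leq s < 1/x_0$, a bound of the form $K(s) \lesssim (s/r)^{0} K(r)$ only if $K$ were decreasing — which it is not in general. The honest route is: $K(s) \leq h(s)$ and $h$ satisfies $h(s) \lesssim (s/r)^{\alpha-2}\cdots$, no. The clean fact to use is that $-\phi''(1/s) \leq c^{-1}(r/s)^{\alpha-2}\,(-\phi''(1/r))$ for $s \geq r$ with $1/r > x_0$, hence $K(s) \approx s^{-2}(-\phi''(1/s)) \lesssim s^{-2}(r/s)^{\alpha-2}(-\phi''(1/r)) \approx s^{-\alpha} r^{\alpha-2}(-\phi''(1/r)) \cdot r^{2-\alpha}\cdot$, giving $K(s)/s \lesssim r^{-\alpha}(-\phi''(1/r)) s^{\alpha-1}\cdot(\text{const})$; integrating $s^{\alpha-1}$ over $[r,\infty)$ diverges when $\alpha > 0$, so I instead integrate only where the scaling holds and handle the tail separately.

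Here is the structure I would actually commit to. Write $h(r) = 2\int_r^\infty K(s)s^{-1}\rmd s$. Split at $R := 1/x_0$ (with $R = \infty$ if $x_0 = 0$, in which case there is no splitting). On $(r, R)$: since $1/s > x_0$, Remark \ref{rem:1} gives $K(s) \approx s^{-2}(-\phi''(1/s))$, and the weak lower scaling $-\phi''(\lambda y) \geq c\lambda^{\alpha-2}(-\phi''(y))$ applied with $y = 1/s$, $\lambda = s/r \geq 1$ yields $-\phi''(1/r) \geq c(s/r)^{\alpha-2}(-\phi''(1/s))$, i.e. $K(s) \lesssim (s/r)^{2-\alpha} K(r)$; therefore $\int_r^R K(s)s^{-1}\rmd s \lesssim K(r) r^{\alpha-2}\int_r^R s^{1-\alpha}\rmd s \lesssim K(r) r^{\alpha-2}\cdot R^{2-\alpha}$, which is \emph{not} bounded by $K(r)$ unless $R$ is comparable to $r$ — so this crude split fails, confirming that the real difficulty is exactly a Dini-type control near $0$. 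The correct fix, and the step I expect to be the genuine obstacle, is to invoke \cite[Lemma 2.3]{GS2019} (used already in the proof of Proposition \ref{prop:8}): the inequality $h(r) \approx r^{-2}\int_0^r t\,\nu((t,\infty))\rmd t$, combined with a doubling/scaling property of $r \mapsto \int_0^r t\nu((t,\infty))\rmd t$ that follows from $-\phi'' \in \WLSC{\alpha-2}{c}{x_0}$ via Lemma \ref{lem:2}, gives $h(\lambda r) \gtrsim \lambda^{\alpha} h(r)$ for $\lambda \in (0,1]$, $r < 1/x_0$, and then \cite[Lemma 2.3]{GS2019} directly produces $h(r) \leq C K(r)$. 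Concretely: from Lemma \ref{lem:2}, $-\phi''(x) \approx x^{-2}K(1/x)$ for $x > x_0$, so $K$ inherits a scaling relation; feeding this into $h(r) = 2\int_r^\infty K(s)s^{-1}\rmd s$ and using that $K(s) \lesssim K(r)(r/s)^{2-\alpha}$ holds only for $r \le s \le 1/x_0$ while for $s \geq 1/x_0$ one uses the crude bound $K(s) \leq \nu((0,\infty)) \wedge$(something) or rather the trivial monotonicity $\int_0^s t\nu((t,\infty))\,\rmd t$ is increasing — the tail contributes a bounded multiple of $K(r)$ because on $s > 1/x_0 \geq r$ we have $K(s) s^{-1} = s^{-3}\int_0^s t\nu((t,\infty))\rmd t$ and $\int_{1/x_0}^\infty s^{-3}\rmd s < \infty$ times a fixed constant $\lesssim K(r)$ after comparing $\int_0^{1/x_0}t\nu\,\rmd t$ with $\int_0^r t\nu\,\rmd t = r^2 h(r)/2 \approx r^2 K(r)$. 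I would assemble these pieces, being careful with the $x_0 > 0$ versus $x_0 = 0$ dichotomy exactly as in the proofs of Lemma \ref{lem:2} and Proposition \ref{prop:WLSC}, where continuity and positivity on a compact interval absorb the boundary case. The main obstacle, then, is not any deep idea but organizing the splitting of the integral $\int_r^\infty K(s)s^{-1}\rmd s$ so that the ``good'' scaling region $(r,1/x_0)$ and the ``bad'' tail $(1/x_0,\infty)$ are each bounded by a constant times $K(r)$ — and the cleanest way to do this is to mimic verbatim the argument already run in Proposition \ref{prop:8}, replacing the hypothesis \eqref{eq:91} on $\nu$ by the consequence of Lemma \ref{lem:2} that $\int_0^r s^2\nu(\rmd s) \approx \int_0^r \varphi(\cdots)$ scales like $r^{\alpha}$ up to slowly varying corrections.
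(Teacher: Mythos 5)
Your write-up identifies the right reduction (only $h\leq CK$ on $(0,1/x_0)$ needs proof, via $h(r)=2\int_r^\infty K(s)s^{-1}\rmd s$ and $K\approx\varphi(1/\cdot)$ from Remark \ref{rem:1}), but it then derails on an algebra slip in the scaling step, and everything after that is an attempt to repair a problem that does not exist. Applying $-\phi''\in\WLSC{\alpha-2}{c}{x_0}$ at the point $1/s>x_0$ with $\lambda=s/r\geq1$ gives $-\phi''(1/s)\leq c^{-1}(s/r)^{2-\alpha}(-\phi''(1/r))$, and hence
\[
	K(s)\approx s^{-2}\big(-\phi''(1/s)\big)\leq c^{-1}\,s^{-2}(s/r)^{2-\alpha}\big(-\phi''(1/r)\big)
	= c^{-1}\,(r/s)^{\alpha}\,r^{-2}\big(-\phi''(1/r)\big)\approx (r/s)^{\alpha}K(r),
\]
for $r\leq s<1/x_0$; you dropped the ratio $s^{-2}/r^{-2}=(r/s)^2$ and arrived at $K(s)\lesssim(s/r)^{2-\alpha}K(r)$, which made the integral look divergent. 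With the correct exponent, $\int_r^{1/x_0}K(s)s^{-1}\rmd s\lesssim K(r)\int_1^\infty u^{-1-\alpha}\rmd u=K(r)/\alpha$, so the ``crude split'' you discarded is in fact the paper's proof of the main term. For $x_0>0$ the tail $2\int_{1/x_0}^\infty K(s)s^{-1}\rmd s=h(1/x_0)$ is a finite constant, and it is absorbed because $K(r)\gtrsim\varphi(1/r)\gtrsim\varphi(x_0)>0$ uniformly on $(0,1/x_0)$ by the lower scaling of $\varphi$; no comparison of $\int_0^{1/x_0}t\,\nu((t,\infty))\rmd t$ with $\int_0^r$ is needed.

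The fallback route you sketch does not close the gap as written. The inequality you propose to feed into \cite[Lemma 2.3]{GS2019}, namely $h(\lambda r)\gtrsim\lambda^{\alpha}h(r)$ for $\lambda\in(0,1]$, is trivially true by monotonicity of $h$ and is not the hypothesis of that lemma: what is needed (and what \eqref{eq:113} in Proposition \ref{prop:8} provides there, from the assumed tail scaling \eqref{eq:91}) is $h(r)\leq C\lambda^{\alpha}h(\lambda r)$, and you give no derivation of this from $-\phi''\in\WLSC{\alpha-2}{c}{x_0}$; any attempt to get it by integrating the scaling of $K$ runs straight back into the direct estimate above. Also, the tail bound you gesture at rests on the identity $K(s)s^{-1}=s^{-3}\int_0^s t\,\nu((t,\infty))\rmd t$, which conflates $K$ with $h$ (the right-hand side equals $h(s)/(2s)$), and $\int_{1/x_0}^\infty h(s)s^{-1}\rmd s$ need not even be finite. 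So the correct and essentially forced argument is the direct one you abandoned: fix the exponent, integrate, and handle $x_0>0$ by the uniform lower bound on $K$ plus the constant tail.
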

\begin{proof}
	Since $h(r) \geq K(r)$, it is enough to show that for some $C \geq 1$ and $0 < r < 1/x_0$,
	\[
		h(r) \leq C K(r).
	\]
	In view of \eqref{eq:25}, we have
	\begin{equation}
		\label{eq:26}
		h(r) = 2 \int_r^\infty K(s) \frac{{\rm d} s}{s} =
		2 \int_r^{1/x_0} K(s) \frac{{\rm d} s}{s} + 
		2 \int_{1/x_0}^{\infty} K(s) \frac{{\rm d} s}{s}.
	\end{equation}
	Let us consider the first term on the right-hand side of \eqref{eq:26}. By Remark \ref{rem:1} we have
	$K(r) \approx \varphi(1/r)$,  for $0<r<1/x_0$, which implies
	\begin{equation*}
		\int_r^{1/x_0} K(s) \frac{{\rm d} s}{s} \lesssim K(r), \quad 0<r<1/x_0.
	\end{equation*}
	This finishes the proof in the case $x_0 = 0$. If $x_0 > 0$ then, for $1/(2x_0)\leq r<1/x_0$, we have
	\[
		K(r) \gtrsim  \vphi(1/r) \gtrsim  \vphi(x_0)>0.
	\]
	Hence, $K(r) \gtrsim 1$ for all $0 < r < 1/x_0$. Since the second term on the right-hand side of \eqref{eq:26}
	is constant, the proof is completed.
\end{proof}

Let us notice that by \eqref{eq:27}, Proposition \ref{prop:3} and Remark \ref{rem:1}, we have
\begin{equation}
	\label{eq:33}
	\psi^*(x) \approx h(1/x) \approx K(1/x) \approx \varphi(x)
\end{equation}
for all $x > x_0$. In particular, there is $c_1 \in (0, 1]$ such that $\psi^* \in \WLSC{\alpha}{c_1}{x_0}$. 
Moreover,
\begin{align*}
    \psi^*(x) \lesssim K(1/x) 
	&= x^2 \int_{(0, 1/x)} s^2 \: \nu({\rm d} s) \\
    &\lesssim \int_{(0, 1/x)} \big(1 - \cos s x\big) \: \nu({\rm d} s),
\end{align*}
thus, for all $x > x_0$,
\begin{equation}
	\label{eq:34}
	\psi^*(x) \lesssim \Re \psi(x).
\end{equation}
Since for $\lambda \geq 1$ and $x > 0$,
\begin{equation}
	\label{eq:59}
	\vphi(\lambda x) \leq \lambda^2 \vphi(x),
\end{equation}
we get
\begin{equation}
	\label{eq:85}
	\vphi^*(\lambda x) \leq \lambda^2 \vphi^*(x).
\end{equation}
\begin{proposition}
	\label{prop:6}
	Suppose that $-\phi'' \in \WLSC{\alpha-2}{c}{x_0}$ for some $c \in (0, 1]$, $x_0 \geq 0$, and $\alpha>0$.
	Then for all $r > 2 h(1/x_0)$,
	\begin{equation}
		\label{eq:41}
		\frac{1}{h^{-1}(r)} \approx \psi^{-1}(r).
	\end{equation}
	Furthermore, there is $C \geq 1$ such that for all $\lambda \geq 1$, and $r > 2 h(1/x_0)$,
	\[
		\psi^{-1}(\lambda r) \leq C \lambda^{1/\alpha} \psi^{-1}(r).
	\]
\end{proposition}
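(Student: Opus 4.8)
The strategy is to transport the whole question to the concentration function $h$. By \eqref{eq:27} we have $\tfrac{1}{24} h(1/s) \leq \psi^*(s) \leq 2 h(1/s)$ for every $s > 0$, so $\psi^*$ is, up to multiplicative constants, the nondecreasing function $s \mapsto h(1/s)$, and therefore $\psi^{-1}$ should be comparable to $1/h^{-1}$ — once we know $h$ is rigid enough that a two-sided comparison of its values forces a two-sided comparison of its arguments. That rigidity is the only nontrivial ingredient. From \eqref{eq:33} we have $h(1/x) \approx \vphi(x)$ for $x > x_0$, and $\vphi \in \WLSC{\alpha}{c}{x_0}$; hence there is $c_0 \in (0,1]$ with
\[
	h(\sigma) \geq c_0 \Big( \frac{\rho}{\sigma} \Big)^{\alpha} h(\rho)
	\qquad \text{for all } 0 < \sigma \leq \rho < 1/x_0 .
\]
Consequently, if $a, b \in (0, 1/x_0)$ and $m \leq h(a)/h(b) \leq M$, then $a \approx b$ with constants depending only on $\alpha, c_0, m, M$ (say $a \leq b$: then $a/b \leq 1$ and, by the display, $c_0 (b/a)^{\alpha} \leq h(a)/h(b) \leq M$, so $1 \leq b/a \leq (M/c_0)^{1/\alpha}$; the case $b \leq a$ is symmetric). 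I will also use that $\nu$ is infinite under the present hypotheses (since $-\phi'' \not\equiv 0$ forces $-\phi'' > 0$ on all of $(0,\infty)$, whence $\vphi$, and so $h(1/\cdot)$, is unbounded), so that $h$ is a continuous nonincreasing map of $(0,\infty)$ onto $(0,\infty)$ and $h\big(h^{-1}(r)\big) = r$ for every $r > 0$.

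To prove \eqref{eq:41}, fix $r > 2 h(1/x_0)$ and set $v := \psi^{-1}(r)$ and $u := h^{-1}(r)$. From $r = \psi^*(v) \leq 2 h(1/v)$ and monotonicity of $h$ we get $h(1/v) \geq r/2 > h(1/x_0)$, hence $1/v < 1/x_0$, i.e. $v > x_0$; likewise $h(u) = r > h(1/x_0)$ forces $u < 1/x_0$. Thus $1/v$ and $u$ both lie in $(0, 1/x_0)$. Moreover, by \eqref{eq:27} again, $\tfrac12 r \leq h(1/v) \leq 24 r = 24\, h(u)$, so $\tfrac12 \leq h(1/v)/h(u) \leq 24$, and the rigidity above yields $1/v \approx u$, that is $\psi^{-1}(r) \approx 1/h^{-1}(r)$, with constants depending only on $\alpha$ and $c_0$.

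For the second assertion, recall from \eqref{eq:33} that $\psi^* \in \WLSC{\alpha}{c_1}{x_0}$ for some $c_1 \in (0,1]$, and that $\psi^{-1}(r) > x_0$ for $r > 2 h(1/x_0)$ by the previous paragraph. Put $u := \psi^{-1}(r)$, so $\psi^*(u) = r$, and let $D := (\lambda / c_1)^{1/\alpha} \geq 1$. Applying the lower scaling at the base point $u > x_0$ gives $\psi^*(Du) \geq c_1 D^{\alpha} \psi^*(u) = \lambda r$. If now $s > 0$ satisfies $\psi^*(s) = \lambda r$ and $s > Du$, then applying the lower scaling at the base point $Du > x_0$ gives $\lambda r = \psi^*(s) \geq c_1 (s/Du)^{\alpha} \psi^*(Du) \geq c_1 (s/Du)^{\alpha} \lambda r$, whence $s \leq c_1^{-1/\alpha} D u$; the bound $s \leq c_1^{-1/\alpha}Du$ holds trivially for $s \leq Du$ too. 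Taking the supremum over all such $s$,
\[
	\psi^{-1}(\lambda r) \leq c_1^{-1/\alpha} D u = c_1^{-2/\alpha} \lambda^{1/\alpha} \psi^{-1}(r),
\]
which is the desired inequality with $C = c_1^{-2/\alpha}$.

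The only real obstacle is bookkeeping around $x_0$: both $\psi^{-1}(r)$ and $h^{-1}(r)$ must be pinned inside the range on which the scaling of $\vphi$ — equivalently, of $h$ — is available (namely $(0, 1/x_0)$, resp. $(x_0, \infty)$), for outside it no comparison can be made. The hypothesis $r > 2 h(1/x_0)$ is precisely what secures this, the constants $2$ and $\tfrac{1}{24}$ of \eqref{eq:27} being comfortably compatible; when $x_0 = 0$ the threshold is vacuous and the argument runs verbatim on all of $(0,\infty)$.
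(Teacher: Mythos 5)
Your argument is correct, and it reaches the conclusion by a genuinely more self-contained route than the paper. The paper's proof is essentially two citations combined: it quotes \cite[(5.1)]{GS2017} to sandwich $\psi^{-1}(r)$ between $1/h^{-1}(r/2)$ and $1/h^{-1}(24r)$, and then uses Proposition \ref{prop:3} ($K\approx h$ below $1/x_0$) together with \cite[Lemma 2.3]{GS2019} to get the upper scaling $1/h^{-1}(\lambda r)\leq C\lambda^{1/\alpha}/h^{-1}(r)$, which both collapses the sandwich into \eqref{eq:41} and transfers to $\psi^{-1}$. You instead work directly with level sets: the identity $\psi^*(\psi^{-1}(r))=r$ and \eqref{eq:27} pin $h(1/\psi^{-1}(r))$ and $h(h^{-1}(r))$ within fixed multiplicative bounds of each other, and the weak lower scaling of $\vphi\approx h(1/\cdot)$ (via \eqref{eq:33}) supplies exactly the rigidity needed to convert comparability of values of $h$ into comparability of arguments; the upper scaling of $\psi^{-1}$ you then deduce directly from $\psi^*\in\WLSC{\alpha}{c_1}{x_0}$ rather than inheriting it from $h^{-1}$. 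Your bookkeeping is sound: the threshold $r>2h(1/x_0)$ indeed places both $1/\psi^{-1}(r)$ and $h^{-1}(r)$ in $(0,1/x_0)$, where the scaling is available, and the identity $h(h^{-1}(r))=r$ that you use is legitimate because $h$ is continuous, nonincreasing, tends to $0$ at infinity and to $\nu((0,\infty))=\infty$ at $0^+$ (the latter following, as you note, from the unboundedness of $\vphi$ in the nondegenerate case $-\phi''\not\equiv 0$, which is the paper's standing assumption). What your approach buys is independence from the external inputs \cite{GS2017} and \cite{GS2019} and fully explicit constants (e.g.\ $C=c_1^{-2/\alpha}$ in the upper scaling); what the paper's approach buys is brevity, since the inverse-function manipulations are outsourced to the cited lemmas.
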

\begin{proof}
	By \cite[(5.1)]{GS2017}, we have
	\[
		\frac{1}{h^{-1}(r/2)} \leq \psi^{-1}(r) \leq \frac{1}{h^{-1}(24 r)}
	\]
	for all $r >0$. On the other hand, by Proposition \ref{prop:3} and \cite[Lemma 2.3]{GS2019}, there is
	$C \geq 1$ such that for all $\lambda \geq 1$ and $r > h(1/x_0)$,
	\begin{equation}
		\label{eq:100}
		\frac{1}{h^{-1}(\lambda r)} \leq C \lambda^{1/\alpha} \frac{1}{h^{-1}(r)}.
	\end{equation}
	Hence, for $r > 2 h(1/x_0)$,
	\begin{equation}
		\label{eq:101}
		C^{-1} 2^{-1/\alpha}
		\frac{1}{h^{-1}(r)} 
		\leq \psi^{-1}(r) 
		\leq  C (24)^{1/\alpha} \frac{1}{h^{-1}(r)},
	\end{equation}
	proving \eqref{eq:41}. The weak upper scaling property of $\psi^{-1}$ is a consequence of 
	\eqref{eq:100} and \eqref{eq:101}.
\end{proof}

\begin{proposition}
	\label{prop:7}
	Suppose that $-\phi'' \in \WLSC{\alpha-2}{c}{x_0}$ for some $c \in (0, 1]$, $x_0 \geq 0$, and $\alpha>0$.
	Then for all $x > x_0$, 
	\begin{equation}
		\label{eq:42}
		\psi^*(x) \approx \vphi^*(x),
	\end{equation}
	and for all $r > \vphi(x_0)$,
	\begin{equation}
		\label{eq:102}
		\psi^{-1}(r) \approx \vphi^{-1}(r).
	\end{equation}
	Furthermore, there is $C \geq 1$ such that for all $\lambda \geq 1$ and $r > \vphi(x_0)$,
	\[
		\vphi^{-1}(\lambda r) \leq C \lambda^{1/\alpha} \vphi^{-1}(r).
	\]
\end{proposition}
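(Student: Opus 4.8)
The plan is to deduce Proposition~\ref{prop:7} from the two-sided comparison $\psi^* \approx \vphi$ on $(x_0,\infty)$ established in \eqref{eq:33}, together with the scaling property $\vphi \in \WLSC{\alpha}{c}{x_0}$, by transferring these relations from the functions to their nondecreasing majorants $\psi^*, \vphi^*$ and then to the generalized inverses. The only delicate point is that the comparison in \eqref{eq:33} is valid only for $x > x_0$, so one must control the behaviour on the compact interval $[0,x_0]$ separately; this is exactly the kind of bookkeeping that appeared in the proof of Proposition~\ref{prop:3}.

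First I would prove \eqref{eq:42}. Since $\psi^*(x) = \sup_{0 < y \le x} \Re\psi(y)$ (note $\Re\psi \ge 0$, so the supremum over $|z|\le x$ equals the supremum over $(0,x]$ by the even majorant bound \eqref{eq:27}) and $\vphi^*(x) = \sup_{0 < y \le x}\vphi(y)$, I would split the supremum defining each majorant at $x_0$: for $x > x_0$,
\[
	\vphi^*(x) = \max\Big\{\sup_{0<y\le x_0}\vphi(y),\ \sup_{x_0<y\le x}\vphi(y)\Big\},
\]
and likewise for $\psi^*$. On $(x_0,x]$ the comparison $\psi^*\approx\vphi$ from \eqref{eq:33} (equivalently $\Re\psi \approx \vphi$ up to the monotone majorant, using \eqref{eq:34} and \eqref{eq:27}) gives $\sup_{x_0<y\le x}\Re\psi(y) \approx \sup_{x_0<y\le x}\vphi(y)$. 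On the compact part $[0,x_0]$ both suprema are fixed finite constants, and one observes that for $x > x_0$ the ``tail'' supremum already dominates a positive constant: indeed $\vphi\in\WLSC{\alpha}{c}{x_0}$ together with $\vphi$ nonnegative shows $\vphi^*(x) \gtrsim \vphi^*(2x_0)>0$ for $x>2x_0$, so the constant contribution from $[0,x_0]$ can be absorbed (and for $x_0 < x \le 2x_0$ one uses continuity and positivity of $\vphi$ on the compact interval $[x_0,2x_0]$). This yields \eqref{eq:42}.

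Next, \eqref{eq:102} follows from \eqref{eq:42} by a routine transfer to generalized inverses. Writing $\psi^{-1}(r) = \sup\{\rho>0 : \psi^*(\rho) = r\}$ and similarly $\vphi^{-1}(r) = \sup\{\rho>0 : \vphi^*(\rho) = r\}$, one uses that $\psi^*$ and $\vphi^*$ are nondecreasing and that $\psi^*\big(\psi^{-1}(r)\big) = r$ (as in the identity stated after the definition of $\psi^{-1}$ in Section~\ref{sec:3}); combining this with $\psi^* \le C\vphi^*$ and the scaling bound \eqref{eq:85}, namely $\vphi^*(\lambda x) \le \lambda^2\vphi^*(x)$, gives $\vphi^*\big(\lambda \psi^{-1}(r)\big) \ge C^{-1}\lambda^2 r$ for suitable $\lambda$, hence $\vphi^{-1}(c_2 r) \gtrsim \psi^{-1}(r)$; the reverse inequality is symmetric, using the lower bound $\psi^* \ge c\vphi^*$ and $\psi^*\in\WLSC{\alpha}{c_1}{x_0}$ (established right after \eqref{eq:33}), which bounds the growth of $\psi^*$ from below and thus of $\psi^{-1}$ from above. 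For $r > \vphi(x_0)$ the relevant arguments $\psi^{-1}(r), \vphi^{-1}(r)$ exceed $x_0$, so \eqref{eq:42} applies throughout.

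Finally, the weak upper scaling of $\vphi^{-1}$ follows from \eqref{eq:102} and the corresponding property of $\psi^{-1}$ proved in Proposition~\ref{prop:6}: for $\lambda\ge1$ and $r > \vphi(x_0)$ (so that, enlarging the threshold by an absolute constant if necessary, $r$ and $\lambda r$ exceed $2h(1/x_0)$ up to comparability via \eqref{eq:33}), we have $\vphi^{-1}(\lambda r) \approx \psi^{-1}(\lambda r) \le C\lambda^{1/\alpha}\psi^{-1}(r) \approx C\lambda^{1/\alpha}\vphi^{-1}(r)$. I expect the main obstacle to be the careful matching of the various thresholds ($x_0$ versus $2x_0$, $\vphi(x_0)$ versus $2h(1/x_0)$) and the handling of the $x_0 > 0$ case on the compact interval, rather than any substantive difficulty — the essential content is all contained in \eqref{eq:33}, \eqref{eq:85}, and Proposition~\ref{prop:6}.
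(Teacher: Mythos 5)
Your plan follows the same route as the paper's proof: use \eqref{eq:33} to compare $\psi^*$ with $\vphi^*$ on $(x_0,\infty)$, splitting the supremum at $x_0$ and absorbing the contribution of the compact piece; then transfer the comparison to the generalized inverses; and finally deduce the upper scaling of $\vphi^{-1}$ from Proposition \ref{prop:6} via \eqref{eq:102}. The first and third steps are fine as sketched. The middle step, however, fails as written. The displayed claim ``$\vphi^*(\lambda\psi^{-1}(r))\ge C^{-1}\lambda^2 r$'' cannot be extracted from \eqref{eq:85}: that inequality is an \emph{upper} scaling bound, and in fact the claim is false for large $\lambda$, since by \eqref{eq:20} and \eqref{eq:28} one has $\vphi^*(\lambda y)\le 2\phi(\lambda y)\le 2\lambda\phi(y)$, so $\vphi^*$ grows at most linearly along rays. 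Moreover the logical direction is inverted: a \emph{lower} bound on $\vphi^*$ at the point $\lambda\psi^{-1}(r)$ only yields, by monotonicity, that $\lambda\psi^{-1}(r)\ge\vphi^{-1}(\,\cdot\,)$, i.e.\ an upper bound on $\vphi^{-1}$ in terms of $\psi^{-1}$, not the asserted $\vphi^{-1}(c_2 r)\gtrsim\psi^{-1}(r)$. To bound $\psi^{-1}$ by $\vphi^{-1}$ you need an \emph{upper} bound on $\vphi^*(\psi^{-1}(r))$: from $\vphi^*\le C\psi^*$ and $\psi^*\big(\psi^{-1}(r)\big)=r$ one gets $\vphi^*\big(\psi^{-1}(r)\big)\le Cr$, hence $\psi^{-1}(r)\le\vphi^{-1}(Cr)$, and symmetrically $\vphi^{-1}(r)\le\psi^{-1}(Cr)$; the constants inside the argument are then removed using the weak upper scaling of $\psi^{-1}$ from Proposition \ref{prop:6}. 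This is exactly the paper's argument, and it requires no scaling of $\vphi^*$ at this stage.

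A smaller issue: it is not true that $r>\vphi(x_0)$ forces $\psi^{-1}(r),\vphi^{-1}(r)>x_0$, because $\vphi^*(x_0)$ and $\psi^*(x_0)$ may strictly exceed $\vphi(x_0)$. The comparison \eqref{eq:102} is first obtained only for $r$ above a threshold comparable to $\max\{\psi^*(x_0), h(1/x_0)\}$, and is then extended down to all $r>\vphi(x_0)$ by continuity and positivity of both inverses, at the possible expense of worsening the constant; this is how the paper closes that range as well. With these two repairs your argument coincides with the paper's proof.
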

\begin{proof}
	We start by showing that there is $C \geq 1$ such that for all $x > x_0$,
	\begin{equation}
        \label{eq:24}
		C^{-1} \psi^*(x) \leq \vphi^*(x) \leq C \psi^*(x).
	\end{equation}
	The first inequality in \eqref{eq:24} immediately follows from \eqref{eq:33}. If $x_0 = 0$ then the second
	inequality is also the consequence of \eqref{eq:33}. In the case $x_0 > 0$, we observe that
	for $x > x_0$, we have 
	\begin{align*}
		\vphi^*(x) 
		&= \max{\Big\{\sup_{0 < y \leq x_0} \vphi(y), \sup_{x_0 \leq y \leq x} \vphi(y)\Big\}} \\
		&\lesssim \max{\big\{\vphi^*(x_0), \psi^*(x)\big\}} \\
		&\leq \bigg(1 + \frac{\vphi^*(x_0)}{\psi^*(x_0)}\bigg)\psi^*(x),
	\end{align*}
	proving \eqref{eq:24}. 
	
	Now, using \eqref{eq:24}, we easily get
	\[
		\psi^{-1}(C^{-1} r) \leq \varphi^{-1}(r) \leq \psi^{-1}(C r)
	\]
	for all $r > C \psi^*(x_0)$. Hence, by Proposition \ref{prop:6},
	\[
		\vphi^{-1}(r) \approx \psi^{-1}(r)
	\]
	for $r > C \max{\big\{\psi^*(x_0), 2 h(1/x_0)\big\}}$. Finally, since both $\psi^{-1}$ and $\varphi^{-1}$ are positive
	and continuous, at the possible expense of worsening the constant, we can extend the area of comparability to conclude
	\eqref{eq:102}. Now, the scaling property of $\varphi^{-1}$ follows by \eqref{eq:102} and Proposition \ref{prop:6}.
\end{proof}

\begin{remark}\label{rem:5}
	Note that, alternatively, one can define the (left-sided) generalized inverse
	\[
	\vphi_{-1}(x) = \inf \{ r>0\colon \vphi_*(r)=x \},
	\]
	where
	\[
	\vphi_*(r) = \inf_{r \leq x} \vphi(x).
	\]
	In such case we have
	\[
	\vphi_* \big( \vphi_{-1}(s) \big) = s, \qquad \text{and} \qquad \vphi_{-1} \big( \vphi_*(s) \big) \leq s.
	\]
	Clearly, for all $x>0$,
	\[
	\vphi_*(x) \leq \vphi(x) \leq \vphi^*(x).
	\]
	Let $u>x_0$ and set
	\[
	r_0 = \inf \{ r>0\colon \vphi^*(r)=u \}.
	\]
	By Proposition \ref{prop:7}, $\vphi^* \in \WLSC{\alpha}{c}{x_0}$ for some $c \in (0,1]$ and $x_0 \geq 0$. Thus, for $\lambda> c^{-1/\alpha}$, we get $\vphi^*(\lambda r_0)> \vphi^*(r_0)$. It follows that for all $u>x_0$,
	\begin{align*}
	\sup \{ r>0\colon \vphi^*(r)=u \} &\leq \lambda \inf \{ r>0\colon \vphi^*(r)=u \} \\
	&\leq \lambda \inf \{ r>0\colon \vphi_*(r)=u \}.
	\end{align*}
	Thus, for all $r>x_0$,
	\[
	\vphi^{-1} \big( \vphi^*(r) \big) \lesssim r.
	\]
\end{remark}

\begin{corollary}
	\label{cor:3}
	Suppose that $-\phi'' \in \WLSC{\alpha-2}{c}{x_0}$ for some $c \in (0, 1]$, $x_0 \geq 0$, and $\alpha>0$.
	Then there is $C > 0$ such that for all $x > x_0$,
	\begin{equation}
		\label{eq:21}
		\big(\phi(x) - x \phi'(x) \big) 
\leq C \vphi(x).
	\end{equation}
\end{corollary}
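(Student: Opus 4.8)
The plan is to reduce the estimate to the elementary identity
\[
	\phi(x) - x \phi'(x) = \int_0^x t \, \big(-\phi''(t)\big) \rmd t, \qquad x > 0,
\]
and then exploit the weak lower scaling of $-\phi''$. To obtain the identity I would differentiate the left-hand side, observing that $\frac{\rmd}{\rmd x}\big(\phi(x) - x\phi'(x)\big) = -x\phi''(x)$, and check the boundary behaviour: by \eqref{eq:20} with $n=1$ we have $0 \leq x\phi'(x) \leq \phi(x)$, and since $\phi(0)=0$ this forces $\lim_{x \to 0^+}\big(\phi(x) - x\phi'(x)\big) = 0$. Integrating from $0$ to $x$ gives the displayed identity (and incidentally shows $\phi(x) - x\phi'(x) \geq 0$, in agreement with the corollary). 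We may assume $\phi'' \not\equiv 0$, the claim being trivial otherwise.

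For $x > x_0$ I would split the integral as $\int_0^x = \int_0^{x_0} + \int_{x_0}^x$. On the range $x_0 < t \leq x$, applying $-\phi'' \in \WLSC{\alpha-2}{c}{x_0}$ with $\lambda = x/t \geq 1$ gives $-\phi''(t) \leq c^{-1}(t/x)^{\alpha-2}\big(-\phi''(x)\big)$, hence
\[
	\int_{x_0}^x t \, \big(-\phi''(t)\big) \rmd t
	\leq c^{-1} x^{2-\alpha}\big(-\phi''(x)\big) \int_{x_0}^x t^{\alpha-1} \rmd t
	\leq \frac{1}{c\alpha}\, x^2 \big(-\phi''(x)\big) = \frac{1}{c\alpha}\, \vphi(x).
\]
This already settles the corollary in the case $x_0 = 0$.

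When $x_0 > 0$ it remains to absorb the constant $A := \int_0^{x_0} t \, \big(-\phi''(t)\big) \rmd t$, which is finite since the full integral equals the finite quantity $\phi(x)-x\phi'(x)$. The only point worth noting here is that $-\phi''$ is a completely monotone function (as $\phi'$ is completely monotone) which is not identically zero, hence strictly positive on $(0,\infty)$; therefore $\vphi(x_0) = x_0^2\big(-\phi''(x_0)\big) > 0$. Since $\vphi \in \WLSC{\alpha}{c}{x_0}$ and $\vphi$ is continuous at $x_0$, we get $\vphi(x) \geq c\,\vphi(x_0)$ for all $x > x_0$, so that $A \leq \frac{A}{c\,\vphi(x_0)}\, \vphi(x)$. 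Adding this to the bound of the previous paragraph yields \eqref{eq:21}.

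An essentially equivalent alternative avoids differentiation and works from the L\'evy--Khintchine representation: one has $\phi(x)-x\phi'(x) = \int_{(0,\infty)} \big(1-(1+xs)e^{-xs}\big)\,\nu(\rmd s)$, and the inequality $1-(1+y)e^{-y} \leq \tfrac{e}{2}\,y^2 e^{-y}$ for $0 \leq y \leq 1$ bounds the part over $\{0 < s < 1/x\}$ by $\tfrac{e}{2}\,x^2\big(-\phi''(x)\big)$; on $\{s \geq 1/x\}$ the integrand is at most $1$, so that part is at most $\nu\big([1/x,\infty)\big) \leq h(1/x)$, which by Proposition \ref{prop:3} and Remark \ref{rem:1} is $\lesssim K(1/x) \approx \vphi(x)$ for $x > x_0$. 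In both approaches the only mild obstacle is the bookkeeping on $(0,x_0]$; no estimate beyond those already available in Sections \ref{sec:3}--\ref{sec:4} is needed.
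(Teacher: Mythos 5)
Your main argument is essentially the paper's own proof: both rest on the identity $\phi(x)-x\phi'(x)-\bigl(\phi(x_0)-x_0\phi'(x_0^+)\bigr)=\int_{x_0}^{x}u\bigl(-\phi''(u)\bigr)\rmd u=\int_{x_0}^x \vphi(u)\,\frac{\rmd u}{u}$, bound that integral by $\frac{1}{c\alpha}\vphi(x)$ via the weak lower scaling, and absorb the leftover constant for $x_0>0$ using continuity and positivity of $\vphi$. The details check out (including the Lévy--Khintchine alternative you sketch, which correctly invokes Proposition \ref{prop:3} and Remark \ref{rem:1}), so nothing further is needed.
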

\begin{proof}
	We have
	\[
		\big(\phi(x) - x \phi'(x) \big) - \big(\phi(x_0) - x_0 \phi'(x_0^+)\big) 
		= \int_{x_0}^x \vphi(u) \frac{{\rm d} u}{u}
		= \int_{x_0/x}^1 \vphi(x u) \frac{{\rm d} u}{u} 
	\]
	where
	\[
	\phi'(x_0^+) = \lim_{x \to x_0^+}\phi'(x).
	\]
	By the weak lower scaling property of $\vphi$, for any $x_0/x < u \leq 1$, we have
	\[
		\vphi(x) \geq c u^{-\alpha} \vphi(x u),
	\]
	thus
	\[
		\big(\phi(x) - x \phi'(x) \big) - \big(\phi(x_0) - x_0 \phi'(x_0^+)\big)
		\lesssim
		\vphi(x) \int_0^1 u^{\alpha-1} \rmd u,
	\]
	which proves \eqref{eq:21} if $x_0=0$. For $x_0>0$ one can use continuity and positivity of $\varphi$.
\end{proof}

\begin{proposition}
	\label{prop:1}
	Suppose that $-\phi'' \in \WLSC{\alpha-2}{c}{x_0} \cap \WUSC{\beta-2}{C}{x_0}$ for some $c \in (0, 1]$, $C \geq 1$,
	$x_0 \geq 0$, and $0 < \alpha \leq \beta < 1$. Assume also that $b = 0$. Then for all $x > x_0$,
	\begin{equation}
		\label{eq:99}
		\vphi^*(x) \approx \phi(x),
	\end{equation}
	and for all $r >\vphi(x_0)$,
	\begin{equation}
		\label{eq:95}
		\vphi^{-1}(r) \approx \phi^{-1}(r).
	\end{equation}
	Furthermore, there is $c' \in (0, 1]$ such that for all $\lambda \geq 1$ and $r > 1/\vphi^*(x_0)$,
	\begin{equation}\label{eq:206}
		\vphi^{-1}(\lambda r) \geq c' \lambda^{1/\beta} \vphi^{-1}(r).
	\end{equation}
\end{proposition}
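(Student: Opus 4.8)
The plan is to exploit the already-established scaling properties. First I would note that under the present hypotheses, Proposition~\ref{prop:WLSC} combined with Remark~\ref{rem:2} (via Proposition~\ref{prop:CM-WLSC} applied to the completely monotone function $\phi'$) and Corollary~\ref{cor:6} run in reverse, so that $\phi \in \WLSC{\alpha}{c_1}{x_0} \cap \WUSC{\beta}{C_1}{x_0}$ for suitable constants. Indeed, the lower scaling of $\phi$ comes from Remark~\ref{rem:2}; for the upper scaling, since $-\phi'' \in \WUSC{\beta-2}{C}{x_0}$ with $\beta < 1$, Proposition~\ref{prop:WUSC} gives $\phi'(x) \leq \frac{C}{1-\beta} x(-\phi''(x))$ for $x > x_0$ (recall $b=0$), and integrating this together with the lower scaling estimate $x\phi'(x) \gtrsim \phi(x)$ from \eqref{eq:132} yields $\phi'' \in \WLSC{\alpha-2}{}{x_0}$-type control on $\phi$; a short computation of the form used in Proposition~\ref{prop:11} shows $\phi(x) \approx x\phi'(x) \approx x^2(-\phi''(x)) = \vphi(x)$ for all $x > x_0$.

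Granting $\phi(x) \approx \vphi(x)$ for $x > x_0$, I would next upgrade this to $\vphi^*(x) \approx \phi(x)$, which is \eqref{eq:99}. Since $\phi$ is nondecreasing, $\vphi^*(x) = \sup_{0 < y \le x} \vphi(y) \gtrsim \vphi(x) \gtrsim \phi(x) \cdot (\text{const})$ for $x > x_0$, so one direction needs only the pointwise comparison restricted to $y$ near $x$. For the reverse, split $\vphi^*(x) = \max\{\sup_{0<y\le x_0}\vphi(y),\ \sup_{x_0 \le y \le x}\vphi(y)\}$ exactly as in the proof of Proposition~\ref{prop:7}: on $[x_0,x]$ one has $\vphi(y) \approx \phi(y) \le \phi(x)$, and the finite constant $\sup_{0<y\le x_0}\vphi(y)$ is absorbed by comparing with $\phi(x_0)$ and using $\phi(x)/\phi(x_0) \ge c$ for $x > x_0$ (which is where the positivity of $\phi$ on $(x_0,\infty)$ and monotonicity enter). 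This gives \eqref{eq:99}.

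For \eqref{eq:95}, I would pass to generalized inverses. From \eqref{eq:99}, say $C_2^{-1}\phi(x) \le \vphi^*(x) \le C_2\phi(x)$ for $x > x_0$; since $\vphi^*$ is nondecreasing with $\vphi^*(\vphi^{-1}(s)) = s$ and $\phi$ is increasing with $\phi^{-1}(\phi(x)) \approx x$, the standard comparison argument (identical in structure to the one ending the proof of Proposition~\ref{prop:7}) gives $\phi^{-1}(C_2^{-1} r) \le \vphi^{-1}(r) \le \phi^{-1}(C_2 r)$ for $r$ above some threshold, and then the upper scaling \eqref{eq:28} of $\phi$ (i.e. $\phi(\lambda x) \le \lambda\phi(x)$, hence $\phi^{-1}(\lambda r) \le$ a power of $\lambda$ times $\phi^{-1}(r)$ using also $\phi \in \WLSC{\alpha}{}{x_0}$) lets one replace the multiplicative constants by a single $\approx$, extending by continuity and positivity down to $r > \vphi(x_0)$. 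Finally, \eqref{eq:206} follows from the upper scaling $\phi \in \WUSC{\beta}{C_1}{x_0}$: this is equivalent to $x \mapsto x^{-\beta}\phi(x)$ being almost decreasing, which translates into a lower scaling bound $\phi^{-1}(\lambda r) \ge c'\lambda^{1/\beta}\phi^{-1}(r)$ for the inverse, and then \eqref{eq:95} transfers it to $\vphi^{-1}$.

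The main obstacle I anticipate is the bookkeeping in the very first step — establishing $\phi \approx \vphi$ on $(x_0,\infty)$ from the two-sided scaling of $-\phi''$ — because one must chain together Propositions~\ref{prop:CM-WLSC}, \ref{prop:WLSC}, \ref{prop:WUSC}, and \ref{prop:11} with the correct constants while keeping track of the $x_0 > 0$ boundary case, where compactness/continuity arguments on $[x_0, 2x_0]$ are needed to absorb a bounded factor; everything downstream is then routine manipulation of generalized inverses of the type already carried out twice in the preceding propositions. I would also double-check that $b = 0$ is genuinely used (it is: $\phi'(x) = b + \int_0^\infty s e^{-xs}\,\nu(ds)$, and without $b = 0$ one cannot compare $\phi'$ with $x(-\phi''(x))$).
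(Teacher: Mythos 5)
Your proposal is correct and takes essentially the same route as the paper: the paper's proof consists precisely of establishing $2\phi(x) \geq \vphi(x) \geq c_1\phi(x)$ for $x > x_0$ from \eqref{eq:20} together with Propositions \ref{prop:WLSC} and \ref{prop:WUSC} (where $b=0$ is used), and then notes that the passage to $\vphi^*$, to the generalized inverses, and to the scaling of $\vphi^{-1}$ is the same argument as in Proposition \ref{prop:7} — which is exactly the chain of steps you spell out.
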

\begin{proof}
	Let us observe that, by \eqref{eq:20}, Proposition \ref{prop:WLSC} and Proposition \ref{prop:WUSC}, there is
	$c_1 \in (0, 1]$ such that for all $x > x_0$, 
	\begin{equation}
		\label{eq:90}
		2 \phi(x) \geq \vphi(x) \geq c_1 \phi(x).
	\end{equation}
	Now the proof of the lemma is similar to the proof of Proposition \ref{prop:7} therefore it is omitted. 
\end{proof}

\subsection{Estimates from above}
In this section we show the upper estimates on $p(t, \: \cdot \:)$. Before embarking on the proof let us
introduce some notation. Given a set $B \subset \RR$, we define
\[
	\delta(B) = \inf\big\{\abs{x} \colon x \in B \big\},
\]
and
\[
	\diam(B) = \sup\big\{\abs{x-y} \colon x, y \in B \big\}.
\]
Let
\[
	b_r = b + \int_{(0, r)} s \: \nu({\rm d} s), \quad r > 0.
\]
In view of \eqref{eq:13a}, the above definition of $b_r$ is in line with the usual one (see e.g. \cite[formula (4)]{MR3357585} or \cite[formula (1.2)]{GS2019}).
Let us define $\zeta\colon [0, \infty) \rightarrow [0, \infty]$, 
\[
\zeta(s) = \begin{cases}
\infty
&\text{if } s = 0, \\
\vphi^*(1/s) & \text{if } 0 < s \leq x_0^{-1}, \\
A \phi(1/s) & \text{if } x_0^{-1} < s,
\end{cases} 
\]
where $A = \vphi^*(x_0)/\phi(x_0) \in (0, 2]$. 
\begin{theorem}
	\label{thm:6}
	Let $\bfT$ be a subordinator with the L\'{e}vy--Khintchine exponent $\psi$ and the Laplace exponent $\phi$. Suppose
	that $-\phi'' \in \WLSC{\alpha-2}{c}{x_0}$ for some $c \in (0, 1]$, $x_0 \geq 0$, and $\alpha>0$. Then the
	probability distribution of $T_t$ has a density $p(t, \: \cdot \:)$. Moreover, there is  $C > 0$ such that for all $t \in (0, 1/\varphi(x_0))$ and $x \in \RR$,
	\begin{equation}
		\label{eq:44}
        p\Big(t, x + t b_{1/\psi^{-1}(1/t)}\Big)
        \leq
        C
        \vphi^{-1}(1/t) \cdot \min{\big\{ 1, t \zeta(\abs{x}) \big\}}.
    \end{equation}
	In particular, for all $t \in (0, 1/\varphi(x_0))$ and $x \geq 2e t\phi'(\psi^{-1}(1/t))$,
	\begin{equation}
		\label{eq:48}
		p(t,x + t b) \leq C \varphi^{-1}(1/t) \cdot \min{\big\{ 1, t\zeta(x) \big\}}.
	\end{equation}
\end{theorem}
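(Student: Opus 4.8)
The existence of the density $p(t,\:\cdot\:)$ for every $t>0$ is already part of Theorem~\ref{thm:3}. For the estimate \eqref{eq:44} I would split it into an on-diagonal bound $p(t,y)\lesssim\varphi^{-1}(1/t)$ valid for all $y\in\RR$, which disposes of the factor $\min\{1,t\zeta(\abs x)\}$ when $t\zeta(\abs x)\geq c_0$ (for a fixed small $c_0$), and an off-diagonal bound $p\big(t,x+tb_{1/\psi^{-1}(1/t)}\big)\lesssim\varphi^{-1}(1/t)\cdot t\zeta(\abs x)$. Since $T_t\geq 0$ we have $p(t,y)=0$ for $y<0$, so in the off-diagonal regime $t\zeta(\abs x)<c_0$ we may assume $x>0$; moreover, since $\varphi^*\in\WLSC{\alpha}{c}{x_0}$ is nondecreasing and doubling with $\varphi^*(\psi^{-1}(1/t))\approx1/t$, the condition $t\zeta(\abs x)<c_0$ forces $\abs x>2r$, where I write $r=1/\psi^{-1}(1/t)$.

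For the on-diagonal bound I would use Fourier inversion, $p(t,y)=\tfrac1{2\pi}\int_\RR e^{-t\psi(\xi)}e^{-i\xi y}\rmd\xi$, so that $p(t,y)\leq\tfrac1{2\pi}\int_\RR e^{-t\Re\psi(\xi)}\rmd\xi$. By \eqref{eq:34} and \eqref{eq:33}, $\Re\psi(\xi)\gtrsim\psi^*(\abs\xi)\approx\varphi(\abs\xi)$ for $\abs\xi>x_0$, and since $\varphi\in\WLSC{\alpha}{c}{x_0}$ with $\varphi(1/r)\approx1/t$, a change of variables yields $\int_{\abs\xi>1/r}e^{-t\Re\psi(\xi)}\rmd\xi\lesssim\int_{1/r}^\infty e^{-c(sr)^\alpha/t}\rmd s\lesssim 1/r$, while the complementary piece is at most $2/r$. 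Hence $p(t,y)\lesssim\psi^{-1}(1/t)\approx\varphi^{-1}(1/t)$ by Proposition~\ref{prop:7} (this is where $t<1/\varphi(x_0)$ is used).

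For the off-diagonal bound, set $\rho=\abs x/2\;(\geq r)$ and decompose the centered process $\widetilde T_t:=T_t-tb_r$ as $\widetilde T_t=\widetilde T^{<\rho}_t+R_t$, where $R$ is the compound Poisson process carrying the jumps of size $\geq\rho$ and $\widetilde T^{<\rho}$ carries the jumps of size $<\rho$ (those below $r$ compensated, zero drift). Conditioning on whether $R_t=0$ and writing $q^{<\rho}_t$ for the density of $\widetilde T^{<\rho}_t$,
\[
	p\big(t,x+tb_r\big)=e^{-t\nu([\rho,\infty))}\,q^{<\rho}_t(x)+\int_{[\rho,\infty)}q^{<\rho}_t(x-z)\,\eta_t({\rm d}z),\qquad \eta_t\big([\rho,\infty)\big)\leq t\nu\big([\rho,\infty)\big).
\]
The integral term is bounded by $\|q^{<\rho}_t\|_\infty\,t\nu([\rho,\infty))$. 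Here $\|q^{<\rho}_t\|_\infty\lesssim\varphi^{-1}(1/t)$, exactly as in the on-diagonal estimate, because truncating the L\'evy measure at level $\rho\gtrsim r$ keeps $\Re\psi_{<\rho}(\xi)\gtrsim\varphi(\abs\xi)$ for $\abs\xi>1/\rho$; and $\nu\big([s,\infty)\big)\lesssim\zeta(s)$ for all $s>0$ (since $\nu([s,\infty))\leq h(s)\approx\varphi(1/s)\leq\varphi^*(1/s)$ for $s\leq1/x_0$ by \eqref{eq:33}, and $\nu([s,\infty))\lesssim\phi(1/s)$ for $s>1/x_0$), so together with the doubling of $\zeta$ this term is $\lesssim\varphi^{-1}(1/t)\,t\zeta(\abs x)$. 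For the no-big-jump term one writes $\widetilde T^{<\rho}_t=J_t-t(b_r-b)$ with $J$ a subordinator of L\'evy measure $\nu|_{(0,\rho)}$; equivalently $\widetilde T^{<\rho}_t=N_t+t\gamma$, where $N$ is a mean-zero martingale with jumps in $(0,\rho)$ and variance $t\rho^2K(\rho)$, and $\gamma=\int_{[r,\rho)}s\,\nu({\rm d}s)\geq0$ is the drift from the uncompensated medium jumps; thus $q^{<\rho}_t(x)$ is the density of $N_t$ at $x-t\gamma$. Estimating this by a Chernoff/Bennett argument (contour-shift in the Fourier representation, using $K(\rho)\approx\varphi(1/\rho)\leq\zeta(\rho)\lesssim\zeta(\abs x)$) gives the required super-polynomial smallness — but only once one knows that $x-t\gamma$ is a large multiple of $\rho$. \emph{I expect the main difficulty to lie exactly here}: the crude bound $t\gamma\leq t\int_{[r,\abs x/2)}s\,\nu({\rm d}s)\lesssim\abs x$ is not enough, and one needs the refined fact that, in the regime $t\zeta(\abs x)<c_0$, both the drift $t\gamma$ and the probability that the medium jumps accumulate to distance $\sim\abs x$ are small; this is a Pruitt-type tail estimate, whose proof follows the pattern of \cite{GS2017,GS2019} and requires a further dyadic decomposition of the jumps in $[r,\abs x)$ together with essential use of the weak lower scaling of $-\phi''$.

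Finally, \eqref{eq:48} follows from \eqref{eq:44}. Taking $\lambda=\psi^{-1}(1/t)$ in $\phi'(\lambda)=b+\int_0^\infty s e^{-\lambda s}\,\nu({\rm d}s)\geq b+e^{-1}\int_{(0,1/\lambda)}s\,\nu({\rm d}s)$ shows $0\leq b_{1/\psi^{-1}(1/t)}-b=\int_{(0,1/\psi^{-1}(1/t))}s\,\nu({\rm d}s)\leq e\,\phi'(\psi^{-1}(1/t))$. Hence for $x\geq2e\,t\phi'(\psi^{-1}(1/t))$ we may write $x+tb=x'+tb_{1/\psi^{-1}(1/t)}$ with $x'=x-t(b_{1/\psi^{-1}(1/t)}-b)\in[x/2,x]$, so $\zeta(x')\approx\zeta(x)$ by the doubling of $\zeta$, and \eqref{eq:44} applied at $x'$ gives \eqref{eq:48}.
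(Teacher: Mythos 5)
Your overall architecture (on-diagonal bound by Fourier inversion, single-big-jump term bounded by $\|q^{<\rho}_t\|_\infty\, t\nu([\rho,\infty))\lesssim \vphi^{-1}(1/t)\,t\zeta(\abs{x})$, plus a smallness estimate for the no-big-jump part) is the standard shape of such proofs, and your derivation of \eqref{eq:48} from \eqref{eq:44} is exactly the paper's. But the proposal has a genuine gap at precisely the step you flag yourself: the bound for the truncated, recentred process at distance $\abs{x}$ from its centre. You need, uniformly in the regime $t\zeta(\abs{x})$ small, that this contribution is $\lesssim \vphi^{-1}(1/t)\, t\zeta(\abs{x})$, and you concede that the crude drift bound $t\gamma\lesssim\abs{x}$ is insufficient and that a Pruitt-type/dyadic argument "following the pattern of \cite{GS2017,GS2019}" is required -- that argument is never carried out, and it is the hard core of the theorem. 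The paper does not redo it either: it invokes \cite[Theorem 1]{MR3357585}, whose hypotheses are exactly what the paper verifies, namely $\nu(B)\lesssim\zeta(\delta(B))$ (\eqref{eq:38} and the lines after it), the convolution-type condition \eqref{eq:39}, and the Fourier bound \eqref{eq:40} from \cite{GS2019}. That theorem yields the estimate with the extra error term $\exp\{-C_2\abs{x}\psi^{-1}(1/t)\log(1+C_3\abs{x}\psi^{-1}(1/t))\}$, and the genuinely new work in the paper is showing this term is $\lesssim t\zeta(\abs{x})$, via the boundedness of $u\mapsto u\,e^{-C_2u\log(1+C_3u)}$ and $u\mapsto u^2e^{-C_2u\log(1+C_3u)}$ together with \eqref{eq:28}, \eqref{eq:85} and Propositions \ref{prop:6} and \ref{prop:7}, split at $x=x_0^{-1}$ (this is \eqref{eq:43}). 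In your plan this quantitative comparison between the Chernoff-type decay and $\zeta$ would also have to be supplied, and it is absent.

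A secondary flaw: the reduction to $x>0$ is not correct as stated. Since the density in \eqref{eq:44} is evaluated at $x+tb_{1/\psi^{-1}(1/t)}$, negative $x$ with $x+tb_{1/\psi^{-1}(1/t)}\geq 0$ is possible (and $tb_r$ can be much larger than $r=1/\psi^{-1}(1/t)$ when no upper scaling is assumed), so the left-deviation case $-tb_r\leq x<0$ with $t\zeta(\abs{x})$ small needs its own estimate; positivity of $T_t$ alone does not dispose of it. The cited compound-kernel theorem covers this case automatically, which is another reason the paper's route closes the argument while yours, as written, does not.
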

\begin{proof}
	Without loss of generality we can assume $b = 0$. Indeed, otherwise it is enough to consider a shifted process $\widetilde{T_t} = T_t - tb$. Next, let us observe that for any Borel set $B \subset \RR$, 
	we have 
	\begin{align}
		\nonumber
		\nu(B) 
		& \lesssim
		\int_{(\delta(B), \infty)} \big(1 - e^{-s/\delta(B)}\big) \: \nu({\rm d} s) \\
		\label{eq:38}
		&\leq \phi\big(1/\delta(B)\big).
	\end{align}
	Furthermore, for $\delta(B)<1/x_0$, by Proposition \ref{prop:3} and Remark \ref{rem:1},
	\begin{align*}
	\nu(B) &\leq h(\delta(B)) \\ &\lesssim \vphi^*(1/\delta(B)).
	\end{align*}
	Thus, $\nu(B) \lesssim \zeta(\delta(B))$. We claim that $\zeta$ has doubling property on $(0, \infty)$. Indeed, since $-\phi''$ is nonincreasing function with
	the weak lower scaling property, it has doubling property on $(x_0, \infty)$, thus for $0 < s < x_0^{-1}$,
	\[
	\zeta\big(\tfrac{1}{2} s\big) \approx 
	4 s^{-2} (-\phi''(2/s)) \lesssim s^{-2} (-\phi''(1/s)) \lesssim \zeta(s).
	\]
	This completes the argument in the case $x_0 = 0$. If $x_0 > 0$, then by \eqref{eq:28}, for $s > 2 x_0^{-1}$
	we have
	\[
	\zeta\big(\tfrac{1}{2} s\big) =
	A \phi(2/s) \leq 2 A  \phi(1/s) \leq 2 \zeta(s).
	\]
	Lastly, the function
	\[
	\big[\tfrac{1}{2} x_0, x_0] \ni x \mapsto \frac{\vphi^*(2 x)}{\phi(x)}
	\]
	is continuous, thus it is bounded.

	Next, for $s > 0$ and $x \in \RR$,
	\[
		s \vee \abs{x} - \tfrac{1}{2} \abs{x} \geq \tfrac{1}{2} s,
	\]
	thus, by motonicity and doubling property of $\zeta$, we get
	\[
		\zeta\big(s \vee \abs{x} - \tfrac{1}{2} \abs{x} \big) \lesssim \zeta(s).
	\]
	Hence, by \eqref{eq:31} and \eqref{eq:33}, for $r > 0$,
	\begin{align}
		\nonumber
		\int_{(r, \infty)} 
		\zeta\big(s \vee x - \tfrac{1}{2} x \big) \: \nu({\rm d} x)
		&\lesssim
		\zeta(s) h(r) \\
		\label{eq:39}
		&\lesssim
		\zeta(s) \psi^*(1/r).
	\end{align}
	Since $\psi^*$ has the weak lower scaling property and satisfies \eqref{eq:34}, by \cite[Proposition 3.4]{GS2019}
	together with Proposition \ref{prop:6}, there are $C > 0$ and $t_1 \in (0, \infty]$ such that for all
	$t \in (0, t_1)$,
	\begin{equation}
		\label{eq:40}
		\int_\RR e^{-t \Re \psi(\xi)} \abs{\xi} \rmd \xi
		\leq
		C \big(\psi^{-1}(1/t)\big)^2.
	\end{equation}
	If $x_0 = 0$ then $t_1 = \infty$. If  $t_1 <  1/\vphi(x_0)$ we can expand the above estimate for $t_1\leq t<  1/\vphi(x_0)$ using positivity of the right hand side and monotonicity of the left hand side. 

	In view of \eqref{eq:38}, \eqref{eq:39} and \eqref{eq:40}, by \cite[Theorem 1]{MR3357585} with $\gamma=0$, there are
	$C_1,C_2,C_3 > 0$ such that for all $t \in (0 , 1/\vphi(x_0))$ and $x \in \RR$,
	\begin{align*}
		&
		p\Big(t, x+t b_{1/\psi^{-1}(1/t)} \Big) \\
		&\qquad\qquad\leq
		C_1
		\psi^{-1}(1/t) \cdot 
		\min{
		\bigg\{
		1, t \zeta\big(\tfrac{1}{4} \abs{x} \big) 
		+ \exp\Big\{-C_2 \abs{x} \psi^{-1}(1/t) \log \big(1 + C_3 \abs{x} \psi^{-1}(1/t)\big) \Big\}
		\bigg\}}.
	\end{align*}
	Let us consider $x > 0$ and $t \in (0, 1/\vphi(x_0))$ such that $t \zeta(x) \leq 1$. We claim that
	\begin{equation}
		\label{eq:43}
		\exp\Big\{-C_2 x \psi^{-1}(1/t) \log\big(1 + C_3 x \psi^{-1}(1/t)\big)\Big\} \lesssim t \zeta(x).
	\end{equation}
	First suppose that $x>x_0^{-1}$. Let us observe that the function
	\[
		[0, \infty) \ni u \mapsto u \exp\Big\{-C_2 u \log \big(1+C_3 u\big) \Big\}
	\]
	is bounded. Therefore,
	\begin{equation}
		\label{eq:46}
		\exp\Big\{-C_2 x \psi^{-1}(1/t) \log\big(1 + C_3 x \psi^{-1}(1/t)\big)\Big\} 
		\lesssim
		\frac{1}{x \psi^{-1}(1/t)}.
	\end{equation}
	Since $x \phi^{-1}(1/t) \geq 1$, by \eqref{eq:28}, we have
	\begin{equation}
		\label{eq:47}
		t \phi(1/x) 
		= \frac{\phi(1/x)}{\phi\big(x \phi^{-1}(1/t) \cdot 1/x\big)}
		\geq \frac{1}{x \phi^{-1}(1/t)}.
	\end{equation}
	Next, in light of \eqref{eq:20}, for all $y > 0$,
	\[
		\tfrac{1}{2} \vphi^*(y) \leq \phi(y),
	\]
	hence, by the monotonicity of $\phi^{-1}$, 
	\begin{align}
		\nonumber
		\phi^{-1}(1/t) &= \phi^{-1}\big(\tfrac{1}{2} \vphi^*(\vphi^{-1}(2/t))\big) \\
		\nonumber
		&\leq
		\phi^{-1}\big(\phi(\vphi^{-1}(2/t))\big) \\
		\nonumber
		&=
		\vphi^{-1}(2/t) \\
		\label{eq:45}
		&\leq
		C \psi^{-1}(1/t)
	\end{align}
	where in the last step we have used Proposition \ref{prop:7}. Putting \eqref{eq:46}, \eqref{eq:47}, and
	\eqref{eq:45} together, we obtain \eqref{eq:43} as claimed. 
	
	Now let $0<x \leq x_0^{-1}$. Observe that the function
	\[
		[0, \infty) \ni u \mapsto u^2 \exp\Big\{-C_2 u \log \big(1+C_3 u\big) \Big\}
	\]
	is also bounded. Hence,
	\begin{equation}
		\label{eq:148}
		\exp\Big\{-C_2 x \psi^{-1}(1/t) \log\big(1 + C_3 x \psi^{-1}(1/t)\big)\Big\} 
		\lesssim
		\frac{1}{\big(x \psi^{-1}(1/t)\big)^2}.
	\end{equation}
	Since $x \vphi^{-1}(1/t) \geq 1$, using \eqref{eq:85} we get
	\begin{align}
		\label{eq:149}
		t\vphi^*(1/x) 
		= \frac{\vphi^*(1/x)}{\vphi^*\big( x\vphi^{-1}(1/t) \cdot 1/x \big)} \geq \frac{1}{\big( x\vphi^{-1}(1/t) \big)^2}.
	\end{align}
	Hence, putting together \eqref{eq:148} and \eqref{eq:149}, and invoking Proposition \ref{prop:6} we again obtain
	\eqref{eq:43}.
	
	Finally, using doubling property of $\zeta$ we get
	\[
		\zeta\big(\tfrac{1}{4} x\big) \lesssim \zeta(x),
	\]
	thus an another application of Proposition \ref{prop:7} leads to \eqref{eq:44}.

	For the proof of \eqref{eq:48},	we observe that
	\[
		\phi'(\lambda) 
		= \int_{(0, \infty)} x e^{-\lambda x} \: \nu({\rm d} x) \\
		\geq 
		e^{-1} \int_{(0, 1/\lambda)}  x \: \nu({\rm d} x).
	\]
	Thus,
	\[
		b_{1/\psi^{-1}(1/t)} 
		= \int_{(0, 1/\psi^{-1}(1/t))} x \: \nu({\rm d}x) \\
		\leq e \phi'(\psi^{-1}(1/t)).
	\]
	Hence, by monotonicity and doubling property of $\zeta$, for $x > 2e t \phi'(\psi^{-1}(1/t))$, we obtain
	\[
		\zeta\Big(x - t b_{1/\psi^{-1}(1/t)}\Big) \leq \zeta\bigg(\frac{x}{2}\bigg) 
		\lesssim \zeta (x),
	\]
	and the theorem follows.
\end{proof}

Now we define $\eta\colon [0, \infty) \rightarrow [0, \infty]$, 
\[
	\eta(s) = s^{-1}\zeta(s)=  \begin{cases}
	\infty
	&\text{if } s = 0, \\
	s^{-1} \vphi^*(1/s) & \text{if } 0 < s \leq x_0^{-1}, \\
	A s^{-1} \phi(1/s) & \text{if } x_0^{-1} < s,
	\end{cases} 
\]
{where $A = \vphi^*(x_0)/\phi(x_0) \in (0, 2]$. Notice that, by \eqref{eq:20}, if $2 t \zeta(\abs{x}) \leq 1$ then 
$t \vphi^*(1/\abs{x}) \leq 1$, and so
\begin{align*}
	\eta(\abs{x}) 
	&=  \abs{x}^{-1} \zeta(\abs{x}) \\
	& \leq \vphi^{-1}(1/t) \zeta(\abs{x}).
\end{align*}
Therefore,
\[
	\min{\big\{\vphi^{-1}(1/t), t \eta(\abs{x}) \big\}} \leq
	4 \vphi^{-1}(1/t) \cdot \min{\big\{1, t \zeta(\abs{x}) \big\}}.
\]
\begin{theorem}
	\label{thm:4}
	Let $\bfT$ be a subordinator with the L\'{e}vy--Khintchine exponent $\psi$ and the Laplace exponent $\phi$.
	Suppose that $-\phi'' \in \WLSC{\alpha-2}{c}{x_0}$ for some $c \in (0,1]$, $x_0 \geq 0$, and $\alpha>0$. 
	We also assume that the L\'{e}vy measure $\nu$ has an almost monotone density $\nu(x)$. Then the probability
	distribution of $T_t$ has a density $p(t, \: \cdot \:)$. Moreover, there is $C > 0$
	such that for all $t\in(0,1/\vphi(x_0))$ and $x \in \RR$,
	\begin{equation}
		\label{eq:98}
		p\Big(t,x+t b_{1/\psi^{-1}(1/t)}\Big) 
		\leq 
		C \min{\big\{\vphi^{-1}(1/t), t \eta(\abs{x}) \big\}}.
	\end{equation}
	In particular, for all $t \in (0, 1/\vphi(x_0))$ and $x \geq 2e t\phi'(\psi^{-1}(1/t))$,
	\begin{equation}
		\label{eq:97}
		p(t, x + t b) \leq C \min{\big\{\vphi^{-1}(1/t), t \eta(x) \big\}}.
	\end{equation}
\end{theorem}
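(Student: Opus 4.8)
The plan is to sharpen only the tail term of Theorem \ref{thm:6}, the entire gain coming from the almost monotone density hypothesis. As in the proof of Theorem \ref{thm:6} we may assume $b=0$ by passing to $\widetilde{T}_t = T_t - tb$. Moreover \eqref{eq:97} will follow from \eqref{eq:98} exactly as \eqref{eq:48} followed from \eqref{eq:44}: the function $\eta$ is nonincreasing and has the doubling property on $(0,\infty)$ --- since $\eta(s)=s^{-1}\zeta(s)$ and $\zeta$ has both these properties, as was shown inside the proof of Theorem \ref{thm:6} --- so for $x\geq 2et\phi'(\psi^{-1}(1/t))\geq 2tb_{1/\psi^{-1}(1/t)}$ one has $\eta(x-tb_{1/\psi^{-1}(1/t)})\leq\eta(x/2)\lesssim\eta(x)$. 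Thus the whole content is \eqref{eq:98}.

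Since Theorem \ref{thm:6} already gives $p\big(t, x + tb_{1/\psi^{-1}(1/t)}\big)\lesssim\vphi^{-1}(1/t)$ for every $x$, and $\min\{\vphi^{-1}(1/t), t\eta(\abs{x})\}\leq\vphi^{-1}(1/t)$, it suffices to establish
\[
	p\big(t, x + tb_{1/\psi^{-1}(1/t)}\big)\lesssim t\,\eta(\abs{x})
\]
in the regime $2t\zeta(\abs{x})\leq 1$, which --- as observed just before the statement --- is exactly the regime where $t\eta(\abs{x})\leq\vphi^{-1}(1/t)$, so that in the complementary regime the desired bound is already contained in Theorem \ref{thm:6}. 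The crucial ingredient is the pointwise domination $\nu(x)\lesssim\eta(\abs{x})$ for $x\neq 0$. To obtain it, recall that the proof of Theorem \ref{thm:6} shows $\nu\big((r,\infty)\big)\lesssim\zeta(r)$ for all $r>0$ (via \eqref{eq:38} together with Proposition \ref{prop:3} and Remark \ref{rem:1}). If $\nu$ is almost decreasing with constant $C_\nu$, then $\nu(s)\geq C_\nu^{-1}\nu(2r)$ for $s\in[r,2r]$, whence
\[
	r\,C_\nu^{-1}\,\nu(2r)\leq\int_r^{2r}\nu(s)\rmd s\leq\nu\big((r,\infty)\big)\lesssim\zeta(r)\lesssim\zeta(2r),
\]
the last step by the doubling of $\zeta$; hence $\nu(2r)\lesssim(2r)^{-1}\zeta(2r)=\eta(2r)$, i.e. $\nu(x)\lesssim\eta(\abs{x})$ as claimed. (For $x_0>0$ a residual compact range near $x_0^{-1}$ is absorbed using continuity and positivity of $\nu$ and $\eta$.)

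It then remains to feed this domination --- together with the oscillation bound \eqref{eq:40} and the identification $\psi^{-1}(1/t)\approx\vphi^{-1}(1/t)$ from Proposition \ref{prop:7} --- into the general upper estimate for densities of non-symmetric L\'evy processes, namely \cite[Theorem 1]{MR3357585}, this time applied with a \emph{positive} value of the parameter $\gamma$; it is precisely the almost monotone density that makes such a $\gamma$ admissible (cf. the analogous mechanism in \cite{GS2017, GS2019}). This produces, for $t\in(0,1/\vphi(x_0))$ and $\abs{x}$ in the tail regime, a bound of the shape $p\big(t, x + tb_{1/\psi^{-1}(1/t)}\big)\lesssim t\,\nu(c\abs{x})$ for some absolute $c>0$; combining with $\nu(c\abs{x})\lesssim\eta(c\abs{x})\lesssim\eta(\abs{x})$ (doubling of $\eta$) and with the center bound recalled above yields \eqref{eq:98}, and then \eqref{eq:97} as explained. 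The main obstacle I anticipate is the bookkeeping around the threshold $\abs{x}=x_0^{-1}$, where $\zeta$ and $\eta$ change their analytic form: one must check that the tail term delivered by \cite[Theorem 1]{MR3357585} genuinely reproduces $t\,\eta(\abs{x})$ for \emph{all} $x$ in the tail regime and not merely for very large $\abs{x}$, and that the $\gamma$-hypothesis of that theorem is verified on the relevant range. Once $\nu(x)\lesssim\eta(\abs{x})$ is in hand, assembling the remaining estimates --- all of which have already been prepared above --- is routine.
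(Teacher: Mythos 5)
Your first half is fine: the reduction to $b=0$, the deduction of \eqref{eq:97} from \eqref{eq:98} via the doubling of $\eta$, and the pointwise bound $\nu(x)\lesssim\eta(x)$ are all correct. (Your route to $\nu\lesssim\eta$ — integrating the almost monotone density over $[r,2r]$ against the tail bound $\nu((r,\infty))\lesssim\zeta(r)$ — differs from the paper, which gets \eqref{eq:30a} directly from $\phi(\lambda)\gtrsim\lambda^{-1}\nu(1/\lambda)$ and $-\phi''(\lambda)\gtrsim\lambda^{-3}\nu(1/\lambda)$, but both work.)

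The gap is in the final step. No version of the general upper bound you invoke produces ``a bound of the shape $p(t,x+tb_{1/\psi^{-1}(1/t)})\lesssim t\,\nu(c\abs{x})$''. Whether you use \cite[Theorem 1]{MR3357585} with $\gamma=1$ or, as the paper does, \cite[Theorem 5.2]{GS2017}, the conclusion has the form
\[
	p\Big(t,x+tb_{1/\psi^{-1}(1/t)}\Big)\lesssim \psi^{-1}(1/t)\cdot\min\Big\{1,\ t\big(\psi^{-1}(1/t)\big)^{-1}\eta(\abs{x})+E\big(\abs{x}\psi^{-1}(1/t)\big)\Big\},
\]
where $E$ is an extra additive error term (polynomial $(1+u)^{-3}$ in \cite{GS2017}, exponential in \cite{MR3357585}). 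The whole difficulty of the theorem is to show that $\psi^{-1}(1/t)\,E(\abs{x}\psi^{-1}(1/t))\lesssim t\,\eta(\abs{x})$ precisely in the regime where the term $t\eta(\abs{x})$ is the smaller one in the minimum; in the paper this is the claim \eqref{eq:62}, proved via the auxiliary implication \eqref{eq:61} and the scaling \eqref{eq:85} of $\vphi^*$ together with \eqref{eq:20} and Proposition \ref{prop:7}. Your proposal does not address this term at all (the obstacle you flag, the threshold $\abs{x}=x_0^{-1}$, is not where the difficulty lies). In addition, to apply either cited theorem with the profile $\eta$ you must verify the convolution-type hypothesis for $\eta$ — the analogue of \eqref{eq:51} — and the integrability bound \eqref{eq:50}; these are part of the proof of Theorem \ref{thm:4} itself, not of Theorem \ref{thm:6}, so they are not ``already prepared above''. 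Finally, your regime split (proving the tail bound only when $2t\zeta(\abs{x})\leq 1$ and claiming Theorem \ref{thm:6} covers the rest) silently uses that $2t\zeta(\abs{x})>1$ forces $t\eta(\abs{x})\gtrsim\vphi^{-1}(1/t)$; this is true but needs an argument (monotonicity and scaling of $\vphi^{-1}$, and a separate case $\abs{x}>1/x_0$), which you should supply or avoid by working, as the paper does, directly in the regime $t\eta(\abs{x})\leq\tfrac{A}{2}\vphi^{-1}(1/t)$.
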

\begin{proof}
	Without loss of generality we can assume $b = 0$. Let us observe that for any $\lambda > 0$,
	\[
		\phi(\lambda) \geq \int_0^{1/\lambda} \big(1 - e^{-\lambda s} \big) \nu(s) \rmd s
		\gtrsim \nu(1/\lambda) \lambda^{-1},
	\]
	and
	\[
		-\phi''(\lambda) \geq \int_0^{1/\lambda} s^2 e^{-\lambda s} \nu(s) \rmd s 
		\gtrsim \nu(1/\lambda) \lambda^{-3}.
	\]
	Hence, 
	\begin{equation}\label{eq:30a}
	\nu(x) \lesssim \eta(x) \qquad \text{for all } x > 0.
	\end{equation} 
	Since $\eta$ is nonincreasing, for any Borel subset $B \subset \RR$,
	\begin{align}
				\nu(B) 
		&\lesssim \int_{B \cap (0,\infty)} \eta(x) \rmd x \label{eq:49}
		\lesssim \eta\big(\delta(B)\big) \diam(B).
	\end{align}
	Arguing as in the proof of Theorem \ref{thm:6} we conclude that $\eta$ has doubling property on $(0,\infty)$. 
Using that and monotonicity of $\eta$, for $s > 0$ and $x \in \RR$,
	\[
		\eta\big(s \vee x - \tfrac{1}{2} x\big) \leq \eta\big(\tfrac{1}{2} s \big) \lesssim \eta(s). 
	\]
	Therefore, by \eqref{eq:33}, for $r > 0$,
	\begin{equation}
		\label{eq:51}
		\int_r^\infty \eta\big(s \vee x - \tfrac{1}{2} x\big) \nu(x) \rmd x
		\lesssim
		\eta(s) \psi^*(1/r).
	\end{equation}
	Since $\psi^*$ has the weak lower scaling property and satisfies \eqref{eq:34}, by \cite[Theorem 3.1]{GS2019}
	and Proposition \ref{prop:6}, there are $C > 0$ and $t_1 \in (0, \infty]$ such that for all $t \in (0, t_1)$,
	\begin{equation}
		\label{eq:50}
		\int_\RR e^{-t\Re \psi(\xi)} \rmd \xi \leq C \psi^{-1}(1/t).
	\end{equation}
	If $x_0 = 0$ then $t_1 = \infty$. If  $t_1 <  48/\vphi(x_0)$ we can expand the above estimate for $t_1\leq t<  48/\vphi(x_0)$ using positivity of the right hand side and monotonicity of the left hand side. 
	
	In view of \eqref{eq:49}, \eqref{eq:51}, and \eqref{eq:50}, by \cite[Theorem 5.2]{GS2017}, there is $C > 0$
	such that for all $t \in (0, 1/\vphi(x_0))$ and $x \in \RR$,
	\[
		p\Big(t,x+tb_{1/\psi^{-1}(1/t)}\Big) 
		\leq C 
		\psi^{-1}(1/t) \cdot 
		\min{ \Big\{ 1, t \big(\psi^{-1}(1/t)\big)^{-1} \eta(\abs{x})+\big(1+\abs{x}\psi^{-1}(1/t)\big)^{-3} \Big\}}.
	\]
	We claim that
	\begin{equation}
		\label{eq:62}
		\frac{\psi^{-1}(1/t)}{\big(1+\abs{x}\psi^{-1}(1/t)\big)^3} 
		\lesssim t \eta(\abs{x})
	\end{equation}
	whenever $t \eta(\abs{x}) \leq \tfrac{A}{2} \vphi^{-1}(1/t)$.

	First, let us show that for any $\epsilon \in (0, 1]$, the condition
	$t \eta(\abs{x}) \leq \tfrac{A \epsilon}{2} \vphi^{-1}(1/t)$ implies that
	\begin{equation}
		\label{eq:61}
		t \vphi^*\bigg(\frac{1}{\abs{x}}\bigg) \leq \epsilon \abs{x} \vphi^{-1}\bigg(\frac{1}{t}\bigg).
	\end{equation}
	Indeed, by \eqref{eq:20}, we have $\abs{x} \eta(\abs{x}) \geq \tfrac{A}{2} \vphi^*(1/\abs{x})$, thus
	\[
		\epsilon \abs{x} \vphi^{-1}\(\frac{1}{t}\) 
		\geq \frac{2}{A} t \abs{x} \eta(\abs{x}) 
		\geq t \vphi^*\(\frac{1}{\abs{x}}\).
	\]
	Notice also that $\epsilon^{1/3} \abs{x} \vphi^{-1}(1/t) \geq 1$ since otherwise, by \eqref{eq:85},
	\[
		1 < t \vphi^*\bigg(\frac{1}{\epsilon^{1/3} \abs{x}}\bigg)
		< \frac{1}{\epsilon^{2/3}} t \vphi^*\bigg(\frac{1}{\abs{x}}\bigg),
	\]
	which entails that $\epsilon^{2/3} < t \vphi^*(1/\abs{x})$, i.e. $\epsilon^{1/3} \abs{x} \vphi^{-1}(1/t) < 
	\epsilon^{-2/3} t \vphi^*(1/\abs{x})$ contrary to \eqref{eq:61}.

	To show \eqref{eq:62}, let us suppose that $t \eta(\abs{x}) \leq \tfrac{A}{2} \vphi^{-1}(1/t)$, thus
	$\abs{x} \vphi^{-1}(1/t) \geq 1$. By \eqref{eq:85}, we have
	\[
		t \vphi^*(1/\abs{x}) 
		= \frac{\vphi^*(1/\abs{x})}{\vphi^*\big(\abs{x} \vphi^{-1}(1/t) \cdot 1/\abs{x}\big)}
		\geq
		\frac{1}{(\abs{x} \vphi^{-1}(1/t))^2},
	\]
	which, by Proposition \ref{prop:7}, gives
	\[
		t \abs{x} \eta(\abs{x}) \geq \tfrac{A}{2} t \vphi^*(1/\abs{x}) 
		\gtrsim \frac{|x|\psi^{-1}(1/t)}{(1 + \abs{x} \psi^{-1}(1/t))^3},
	\]
	proving \eqref{eq:62}, and \eqref{eq:98} follows. The inequality \eqref{eq:97} holds by the same argument as in the proof of Theorem  \ref{thm:6}.
\end{proof}

\begin{remark}
	In statements of Theorems \ref{thm:6} and \ref{thm:4}, we can replace $b_{1/\psi^{-1}(1/t)}$
	by $b_{1/\vphi^{-1}(1/t)}$. Indeed, let us observe that if $0 < r_1 \leq r_2 < 1/x_0$ then
	\begin{align}
		\nonumber
		\big| b_{r_1} - b_{r_2} \big| 
		&\leq \int_{(r_1, r_2]} s \: \nu({\rm d} s) \\
		\nonumber 
		&\leq r_1^{-1} r_2^2 h(r_2) \\
		\label{eq:83}
		&\lesssim r_1^{-1} r_2^2 \psi^*(1/r_2),
	\end{align}
	where in the last estimate we have used \eqref{eq:33}. Hence, by \eqref{eq:42}, we get
	\begin{equation}
		\label{eq:84}
		\big| b_{r_1} - b_{r_2} \big| \lesssim r_1^{-1} r_2^2 \vphi^*(1/r_2).
	\end{equation}
	Therefore, by \eqref{eq:83}, \eqref{eq:84}, and Proposition \ref{prop:7}, there is $C \geq 1$ such that
	\begin{equation}
		\label{eq:87}
		\Big| b_{1/\psi^{-1}(1/t)} - b_{1/\vphi^{-1}(1/t)} \Big| \leq C \frac{1}{t \vphi^{-1}(1/t)},
	\end{equation}
	provided that $0 < t < 1/\vphi(x_0)$. Now, let us suppose that
	$8 C^2 t \zeta\big(\abs{x}\big) \leq 1$. Then, by \eqref{eq:28} and \eqref{eq:85},
	\begin{align*}
		\frac{1}{t} 
		\geq 8 C^2 \zeta\big(\abs{x} \big) 
		\geq 4C^2 \vphi^*\bigg(\frac{1}{\abs{x}}\bigg) 
		\geq \vphi^*\bigg(\frac{2 C}{\abs{x}} \bigg),
	\end{align*}
	that is
	\begin{equation}
		\label{eq:86}
		\abs{x} \geq \frac{2 C}{\vphi^{-1}(1/t)}.
	\end{equation}
	Hence, by \eqref{eq:87},
	\[
		\Big|x + t\Big( b_{1/\psi^{-1}(1/t)} - b_{1/\vphi^{-1}(1/t)}\Big)\Big| 
		\geq 
		\abs{x} - \frac{C}{\vphi^{-1}(1/t)} \geq \frac{\abs{x}}{2},
	\]
	which together with monotonicity and the doubling property of $\zeta$, gives
	\[
		\zeta\Big(\big| x + t \big(b_{1/\psi^{-1}(1/t)} - b_{1/\vphi^{-1}(1/t)}\big)\big| \Big)
		\lesssim \zeta\big( \abs{x} \big).
	\]
	Similarly, if $t \eta(\abs{x}) \leq \tfrac{A \epsilon}{2} \vphi^{-1}(1/t)$, then 
	\[
		\abs{x} \vphi^{-1}(1/t) \geq \epsilon^{-1/3},
	\]
	thus, by taking $\epsilon = (2C)^{-3}$, we obtain \eqref{eq:86}. Hence, by monotonicity and the doubling property of
	$\eta$, we again obtain
	\[
		\eta\bigg(\Big| x + t\Big( b_{1/\psi^{-1}(1/t)} - b_{1/\vphi^{-1}(1/t)}\Big) \Big|\bigg)
		\lesssim
		\eta(\abs{x}).
	\]
\end{remark}

\subsection{Estimates from below}
In this section we develop estimates from below on the density $p(t, \: \cdot \:)$. The main result is Theorem \ref{thm:2}.
Its proof is inspired by the ideas from \cite{MR1486930}, see also \cite{GS2019}. Thanks to Theorem \ref{thm:3}, we can
generalize results obtained in \cite{MR1486930} to the case when $-\phi''$ satisfies the weak lower scaling of index
$\alpha-2$ for $\alpha>0$ together with a certain additional condition. We use the following variant of
the celebrated Pruitt's result \cite[Section 3]{MR632968} adapted to subordinators.
\begin{proposition}
	\label{prop:13}
	Let $\bfT$ be a subordinator with the L\'{e}vy--Khintchine exponent 
	\[
		\psi(\xi) = -i\xi b - \int_{(0, \infty)} \big(e^{i\xi x} - 1\big) \: \nu({\rm d} x).
	\]
	Then there is an absolute constant $c>0$ such that for all $\lambda > 0$ and $t > 0$,
	\[
		\PP \Big( \sup_{0 \leq s \leq t} \big|T_s- s b_\lambda \big|\geq \lambda \Big) 
		\leq ct h(\lambda).
	\]
\end{proposition}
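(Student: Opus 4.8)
The plan is a standard Pruitt-type truncation at the level $\lambda$: I split the jumps of $\bfT$ into small and large ones and control the two parts by Doob's maximal inequality and by a direct estimate of the probability of a large jump, respectively. Let $N$ be the Poisson random measure of jumps of $\bfT$ with intensity ${\rm d}u \otimes \nu({\rm d}x)$, and set $Y_s = T_s - sb_\lambda$. Since $b_\lambda = b + \int_{(0,\lambda)} x \, \nu({\rm d}x)$, the parameter $b$ cancels and $Y_s = Y^{(1)}_s + Y^{(2)}_s$, where
\[
	Y^{(1)}_s = \int_0^s \int_{(0,\lambda)} x \, \big( N({\rm d}u,{\rm d}x) - {\rm d}u\, \nu({\rm d}x) \big),
	\qquad
	Y^{(2)}_s = \int_0^s \int_{[\lambda,\infty)} x \, N({\rm d}u,{\rm d}x),
\]
and these two processes are independent (they are driven by the restrictions of $N$ to disjoint sets). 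The process $Y^{(1)}$ is a c\`{a}dl\`{a}g square-integrable martingale (the compensated small jumps); computing the second derivative at $0$ of its characteristic exponent $\xi \mapsto \int_{(0,\lambda)}(e^{i\xi x}-1-i\xi x)\,\nu({\rm d}x)$ gives $\EE Y^{(1)}_s = 0$ and $\EE \big(Y^{(1)}_s\big)^2 = s \int_{(0,\lambda)} x^2 \, \nu({\rm d}x) = s \lambda^2 K(\lambda)$, the integral being finite by \eqref{eq:13} since $\int_{(0,\lambda)} x^2\,\nu({\rm d}x) \le \int_{(0,1)} x\,\nu({\rm d}x) + \lambda^2 \nu([1,\infty)) < \infty$. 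The process $Y^{(2)}$ is a compound Poisson process with jump intensity $\nu([\lambda,\infty))$.

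Next I would estimate the two pieces separately. On the event $\{ N([0,t]\times[\lambda,\infty)) = 0 \}$ there are no large jumps up to time $t$, so $Y^{(2)}_s \equiv 0$ and hence $Y_s = Y^{(1)}_s$ for all $s \in [0,t]$; consequently
\[
	\PP\Big( \sup_{0 \le s \le t} |Y_s| \ge \lambda \Big)
	\le \PP\big( N([0,t]\times[\lambda,\infty)) \ge 1 \big) + \PP\Big( \sup_{0 \le s \le t} |Y^{(1)}_s| \ge \lambda \Big).
\]
The first term equals $1 - e^{-t\nu([\lambda,\infty))} \le t\,\nu([\lambda,\infty)) \le t\,h(\lambda)$, since $\min\{1,\lambda^{-2}s^2\} = 1$ for $s \ge \lambda$. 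For the second term I apply Doob's maximal inequality to the nonnegative c\`{a}dl\`{a}g submartingale $\big( Y^{(1)}_s \big)^2$:
\[
	\PP\Big( \sup_{0 \le s \le t} |Y^{(1)}_s| \ge \lambda \Big)
	= \PP\Big( \sup_{0 \le s \le t} \big( Y^{(1)}_s \big)^2 \ge \lambda^2 \Big)
	\le \lambda^{-2}\, \EE \big( Y^{(1)}_t \big)^2 = t\, K(\lambda) \le t\, h(\lambda),
\]
using $K \le h$. Adding the two bounds yields $\PP\big( \sup_{0 \le s \le t} |Y_s| \ge \lambda \big) \le 2t\,h(\lambda)$, i.e. the claim with the absolute constant $c = 2$.

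There is no real obstacle here. The only points needing a little care are: that the drift correction in $Y$ is \emph{exactly} $b_\lambda$, so that $Y^{(1)}$ is genuinely centered and the martingale argument goes through without a leftover linear term; the finiteness and value of $\EE \big( Y^{(1)}_t \big)^2$, handled above via \eqref{eq:13}; and the elementary identity $\{ \sup_s |Y^{(1)}_s| \ge \lambda \} = \{ \sup_s ( Y^{(1)}_s )^2 \ge \lambda^2 \}$, which is valid because $Y^{(1)}$ has c\`{a}dl\`{a}g paths on the compact interval $[0,t]$. Everything else is bookkeeping.
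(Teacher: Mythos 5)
Your argument is correct, and it is a genuinely different route from the one in the paper. The paper does not redo the truncation analysis at all: it rewrites the L\'{e}vy--Khintchine exponent of $T_s - s b_\lambda$ in the form used by Pruitt, observes that the recentering by $b_\lambda$ makes the drift functional $M(\lambda)$ of \cite{MR632968} vanish identically, and then simply quotes Pruitt's maximal inequality (3.2), which yields the bound $c\,t\,h(\lambda)$ with an unspecified absolute constant. You instead give a self-contained proof: split the jumps at level $\lambda$, absorb the large-jump part into the event that some jump of size at least $\lambda$ occurs (probability at most $1-e^{-t\nu([\lambda,\infty))}\leq t\,\nu([\lambda,\infty)) \leq t\,h(\lambda)$), and control the compensated small-jump martingale $Y^{(1)}$ by Doob's inequality applied to the submartingale $(Y^{(1)})^2$, giving $t\,K(\lambda)\leq t\,h(\lambda)$; the recentering by $b_\lambda$ is precisely what makes $Y^{(1)}$ centered, as you note, and the finiteness of $\int_{(0,\lambda)}x^2\,\nu({\rm d}x)$ follows from \eqref{eq:13} as you say. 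The payoff of your version is that it is elementary and produces the explicit constant $c=2$; the payoff of the paper's version is brevity, since exactly this bookkeeping is outsourced to Pruitt, whose estimate also covers general L\'{e}vy processes rather than just subordinators. Both establish the stated inequality.
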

\begin{proof}
	We are going to apply the estimates \cite[(3.2)]{MR632968}. To do so, we need to express the L\'{e}vy--Khintchine
	exponent of $T_s - s b_\lambda$ in the form used in \cite[Section 3]{MR632968}, namely
	\[
		\tilde{\psi}(\xi) = \psi(\xi) + i \xi b_\lambda
		=
		-i\xi \bigg(b - b_\lambda + \int_{(0, \infty)} \frac{y}{1 + |y|^2} \: \nu({\rm d} y) \bigg) 
		- \int_{(0, \infty)} \bigg(e^{i\xi y} - 1 - \frac{i y \xi}{1 + |y|^2} \bigg) \: \nu({\rm d} y).
	\]
	Since
	\[
		\int_{(0, \lambda]} \frac{y |y|^2}{1+|y|^2} \: \nu({\rm d} y) 
		- \int_{(\lambda, \infty)} \frac{y}{1+|y|^2} \: \nu({\rm d} y)
		=
		\int_{(0, \lambda]} y \: \nu({\rm d} y) - \int_{(0, \infty)} \frac{y}{1+|y|^2} \: \nu({\rm d} y),
	\]
	we have
	\begin{align*}
		M(\lambda) &= \frac{1}{\lambda} \bigg| 
		b - b_\lambda + 
		\int_{(0, \infty)} \frac{y}{1 + |y|^2} \: \nu({\rm d} y)
		+
		\int_{(0, \lambda]} \frac{y |y|^2}{1+|y|^2} \: \nu({\rm d} y)
		-
		\int_{(\lambda, \infty)} \frac{y}{1+|y|^2} \: \nu({\rm d} y)
		\bigg| \\
		&=
		\frac{1}{\lambda}
		 \bigg|
		b - b_\lambda +  \int_{(0, \lambda]} y \: \nu({\rm d} y)
		\bigg| = 0.
	\end{align*}
	Hence, by \cite[(3.2)]{MR632968},
	\[
		\PP \Big( \sup_{0 \leq s \leq t} \big|T_s- s b_\lambda \big|\geq \lambda \Big)
		\leq
		c s h(\lambda),
	\]
	as desired.
\end{proof}

\begin{theorem}
	\label{thm:2}
	Let $\bfT$ be a subordinator with the Laplace exponent $\phi$. Suppose that $-\phi'' \in \WLSC{\alpha-2}{c}{x_0}$
	for some $c \in (0, 1]$, $x_0 \geq 0$, and $\alpha>0$, and assume that one of the following conditions holds
	true:
	\begin{enumerate}
		\item $-\phi'' \in \WUSC{\beta-2}{C}{x_0}$ for some $C\geq 1$ and $\alpha \leq \beta <1$, or
		\item $-\phi''$ is a function regularly varying at infinity with index $-1$.
		If $x_0 = 0$ we also assume that $-\phi''$ is regularly varying at zero with index $-1$.
	\end{enumerate}
	Then there exists $\rho_0 > 0$, so that for all $0 < \rho_1 < \rho_0$, $0 < \rho_2$ there is $C>0$ such that for
	all $t \in (0, 1/\varphi(x_0))$, and all $x > 0$ satisfying
	\[
		-\frac{\rho_1}{\vphi^{-1}(1/t)} \leq
		x -  t \phi'\big(\vphi^{-1}(1/t)\big)
		\leq
		\frac{\rho_2}{\vphi^{-1}(1/t)},
	\]
	we have
	\begin{equation}\label{eq:22}
		p\left(t, x\right) \geq C \vphi^{-1}(1/t).
	\end{equation}
\end{theorem}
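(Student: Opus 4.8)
The plan is the \cite{MR1486930}-style scheme: build a ``seed'' lower bound from the sharp asymptotics of Theorem \ref{thm:3} and then transport it to the diagonal point $t\phi'(\vphi^{-1}(1/t))$ by iterating the convolution identity $p(t_1+t_2,\cdot)=p(t_1,\cdot)*p(t_2,\cdot)$ together with the deviation estimate of Proposition \ref{prop:13}. First I would record the scaling facts that keep all comparisons uniform. Proposition \ref{prop:7} already gives an upper scaling $\vphi^{-1}(\lambda r)\le C\lambda^{1/\alpha}\vphi^{-1}(r)$; under hypothesis (i), $-\phi''\in\WUSC{\beta-2}{C}{x_0}$ forces $\vphi$, and hence $\vphi^*$, to have the weak upper scaling property with exponent $\beta$, and passing to the generalized inverse exactly as in Proposition \ref{prop:7} (compare also Proposition \ref{prop:1}) yields a matching lower scaling $\vphi^{-1}(\lambda r)\ge c'\lambda^{1/\beta}\vphi^{-1}(r)$; under hypothesis (ii) Potter's bounds \cite[Theorem 1.5.6]{MR1015093} supply both, with exponents arbitrarily close to $1$. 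Consequently, for every fixed $\lambda\ge 1$ one has $\vphi^{-1}(\lambda r)\approx\vphi^{-1}(r)$ uniformly, so the space and density scales attached to a time $s$ and to a fixed multiple of $s$ are comparable.

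For the seed, fix $\epsilon=1/2$ in Theorem \ref{thm:3} and let $M_0$ be the resulting constant. Given $s\in(0,1/\vphi(x_0))$, choose $w>x_0$ with $s\,\vphi(w)\approx M_0$; since $s\,w^2(-\phi''(w))=s\,\vphi(w)$ then exceeds $M_0$, Theorem \ref{thm:3} applies. Using Corollary \ref{cor:3} to bound the exponent ($\phi(w)-w\phi'(w)\le C\vphi(w)$) and the continuity of $w\mapsto s\phi'(w)$, one obtains a constant $c_0\in(0,1]$, depending only on $\alpha,c,x_0$, and an interval $I_s$ of length $\ge c_0/\vphi^{-1}(1/s)$ on which $p(s,\cdot)\ge c_0\,\vphi^{-1}(1/s)$. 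The subtlety is that $I_s$ lies around the moderate-deviation point $s\phi'(w)\approx s\phi'(\vphi^{-1}(M_0/s))$, which is a definite fraction of the way toward --- but not at --- the target point $s\phi'(\vphi^{-1}(1/s))$.

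For the propagation, suppose $p(t_1,\cdot)\ge\delta$ on an interval $I$ of length $\ell$ centred at $a$. Then
\[
    p(t_1+t_2,z)\ \ge\ \int_I p(t_1,y)\,p(t_2,z-y)\,{\rm d}y\ \ge\ \delta\,\PP\big(T_{t_2}\in z-I\big),
\]
and by Proposition \ref{prop:13} applied with $\lambda_2:=K_0/\vphi^{-1}(1/t_2)$ --- the absolute constant $K_0$ chosen so that $t_2\,h(\lambda_2)\le 1/4$, which is possible since $t_2\,h(\lambda_2)\lesssim K_0^{-\alpha}$ by \eqref{eq:33} and the scaling of $\vphi^*$ --- one gets $\PP(T_{t_2}\in[t_2 b_{\lambda_2}-\lambda_2,\,t_2 b_{\lambda_2}+\lambda_2])\ge 3/4$. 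Hence, whenever $\ell\ge 2\lambda_2$, one concludes $p(t_1+t_2,\cdot)\ge\tfrac34\delta$ on the interval of length $\ell-2\lambda_2$ centred at $a+t_2 b_{\lambda_2}$. Starting from the seed $I_{\tau_0}$ with $\tau_0$ a fixed fraction of $t$, I would iterate this with a decomposition $t=\tau_0+\tau_1+\dots+\tau_m$ chosen so that (a) at each stage the width condition holds --- here the two-sided scaling of $\vphi^{-1}$ is used to compare the current interval length, of order $1/\vphi^{-1}(1/(\tau_0+\dots+\tau_{k-1}))$, with $\lambda_k$ of order $1/\vphi^{-1}(1/\tau_k)$; (b) the total shift $\sum_k\tau_k b_{\lambda_k}$, added to the initial position of $I_{\tau_0}$, places the last interval $I_m$ around $t\phi'(\vphi^{-1}(1/t))$; and (c) the step count $m$ and the total length loss $2\sum_k\lambda_k$ stay bounded in terms of $\alpha,\beta,c,C,x_0,\rho_1,\rho_2$. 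Then $p(t,\cdot)\ge(3/4)^m c_0 c'\,\vphi^{-1}(1/t)$ on $I_m$, and choosing $\rho_0$ small enough that the window $\{\abs{x-t\phi'(\vphi^{-1}(1/t))}\le(\rho_1\vee\rho_2)/\vphi^{-1}(1/t)\}$ fits inside $I_m$ gives \eqref{eq:22}.

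The main obstacle is exactly the construction in (a)--(c): one must reconcile the facts that the seed starts a definite fraction of the way toward the target and lives on an interval only a small (and $M_0$-dependent) constant multiple of $1/\vphi^{-1}(1/s)$, while each propagation step advances the interval by the typical displacement of the added increment but shrinks it by twice that increment's deviation scale. Making the $\tau_k$ explicit, verifying that the partial shifts $\sum_{k\le j}\tau_k b_{\lambda_k}$ sweep through the window while the interval never collapses, and keeping $m$ bounded, is the technical heart; the uniform scaling comparisons from the first paragraph are the tools that make these estimates work, and hypothesis (ii) is handled identically with Potter's bounds in place of Proposition \ref{prop:1}. Everything else --- the $\epsilon=\tfrac12$ use of Theorem \ref{thm:3}, Corollary \ref{cor:3} for the exponent, the calibration of $K_0$, and the case $x_0>0$ via continuity and positivity --- is routine.
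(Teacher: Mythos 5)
Your seed step is fine (and close in spirit to what the paper does), but the propagation scheme in (a)--(c) — which you yourself flag as the technical heart — cannot be made to work, and the obstruction is quantitative, not just technical. The region where Theorem \ref{thm:3} applies at time $s$ is $s\vphi(w)>M_0$, so the seed interval's right endpoint is capped at $s\phi'\big(\vphi^{-1}(M_0/s)\big)$, and its length is at most of that same order. Take the stable case $\phi(x)=x^{\alpha}$: the cap is $M_0^{-(1-\alpha)/\alpha}$ times the target $t\phi'(\vphi^{-1}(1/t))\asymp \tfrac{1}{1-\alpha}\cdot\vphi^{-1}(1/t)^{-1}$, so the gap you must bridge is a fixed, not small, multiple of $1/\vphi^{-1}(1/t)$, while your total loss budget (the seed length) is only a small, $M_0$-dependent multiple of $1/\vphi^{-1}(1/t)$. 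Now look at one propagation step: Proposition \ref{prop:13} forces $\lambda_k\gtrsim 1/\vphi^{-1}(1/\tau_k)$ (with a large constant $K_0$ if you insist on probability $3/4$, and still an absolute constant even if you only ask for positive probability), the advance is $\tau_k b_{\lambda_k}\approx \tau_k\lambda_k\,\vphi(1/\lambda_k)$ and the width loss is $2\lambda_k$; since $b_\lambda/\lambda\lesssim h(\lambda)$ up to absolute constants, the ratio (advance)/(loss) is bounded by an absolute constant (in fact $\lesssim K_0^{-\alpha}$ with your calibration), uniformly in $\tau_k$ and $\lambda_k$, and making $\lambda_k$ larger only makes it worse. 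Hence the total advance before the interval collapses is at most an absolute constant times the seed length, which is far smaller than the gap; no choice of the $\tau_k$ (many small steps shrink the loss and the advance at the same rate) closes this. In other words, for a subordinator of index $<1$ the fluctuation of the truncated part over time $\tau$ is at least comparable to its mean displacement, so "concentrate and shift" loses width at least as fast as it gains position. A second, independent problem: the theorem demands the bound on a window extending $\rho_2/\vphi^{-1}(1/t)$ to the right for \emph{arbitrary} $\rho_2$, but your final interval is never longer than the seed, and "sweeping" the centre further right requires more total advance, hence more loss — so choosing $\rho_0$ small cannot rescue this, since $\rho_0$ only constrains $\rho_1$.

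This is precisely why the paper argues differently. It splits $\nu=\nu_1+\nu_2$ with $\nu_1=\tfrac12\nu|_{(0,\lambda]}$, so that $\phi_2\approx\phi$ and Corollary \ref{cor:1} gives the seed for $p^{(2)}$ at $x_t=t\phi_2'(\vphi^{-1}(M/t))$, widened to an interval of length $\asymp 1/\vphi^{-1}(1/t)$ by a gradient bound ($\sup_x|\partial_x p^{(2)}(t,x)|\lesssim (\vphi^{-1}(1/t))^2$ via Fourier inversion), and then the entire displacement to the window is delegated to the independent small-jump part $T^{(1)}_t$: one must show that $\PP\big(|x-x_t-T^{(1)}_t|\le C_0\lambda\big)$ is bounded below uniformly, which the paper does by tightness (again Proposition \ref{prop:13}) plus an analysis of weak limits $Y_0$ of $\lambda^{-1}(T^{(1)}_t-\tfrac12 tb_\lambda)$ — showing in case (ii) that $\mu_0$ has infinite first moment near $0$ (the $\log\epsilon^{-1}$ divergence), hence full support, and in case (i) that $\int_{(0,\epsilon)}s^2\,\mu_0({\rm d}s)\gtrsim\epsilon^{2-\alpha}$ together with Picard's lemma and a Moore--Osgood uniformity argument (this is where $\WUSC{\beta-2}{C}{x_0}$ is really used) identify the support as $[\chi,\infty)$ with $\chi\le-\tilde\rho$. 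Your proposal uses hypotheses (i)/(ii) only for scaling bookkeeping of $\vphi^{-1}$, not for any support property of the small-jump distribution; that these hypotheses play no essential role in your argument is itself a sign that the real difficulty — why the density is bounded below slightly to the \emph{left} of, and arbitrarily far (in units of $1/\vphi^{-1}(1/t)$) to the right of, the typical position — has not been addressed.
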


\begin{remark}
	From the proof of Theorem \ref{thm:2} it stems that if $x_0=0$, one can obtain the same statement under the condition that 
	$-\phi''$ is $(-1)$-regular at infinity and satisfies upper scaling at $0$ with $\alpha \leq \beta < 1$.
	Alternatively, one can assume that $-\phi''$ satisfies upper scaling at infinity with $\alpha \leq \beta < 1$
	and varies regularly at zero with index $-1$. The same remark applies to Proposition \ref{prop:4}.
\end{remark}

\begin{proof}
	First let us observe that it is enough to prove that \eqref{eq:22} holds true for a certain $M \geq 1$,
	all $t \in (0,1/\vphi(x_0))$ and all $x > 0$ satisfying
	\[
		-\frac{\rho_1}{\vphi^{-1}(M/t)} \leq
		x -  t \phi'\big(\vphi^{-1}(1/t)\big)
		\leq
		\frac{\rho_2}{\vphi^{-1}(M/t)}.
	\]
	Indeed, since $\vphi^{-1}$ is nondecreasing and has upper scaling property (see Proposition \ref{prop:7}), it has
	a doubling property. Hence, the lemma will follow immediately with possibly modified $\rho_0$.
	
	Without loss of generality we can assume that $b = 0$. Let $\lambda > 0$, whose value will be specified later. 
	We decompose the L\'{e}vy measure $\nu({\rm d} x)$ as follows: Let $\nu_1({\rm d} x)$ be the restriction of 
	$\tfrac12 \nu({\rm d} x)$ to the interval $(0,\lambda]$, and
	\[
		\nu_2({\rm d} x)=\nu({\rm d} x)-\nu_1({\rm d} x).
	\]
	We set
	\[
		\phi_1(u) = \int_{(0, \infty)} \big(1 - e^{-u s}\big) \: \nu_1({\rm d} s), 
		\qquad
		\phi_2(u) = \int_{(0, \infty)} \big(1 - e^{-u s}\big) \: \nu_2({\rm d} s).
	\]
	Let us denote by $\bfT^{(j)}$ the subordinator having the Laplace exponent $\phi_j$, for
	$j \in \{ 1, 2 \}$. Let $\psi_j(\xi) = \phi_j(-i\xi)$. Notice that $\tfrac12 \nu \leq \nu_2 \leq \nu$, thus
	\[
		\tfrac12 \phi \leq \phi_2 \leq \phi,
	\]
	and for every $n \in \NN$,
	\begin{equation}
		\label{eq:88}
		\tfrac12 (-1)^{n+1} \phi^{(n)} 
		\leq (-1)^{n+1} \phi_2^{(n)} 
		\leq (-1)^{n+1} \phi^{(n)}.
	\end{equation}
	Therefore, for all $u > 0$,
	\begin{equation}
		\label{eq:67}
		\tfrac12 \vphi(u) \leq \vphi_2(u) \leq \vphi(u).
	\end{equation}
	Next, by Theorem \ref{thm:3}, the random variables $T_t^{(2)}$ and $T_t$ are absolutely continuous. 
	Let us denote by $p^{(2)}(t, \:\cdot\:)$ and $p(t, \:\cdot\:)$ the densities of $T^{(2)}_t$ and $T_t$, respectively.

	Let $M \geq 2 M_0+1$, where $M_0$ is determined in Corollary \ref{cor:1} for
	the process $\bfT^{(2)}$. For $0 < t < 1/\varphi(x_0)$, we set
	\[
		x_t = t \phi_2'\big(\vphi^{-1}(M/t)\big).
	\]
	Since $\vphi^{-1}(M/t) > x_0$, we have
	\[
		\frac{x_t}{t} 
		= \phi_2'\big(\vphi^{-1}(M/t)\big) 
		\leq \phi'_2(x_0).
	\]
	Let
	\[
		w_2 = (\phi_2')^{-1}(x_t/t) =\vphi^{-1}(M/t).
	\]
	Then, by \eqref{eq:67} we get
	\begin{align*}
		\vphi_2(w_2) \geq \frac12 \vphi \big(  \vphi^{-1}(M/t) \big) = \frac{M}{2t} \geq \frac{M_0}{t}.
	\end{align*}
	Moreover, by Corollary \ref{cor:3} together with \eqref{eq:67} we get
	\begin{align*}
		t\big( \phi_2(w_2) - w_2 \phi_2'(w_2) \big)
		\lesssim
		t \vphi_2\big(w_2\big) \lesssim
		1.
	\end{align*}
	Hence, by Corollary \ref{cor:1},
	\begin{equation}
		\label{eq:89}
        p^{(2)}(t, x_t) \gtrsim \frac{1}{\sqrt{t (-\phi_2'')(w_2)}}.
	\end{equation}
	Notice that, by \eqref{eq:88} and Remark \ref{rem:3}, the implied constant in \eqref{eq:89} is independent of
	$t$ and $\lambda$. Since
	\begin{align*}
		(-\phi_2'')(w_2) \leq (-\phi'')\big( \vphi^{-1}(M/t)\big) =\frac{M}{t \big(\vphi^{-1}(M/t)\big)^{2}},
	\end{align*}
	by \eqref{eq:89} and monotonicity of $\vphi^{-1}$, we get
	\begin{equation}
		\label{eq:52}
		p^{(2)}(t, x_t) \geq C_1 \vphi^{-1} (1/t),
	\end{equation}
	for some constant $C_1 > 0$.
	
	Next, by the Fourier inversion formula 
	\begin{align*}
		\sup_{x \in \RR} \big| \partial_x p^{(2)}(t,x) \big|
		&\lesssim
		\int_\RR
		e^{-t \Re \psi_2(\xi)} \abs{\xi} \rmd \xi \\
		&\lesssim
		\int_\RR
		e^{-\frac{t}{2} \Re \psi(\xi)} \abs{\xi} \rmd \xi,
	\end{align*}
	thus, by \cite[Proposition 3.4]{GS2019}, and Propositions \ref{prop:6} and \ref{prop:7} we see
	that there is $C_2 > 0$ such that for all $t \in (0, 1/\varphi(x_0))$,
	\[
		\sup_{x \in \RR} \big| \partial_x p^{(2)}(t,x) \big|
		\leq
		C_2 \big(\vphi^{-1}(1/t)\big)^2.
	\]
	By the mean value theorem, for $y \in \RR$, we get
	\[
		\big|
		p^{(2)}(t, y + x_t) - p^{(2)}(t, x_t) 
		\big|
		\leq
		C_2 \abs{y} \big(\vphi^{-1}(1/t)\big)^2.
	\]
	Hence, for $y \in \RR$ satisfying
	\[
		\abs{y} \leq \frac{C_1} {2 C_2 \vphi^{-1}(1/t)},
	\]
	by \eqref{eq:52}, we get
	\begin{align*}
		p^{(2)}(t, y + x_t) 
		&\geq p^{(2)}(t, x_t) - C_2 \abs{y}  \big(\vphi^{-1}(1/t)\big)^2 \\
		&\geq \frac{C_1}{2} \vphi^{-1}(1/t).
	\end{align*}
	Therefore,
	\begin{align*}
		p(t,x) &= 
		\int_{\RR} p^{(2)}(t,x-y) \PP \big(  T_t^{(1)}\in \rmd y \big) \\
		&\geq
		\frac{C_1}{2} \vphi^{-1}(1/t) \cdot	
		\PP \bigg( \big|x - x_t - T_t^{(1)} \big| \leq \frac{C_0}{\varphi^{-1}(1/t)} \bigg) \\
		&\geq
		\frac{C_1}{2} \vphi^{-1}(1/t) \cdot	
		\PP \bigg( \big|x - x_t - T_t^{(1)} \big| \leq \frac{C_0}{\varphi^{-1}(M/t)} \bigg) \\
		&= \frac{C_1}{2} \vphi^{-1}(1/t) \cdot
		\PP \bigg( \Big|x - \tilde{x}_t - \big( \tfrac12 tb_{\lambda} - (\tilde{x}_t - x_t) \big) 
		- \big(T_t^{(1)}-\tfrac12 tb_{\lambda}\big) \Big| 
		\leq \frac{C_0}{\varphi^{-1}(M/t)} \bigg),
	\end{align*}
	where we have set $C_0=C_1(2C_2)^{-1}$ and
	\[
		\tilde{x}_t = t\phi' \big(\vphi^{-1}(M/t)\big).
	\]
	Let $\rho_0 = \tfrac{1}{2} C_0$ and
	\begin{align}
		\label{eq:123}
		\lambda = \frac{1}{\vphi^{-1}(M/t)}.
	\end{align}
	We have
	\begin{align*}
		\frac{1}{2} tb_{\lambda} - (\tilde{x}_t - x_t) 
		&= \frac12tb_{\lambda} - t\phi_1'(1/\lambda)  \\
		&= \frac{t}{2} \int_{(0,\lambda]} s\big(1-e^{-s/\lambda} \big) \: \nu({\rm d} s).
	\end{align*}
	Thus, $\tfrac12 tb_{\lambda} - (\tilde{x}_t - x_t)$ is nonnegative and in view of \eqref{eq:33}
	and \eqref{eq:123},
	\begin{align}
		\nonumber
		\tfrac12 tb_{\lambda} - (\tilde{x}_t - x_t) 
		&\leq 
		C_3 t \lambda \vphi(1/\lambda) \\
		\label{eq:29}
		&= \frac{C_3M}{\vphi^{-1}(M/t)}, 
	\end{align}
	for some constant $C_3>0$. Next, setting
	\[
		\rho(t) = \lambda^{-1}\Big( \tfrac12 tb_{\lambda} - (\tilde{x}_t - x_t)\Big),
	\]
	we get
	\begin{align}
		\nonumber
		&
		\inf_{t \in (0, 1/\varphi(x_0))}{\bigg\{ 
		\PP \bigg( 
		\Big| x -\tilde{x}_t - \lambda\rho(t) - \big( T_t^{(1)} - \tfrac12 tb_{\lambda}\big) \Big| 
		\leq C_0 \lambda \bigg)
		\colon
		x \geq 0, -\rho_1 \lambda \leq x- \tilde{x}_t 
		\leq 
		\rho_2\lambda \bigg\}} \\
		\label{eq:56}
		&\qquad\qquad\geq
		\inf_{t \in (0, 1/\varphi(x_0))}{
		\Big\{
		\PP\( \Big| y - \lambda^{-1} \big( T_t^{(1)} -\tfrac12 tb_{\lambda} \big) \Big| \leq C_0 \)
		\colon
		-\rho_1 -\rho(t) \leq y \leq \rho_2  \Big\}}.
	\end{align}
	Hence, the problem is reduced to showing that the infimum above is positive. Let us consider a collection
	$\{Y_t \colon t \in (0, 1/\varphi(x_0)) \}$ of infinitely divisible nonnegative random variables
	$Y_t = \lambda^{-1}\big( T^{(1)}_t-\tfrac12 tb_{\lambda}\big)$. The L\'{e}vy measure corresponding to $Y_t$ 
	is
	\begin{equation}
		\label{eq:53}
		\mu_t(B) = t \nu_1\big(\lambda B\big)
	\end{equation}
	for any Borel subset $B \subset \RR$. Since for each $R > 1$,
	\begin{align*}
		b_{R \lambda}^{(1)} 
		&= \int_{(0, R \lambda]} y \: \nu_1({\rm d} y) \\
		&= \frac{1}{2} \int_{(0, \lambda]} y \: \nu({\rm d} y) = \frac{1}{2} b_\lambda,
	\end{align*}
	by Proposition \ref{prop:13}, 
	\begin{align*}
		\PP\big(|Y_t| \geq R\big) &=
		\PP\(\Big| T_t^{(1)}-\tfrac12 tb_{\lambda} \Big| \geq R \lambda \) \\
		&\lesssim t \int_{(0, \infty)} \min{\big\{1, R^{-2} \lambda^{-2} s^2\big\}} \: \nu_1({\rm d} s),
	\end{align*}
	thus,
	\begin{align*}
		\PP\big(|Y_t| \geq R\big)
		&\lesssim 
		t \lambda^{-2} R^{-2} \int_{(0, \lambda]} s^2 \: \nu({\rm d} s) \\
		&\lesssim
		t R^{-2} h(\lambda) \\
		&\lesssim
		t R^{-2} \vphi(1/\lambda)
	\end{align*}
	where in the last estimate we have used \eqref{eq:33}. Therefore, recalling \eqref{eq:123} we conclude that the
	collection is tight. Next, let $\big((Y_{t_n}, y_n) \colon n \in \NN\big)$ be a sequence realizing the infimum
	in \eqref{eq:56}. By the Prokhorov theorem, we can assume that $(Y_{t_n} \colon n \in \NN)$ is weakly
	convergent to the random variable $Y_0$. We note that $Y_{t_n}$ has the probability distribution supported in
	$\big[-\tfrac12t_n\lambda_n^{-1}b_{\lambda_n}, \infty\big)$ where $\lambda_n$ is defined as $\lambda$ corresponding
	to $t_n$.

	Suppose that $(t_n : n \in \NN)$ contains a subsequence convergent to $t_0 > 0$. Then $Y_0 = Y_{t_0}$ and
	the support of its probability distribution equals $\big[-\tfrac12t_0\lambda_0^{-1}b_{\lambda_0}, \infty\big)$. 
	Since $\rho(t_0) \leq \tfrac12t_0\lambda_0^{-1}b_{\lambda_0}$, we easily conclude that the infimum in \eqref{eq:56}
	is positive.

	Hence, it remains to investigate the case when $(t_n \colon n \in \NN)$ has no positive accumulation points.
	If zero is the only accumulation point then $(\lambda_n \colon n \in \NN)$ has a subsequence convergent to zero.
	Otherwise $(t_n)$ diverges to infinity, thus $x_0 = 0$ and $(\lambda_n)$ contains a subsequence diverging to infinity.
	In view of \eqref{eq:29}, $\rho(t)$ is uniformly bounded in $t$. Thus, after taking a subsequence we may
	and do assume that there exists a limit
	\[
		\tilde{\rho} = \lim_{n \to \infty} \rho(t_n).
	\]
	By compactness we can also assume that $(y_n \colon n \in \NN)$ converges to
	$y_0 \in [-\rho_1-\tilde{\rho}, \rho_2]$. Consequently, to prove that the infimum in \eqref{eq:56} is positive
	it is sufficient to show that
	\begin{equation}
		\label{eq:35}
		\PP\big(|y_0 - Y_0| \leq \tfrac{1}{2} C_0 \big) > 0.
	\end{equation}
	Observe that \eqref{eq:35} is trivially satisfied if the support of the probability distribution of $Y_0$ is
	the whole real line. Therefore, we can assume that $Y_0$ is purely non-Gaussian. In view of
	\cite[Theorem 8.7]{MR1739520}, it is also infinitely divisible.

	Given $w \colon \RR \rightarrow \RR$ a continuous function satisfying
	\begin{align}
		\label{eq:125}
		\big| w(x) - 1 \big| \leq C' \abs{x}, \qquad\text{and}\qquad
		\big| w(x)\big| \leq C' \abs{x}^{-1},
	\end{align}
	we write the L\'{e}vy--Khintchine exponent of $Y_{t_n}$ in the form
	\[
		\psi_n(\xi) = -i \xi \gamma_n - \int_{(0, \infty)} \big(e^{i\xi s} -1 -i\xi s w(s) \big)
		\: \mu_{t_n}({\rm d} s)
	\]
	where
	\[
		\gamma_n = \int_{(0, \infty)} s w(s) \: \mu_{t_n}({\rm d} s) - \tfrac12 \lambda_n^{-1}t_n b_{\lambda_n}.
	\]
	Since $(Y_{t_n} : n \in \NN)$ converges weakly to $Y_0$, there are $\gamma_0 \in \RR$ and $\sigma$-finite measure
	$\mu_0$ on $(0, \infty)$ satisfying
	\[
		\int_{(0, \infty)} \min{\big\{1, s^2 \big\}} \: \mu_0({\rm d} s) < \infty,
	\]
	such that the L\'{e}vy--Khintchine exponent of $Y_0$ is
	\[
		\psi_0(\xi) = -i\xi \gamma_0 - \int_{(0, \infty)} \big(e^{i\xi s} -1 -i\xi s w(s) \big)
		\:
        \mu_0({\rm d} s)
	\]
	where
	\begin{equation}
		\label{eq:82}
		\gamma_0 = \lim_{n \to \infty} \gamma_n.
	\end{equation}
	Moreover, for any bounded continuous function $f\colon \RR \rightarrow \RR$ vanishing in a neighborhood of zero,
	we have
	\begin{equation}
		\label{eq:81}
		\lim_{n \to \infty} \int_{(0, \infty)} f(s) \: \mu_{t_n}({\rm d} s) 
		= \int_{(0, \infty)} f(s) \: \mu_{0}({\rm d} s).
	\end{equation}
	Next, let us fix $w$ satisfying \eqref{eq:125} which equals $1$ on $[0,1]$. In view of \eqref{eq:53} and
	the definition of $\nu_1$ the support of $\mu_{t_n}$ is contained in $[0, 1]$. Hence, $\gamma_n=0$ for every
	$n \in \NN$ and consequently, $\gamma_0=0$. We also conclude that $\supp \mu_0 \subset [0,1]$.
	
	At this stage we consider the cases (i) and (ii) separately. In (ii) we need to distinguish two possibilities: 
	if $(t_n)$ tends to zero then also $(\lambda_n)$ approaches to zero, and we impose that $-\phi''$ is a function
	regularly varying at infinity with index $-1$; otherwise $(t_n)$ tends to infinity as well as $(\lambda_n)$, thus
	$x_0 = 0$ and we additionally assume that $-\phi''$ is a function regularly varying at zero with index $-1$. For the sake of
	clarity of presentation, we restrict attention to the first possibility only. In the second one the reasoning is
	analogous. We show that the support of the probability distribution of $Y_0$ is the whole real line. 
	By \cite[Theorem 24.10]{MR1739520}, the latter can be deduced from
	\begin{equation}
		\label{eq:54}
		\int_{(0, \infty)} \min\{1, s\} \: \mu_0({\rm d} s) = \infty.
	\end{equation}
	Since $\supp \mu_0 \subset [0, 1]$, for each $\epsilon \in (0,1)$ we can write
	\[
		\int_{(0, \infty)} \min\{1, s\} \: \mu_0({\rm d} s) 
		\geq 
		\int_{(\epsilon/2, 1]} s \: \mu_0({\rm d} s),
	\]
	thus to conclude \eqref{eq:54}, it is enough to show that
	\begin{equation}
		\label{eq:68}
		\int_{(\epsilon/2, 1]} s \: \mu_0({\rm d} s) \gtrsim \log \epsilon^{-1}.
	\end{equation}
	For the proof, for any $\epsilon \in (0,1)$ we define the following
	bounded continuous function
	\begin{align}\label{eq:65}
		f_{\epsilon}(s) = \begin{cases}
			0 & \text{if } s<\epsilon/2,\\
			2s-\epsilon, & \text{if } \epsilon/2 \leq s < \epsilon,\\
			s & \text{if } \epsilon \leq s < 1,\\
			1 & \text{if }s \geq 1.
		\end{cases}
	\end{align}
	We have, in view of \eqref{eq:81},
	\begin{equation}\label{eq:66}
	\int_{(\epsilon/2,1]} s \: \mu_0({\rm d}s) \geq \int_{(0,1]}f_{\epsilon}(s) \: \mu_0({\rm d}s) = \lim_{n \to \infty} \int_{(0,1]}f_{\epsilon}(s) \: \mu_{t_n}({\rm d}s) \geq \liminf_{n \to \infty} \int_{(\epsilon,1]} s\:\mu_{t_n}({\rm d}s).
	\end{equation}
	Let us estimate the last integral. We write
	\begin{align*}
		\int_{(\epsilon, 1]} s \: \mu_t (\rm{d} s) \nonumber 
		&= 
		t\lambda^{-1} \int_{(\lambda \epsilon, \lambda]}s \: \nu_1({\rm d}s) \\ 
		\label{eq:131}
		&= 
		\tfrac{1}{2} t \lambda^{-1}\int_{(\lambda \epsilon, \lambda ]}s \: \nu({\rm d}s).
	\end{align*}
	By the Fubini--Tonelli theorem, we get
	\[
		\int_{[\lambda\epsilon,\lambda)} s \:\nu({\rm d}s) = 
		\int_{\lambda \epsilon}^{\lambda}u^{-2} \int_{(0,u]} s^2\:\nu({\rm d}s) \:{\rm d}u
		+ \lambda K(\lambda) - \lambda \epsilon K(\lambda \epsilon).
	\]
	Thus,
	\begin{equation}
		\label{eq:72}
		2 \int_{[\epsilon,1)} s \:\mu_t({\rm d}s) 
		= t\lambda^{-1} \int_{\lambda \epsilon}^{\lambda} K(u)\:{\rm d}u 
		+ tK(\lambda) - t\epsilon K(\lambda \epsilon).
	\end{equation}
	Setting $z = 1/\lambda$, by \eqref{eq:33} and \eqref{eq:123}, we obtain
	\[
		t K(\lambda) \approx t \vphi(z) \approx 1.
	\]
	Moreover, since $\vphi$ is $1$-regularly varying function at infinity, we have
	\[
		t\epsilon K(\lambda \epsilon) \approx t \epsilon \vphi(z/\epsilon) = M \epsilon 
		\frac{\vphi(z/\epsilon)}{\vphi(z)}\to M,
	\]
	as $z$ tends to infinity. Therefore, it remains to estimate the integral in \eqref{eq:72}. Using \eqref{eq:33} we get
	\begin{align*}
		t\lambda^{-1} \int_{\lambda \epsilon}^{\lambda} K(u)\:{\rm d}u
		&\approx 
		\frac{z}{\vphi(z)} \int_{\epsilon z^{-1}}^{z^{-1}}\vphi \big( u^{-1} \big)\:{\rm d}u \\ 
		&=\frac{z}{\vphi(z)}\int_z^{\epsilon^{-1}z}u^{-2}\vphi(u)\:{\rm d}u \\ 
		&= \frac{\phi'(z)-\phi' \big(\epsilon^{-1}z\big)}{z \big(-\phi''(z)\big)}.
	\end{align*}
	Since $-\phi''(s)=s^{-1} \ell(s)$ for a certain function $\ell$ slowly varying at infinity, 
	by \cite[Theorem 1.5.6]{MR1015093},
	\[
		\frac{\phi'(z)-\phi' \big(\epsilon^{-1}z\big)}{z \big(-\phi''(z)\big)} 
		= 
		\int_1^{\epsilon^{-1}} \frac{\ell(zt)}{\ell(z)} \frac{{\rm d}t}{t} 
		\to \log \epsilon^{-1},
	\]
	as $z$ tends to infinity. Hence, 
	\[
		\liminf_{n \to \infty} \int_{(\epsilon, 1]} s \: \mu_{t_n}({\rm d} s) \gtrsim \log \epsilon^{-1},
	\]
	which by \eqref{eq:66} implies \eqref{eq:68}.

	Next, let us consider the case (i) that is when $-\phi'' \in \WUSC{\beta-2}{C}{x_0}$ with $C \geq 1$ and 
	$\alpha\leq\beta<1$. We claim that for all $\epsilon \in (0, 1)$,
	\begin{equation}
		\label{eq:64}
		\int_{(0, \epsilon)} s^2 \: \mu_0({\rm d} s) >0.
	\end{equation}
	To see this, it is enough to show that there is $C > 0$ such that for all $\epsilon \in (0, 1]$ and 
	$t \in (0, 1/\varphi(x_0))$,
	\begin{equation}
		\label{eq:63}
		\int_{(0, \epsilon)} s^2 \: \mu_t({\rm d} s) \geq C \epsilon^{2-\alpha}.
	\end{equation}
	For the proof, we select a continuous function on $\RR$ such that
	\[
		\ind{(-1, 1)} \leq \eta \leq \ind{(-2, 2)},
	\]
	and for each $\tau > 0$ set
	\[
		\eta_\tau(x) = \eta(\tau^{-1} x).
	\]
	Since for $0 < 2 \tau < \epsilon$, 
	\begin{align*}
		\int_{(0, \infty)} s^2 \big(\eta_\epsilon(s) - \eta_\tau(s)\big) \: \mu_t({\rm d} s)
		+ \int_{(0, 2\tau)} s^2 \eta_\tau(s) \: \mu_t({\rm d} s)
		&\geq
		\int_{(0, \epsilon)} s^2  \: \mu_t({\rm d} s),
	\end{align*}
	by \eqref{eq:63} and \eqref{eq:81},
	\[
		\int_{(0, \infty)} s^2 \big(\eta_\epsilon(s) - \eta_\tau(s)\big) \: \mu_0({\rm d} s) +
		\limsup_{n \to \infty} 
		\int_{(0, \infty)} s^2 \eta_\tau(s) \: \mu_{t_n}({\rm d} s)
		\geq C \epsilon^{2-\alpha}.
	\]
	Since $Y_{t_n}$ and $Y_0$ are purely non-Gaussian, by \cite[Theorem 8.7(2)]{MR1739520},
	\[	
		\lim_{\tau \to 0^+}
		\limsup_{n \to \infty} 
		\int_{(-\tau, \tau)} s^2 \: \mu_{t_n} ({\rm d} s) = 0,
	\]
	thus,
	\[
		\int_{(0, \epsilon)} s^2 \:\mu_0({\rm d} s) \geq C \epsilon^{2-\alpha},
	\]
	which entails \eqref{eq:64}. 

	We now turn to showing \eqref{eq:63}. We have
	\begin{align*}
		\int_{(0, \epsilon)} s^2 \: \mu_t({\rm d} s) 
		&=
		t \lambda^{-2} \int_{(0, \lambda \epsilon)} s^2 \: \nu_1({\rm d} s) \\
		&=
		\tfrac{1}{2} t \lambda^{-2} \int_{(0, \lambda \epsilon)} s^2 \: \nu({\rm d} s)\\
		&=
		\tfrac{1}{2} t \epsilon^2 K(\lambda \epsilon),
	\end{align*}
	thus, by \eqref{eq:33} and the weak lower scaling property of $\vphi$,
	\begin{align*}
		\int_{(0, \epsilon)} s^2 \:\mu_t({\rm d} s) 
		&\gtrsim t \epsilon^2 \vphi\big(\epsilon^{-1} \lambda^{-1}\big) \\
		&\gtrsim t \epsilon^{2-\alpha} \vphi(1/\lambda),
	\end{align*}
	which, together with the definition of $\lambda$, implies \eqref{eq:63}.

	Since the support of the probability distribution of $Y_0$ is not the whole real line, 
	by \cite[Lemma 2.5]{MR1486930}, the inequality \eqref{eq:64} implies that 
	\begin{equation}
		\label{eq:127}
		\int_{(0, \infty)} \min\{1, s\} \: \mu_0({\rm d} s) < \infty
	\end{equation}
	and the support of $Y_0$ equals $[\chi, \infty)$ where
	\begin{equation}
		\label{eq:127a}
		\chi = \gamma_0 - \int_{(0, \infty)} sw(s) \: \mu_0({\rm d} s) = -\int_{(0, 1]} s \: \mu_0({\rm d} s).
	\end{equation}
	To conclude \eqref{eq:35}, it is enough to show that $\chi \leq -\tilde{\rho}$. Since
	$\rho(t_n) \leq \frac{1}{2}t_n \lambda_n^{-1} b_{\lambda_n}$, the latter can be deduced from
	\begin{equation}
		\label{eq:79}
		\begin{aligned}
		\chi 
		&= -\lim_{n \to \infty} \frac{1}{2} t_n \lambda_n^{-1} b_{\lambda_n} \\
		&= -\lim_{n \to \infty} \int_{(0, 1]} s \: \mu_{t_n}({\rm d} s)
		\end{aligned}
	\end{equation}
	where the last equality is a consequence of \eqref{eq:53} since
	\begin{align}
		\int_{(0, 1]} s \: \mu_t (\rm{d} s) \nonumber 
		&= 
		t\lambda^{-1} \int_{(0, \lambda]} s \: \nu_1({\rm d}s) \\ 
		\label{eq:103}
		&= 
		\tfrac{1}{2} t \lambda^{-1}\int_{(0, \lambda] }s \: \nu({\rm d}s). 
	\end{align}
	Therefore, the problem is reduced to showing \eqref{eq:79}. By the monotone convergence theorem and \eqref{eq:81} we have
	\begin{equation}
		\label{eq:60}
		\begin{aligned}
		\chi &= -\lim_{\epsilon \to 0^+} \int_{(0,1]} f_{\epsilon}(s) \: \mu_0({\rm d}s) \\
		&= -\lim_{\epsilon \to 0^+} \lim_{n \to \infty}\int_{(0,1]} f_{\epsilon}(s) \: \mu_{t_n}({\rm d}s),
		\end{aligned}
	\end{equation}
	and
	\begin{equation}
		\label{eq:80}
		\lim_{\epsilon \to 0^+}
		\int_{(0,1]} f_{\epsilon}(s) \: \mu_{t_n}({\rm d}s)
		=
		\int_{(0, 1]} s \: \mu_{t_n}({\rm d} s),
	\end{equation}
	where $f_{\epsilon}$ is as in \eqref{eq:65}. Hence, we just need to justify the change in the order of limits. In view of the Moore--Osgood theorem \cite[Chapter VII]{Graves}, 
	it is enough to show that the limit in \eqref{eq:80} is uniform with respect to $n \in \NN$. 

	We write
	\begin{align*}
		\bigg|
		\int_{(0, 1]} s \: \mu_t({\rm d} s)
		-
		\int_{(0, 1]} f_\epsilon(s) \: \mu_t({\rm d} s)
		\bigg|
		&\leq
		\int_{(0, \epsilon/2]} s \: \mu_t({\rm d} s) 
		+
		\int_{(\epsilon/2, \epsilon]} (\epsilon-s) \: \mu_t({\rm d} s) \\
		&\leq
		\int_{(0, \epsilon]} s \: \mu_t({\rm d} s).
	\end{align*}	
	By \eqref{eq:103} and the Fubini--Tonelli theorem, we have
	\begin{align*}
		2 t^{-1} \lambda 
		\int_{(0, \epsilon]} s \: \mu_t({\rm d} s)
		=
		\int_{(0, \lambda \epsilon]}s \: \nu({\rm d}s) 
		&= 
		\int_0^{\lambda \epsilon} u^{-2} \int_{(0,u]} s^2 \: \nu({\rm d} s) \rmd u 
		+
		\lambda \epsilon K (\lambda \epsilon) \\ 
		&\approx 
		\int_0^{\lambda \epsilon} \varphi\big(u^{-1}\big) \rmd u
		+
		\lambda \epsilon \varphi\big(\lambda^{-1}\epsilon^{-1}\big).
	\end{align*}
	By almost monotonicity of $\vphi$,
	\begin{align}
		\nonumber
		\int_{(0, \epsilon]} s \: \mu_t ({\rm d} s) 
		&
		\approx
		t \lambda^{-1} \int_0^{\lambda \epsilon} \varphi\big(u^{-1}\big) \rmd u
		+
		t \epsilon \varphi\big(\lambda^{-1}\epsilon^{-1}\big) \\
		\label{eq:124}
		&\approx
		t \lambda^{-1} \int_0^{\lambda \epsilon} \varphi\big(u^{-1}\big) \rmd u.
	\end{align}
	Now, setting $z = \varphi^{-1}(M/t)$, by \eqref{eq:123}, we get
	\begin{align}
		\nonumber
		t \lambda^{-1} \int_0^{\lambda \epsilon} \varphi\big(u^{-1}\big) \rmd u
		&= 
		t\varphi^{-1}(M/t)
		\int_0^{\epsilon/\varphi^{-1}(M/t)} \varphi \big(u^{-1}\big) \rmd u \\
		\nonumber
		&\approx 
		\frac{z}{\varphi(z)}\int_0^{\epsilon z^{-1}} \varphi \big( u^{-1} \big) \rmd u \\
		\nonumber
		&= 
		\frac{z}{\varphi(z)}\int_{\epsilon^{-1} z}^{\infty} u^{-2}\varphi(u) \rmd u \nonumber \\
		\label{eq:126}
		&= \frac{\phi'\big(\epsilon^{-1} z\big)}{z \big(-\phi''(z)\big)}.
	\end{align}
	In view of Proposition \ref{prop:WUSC}, by the upper scaling of $-\phi''$, there is $c > 0$ such that for all
	$z > x_0$, 
	\[
		\frac{\phi'\big(\epsilon^{-1} z\big)}{z \big(-\phi''(z)\big)}
		\leq c \epsilon^{1-\beta}.
	\]
	Hence, the limit in \eqref{eq:80} is uniform with respect to $n \in \NN$ which justifies
	\eqref{eq:79}. This completes the proof of \eqref{eq:35} and the lemma follows.
\end{proof}

\begin{theorem}
	\label{thm:7}
	Let $\bfT$ be a subordinator with the Laplace exponent $\phi$. Suppose that $\phi \in \WLSC{\alpha}{c}{x_0}
	\cap \WUSC{\beta}{C}{x_0}$ for some $c \in (0, 1]$, $C \geq 1$, $x_0 \geq 0$, and $0 < \alpha \leq \beta < 1$. 
	We also assume that $b=0$. Then for all $0<\chi_1<\chi_2$ there is $C' \geq 1$ such that for all
	$t \in (0, 1/\varphi(x_0))$ and $x > 0$ satisfying
	\[
		\chi_1 \leq x \phi^{-1}(1/t) \leq \chi_2,
	\]
	we have
	\begin{equation}
		\label{eq:11}
		C'^{-1} \phi^{-1}(1/t) \leq p(t, x) \leq C' \phi^{-1}(1/t).
	\end{equation}	
\end{theorem}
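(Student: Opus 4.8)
The plan is to establish the two bounds in \eqref{eq:11} separately, after unpacking the hypothesis. Since $\phi\in\WLSC{\alpha}{c}{x_0}\cap\WUSC{\beta}{C}{x_0}$ with $0<\alpha\le\beta<1$, Corollary~\ref{cor:6} gives $-\phi''\in\WLSC{\alpha-2}{c'}{x_0}\cap\WUSC{\beta-2}{C'}{x_0}$; in particular condition (i) of Theorem~\ref{thm:2} holds, and Theorems~\ref{thm:3}, \ref{thm:4}, Corollary~\ref{cor:1} and Theorem~\ref{thm:2} all apply, so $T_t$ has a density. I would first record, for repeated use, the comparabilities $\varphi^{-1}(1/t)\approx\phi^{-1}(1/t)$ and $\varphi\approx\varphi^*\approx\phi$ on $(x_0,\infty)$ (Propositions~\ref{prop:1} and \ref{prop:7}), $u\phi'(u)\approx\phi(u)$ on $(x_0,\infty)$ (Proposition~\ref{prop:WLSC}), $\phi(\phi^{-1}(1/t))\approx 1/t$, the doubling of $\phi^{-1}$, and --- combining $u\phi'(u)\approx\phi(u)$ with the two scalings of $\phi$ --- that $\phi'$ is nonincreasing and satisfies the lower and upper scaling conditions at infinity with indices $\alpha-1$ and $\beta-1$. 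Together these give $t\phi'(\varphi^{-1}(1/t))\approx 1/\varphi^{-1}(1/t)\approx 1/\phi^{-1}(1/t)$.

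\emph{Upper bound.} This is immediate from Theorem~\ref{thm:4}: as $b=0$, bounding $\min\{a,b\}\le a$ in \eqref{eq:98} gives $p\big(t,x+tb_{1/\psi^{-1}(1/t)}\big)\le C\varphi^{-1}(1/t)$ for every $x\in\RR$ and $t\in(0,1/\varphi(x_0))$, and replacing $x$ by $x-tb_{1/\psi^{-1}(1/t)}$ yields $p(t,x)\le C\varphi^{-1}(1/t)\le C'\phi^{-1}(1/t)$ for all $x>0$.

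\emph{Lower bound.} I would cover the range $\chi_1\le x\phi^{-1}(1/t)\le\chi_2$ with two overlapping families of estimates. For the ``bulk'' I would apply Theorem~\ref{thm:2} with a fixed $\rho_1\in(0,\rho_0)$ and with $\rho_2$ large (depending on $\chi_2$); using $t\phi'(\varphi^{-1}(1/t))\approx 1/\phi^{-1}(1/t)$, its band turns into $x\phi^{-1}(1/t)\in[\bar a-\rho_1',\chi_2]$ for some constant $\bar a\approx 1$ and $\rho_1'>0$, on which $p(t,x)\gtrsim\varphi^{-1}(1/t)\approx\phi^{-1}(1/t)$. For small $x$ I would use Corollary~\ref{cor:1} with $w:=(\phi')^{-1}(x/t)$: when $\varphi^{-1}(M_0/t)\le w\le N\varphi^{-1}(1/t)$ for a fixed $N$, by \eqref{eq:85} one has $t\varphi(w)=t(-\phi''(w))w^2\le N^2 t\varphi^*(\varphi^{-1}(1/t))=N^2$, while $w\ge\varphi^{-1}(M_0/t)\gtrsim\phi^{-1}(1/t)$, so $(t(-\phi''(w)))^{-1/2}=w/\sqrt{t\varphi(w)}\gtrsim_N\phi^{-1}(1/t)$; moreover \eqref{eq:20} and Corollary~\ref{cor:3} give $0\le\phi(w)-w\phi'(w)\le C\varphi(w)\le CN^2/t$, so the exponential factor in Corollary~\ref{cor:1} is bounded below. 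Hence $p(t,x)\gtrsim_N\phi^{-1}(1/t)$, and translating the constraints on $w$ through the scaling of $\phi'$ and $\varphi^{-1}$ shows this covers $x\phi^{-1}(1/t)\in[\chi_1,\kappa_0]$ for a fixed $\kappa_0\approx 1$, once $N$ is taken large enough in terms of $\chi_1$.

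The translations between $x$, $w$, $\varphi^{-1}(1/t)$ and $\phi^{-1}(1/t)$ are routine bookkeeping with the scaling inequalities of Section~\ref{sec:4}; the delicate step, on which I would spend the most effort, is ensuring that the band of Theorem~\ref{thm:2} and the region of Corollary~\ref{cor:1} together cover the whole interval $[\chi_1,\chi_2]$ uniformly in $t$. If a bounded gap remains, I would close it by a Chapman--Kolmogorov step: for $x$ in the gap write $p(t,x)=\int_0^x p(t/2,y)p(t/2,x-y)\rmd y\ge\int_E p(t/2,y)p(t/2,x-y)\rmd y$, where $E$ is the set of $y$ for which both $y$ and $x-y$ lie in the region where the lower bound at time $t/2$ is already known; since that region is an interval of length $\approx 1/\phi^{-1}(1/t)$ in the variable $y\phi^{-1}(2/t)$, one gets $|E|\gtrsim 1/\phi^{-1}(1/t)$ and therefore $p(t,x)\gtrsim\phi^{-1}(2/t)^2/\phi^{-1}(1/t)\gtrsim\phi^{-1}(1/t)$, so finitely many such steps close the gap.
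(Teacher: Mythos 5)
Your upper bound and your two individual lower-bound regimes are essentially fine, with one citation slip: Theorem \ref{thm:4} assumes an almost monotone L\'evy density, which Theorem \ref{thm:7} does not, so it does not automatically ``apply''; since you only use $\min\{\cdot,\cdot\}\le\varphi^{-1}(1/t)$, you should invoke \eqref{eq:44} of Theorem \ref{thm:6} instead (the paper's own reference to \eqref{eq:98} has the same defect, but \eqref{eq:44} needs no extra hypothesis). The genuine gap is exactly in the step you flag as delicate. The window covered by Corollary \ref{cor:1} ends at a constant $\kappa_0$ dictated by $M_0$ and the comparability constants, while the band of Theorem \ref{thm:2}, as stated, begins at $t\phi'(\varphi^{-1}(1/t))-\rho_1/\varphi^{-1}(1/t)$ with $\rho_1<\rho_0$ and $\rho_0$ small and uncontrolled, so in general a gap in the variable $x\phi^{-1}(1/t)$ does remain, and your Chapman--Kolmogorov patch cannot close it as written. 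The obstruction is the direction of the scaling: if the region known at time $t/2$ is $\{y:\ y\phi^{-1}(2/t)\in[a,\kappa]\}$, then your set $E$ is nonempty only when $x\phi^{-1}(2/t)\le 2\kappa$; for the $\alpha$-stable exponent, $\phi^{-1}(2/t)=2^{1/\alpha}\phi^{-1}(1/t)$ with $2^{1/\alpha}>2$, so this forces $x\phi^{-1}(1/t)\le 2^{1-1/\alpha}\kappa<\kappa$, i.e.\ the convolution step only reaches points already covered and never advances into the gap; iterating it moves the covered window down, not up (probabilistically: forcing both halves of $T_t$ to be typical at time $t/2$ makes $T_t$ atypically small). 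So ``finitely many such steps'' do not close the gap, and since the gap is generically present, this step is load-bearing.

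A convolution argument can be repaired, but only by pairing the two regimes rather than doubling one of them: take $y$ in the Corollary \ref{cor:1} window at time $t-s$ and $x-y$ in the Theorem \ref{thm:2} band at time $s=2^{-k}t$, using the lower scaling $\phi^{-1}(\lambda r)\gtrsim\lambda^{1/\beta}\phi^{-1}(r)$ (Proposition \ref{prop:1}) to push $(x-y)\phi^{-1}(1/s)$ above the band's left edge and the arbitrariness of $\rho_2$ to keep it inside; this requires choosing $k$, the lower edge of the Corollary \ref{cor:1} window, and the band's right edge in terms of the fixed constants, none of which is in your sketch. The paper avoids any gluing altogether: in its proof the reference point is $x_t=t\phi'(\varphi^{-1}(M/t))$, and since $\phi'$ has upper scaling of negative index $\beta-1$, one gets $x_t\le C_2M^{1-1/\beta}/\varphi^{-1}(1/t)$; choosing $M$ with $C_1C_2M^{1-1/\beta}<\chi_1$ places the whole interval $[\chi_1,\chi_2]/\phi^{-1}(1/t)$ to the right of the centre, where the half-width $\rho_2$ is unrestricted, so Theorem \ref{thm:2} (in the $M$-recentred form its proof provides) yields the lower bound in one stroke, with no need for Corollary \ref{cor:1} at all. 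You should either adopt that recentring device or carry out the repaired two-regime convolution in detail.
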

\begin{proof}
	First let us notice that Corollary \ref{cor:6} implies that $-\phi'' \in \WLSC{\alpha-2}{c}{x_0}
	\cap \WUSC{\beta-2}{C}{x_0}$. Therefore, the hypothesis of Theorem \ref{thm:2} is satisfied.

	It is enough to show the first inequality in \eqref{eq:11} since the latter is an easy consequence of 
	\eqref{eq:98} and Proposition \ref{prop:1}. For $t \in (0, 1/\varphi(x_0))$ and $M \geq 1$, we set
	\[
		x_t =  t  \phi' \big( \varphi^{-1}(M/t) \big).
	\]
	By Proposition \ref{prop:1}, the function $\vphi^{-1}$ possesses the weak lower scaling property. Moreover,	
	there is $C_1 \geq 1$ such that for all $r > \max{\big\{\vphi^*(x_0), \phi(x_0)\big\}}$,
	\begin{equation}
		\label{eq:96}
		C_1^{-1} \vphi^{-1}(r) \leq \phi^{-1}(r) \leq C_1 \vphi^{-1}(r).
	\end{equation}
	Hence, by Proposition \ref{prop:WUSC}, there is $C_2 \geq 1$, such that
	\begin{equation}
		\label{eq:92}
		x_t \leq C_2 M^{1-1/\beta} \frac{1}{\vphi^{-1}(1/t)}.
	\end{equation}
	We select $M \geq 1$ satisfying
	\[
		C_1 C_2 M^{1-1/\beta} < \chi_1.
	\]
	Let $\rho_1 = \rho_0/2$ where $\rho_0$ is determined in Theorem \ref{thm:2}. Then, by \eqref{eq:96}
	and \eqref{eq:92}, we have
	\begin{align}
	\nonumber
		x_t - \frac{\rho_1}{\vphi^{-1}(1/t)}
		&\leq
		C_1 C_2 M^{1-1/\beta} \frac{1}{\phi^{-1}(1/t)} \\
		\label{eq:93}
		&< \frac{\chi_1}{\phi^{-1}(1/t)}.
	\end{align}
	Now set $\rho_2 = C_1 \chi_2$. Then, by \eqref{eq:96}, we have
	\begin{equation}
		\label{eq:94}
		x_t + \frac{\rho_2}{\vphi^{-1}(1/t)} 
		>
		\frac{\rho_2}{C_1 \phi^{-1}(1/t)} = \frac{\chi_2}{\phi^{-1}(1/t)}.
	\end{equation}
	Putting \eqref{eq:94} and \eqref{eq:93} together, we conclude that
	\[
	\bigg[\frac{\chi_1}{\phi^{-1}(1/t)}, \frac{\chi_2}{\phi^{-1}(1/t)} \bigg]
	\subseteq
	\(
	x_t - \frac{\rho_1}{\vphi^{-1}(1/t)}, x_t + \frac{\rho_2}{\vphi^{-1}(1/t)}
	\).
	\]
	Therefore, by Theorem \ref{thm:2}, for all $t \in (0, 1/\vphi(x_0))$ and $x > 0$ satisfying
	\[
	\chi_1 \leq x \phi^{-1}(1/t) \leq \chi_2,
	\]
	we have
	\[
	p(t,x) \gtrsim \vphi^{-1}(1/t).
	\]
	In view of \eqref{eq:96}, this completes the proof of the theorem. 
\end{proof}

\begin{proposition}
	\label{prop:4}
	Let $\bfT$ be a subordinator with the Laplace exponent $\phi$. Suppose that $-\phi'' \in \WLSC{\alpha-2}{c}{x_0}$
	for some $c \in (0, 1]$, $x_0 \geq 0$, and $\alpha>0$, and assume that one of the following conditions holds true:
	\begin{enumerate}
		\item $-\phi'' \in \WUSC{\beta-2}{C}{x_0}$ for some $C\geq 1$ and $\alpha \leq \beta <1$, or
		\item $-\phi''$ is a function regularly varying at infinity with index $-1$. If $x_0 = 0$, we also assume
		that $-\phi''$ is regularly varying at zero with index $-1$.
	\end{enumerate}
	We also assume that the L\'{e}vy measure $\nu({\rm d} x)$ has an almost monotone density $\nu(x)$. Then
	the probability distribution of $T_t$ has a density $p(t, \: \cdot \:)$. Moreover, there are $\rho_0 > 0$, and
	$C > 0$ such that for all $t \in (0, 1/\varphi(x_0))$ and
	\[
		x \geq 2t\phi'\big(\vphi^{-1}(1/t) \big) + \frac{2 \rho_0}{\vphi^{-1}(1/t)},
	\]
	we have
	\[
		p(t, x) \geq C t \nu(x).
	\]
\end{proposition}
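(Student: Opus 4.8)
The plan is to produce the value $x$ (for $x$ large) by a single large jump: I let $\mathbf{T}$ run to its typical position, which lies near $t\phi'(\varphi^{-1}(1/t))\le x/2$, and then add one jump of size comparable to $x$. First I would reduce to $b=0$, replacing $\mathbf{T}$ by $(T_t-tb)$; the admissible range for $x$ transforms into the corresponding range for the shifted process, and in the conclusion $\nu(x-tb)\gtrsim\nu(x)$ by almost monotonicity, so it suffices to treat $b=0$. Write $r=\varphi^{-1}(1/t)$, split $\nu=\nu_1+\nu_2$ with $\nu_2=\tfrac12\nu\big|_{(x/4,\,2x)}$ and $\nu_1=\nu-\nu_2$ (so $\tfrac12\nu\le\nu_1\le\nu$), and let $\mathbf{T}^{(1)},\mathbf{T}^{(2)}$ be independent subordinators with zero drift and Lévy measures $\nu_1,\nu_2$. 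Then $T_t\stackrel{d}{=}T^{(1)}_t+T^{(2)}_t$, $\mathbf{T}^{(2)}$ is compound Poisson with intensity $m:=|\nu_2|<\infty$, and $T^{(1)}_t$ has a density $p^{(1)}(t,\cdot)$ by Theorem \ref{thm:3} (note $-\phi_1''\ge\tfrac12(-\phi'')$ gives $-\phi_1''\in\WLSC{\alpha-2}{c'}{x_0}$). Conditioning on $\mathbf{T}^{(2)}$ making exactly one jump before time $t$ gives
\[
  p(t,x)\ \ge\ t e^{-tm}\!\int_{(0,\infty)}p^{(1)}(t,x-z)\,\nu_2(z)\,\rmd z
  \ =\ \tfrac12 t e^{-tm}\!\int_{x/4}^{2x}p^{(1)}(t,x-z)\,\nu(z)\,\rmd z.
\]

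Next I would collect three estimates. \emph{(1)} Since $\tfrac12(-\phi'')\le -\phi_1''\le -\phi''$, the functions $\phi_1,\varphi_1$ are comparable to $\phi,\varphi$ with absolute constants, $\mathbf{T}^{(1)}$ satisfies the hypotheses of Theorem \ref{thm:2} (in case (ii) regular variation of $-\phi_1''$ at $\infty$ and, when $x_0=0$, at $0$ persists because the removed piece is supported in the compact set $[x/4,2x]\subset(0,\infty)$, hence negligible in the respective limits), and the constants $\rho_0^{(1)},C^{(1)}$ furnished by Theorem \ref{thm:2} for $\mathbf{T}^{(1)}$ can be taken uniform in $x$, since, inspecting its proof, they depend only on $\alpha$ (and $\beta$, or the regular-variation indices), on the scaling constants, and, when $x_0>0$, on $\sup_{x_0\le u\le 2x_0}u\phi_1'''(u)/(-\phi_1''(u))$, all controlled by the analogous data of $\phi$. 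Set $\rho_0:=\rho_0^{(1)}/2$, apply Theorem \ref{thm:2} to $\mathbf{T}^{(1)}$ with $\rho_1^{(1)}:=\rho_0^{(1)}/2$ and $\rho_2^{(1)}:=\rho_0^{(1)}/4$, put $r_1:=\varphi_1^{-1}(1/t)$ and $W:=[\,t\phi_1'(r_1)-\rho_1^{(1)}/r_1,\ t\phi_1'(r_1)+\rho_2^{(1)}/r_1\,]$, so that $p^{(1)}(t,w)\ge C^{(1)}r_1$ for $w\in W$. \emph{(2)} Because $\phi_1'\le\phi'$, $\phi'$ is nonincreasing and $r\le r_1\le C_0 r$ (Proposition \ref{prop:7}), one has $t\phi_1'(r_1)\le t\phi'(r)\le x/2$ by the hypothesis, and $\rho_2^{(1)}/r_1\le\rho_0/(2r)\le x/4$ by the hypothesis $x\ge 2\rho_0/r$; hence $W\subset(-\infty,3x/4]$ and $x-W\subset(x/4,2x)$. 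Restricting the displayed integral to $z\in(x-W)\cap(0,x]$, using almost monotonicity of $\nu$ (so $\nu(z)\gtrsim\nu(x)$ there), and
\[
  \int_{(x-W)\cap(0,x]}p^{(1)}(t,x-z)\,\rmd z=\PP\big(T^{(1)}_t\in W\big)\ge C^{(1)}r_1\cdot\big|W\cap[0,\infty)\big|\ge C^{(1)}\rho_2^{(1)},
\]
we obtain $\int_{x/4}^{2x}p^{(1)}(t,x-z)\,\nu(z)\,\rmd z\gtrsim\nu(x)$. \emph{(3)} By \eqref{eq:30a} we have $\nu\lesssim\eta$, and $\eta$ is nonincreasing with the doubling property, so $m\lesssim\int_{x/4}^{2x}\eta\lesssim x\eta(x)=\zeta(x)$; moreover $t\zeta(x)\lesssim1$ — if $x\le x_0^{-1}$ this follows from $1/x\le r=\varphi^{-1}(1/t)$, monotonicity of $\varphi^*$ and \eqref{eq:85} (using $x\ge2\rho_0/r$), and if $x>x_0^{-1}$ then $\zeta(x)=A\phi(1/x)\le2\phi(x_0)$ while $t<1/\varphi(x_0)$ — so $e^{-tm}\gtrsim1$.

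Combining the three estimates with the displayed lower bound yields $p(t,x)\gtrsim t\nu(x)$, which is the claim. I expect the main difficulty to be the bookkeeping in the middle paragraph: choosing the truncation interval $(x/4,2x)$ for $\nu_2$ together with the parameters fed into Theorem \ref{thm:2} in a single, non-circular way so that the concentration window $W$ of $T^{(1)}_t$ stays inside $[0,3x/4]$ while its reflection $x-W$ remains inside $(x/4,2x)$ — this is exactly where the shape $x\ge 2t\phi'(\varphi^{-1}(1/t))+2\rho_0/\varphi^{-1}(1/t)$ of the admissible set enters — and checking that the constants coming out of Theorem \ref{thm:2} applied to the $x$-dependent truncations $\mathbf{T}^{(1)}$ are genuinely uniform in $x$.
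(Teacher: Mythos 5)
Your argument is correct in substance and follows the same skeleton as the paper's proof --- split off an independent compound Poisson piece carrying half of the relevant jumps, lower-bound its one-jump contribution by $t e^{-tm}\nu_2$, and use the concentration window supplied by Theorem \ref{thm:2} for the remaining subordinator --- but your decomposition is genuinely different. The paper removes half of \emph{all} jumps above a $t$-dependent threshold $\lambda$, chosen self-consistently as $\lambda=t\phi_1'\big(\vphi_1^{-1}(1/t)\big)+\rho_0/\vphi_1^{-1}(1/t)$, so that $[0,\lambda]$ captures mass $\gtrsim 1$ of $T^{(1)}_t$, the condition $x\ge 2\lambda$ does the bookkeeping, and $t\nu_2(\RR)\lesssim 1$ comes from $t\,h(\lambda)$ via \eqref{eq:33}. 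You instead remove half of the jumps in the band $(x/4,2x)$, use only the window $W$ of length $\approx 1/\vphi_1^{-1}(1/t)$ around the typical position (capturing a constant amount of mass, which suffices), and get $tm\lesssim t\zeta(x)\lesssim 1$ from \eqref{eq:30a} together with $x\ge 2\rho_0/\vphi^{-1}(1/t)$. Your route buys two things: there is no implicit fixed point in the choice of the truncation level (in the paper $\lambda$ is defined through $\phi_1$, which itself depends on $\lambda$), and in case (ii) regular variation of $-\phi_1''$ at infinity (and at zero when $x_0=0$) is immediate, since the removed band is compactly supported in $(0,\infty)$ and its contribution is negligible in both limits.

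The price is that your truncated process depends on $x$, so you need the constants $\rho_0^{(1)},C^{(1)}$ of Theorem \ref{thm:2} uniformly over this $x$-indexed family, and your justification (``inspecting the proof, the constants depend only on the scaling data'') is a little too quick. The explicit ingredients are fine: Corollary \ref{cor:1} via Remark \ref{rem:3} and the gradient bound from \cite{GS2019} are controlled through $\tfrac12(-\phi'')\le-\phi_1''\le-\phi''$. But the final constant in Theorem \ref{thm:2} comes from the compactness argument proving positivity of the infimum \eqref{eq:56}, which is run for one fixed L\'evy measure and is not effective; for your family one must rerun it with joint sequences $(t_n,x_n)$ and the associated measures. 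This does go through, because every quantity attached to $\nu_1^{(x)}$ ($K$, $h$, $\vphi_1$, and in case (ii) the slowly varying factor $\ell_1$, which satisfies $\tfrac12\ell\le\ell_1\le\ell$) is two-sided comparable to the corresponding quantity for $\nu$ with absolute constants, so the tightness, the limit identification, and the uniform-limit step \eqref{eq:80} survive with modified constants --- but it is genuinely more than a citation of the theorem. The paper's own proof faces the same uniformity issue with its $t$-dependent truncation, so this is a caveat to be spelled out, not a defect peculiar to your approach; apart from it (and the harmless imprecision that your displayed integral over $(x-W)\cap(0,x]$ equals $\PP(T^{(1)}_t\in W\cap[0,x))$ rather than $\PP(T^{(1)}_t\in W)$), the proof is sound.
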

\begin{proof}
	Let $\lambda > 0$. We begin by decomposing the L\'{e}vy measure $\nu({\rm d} x)$. Let 
	$\nu_1({\rm d} x) = \nu_1(x) \rmd x$ and $\nu_2({\rm d}x) = \nu_2(x) \rmd x$ where
	\[
		\nu_1(x) = \nu(x) - \nu_2(x), \qquad\text{and}\qquad
		\nu_2(x) = \tfrac{1}{2} \nu(x) \ind{[\lambda, \infty)}(x).
	\]
	For $u > 0$, we set
	\[
		\phi_1(u) = b u + \int_{(0, \infty)} \big(1-e^{-u s}\big) \: \nu_1({\rm d} s),
		\qquad\text{and}\qquad
		\phi_2(u) = \int_{(0, \infty)} \big(1 - e^{-u s}\big) \: \nu_2({\rm d} s).
	\]
	Let $\bfT^{(j)}$ be the L\'{e}vy process having the Laplace exponent $\phi_j$, for $j \in \{1, 2\}$.
	Since $\tfrac{1}{2} \nu \leq \nu_1 \leq \nu$, we have
	\[
		\tfrac{1}{2} \phi \leq \phi_1 \leq \phi,
	\]
	and for all $n \in \NN$,
	\begin{equation}
		\label{eq:73}
		\tfrac{1}{2} (-1)^{n+1} \phi^{(n)} 
		\leq (-1)^{n+1} \phi_1^{(n)}
		\leq (-1)^{n+1} \phi^{(n)}.
	\end{equation}
	Thus
	\[
		\tfrac{1}{2} \vphi \leq \vphi_1 \leq \vphi,
	\]
	and so for all $u > 0$,
	\begin{equation}
		\label{eq:77}
		\vphi^{-1}_1(u/2) \leq \vphi^{-1}(u) \leq \vphi^{-1}_1(u).
	\end{equation}
	In particular, $-\phi_1''$ has the weak lower scaling property. Therefore, by Theorem \ref{thm:3}, 
	$T_t^{(1)}$ and $T_t$ are absolutely continuous. Let us denote by $p(t, \:\cdot\:)$ and $p^{(1)}(t, \:\cdot\:)$
	the densities of $T_t$ and $T_t^{(1)}$, respectively. Observe that $\bfT^{(2)}$ is a compound Poisson process with
	the probability distribution denoted by $P_t({\rm d} x)$. By \cite[Remark 27.3]{MR1739520},
	\begin{equation}
		\label{eq:71}
		P_t({\rm d} x) \geq t e^{-t\nu_2(\RR)} \nu_2(x) \rmd x. 
	\end{equation}
	We apply Theorem \ref{thm:2} to the process $\bfT^{(1)}$. For $t > 0$, we set
	\[
		x_t = t \phi'_1\big(\vphi_1^{-1}(1/t)\big).
	\]
	Then there are $C > 0$ and $\rho_0 > 0$, such that for all $t \in (0, 1/\varphi(x_0))$ and $x \geq 0$ satisfying
	\[
		x_t - \frac{\rho_0}{\vphi_1^{-1}(1/t)}
		\leq
		x
        \leq
		x_t + 
        \frac{\rho_0}{\vphi_1^{-1}(1/t)},
	\]
    we have
    \[
        p^{(1)}(t, x) \geq C \vphi_1^{-1}(1/t).
    \]
	Therefore, if
	\[
		\lambda = x_t + \frac{\rho_0}{\vphi_1^{-1}(1/t)},
	\]
	then
	\begin{equation}
		\label{eq:70}
		\int_0^\lambda p^{(1)}(t, x) \rmd x  
		\gtrsim 1.
	\end{equation}
	Next, if $\lambda \geq \rho_0/\vphi^{-1}(1/t)$ then, by \eqref{eq:33},
	\begin{align}
		\nonumber
		t \nu_2(\RR) 
		&= \tfrac{1}{2} t \int_\lambda^\infty \: \nu(x) \rmd x \\
		\nonumber
		&\leq \tfrac{1}{2} t h\big(\rho_0/\vphi^{-1}(1/t)\big) \\
		\nonumber
		&\lesssim t h\big(1/\vphi^{-1}(1/t)\big)\\
		\label{eq:69}
		&\lesssim 1,
	\end{align}
	where the penultimate inequality follows either by monotonicity of $h$ or by \cite[Lemma 2.1 (4)]{GS2019}.
	Finally, by \eqref{eq:71} and \eqref{eq:69}, for $x \geq 2 \lambda$ we can compute
	\begin{align*}
		p(t, x) 
		&= \int_\RR p^{(1)}(t, x - y) P_t({\rm d} y) \\
		&\gtrsim t \int_\RR p^{(1)}(t, x-y) \nu_2(y) \rmd y \\
		&=\tfrac{1}{2} t \int_\lambda^x p^{(1)}(t, x-y) \nu(y) \rmd y.
	\end{align*}
	Hence, by the monotonicity of $\nu$, we get
	\begin{align*}
		p(t, x) &\gtrsim 
		t \nu(x) \int_0^{x - \lambda} p^{(1)}(t, y) \rmd y\\
		&\geq
		t \nu(x) \int_0^\lambda p^{(1)}(t, y) \rmd y \\
		&\gtrsim
		t \nu(x),
	\end{align*}
	where in the last estimate we have used \eqref{eq:70}. Using \eqref{eq:73}, and \eqref{eq:77},
	we can easily show that
	\[
    	\lambda = x_t + \frac{\rho_0}{\vphi_1^{-1}(1/t)} \leq  t \phi'\big(\vphi^{-1}(1/t)\big)
		+\frac{\rho_0}{\vphi^{-1}(1/t)},
	\]
	and the proposition follows.
\end{proof}

\subsection{Sharp two-sided estimates}
In this section we present sharp two-sided estimates on the density $p(t, \: \cdot\:)$ assuming both the weak lower
and upper scaling properties on $-\phi''$. First, following \cite[Lemma 13]{MR3165234}, we prove an auxiliary result.

\begin{proposition}\label{prop:5}
	Assume that the L\'{e}vy measure $\nu({\rm d}x)$ has an almost monotone density $\nu(x)$. Suppose that 
	$-\phi'' \in \WUSC{\gamma}{C}{x_0}$ for some $C \geq 1$, $x_0 \geq 0$ and $\gamma <0$. Then there are $a \in (0,1]$
	and $c \in (0,1]$ such that for all $0 < x < a/x_0$,
	\[
		\nu(x) \geq c x^{-3} \big( -\phi''(1/x) \big).
	\]
\end{proposition}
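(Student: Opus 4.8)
The plan is to compare $-\phi''(1/x)$ with the truncated second moment $N(x):=\int_0^x s^2\nu(s)\rmd s$ of the L\'evy density, and then to invoke the upper scaling of $-\phi''$ at a single dyadic step. Denote by $C_1\ge 1$ the almost monotonicity constant of $\nu$, so that $\nu(s)\le C_1\nu(t)$ whenever $s\ge t$ (hence also $\nu(s)\ge C_1^{-1}\nu(t)$ when $s\le t$).

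First I would show that $-\phi''(1/x)\approx N(x)$ with constants depending only on $C_1$. Splitting $-\phi''(1/x)=\int_0^\infty s^2e^{-s/x}\nu(s)\rmd s$ at $s=x$, the lower bound $-\phi''(1/x)\ge e^{-1}N(x)$ is immediate from $e^{-s/x}\ge e^{-1}$ on $(0,x)$. For the reverse inequality I would decompose $\int_x^\infty$ into the dyadic shells $(2^kx,2^{k+1}x)$, on which $s^2\le 4^{k+1}x^2$, $e^{-s/x}\le e^{-2^k}$ and $\nu(s)\le C_1\nu(x)$, whence $\int_x^\infty s^2e^{-s/x}\nu(s)\rmd s\le 4C_1 x^3\nu(x)\sum_{k\ge0}8^ke^{-2^k}\lesssim x^3\nu(x)$; since moreover $N(x)\ge C_1^{-1}\nu(x)\int_0^x s^2\rmd s=\tfrac13 C_1^{-1}x^3\nu(x)$, we obtain $-\phi''(1/x)\le C_2N(x)$ for some $C_2=C_2(C_1)$. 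Note that no scaling assumption is used here.

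Next, combining this comparison with $-\phi''\in\WUSC{\gamma}{C}{x_0}$, for every $\mu\ge1$ and $0<x<1/x_0$ we get
\[
N(x/\mu)\le e\bigl(-\phi''(\mu/x)\bigr)\le eC\mu^{\gamma}\bigl(-\phi''(1/x)\bigr)\le eCC_2\,\mu^{\gamma}N(x).
\]
Because $\gamma<0$, I may fix $m\in\NN$, depending only on $C$, $\gamma$ and $C_1$, so large that $eCC_2\,2^{m\gamma}\le\tfrac12$; then $N(x/2^m)\le\tfrac12N(x)$ for all $0<x<1/x_0$. Hence the outermost dyadic shell carries at least half of the mass,
\[
\int_{x/2^m}^{x}s^2\nu(s)\rmd s=N(x)-N(x/2^m)\ge\tfrac12N(x),
\]
while, by almost monotonicity, $\nu(s)\le C_1\nu(x/2^m)$ on $(x/2^m,x)$, so the same integral is at most $C_1x^3\nu(x/2^m)$. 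Therefore $\nu(x/2^m)\ge\tfrac1{2C_1}x^{-3}N(x)$ for $0<x<1/x_0$.

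Finally I would substitute $y=x/2^m$: for $0<y<2^{-m}/x_0$ the last estimate reads $\nu(y)\ge\tfrac1{2C_1}(2^my)^{-3}N(2^my)\ge\tfrac1{2C_1 2^{3m}}\,y^{-3}N(y)$, using that $N$ is nondecreasing. One more application of $N(y)\ge C_2^{-1}\bigl(-\phi''(1/y)\bigr)$ then yields $\nu(y)\ge c\,y^{-3}\bigl(-\phi''(1/y)\bigr)$ with $c=(2C_1 2^{3m}C_2)^{-1}$ (replaced by $\min\{c,1\}$ if necessary) and $a=2^{-m}\in(0,1]$; when $x_0=0$ the scaling is global and the bound holds for every $y>0$. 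The two decisive ingredients are the comparison $-\phi''(1/x)\approx N(x)$, which is where the almost monotonicity of $\nu$ is genuinely needed, and the inequality $N(x/2^m)\le\tfrac12N(x)$, which is the sole use of the upper scaling and is indispensable: without it $\nu$ could drop abruptly just below scale $x$ while $N(x)$ stays comparable to $N$ at coarser scales, and the asserted pointwise lower bound on $\nu$ would fail. The substitution $y=x/2^m$ in the last paragraph is what lets us avoid assuming a doubling property for $\nu$ itself.
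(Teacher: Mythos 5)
Your argument is correct: the comparison $-\phi''(1/x)\approx N(x)$ with $N(x)=\int_0^x s^2\nu(s)\,\rmd s$ (almost monotonicity only, no scaling), the reverse-doubling inequality $N(x/2^m)\le\tfrac12 N(x)$ obtained from one application of the upper scaling, the conversion of the outer-shell mass into a pointwise bound on $\nu(x/2^m)$, and the final rescaling $y=x/2^m$ all check out, and the range $0<y<2^{-m}/x_0$ matches the claimed form with $a=2^{-m}$. The route is genuinely different in organization from the paper's, though it rests on the same two ingredients. The paper works directly with the Laplace-type integral: it splits $-\phi''(u)=\int_0^{a/u}+\int_{a/u}^\infty$, bounds the inner part by inserting the pointwise estimate $\nu(s)\lesssim s^{-3}(-\phi''(1/s))$ (the analogue of your $N(x)\gtrsim x^3\nu(x)$ combined with $-\phi''(1/x)\ge e^{-1}N(x)$) and then applying $\WUSC{\gamma}{C}{x_0}$ pointwise under the integral, which produces the factor $\int_0^a s^{-1-\gamma}e^{-s}\,\rmd s$; choosing $a$ small makes this contribution at most $\tfrac12(-\phi''(u))$, while the tail is at most a constant times $u^{-3}\nu(a/u)$ by almost monotonicity, and monotonicity of $-\phi''$ finishes. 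Your continuous cutoff is replaced there by the parameter $a$, which plays exactly the role of your $2^{-m}$, and the final monotonicity step uses $-\phi''$ rather than your $N$. What your version buys is a clean separation of the hypotheses (almost monotonicity enters only through the two-sided comparison with $N$, the scaling only through the single reverse-doubling step) and explicit constants in terms of one dyadic exponent; what the paper's version buys is brevity, since the absorption of the small-scale contribution and the choice of the cutoff happen in one integral estimate rather than via an auxiliary comparison plus substitution.
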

\begin{proof}
	Let $a \in (0,1]$. Recall that by \eqref{eq:30a} we have $\nu(s) \leq C_1 s^{-3} \big( -\phi''(1/s) \big)$ for any $s>0$. Hence, for any $u>0$,
	\begin{align}
		\nonumber
		-\phi''(u) 
		&= \int_0^{au^{-1}} s^2e^{-us}  \nu(s) \rmd s 
		+ \int_{au^{-1}}^{\infty} s^2e^{-us}  \nu(s) \rmd s \\
		\label{eq:36}
		&\leq C_1 \int_0^{au^{-1}} s^{-1}e^{-us} \big( -\phi''(1/s) \big) \rmd s 
		+ C_2 \nu(au^{-1}) \int_{au^{-1}}^{\infty} s^2e^{-us} \rmd s
	\end{align}
	where $C_2$ is a constant from the almost monotonicity of $\nu$. If $u > x_0$, then by the scaling property of
	$-\phi''$ we obtain
	\begin{align*}
		C_1 \int_0^{au^{-1}} s^{-1}e^{-us} \big( -\phi''(1/s) \big) \rmd s 
		&\leq C\int_0^{au^{-1}} s^{-1}e^{-us} (su)^{-\gamma} \big( -\phi''(u) \big) \rmd s \\
		&\leq C \big( -\phi''(u) \big) \int_0^as^{-1-\gamma} e^{-s} \rmd s.
	\end{align*}
	By selecting $a \in (0, 1]$ such that
	\[
		2C \int_0^a s^{-1-\gamma}e^{-s} \rmd s \leq 1,
	\]
	we get
	\[
		\int_0^{au^{-1}} s^{-1}e^{-us} \big( -\phi''(1/s) \big) \rmd s
		\leq
		\tfrac{1}{2} \big( -\phi''(u) \big).
	\]
	Since
	\[
		\int_{au^{-1}}^{\infty} s^2 e^{-us} \rmd s = u^{-3} e^{-a}(a^2+2a+2),
	\]
	by \eqref{eq:36}, we obtain 
	\[
		\nu(au^{-1}) \geq \frac{e^a}{2(a^2+2a+2)}u^3 \big( -\phi''(u) \big),
	\]
	provided that $u > x_0$. Now, by the monotonicity of $-\phi''$ we conclude the proof.
\end{proof}
In view of Propositions \ref{prop:WLSC} and \ref{prop:WUSC}, we immediately obtain the following corollary.
\begin{corollary}
	\label{cor:4}
	Assume that the L\'{e}vy measure $\nu({\rm d}x)$ has an almost monotone density $\nu(x)$. Suppose that $b=0$ and
	$\phi \in \WLSC{\alpha}{c}{x_0} \cap \WUSC{\beta}{C}{x_0}$ for some $c \in (0,1]$, $C \geq 1$, $x_0 \geq 0$
	and $0<\alpha \leq \beta < 1$. Then there are $a \in (0,1]$ and $c' \in (0,1]$ such that for all $0 < x<a/x_0$,
	\[
		\nu(x) \geq c' x^{-1}\phi(1/x).
	\]
\end{corollary}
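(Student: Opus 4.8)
The plan is to combine Proposition \ref{prop:5} with the elementary scaling relations between $\phi$, $\phi'$, and $-\phi''$ recorded in Propositions \ref{prop:WLSC} and \ref{prop:WUSC}. First I would use Corollary \ref{cor:6}: the hypothesis $\phi \in \WLSC{\alpha}{c}{x_0} \cap \WUSC{\beta}{C}{x_0}$ with $0 < \alpha \leq \beta < 1$ yields $-\phi'' \in \WLSC{\alpha-2}{c'}{x_0} \cap \WUSC{\beta-2}{C'}{x_0}$ for some $c' \in (0,1]$ and $C' \geq 1$. Since $\beta < 1$ we have $\gamma := \beta-2 < -1 < 0$, so Proposition \ref{prop:5} applies to $-\phi''$ with exponent $\gamma$: there are $a \in (0,1]$ and $c_1 \in (0,1]$ such that
\[
	\nu(x) \geq c_1\, x^{-3}\big(-\phi''(1/x)\big) = c_1\, x^{-1} \vphi(1/x), \qquad 0 < x < a/x_0.
\]

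The next step is to replace $\vphi(1/x)$ by $\phi(1/x)$, i.e.\ to show $\vphi(y) \gtrsim \phi(y)$ for $y > x_0$. By \eqref{eq:132} in Proposition \ref{prop:WLSC} there is $C_2 \geq 1$ with $\phi(y) \leq C_2\, y \phi'(y)$ for $y > x_0$, and by Proposition \ref{prop:WUSC}, since $b = 0$, $\phi'(y) \leq \tfrac{C'}{1-\beta}\, y\big(-\phi''(y)\big)$ for $y > x_0$. Multiplying these gives $\phi(y) \leq \tfrac{C_2 C'}{1-\beta}\, y^2\big(-\phi''(y)\big) = \tfrac{C_2 C'}{1-\beta}\, \vphi(y)$ — this is precisely the lower bound in \eqref{eq:90}. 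Substituting $y = 1/x$ into the display above (legitimate because $a \leq 1$ forces $1/x > x_0$ whenever $0 < x < a/x_0$) yields $\nu(x) \geq c'\, x^{-1}\phi(1/x)$ on $0 < x < a/x_0$, with $c' = c_1(1-\beta)/(C_2 C')$, which we may shrink so that $c' \in (0,1]$.

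There is no serious obstacle here; the proof is a routine composition of facts already proved. The only points requiring a moment's care are checking that the hypotheses of Proposition \ref{prop:5} are met — in particular that $-\phi''$ has \emph{negative}-index weak upper scaling, which is exactly where $\beta < 1$ enters — and that the interval $0 < x < a/x_0$ produced by Proposition \ref{prop:5} is contained in the range $\{x : 1/x > x_0\}$ on which \eqref{eq:132} and Proposition \ref{prop:WUSC} are available; both are guaranteed by $a \leq 1$ (with the statement trivially covering all $x > 0$ when $x_0 = 0$).
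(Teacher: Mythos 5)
Your proposal is correct and follows essentially the same route as the paper: apply Proposition \ref{prop:5} (after securing the upper scaling of $-\phi''$ via Corollary \ref{cor:6}) and then convert $\vphi(1/x)$ into $\phi(1/x)$ using \eqref{eq:132} from Proposition \ref{prop:WLSC} together with Proposition \ref{prop:WUSC} and $b=0$. Your attention to the range $0<x<a/x_0$ and to $\gamma=\beta-2<0$ matches exactly what the paper's one-line deduction relies on.
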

We are now ready to prove our main result in this section.
\begin{theorem}
	\label{thm:5}
	Let $\bfT$ be a subordinator with the Laplace exponent $\phi$. Suppose that $\phi \in \WLSC{\alpha}{c}{x_0}
	\cap \WUSC{\beta}{C}{x_0}$ for some $c \in (0, 1]$, $C \geq 1$, $x_0 \geq 0$, and $0 < \alpha \leq \beta < 1$. 
	We also assume that $b=0$ and that the L\'{e}vy measure $\nu({\rm d} x)$ has an almost monotone density $\nu(x)$.
	Then  there is
	$x_1 \in (0, \infty]$ such that for all $t \in (0, 1/\varphi(x_0))$ and $x \in (0,x_1)$,
	\[
		p(t, x) \approx
		\begin{cases}
			\big(t (-\phi''(w)) \big)^{-\frac{1}{2}} \exp\big\{-t \big(\phi(w) - w \phi'(w)\big) \big\}
			& \text{if } 0 < x \phi^{-1}(1/t) \leq 1, \\
			 t x^{-1}\phi(1/x)
			& \text{if } 1 <  x \phi^{-1}(1/t),
		\end{cases}
	\]
	where $w = (\phi')^{-1}(x/t)$. If $x_0 = 0$ then $x_1 = \infty$.
\end{theorem}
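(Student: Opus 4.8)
The plan is to assemble the statement from the results already proved, distinguishing three ranges of the scale‑invariant quantity $s:=x\phi^{-1}(1/t)$: a Gaussian range $s\in(0,\delta)$, an intermediate range $s\in[\delta,K]$, and a polynomial range $s>K$, where $\delta$ is the constant furnished by Corollary \ref{cor:2} and $K\geq 1$ is a large constant fixed at the end. First I note that, by Corollary \ref{cor:6}, the hypothesis $\phi\in\WLSC{\alpha}{c}{x_0}\cap\WUSC{\beta}{C}{x_0}$ gives $-\phi''\in\WLSC{\alpha-2}{c'}{x_0}\cap\WUSC{\beta-2}{C'}{x_0}$, so that Theorems \ref{thm:3}, \ref{thm:4}, \ref{thm:2} (hence \ref{thm:7}), Propositions \ref{prop:1}, \ref{prop:4} and Corollary \ref{cor:4} are all available; in particular the density $p(t,\cdot)$ exists. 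I then record the comparabilities $\psi^{-1}(r)\approx\vphi^{-1}(r)\approx\phi^{-1}(r)$ and $\vphi^*(y)\approx\phi(y)$ from Propositions \ref{prop:7} and \ref{prop:1}, which together with the definitions of $\zeta$ and $\eta$ and the bound $\vphi^*(x_0)\geq c_1\phi(x_0)$ from \eqref{eq:90} yield $\zeta(y)\approx\phi(1/y)$ and $\eta(y)\approx y^{-1}\phi(1/y)$ uniformly in $y>0$. Using \eqref{eq:132}, \eqref{eq:20} and the doubling of $\phi,\phi',-\phi''$ (hence of $\phi^{-1}$), I also record $t\phi'(\phi^{-1}(1/t))\approx 1/\phi^{-1}(1/t)$, $\phi(\vphi^{-1}(1/t))\approx 1/t$, and — since $\phi'\in\WLSC{\alpha-1}{}{x_0}\cap\WUSC{\beta-1}{}{x_0}$ is invertible with power‑law two‑sided control — that $w=(\phi')^{-1}(x/t)\approx\phi^{-1}(1/t)$ whenever $s\approx 1$.

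On the range $s\in(0,\delta)$ Corollary \ref{cor:2} applies and gives directly
\[
	p(t,x)\approx\big(t(-\phi''(w))\big)^{-1/2}\exp\big\{-t(\phi(w)-w\phi'(w))\big\},
\]
which is the asserted form in the first case. On the range $s\in[\delta,K]$, Theorem \ref{thm:7} with $\chi_1=\delta$, $\chi_2=K$ gives $p(t,x)\approx\phi^{-1}(1/t)$, and it remains to identify this with the two claimed forms. When $s\le 1$ we have $w\approx\phi^{-1}(1/t)$, so by \eqref{eq:90} and doubling $w^2(-\phi''(w))=\vphi(w)\approx\phi(w)\approx 1/t$, whence $\big(t(-\phi''(w))\big)^{-1/2}\approx w\approx\phi^{-1}(1/t)$; moreover $0\le\phi(w)-w\phi'(w)\lesssim\vphi(w)\approx 1/t$ by Corollary \ref{cor:3}, so the exponential factor is $\approx 1$, and the Gaussian form is $\approx\phi^{-1}(1/t)$. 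When $s>1$ we have $1/x\approx\phi^{-1}(1/t)$, hence $\phi(1/x)\approx 1/t$ by doubling and $tx^{-1}\phi(1/x)\approx t\phi^{-1}(1/t)\cdot(1/t)=\phi^{-1}(1/t)$. Thus the intermediate range is covered with the correct form in each case.

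For $s>K$ I combine the upper estimate of Theorem \ref{thm:4} with the lower estimate of Proposition \ref{prop:4}. Since $b=0$, inequality \eqref{eq:97} holds as soon as $x\ge 2et\phi'(\psi^{-1}(1/t))$, and by Proposition \ref{prop:7} and the basic estimate above this threshold is $\lesssim 1/\phi^{-1}(1/t)$, hence holds once $s$ exceeds a constant; it gives $p(t,x)\le C\min\{\vphi^{-1}(1/t),t\eta(x)\}\le Ct\eta(x)\approx tx^{-1}\phi(1/x)$. For the lower bound, Proposition \ref{prop:4} applies under its hypothesis (i), which is exactly what Corollary \ref{cor:6} provides, and yields $p(t,x)\ge Ct\nu(x)$ whenever $x\ge 2t\phi'(\vphi^{-1}(1/t))+2\rho_0/\vphi^{-1}(1/t)$, a threshold again $\lesssim 1/\phi^{-1}(1/t)$; combining with Corollary \ref{cor:4}, which gives $\nu(x)\ge c'x^{-1}\phi(1/x)$ for $0<x<a/x_0$, one gets $p(t,x)\gtrsim tx^{-1}\phi(1/x)$. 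Choosing $K$ larger than all the constants appearing in these two thresholds, and setting $x_1=a/x_0$ when $x_0>0$ and $x_1=\infty$ when $x_0=0$, settles the polynomial range and finishes the proof.

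The work is thus largely organizational; the two points requiring care are (a) checking that each threshold of the shape ``$x\ge$ constant$/\phi^{-1}(1/t)$'' appearing in Theorem \ref{thm:4} and Proposition \ref{prop:4} transforms into ``$s\ge$ constant'', which rests precisely on $\psi^{-1}(1/t)\approx\vphi^{-1}(1/t)\approx\phi^{-1}(1/t)$ and $t\phi'(\phi^{-1}(1/t))\approx 1/\phi^{-1}(1/t)$; and (b) the bookkeeping needed when $x_0>0$ to run all the cited results on the stated range of $(t,x)$ — here one uses continuity and positivity of $\phi,\phi',-\phi''$ on compact subintervals of $(x_0,\infty)$ to absorb the boundary behaviour on $[x_0,2x_0]$ into the constants, at the cost of a finite $x_1$. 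When $x_0=0$ all constants are global and the argument is immediate, giving $x_1=\infty$.
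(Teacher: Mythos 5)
Your proposal is correct and follows essentially the same route as the paper's own proof: Corollary \ref{cor:2} in the region where $x\phi^{-1}(1/t)$ is small, Theorem \ref{thm:7} together with the identification $w\approx\phi^{-1}(1/t)$ (the paper's Claim \ref{clm:3}) in the intermediate region, and Theorem \ref{thm:4} combined with Proposition \ref{prop:4} and Corollary \ref{cor:4} in the tail region, with the same threshold comparisons via $\psi^{-1}\approx\vphi^{-1}\approx\phi^{-1}$ and the same continuity/positivity bookkeeping when $x_0>0$. No essential difference from the paper's argument.
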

\begin{proof}
	First let us note that by Corollary \ref{cor:6}, $-\phi'' \in \WLSC{\alpha-2}{c}{x_0}
	\cap \WUSC{\beta-2}{C}{x_0}$. Therefore, we are in position to apply Proposition \ref{prop:4}. By Corollary \ref{cor:2}, for $\chi_1 = \min{\{1, \delta\}}$, we have
	\[
		p(t, x)
		\approx \big(t (-\phi''(w)) \big)^{-\frac{1}{2}} \exp\big\{-t \big(\phi(w) - w \phi'(w)\big) \big\},
	\]
	whenever $0 < x \phi^{-1}(1/t) \leq \chi_1$. Next, by Proposition \ref{prop:WUSC} and \eqref{eq:33}, for
	$t \in (0, 1/\varphi(x_0))$, we get
	\[
		t \phi'\big(\psi^{-1}(1/t)\big) \lesssim  \frac{1}{\psi^{-1}(1/t)},
	\]
	thus, by Propositions \ref{prop:7} and \ref{prop:1}, there is $C_1 > 0$ such that
	\begin{equation*}
		2 e t \phi'\big(\psi^{-1}(1/t)\big) 
		+
		\frac{2 \rho'_0}{\vphi^{-1}(1/t)}
		\leq C_1 \frac{1}{\phi^{-1}(1/t)}
	\end{equation*}
	where $\rho_0'$ is the value of $\rho_0$ determined in Proposition \ref{prop:4}. Let 
	$\chi_2 = \max{\{1, C_1, \chi_1\}}$. By Proposition \ref{prop:4}, and Corollary \ref{cor:4}, there is $a \in (0,1]$
	such that if $x \phi^{-1}(1/t) > \chi_2$ and $0 < x< a/x_0$, then 
	\begin{align*}
		p(t, x) 
		&\gtrsim t \nu(x) \\ 
		&\gtrsim tx^{-1}\phi(1/x).
	\end{align*}
	Furthermore, by \eqref{eq:97}, if $x \phi^{-1}(1/t) > \chi_2$, then
	\begin{align*}
		p(t,x) &\lesssim t\eta (x) \\ &\lesssim tx^{-1}\phi(1/x)
	\end{align*} 
	where in the last step we have also used \eqref{eq:99}. Lastly, by Theorem \ref{thm:7} there is $C_2 \geq 1$
	such that for all $t \in (0, 1/\varphi(x_0))$ and $x > 0$ satisfying
	\[
		\chi_1 \leq x \phi^{-1}(1/t) \leq \chi_2,
	\]
	we have
	\begin{equation}
		\label{eq:129}
		C_2^{-1} \phi^{-1}(1/t) \leq p(t, x) \leq C_2 \phi^{-1}(1/t).
	\end{equation}
	We next claim that the following holds true.
	\begin{claim}
		\label{clm:3}
		There exist $0< c_1 \leq 1 \leq c_2$ such that for all $t \in (0,c_1/\vphi(x_0))$ and $x>0$ satisfying
		\[
			\chi_1 \leq x\phi^{-1}(1/t) \leq \chi_2,
		\]
		we have
		\begin{equation}
			\label{eq:106}
			t\phi' \big( \phi^{-1}(c_2/t) \big) \leq x \leq t \phi' \big( \phi^{-1}(c_1/t) \big).
		\end{equation}
	\end{claim}
	By Proposition \ref{prop:1}, there is $C_3 \geq 1$ such that for $r > \vphi(x_0)$,
	\[
		C_3^{-1} \vphi^{-1}(r) \leq \phi^{-1}(r) \leq C_3 \vphi^{-1}(r).
	\]
	Let $c_2 = (\chi_1 c' C_3^{-2})^{-\beta/(1-\beta)} \in [1, \infty)$, where $c'$ is taken from \eqref{eq:206}.
	Then
	\[
		c_2^{-1}
		\phi^{-1}(c_2/t) 
		\geq C_3^{-2} c' c_2^{-1+1/\beta} \phi^{-1}(1/t) = \chi_1^{-1} \phi^{-1}(1/t).
	\]
	Consequently, by Proposition \ref{prop:WLSC},
	\begin{align}
		\nonumber
		x 
		\geq \frac{\chi_1}{\phi^{-1}(1/t)} 
		&\geq t \frac{\phi \big( \phi^{-1} ( c_2/t) \big)}{\phi^{-1} (c_2/t)} \\
		\label{eq:107}
		&\geq t \phi' \big( \phi^{-1}(c_2/t) \big).
	\end{align}
	Moreover, there is $C_4 \geq 1$ such that $C_4 x\phi'(x) \geq \phi(x)$ provided that $x > x_0$. Therefore,
	if $\chi_2 \leq C_4^{-1}$, then
	\begin{align}
		\nonumber
		\frac{\chi_2}{\phi^{-1}(1/t)} 
		&= \chi_2 t \frac{\phi\big(\phi^{-1}(1/t)\big)}{\phi^{-1}(1/t)} \\
		\label{eq:108}
		&\leq
		t \phi'\big( \phi^{-1}(1/t) \big),
	\end{align}
	which yields \eqref{eq:106} with $c_1 = 1$. Otherwise, if $\chi_2 > C_4^{-1}$, then we set 
	$c_1 =\big(C_4 \chi_2 C_3^2 (c')^{-1}\big)^{-\beta/(1-\beta)} \in (0,1]$. Hence, by Proposition \ref{prop:1}, for all $t \in (0,c_1/\vphi(x_0))$,
	\begin{align*}
		\frac{C_4 \chi_2}{c_1} \phi^{-1}(c_1/t) 
		\leq
		C_4 \chi_2 C_3^2 (c')^{-1} c_1^{-1+1/\beta} \phi^{-1}(1/t) = \phi^{-1}(1/t).
	\end{align*}
	Therefore,
	\begin{align*}
		x 
		&\leq \frac{\chi_2}{\phi^{-1}(1/t)}  \\
		&\leq t 
		\frac{\chi_2}{c_1} \cdot \frac{\phi^{-1}(c_1/t)}{\phi^{-1}(1/t)} 
		\cdot \frac{\phi \big( \phi^{-1}(c_1/t) \big)}{\phi^{-1}(c_1/t)} \\
		&\leq t\phi' \big( \phi^{-1}(c_1/t) \big),
	\end{align*}
	which combined with \eqref{eq:107} and \eqref{eq:108}, implies \eqref{eq:106}.
	
	Now, using Claim \ref{clm:3} and Propositions \ref{prop:7} and \ref{prop:1} we deduce that for $t \in (0,c_1/\vphi(x_0))$ and
	$\chi_1 \leq x \phi^{-1}(1/t) \leq \chi_2$,
	\begin{equation}
		\label{eq:109}
		w \leq \phi^{-1}(c_2/t) \lesssim \phi^{-1}(1/t), 
	\end{equation}
	and
	\begin{equation}\label{eq:110}
		w \geq \phi^{-1}(c_1/t) \gtrsim \phi^{-1}(1/t).
	\end{equation}
	Hence, $tw\phi'(w) \approx 1$ and
	\begin{equation}
		\label{eq:111}
		\exp\big\{-t \big(\phi(w) - w \phi'(w)\big) \big\} \approx 1.
	\end{equation}
	Next, by Propositions \ref{prop:WUSC} and \ref{prop:2},
	\[
		w^2 \big( -\phi''(w) \big) \approx w\phi'(w),
	\]
	thus, by \eqref{eq:109} and \eqref{eq:110}, we obtain
	\[
		\frac{1}{\sqrt{t\big( -\phi''(w) \big)}} \approx \frac{w}{\sqrt{tw\phi'(w)}} \approx \phi^{-1}(1/t),
	\]
	which, together with \eqref{eq:111}, implies that 
	\[
		\big(t (-\phi''(w)) \big)^{-\frac{1}{2}} \exp\big\{-t \big(\phi(w) - w \phi'(w)\big) \big\} \approx \phi^{-1}(1/t),
	\]
	for $t \in (0,c_1/\vphi(x_0))$ and $\chi_1 \leq x\phi^{-1}(1/t) \leq \chi_2$. In view of \eqref{eq:129}, the theorem follows in the case $x_0=0$. Now, it remains to observe that in the case $x_0>0$ we may use positivity and continuity to conclude the claim for all $t \in (0,1/\vphi(x_0))$.
\end{proof}
\section{Applications}\label{sec:5}
\subsection{Subordination}
Let $(\scrX, \tau)$ be a locally compact separable metric space with a Radon measure $\mu$ having full support on $\scrX$.
Assume that $(X_t \colon t \geq 0)$ is a homogeneous in time Markov process on $\scrX$ with density $h(t, \: \cdot \:,\: \cdot \:)$, that is
\[
	\PP\(\left. X_t \in B \right| X_0 = x\) = \int_B h(t, x, y) \: \mu({\rm d} y)
\]
for any Borel set $B \subset \scrX$, $x \in \scrX$ and $t > 0$. Assume that for all $t > 0$ and $x, y \in \scrX$,
\begin{equation}
	\label{eq:115}
	t^{-\frac{n}{\gamma}} \Phi_1\Big(\tau(x, y) t^{-\frac{1}{\gamma}} \Big)
	\leq
	h(t, x, y) 
	\leq
	t^{-\frac{n}{\gamma}} \Phi_2\Big(\tau(x, y) t^{-\frac{1}{\gamma}} \Big)
\end{equation}
where $n$ and $\gamma$ are some positive constants, $\Phi_1$ and $\Phi_2$ are nonnegative nonincreasing function on
$[0, \infty)$ such that $\Phi_1(1) > 0$ and
\begin{equation}
	\label{eq:121}
	\sup_{s \geq 0}{\Phi_2(s) (1+s)^{n + \gamma}} < \infty.
\end{equation}
By $H(t, x, y)$ we denote the heat kernel for the subordinate process $\(X_{T_t} \colon t \geq 0\)$, that is
\[
	H(t, x, y) = \int_0^\infty h(s, x, y) G(t, {\rm d} s),
\]
where
\[
	G(t, s) = \PP\big(T_t \geq s \big).
\]
Suppose that $\phi \in \WLSC{\alpha}{c}{x_0} \cap \WUSC{\beta}{C}{x_0}$ for some $c \in (0, 1]$,
$C \geq 1$, $x_0 > 0$, and $0 < \alpha \leq \beta < 1$. We also assume that
\[
	\lim_{x \to \infty} \phi'(x) = b = 0,
\]
and that the L\'{e}vy measure $\nu({\rm d} x)$ has an almost monotone density $\nu(x)$. 
\begin{claim}
	\label{clm:2}
	For all $x, y \in \scrX$ satisfying $\tau(x, y)^{-\gamma} > x_0$, and any $t \in (0, 1/\vphi(x_0))$,
	\[
		H(t, x, y) 
		\approx
		\begin{cases}
		t \phi\big(\tau(x,y)^{-\gamma} \big) \tau(x, y)^{-n} & \text{if } 
		0 < t \phi\big(\tau(x, y)^{-\gamma}\big) \leq 1, \\
		\big(\phi^{-1}(1/t) \big)^{\frac{n}{\gamma}} & \text{if } 1 \leq t \phi\big(\tau(x, y)^{-\gamma}\big).
		\end{cases}
	\]
\end{claim}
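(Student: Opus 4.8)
The plan is to estimate the subordination integral directly. By Corollary \ref{cor:6} the hypothesis gives $-\phi'' \in \WLSC{\alpha-2}{c'}{x_0} \cap \WUSC{\beta-2}{C'}{x_0}$, so Theorem \ref{thm:3} applies and the law of $T_t$ has a density $p(t,\:\cdot\:)$; in particular $H(t,x,y)$ is the integral of the parent kernel against this law,
\[
	H(t,x,y) = \int_0^\infty h(s,x,y)\, p(t,s)\rmd s .
\]
Fix $x,y$ with $r := \tau(x,y)$, $r^{-\gamma} > x_0$, and set $\rho = r^\gamma$ and $\theta = 1/\phi^{-1}(1/t)$. The elementary identity $\rho \ge \theta \iff \rho^{-1} \le \phi^{-1}(1/t) \iff t\phi(\rho^{-1}) \le 1$ recasts the two cases of the statement as $\rho \ge \theta$ (``large distance'') and $\rho \le \theta$ (``small distance''). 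By \eqref{eq:115} it suffices to bound $J_\Phi := \int_0^\infty s^{-n/\gamma}\Phi(r s^{-1/\gamma})\, p(t,s)\rmd s$ from below for $\Phi = \Phi_1$ and from above for $\Phi = \Phi_2$. I would freely use $\vphi \approx \phi$, $\vphi^{-1}\approx\phi^{-1}\approx\psi^{-1}$ (Propositions \ref{prop:1} and \ref{prop:7}) and that $\phi$, $\phi'$ obey two-sided scaling with exponents in $(0,1)$, $(-1,0)$ respectively (Propositions \ref{prop:WLSC}, \ref{prop:WUSC}).

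For the lower bound restrict $J_{\Phi_1}$ to $I = [a, 2a]$ with $a := \rho \vee \theta$. On $I$ one has $s \ge \rho$, hence $r s^{-1/\gamma} \le 1$ and $\Phi_1(r s^{-1/\gamma}) \ge \Phi_1(1) > 0$, while $s^{-n/\gamma} \approx a^{-n/\gamma}$, so $H(t,x,y) \gtrsim a^{-n/\gamma}\, \PP(a \le T_t \le 2a)$. If $\rho \le \theta$ then $a = \theta$ and $s\phi^{-1}(1/t) \in [1,2]$ on $I$, so Theorem \ref{thm:7} (with $\chi_1 = 1$, $\chi_2 = 2$) gives $p(t,s) \gtrsim \phi^{-1}(1/t)$ there and $\PP(\theta \le T_t \le 2\theta) \gtrsim \theta\,\phi^{-1}(1/t) = 1$, whence $H(t,x,y) \gtrsim (\phi^{-1}(1/t))^{n/\gamma}$. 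If $\rho \ge \theta$ then $a = \rho$, $s\phi^{-1}(1/t) \ge 1$ on $I$, and Theorem \ref{thm:5} together with \eqref{eq:28} and monotonicity of $\phi$ (which give $\phi(1/s) \approx \phi(\rho^{-1})$ on $I$) yield $p(t,s) \approx t\rho^{-1}\phi(\rho^{-1})$ on $I$, so $\PP(\rho \le T_t \le 2\rho) \gtrsim t\phi(\rho^{-1})$ and $H(t,x,y) \gtrsim \rho^{-n/\gamma} t\phi(\rho^{-1}) = r^{-n}\, t\phi(r^{-\gamma})$. These are exactly the asserted lower bounds.

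For the upper bound use \eqref{eq:121} in the form $\Phi_2(u) \lesssim \min\{1, u^{-(n+\gamma)}\}$; splitting $J_{\Phi_2}$ at $s = \rho$ gives
\[
	H(t,x,y) \lesssim \rho^{-(n+\gamma)/\gamma}\int_{(0,\rho)} s\, p(t,s)\rmd s + \int_{[\rho,\infty)} s^{-n/\gamma} p(t,s)\rmd s .
\]
On $[\rho,\infty)$ (where $s\ge\rho\ge\theta$ in the large-distance case; in the small-distance case one first splits at $\theta$) one uses $p(t,s) \lesssim t s^{-1}\phi(1/s)$ for $s \gtrsim \theta$, which follows from \eqref{eq:97} and Propositions \ref{prop:1}, \ref{prop:7} since $2e\, t\phi'(\psi^{-1}(1/t)) \approx \theta$; then $u = 1/s$ and $\phi(u)\le\phi(1/\theta)=1/t$ (resp. $\phi(u)\le\phi(\rho^{-1})$) bound this piece by $\theta^{-n/\gamma}$ (resp. $\rho^{-n/\gamma}t\phi(\rho^{-1})$). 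For the portion with $s$ below $\theta$ one invokes the exponential Chebyshev bound $\PP(T_t \le s) \le \exp\{-t(\phi(w) - w\phi'(w))\}$ with $w = (\phi')^{-1}(s/t)$; combining $\phi(w) - w\phi'(w) = \int_{x_0}^{w}\vphi(u)\tfrac{\rmd u}{u} + \mathrm{const} \gtrsim \vphi(w)$ (as in the proof of Corollary \ref{cor:3}, using both scalings of $\vphi$) with the scaling of $\phi'$ gives the sub-Gaussian decay $\PP(T_t \le s) \lesssim \exp\{-c(\theta/s)^{\alpha/(1-\beta)}\}$ for $s \le \theta$; since $u^{n/\gamma} e^{-c u^{\alpha/(1-\beta)}}$ is bounded on $[1,\infty)$, integration by parts absorbs the weights $s^{-n/\gamma}$ and $\rho^{-(n+\gamma)/\gamma}s$ and bounds these pieces by $\theta^{-n/\gamma}$. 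Finally $\int_{[\theta,\rho)} s\, p(t,s)\rmd s \lesssim t\int_\theta^\rho \phi(1/s)\rmd s \lesssim t\rho\,\phi(\rho^{-1})$ by the upper scaling of $\phi$ — the one place $\beta < 1$ is essential — and $\int_{(0,\theta)} s\, p(t,s)\rmd s \le \theta \lesssim \rho\, t\phi(\rho^{-1})$, the last step by \eqref{eq:28} applied to $1/t = \phi(\phi^{-1}(1/t)) \le \rho\phi^{-1}(1/t)\,\phi(\rho^{-1})$. Collecting terms gives $H(t,x,y) \lesssim (\phi^{-1}(1/t))^{n/\gamma}$ when $t\phi(r^{-\gamma}) \ge 1$ and $H(t,x,y) \lesssim t\phi(r^{-\gamma})\, r^{-n}$ when $t\phi(r^{-\gamma}) \le 1$.

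The main obstacle is exactly the control of the ``wrong-scale'' mass of $p(t,\:\cdot\:)$ in the upper bound: one must show that the rapidly decaying sub-Gaussian left tail near $0$ beats the singularity $s^{-n/\gamma}$ of the parent kernel, and that the polynomially decaying right tail $t s^{-1}\phi(1/s)$ is integrable against $s^{-n/\gamma}$, the latter hinging on $-\phi'' \in \WUSC{\beta-2}{C'}{x_0}$ with $\beta < 1$. A secondary bookkeeping point appears when $x_0 > 0$: the lower estimate $p(t,s) \gtrsim t s^{-1}\phi(1/s)$ is furnished by Theorem \ref{thm:5} (through Corollary \ref{cor:4}) only for $s$ below a threshold, so the argument produces the asserted estimate on the corresponding range of $\tau(x,y)$, while on the remaining bounded range all quantities in play are comparable to positive constants and the contribution $\int_{x_1}^\infty h(s,x,y)p(t,s)\rmd s \le x_1^{-n/\gamma}\PP(T_t \ge x_1) \lesssim t$ is dominated by either side of the claimed estimate throughout the regime $\tau(x,y)^{-\gamma} > x_0$.
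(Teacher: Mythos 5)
Your proposal is correct in substance but follows a genuinely different route from the paper's. The paper splits the subordination integral once, at $s=1/\phi^{-1}(1/t)$, and feeds the sharp two-sided estimate of Theorem \ref{thm:5} into both pieces: on the small-$s$ piece it keeps the full saddle-point form and tames the exponential factor via the scaling bounds \eqref{eq:112}--\eqref{eq:120}, then distinguishes $A\le 1$ from $A>1$ with $A=\tau(x,y)^{\gamma}\phi^{-1}(1/t)$. You instead get the lower bound by localizing to the single interval $[a,2a]$, $a=\rho\vee\theta$ (in your notation $\rho=\tau(x,y)^{\gamma}$, $\theta=1/\phi^{-1}(1/t)$), quoting Theorem \ref{thm:7} when $a=\theta$ and the heavy-tail regime of Theorem \ref{thm:5} when $a=\rho$, and you get the upper bound from \eqref{eq:97} for $s\gtrsim\theta$ together with the Chernoff bound $\PP(T_t\le s)\le\exp\{-t(\phi(w)-w\phi'(w))\}$, $w=(\phi')^{-1}(s/t)$, for $s\le\theta$; this replaces the paper's analysis of the exponential factor below $\theta$ by an elementary large-deviation estimate and avoids the first regime of Theorem \ref{thm:5} entirely, at the cost of carrying out the Chernoff computation, while the paper's route reuses its sharp asymptotics uniformly and keeps all constants tied to Theorem \ref{thm:5}. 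Two caveats: the exponent in your sub-Gaussian tail should come out as $\alpha/(1-\alpha)$ rather than $\alpha/(1-\beta)$ (it is the lower scaling of $\phi'$ that bounds $w$ from below), which is immaterial since only positivity of the exponent is used; and your closing treatment of the range where $\rho$ exceeds the threshold $x_1$ of Theorem \ref{thm:5} (possible when $x_0>0$) is too quick for the lower bound, since there the target $t\phi(\rho^{-1})\rho^{-n/\gamma}\approx t$ still degenerates as $t\to 0$ and is not a matter of comparing positive constants -- but the paper's own proof applies Theorem \ref{thm:5} on that region without comment, so your argument is at the same level of rigor as the text and has the merit of flagging the issue explicitly.
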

By Proposition \ref{prop:WLSC}, $\phi' \in \WLSC{\alpha-1}{c}{x_0} \cap \WUSC{\beta-1}{C}{x_0}$.
Let $0 < r < \phi'(x_0^+)$. If $0 < \lambda \leq C$ then by setting
\[
	D = C^\frac{1}{1-\beta} \lambda^{-\frac{1}{1-\beta}},
\]
the weak upper scaling property of $\phi'$ implies that
\[
	\lambda r = 
	\lambda \phi'\big((\phi')^{-1} (r)\big) \geq \phi'\big(D (\phi')^{-1} (r) \big).
\]
Therefore,
\begin{equation}
	\label{eq:104}
	(\phi')^{-1}(\lambda r) \leq C^\frac{1}{1-\beta} \lambda^{-\frac{1}{1-\beta}} (\phi')^{-1}(r).
\end{equation}
Analogously, we can prove the lower estimate: If $0 < \lambda \leq c$ then by setting
\[
	D = c^{\frac{1}{1-\alpha}} \lambda^{-\frac{1}{1-\alpha}},
\]
we obtain
\[
	\lambda r = \lambda \phi'\big((\phi')^{-1} (r)\big) \leq \phi'\big(D (\phi')^{-1} (r) \big),
\]
and consequently,
\begin{equation}
	\label{eq:105}
	(\phi')^{-1}(\lambda r) \geq c^{\frac{1}{1-\alpha}} \lambda^{-\frac{1}{1-\alpha}} (\phi')^{-1}(r).
\end{equation}
Since $(\phi')^{-1}$ is nonincreasing the last inequality is valid for all $0 < \lambda \leq 1$. Let 
\begin{align*}
	H(t, x, y) &= 
	\left(
	\int_0^{\frac{1}{\phi^{-1}(1/t)}}
	+
	\int_{\frac{1}{\phi^{{-1}(1/t)}}}^\infty
	\right)
	h(s, x, y) G(t, {\rm d} s) \\
	&=
	I_1(t, x, y) + I_2(t, x, y).
\end{align*}
By Theorem \ref{thm:5},
\begin{align}\label{eq:114a}
	I_1
	\approx \frac{1}{\phi^{-1}(1/t)} \int_0^{1} 
	h\bigg(\frac{u}{\phi^{-1}(1/t)}, x , y\bigg) 
	\frac{1}{\sqrt{t (-\phi''(w))}} \exp\Big(-t \big(\phi(w) - w\phi'(w)\big) \Big)
	\rmd u
\end{align}
where
\[
	w = (\phi')^{-1}\bigg(\frac{u}{t \phi^{-1}(1/t)} \bigg).
\]
Recall that, by Proposition \ref{prop:WLSC}, for all $r > x_0$ we have 
\begin{equation}
	\label{eq:114}
	r \phi'(r) \leq \phi(r) \leq C_1 r \phi'(r).
\end{equation}
We can assume that
\[
	t \phi\(2 (CC_1)^{\frac{1}{1-\beta}} x_0 \) < 1.
\]
By \eqref{eq:114} and the weak upper scaling of $\phi'$, we get
\begin{align*}
	\phi'\Big(\phi^{-1}(1/t)\Big) \leq
	\frac{1}{t \phi^{-1}(1/t)} 
	&\leq C_1 \phi'\big(\phi^{-1}(1/t)\big)\\
	&\leq \phi'\Big((CC_1)^{-\frac{1}{1-\beta}} \phi^{-1}(1/t)\Big),
\end{align*}
thus
\[
	(\phi')^{-1}\bigg(\frac{1}{t \phi^{-1}(1/t)} \bigg) \approx \phi^{-1}(1/t).
\]
Hence, by \eqref{eq:104} and \eqref{eq:105}, we obtain
\begin{equation}
	\label{eq:112}
	u^{-\frac{1}{1-\alpha}} \phi^{-1}(1/t) \lesssim w \lesssim u^{-\frac{1}{1-\beta}} \phi^{-1}(1/t),
	\quad u \in (0, 1].
\end{equation}
Moreover, since $w > x_0$, by \eqref{eq:114} and Proposition \ref{prop:1},
\begin{align*}
	w \phi'(w)
	\gtrsim
	\phi(w) - w \phi'(w) 
	&=
	\int_0^{w} \vphi(u) \frac{{\rm d} u}{u} \\ 
	&\geq \int_{x_0}^{w} \vphi(u) \frac{{\rm d} u}{u}
	\gtrsim w \phi'(w).
\end{align*}
Thus, \eqref{eq:112} entails that
\begin{equation}
	\label{eq:119}
	u^{-\frac{\alpha}{1-\alpha}} 
	\lesssim 
	t \big(\phi(w) - w \phi'(w)\big) 
	\lesssim u^{-\frac{\beta}{1-\beta}}, \quad u \in (0, 1].
\end{equation}
Next, by Proposition \ref{prop:1} and \eqref{eq:114}, we get
\begin{align*}
	\frac{1}{\sqrt{t (-\phi''(w))}} 
	\approx
	\frac{w}{\sqrt{t \phi(w)}} 
	\approx
	\sqrt{u^{-1} \phi^{-1}(1/t) w}.
\end{align*}
Therefore, by \eqref{eq:112},
\begin{equation}
	\label{eq:120}
	u^{-\frac{2-\alpha}{2(1-\alpha)}} \phi^{-1}(1/t) \lesssim \frac{1}{\sqrt{t (-\phi''(w))}}
	\lesssim
	u^{-\frac{2-\beta}{2(1-\beta)}} \phi^{-1}(1/t),
	\quad u \in (0, 1].
\end{equation}
Now, by \eqref{eq:114a} and \eqref{eq:115} together with \eqref{eq:119} and \eqref{eq:120}, we can estimate
\begin{align}
	\label{eq:107a}
	I_1 &\lesssim 
	\big(\phi^{-1}(1/t) \big)^{\frac{n}{\gamma}}
	\int_0^{1}
	\Phi_2\(u^{-\frac{1}{\gamma}} A^{\frac{1}{\gamma}}\)
	u^{-\frac{n}{\gamma} - \frac{2-\beta}{2(1-\beta)}} 
	\exp\Big(-C'' u^{-\frac{\alpha}{1-\alpha}} \Big) \rmd u,
	\intertext{and}
	\label{eq:107b}
	I_1 &\gtrsim 
	\big(\phi^{-1}(1/t) \big)^{\frac{n}{\gamma}}
	\int_0^{1} 
	\Phi_1\(u^{-\frac{1}{\gamma}} A^{\frac{1}{\gamma}}\)
	u^{-\frac{n}{\gamma} - \frac{2-\alpha}{2(1-\alpha)}} 
	\exp\Big(-C' u^{-\frac{\beta}{1-\beta}} \Big) \rmd u,
\end{align}
where
\[
	A = \tau(x, y)^\gamma \phi^{-1}(1/t).
\]
Suppose that $A \leq 1$. Since $\Phi_1$ and $\Phi_2$ are nonincreasing, by \eqref{eq:107a} and \eqref{eq:107b}, 
we easily see that
\[
	I_1 \approx \big(\phi^{-1}(1/t) \big)^{\frac{n}{\gamma}}.
\]
We also have
\begin{align*}
	I_2 \lesssim 
	\int_{\frac{1}{\phi^{-1}(1/t)}}^{\infty} 
	s^{-\frac{n}{\gamma}} p(t, s) \rmd s 
	\lesssim \big(\phi^{-1}(1/t) \big)^{\frac{n}{\gamma}}.
\end{align*}
Therefore,
\[
	H(t, x, y) \approx \big(\phi^{-1}(1/t) \big)^{\frac{n}{\gamma}}.
\]
We now turn to the case $A > 1$. By \eqref{eq:121} and \eqref{eq:107a},
\begin{align}
	\nonumber
	I_1 
	&\lesssim 
	\big(\phi^{-1}(1/t) \big)^{\frac{n}{\gamma}}
	A^{-\frac{n}{\gamma}-1}
	\int_0^{1}
	u^{-\frac{\beta}{2(1-\beta)}} \exp\Big(-C'' u^{-\frac{\alpha}{1-\alpha}} \Big) \rmd u \\
	\label{eq:78}
	&\lesssim
	A^{-1} \tau(x, y)^{-n}.
\end{align}
It remains to estimate $I_2$. Let us observe that for all $r > x_0$, if $u \geq 1$ then by the weak upper scaling of
$\phi$, we have
\[
	\phi(r) \leq \phi(r u) \leq C u^\beta \phi(r).
\]
On the other hand, if $0 < u \leq 1$ then by \eqref{eq:28} and the monotonicity of $\phi$, we get
\[
	u \phi(r) \leq \phi(r u) \leq \phi(r).
\]
Therefore, for all $u > 0$ and $r > x_0$,
\begin{equation}
	\label{eq:118}
	\min{\{1, u\}} \phi(r) \leq \phi(r u) \leq C \max{\{1, u^\beta\}} \phi(r).
\end{equation}
Since $\tau(x, y)^{-\gamma} > x_0$, by Theorem \ref{thm:5}, \eqref{eq:115}, and estimates \eqref{eq:118}, we get 
\begin{align*}
	I_2 &\lesssim
	t \phi\big(\tau(x, y)^{-\gamma}\big) \tau(x, y)^{-n}
	\int_{1/A}^\infty
	\Phi_2\(u^{-\frac{1}{\gamma}} \) 
	u^{-\frac{n}{\gamma}-1} \max{\left\{1, u^{-\beta} \right\}} \rmd u,
	\intertext{and}
	I_2 &\gtrsim
	t \phi\big(\tau(x, y)^{-\gamma}\big) \tau(x, y)^{-n}
	\int_{1/A}^\infty 
	\Phi_1\(u^{-\frac{1}{\gamma}}\)
	u^{-\frac{n}{\gamma} - 1} \min{\{1, u\} } \rmd u.
\end{align*}
By \eqref{eq:121}, we have
\[
	\int_0^1 \Phi_2\(u^{-\frac{1}{\gamma}} \) u^{-\frac{n}{\gamma}-\beta-1} \rmd u
	\lesssim
	\int_0^1 u^{-\beta} \rmd u < \infty,
\]
thus,
\[
	I_2 \approx t \phi\big(\tau(x, y)^{-\gamma}\big) \tau(x, y)^{-n}.
\]
Finally, since $A > 1$, by \eqref{eq:28}, we have
\[
	t \phi\big(\tau(x, y)^{-\gamma}\big) = t \phi\big(A^{-1} \phi^{-1}(1/t)\big) \geq A^{-1},
\]
hence, by \eqref{eq:78},
\[
	I_1 \lesssim t \phi\big(\tau(x, y)^{-\gamma}\big) \tau(x, y)^{-n},
\]
proving the claim.

\begin{example}
	Let $(\scrX, \tau)$ be a nested fractal with the geodesic metric on $\scrX$. Let $d_w$ and $d_f$ be the walk dimension
	and the Hausdorff dimension of $\scrX$, respectively. Let $(X_t \colon t \geq 0)$ be the diffusion on $\scrX$
	constructed in \cite[Section 7]{barlow1998}. By \cite[Theorem 8.18]{barlow1998}, the corresponding heat kernel
	satisfies \eqref{eq:115} with $n = d_f$, $\gamma = d_w$, and
	\[
		\Phi_1(s) = \Phi_2(s) = \exp\(-s^{\frac{\gamma}{\gamma-1}} \).
	\]
	Let $\bfT$ be a subordinator with the Laplace exponent
	\[
		\phi(s) = s^\alpha \log^\sigma(2+s),
	\]
	where $\alpha \in (0, 1)$ and $\sigma \in \RR$. Then, by Claim \ref{clm:2}, the process $(X_{T_t} \colon t \geq 0)$
	has density $H(t, x, y)$ such that for all $x, y \in \scrX$ and $t > 0$,
	\begin{itemize}
		\item if $t > \tau(x, y)^{\alpha\gamma} \log^{-\sigma}\(2 + \tau(x, y)^{-\gamma}\)$ then
	\[
		H(t, x, y) \approx t^{-\frac{n}{\alpha\gamma}} \log^{-\frac{\sigma n}{\alpha\gamma}}\big(2+t^{-1}\big),
	\]
		\item if $t < \tau(x, y)^{\alpha\gamma} \log^{-\sigma}\(2 + \tau(x, y)^{-\gamma}\)$ then
	\[
		H(t, x, y) \approx t \tau(x, y)^{-\alpha \gamma - n} \log^{\sigma}\big(2 + \tau(x, y)^{-\gamma}\big).
	\]
	\end{itemize}	
\end{example}

\begin{example}
	Let $(\scrX, \tau)$ be a complete manifold without boundary, having nonnegative Ricci curvature. Then
	by \cite{MR834612}, the heat kernel corresponding to the Laplace--Beltrami operator on $\scrX$ satisfies
	estimates \eqref{eq:115} with
	\[
		\Phi_1(s) = e^{-C_1 s^2}, \qquad \Phi_2(s) = e^{-C_2 s^2}.
	\]
	Now, one can take $\bfT$ with a L\'{e}vy-Khintchine exponent regularly varying at infinity with index $\alpha \in (0,1)$ and apply Claim \ref{clm:2} to obtain the asymptotic behavior of subordinate process.
\end{example}

\subsection{Green function estimates}\label{subsec:1}
Let $\bfT = (T_t \colon t \geq 0)$ be a subordinator with the Laplace exponent $\phi$. If $-\phi''$ has the weak lower scaling
property of index $\alpha-2$ for some $\alpha > 0$, then the probability distribution of $T_t$ has a density
$p(t, \: \cdot \:)$, see Theorem \ref{thm:3}. In this section we want to derive sharp estimates on the Green function
based on Sections \ref{sec:1} and \ref{sec:4}. Let us recall that the Green function is
\[
	G(x) = \int_0^{\infty} p(t,x) \rmd t, \qquad x>0.
\]
We set
\[
	f(x) = \frac{\vphi(x)}{\phi'(x)}, \qquad x>0.
\]
Let us denote by $f^{-1}$ the generalized inverse of $f$, i.e.
\[
	f^{-1}(x) = \sup \{ r>0\colon f^*(r)=x \}
\]
where
\[
	f^*(r) = \sup_{0<x\leq r}f(x).
\]
Notice that by \eqref{eq:20} and Proposition \ref{prop:WLSC}, for all $x > x_0$,
\begin{align}
	\label{eq:138}
	f^*(x) \lesssim x.
\end{align}
In view of \eqref{eq:33} and Proposition \ref{prop:7}, the function $\vphi$ is almost increasing, thus
by monotonicity of $\phi'$, $f$ is almost increasing as well. Therefore, there is $c_0 \in (0, 1]$ such that for all
$x > x_0$,
\begin{equation}
	\label{eq:74}
	c_0 f^*(x) \leq f(x) \leq f^*(x).
\end{equation}
Moreover, $f$ has the doubling property on $(x_0,\infty)$. Since $\vphi$ belongs to $\WLSC{\alpha}{c}{x_0}$, by monotonicity of $\phi'$, we conclude that $f$ belongs 
	to $\WLSC{\alpha}{c}{x_0}$. It follows that $f^{-1} \in \WUSC{1/\alpha}{C}{f^*(x_0)}$ for some $C \geq 1$ and since $f^{-1}$ is increasing, we infer that $f^{-1}$ also has doubling property on $(f^*(x_0),\infty)$.
\begin{proposition}
	\label{prop:9}
	Suppose that $b=0$ and $-\phi'' \in \WLSC{\alpha-2}{c}{x_0}$ for some $c \in (0,1]$, $x_0 \geq 0$ and $\alpha>0$.
	Then for each $A>0$ and $M>0$ there is $C \geq 1$ so that for all $x < A/x_0$,
	\[
		C^{-1} \frac{1}{x\phi( 1/x)} 
		\leq \int_{\frac{x}{\phi'( f^{-1}(M/x))}}^{\infty} p(t,x) \rmd t 
		\leq 
		C\frac{1}{x\phi( 1/x)}.
	\]
	In particular, for each $A>0$ there is $C > 0$ such that for all $x < A/x_0$,
	\[
		G(x) \geq C \frac{1}{x\phi ( 1/x)}.
	\]
\end{proposition}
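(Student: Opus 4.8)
The plan is to turn the integral into an explicit one by the substitution $t\mapsto w$ dictated by the saddle point, and then feed in the uniform asymptotics of Corollary~\ref{cor:1}. Throughout put $w=w(t)=(\phi')^{-1}(x/t)$, so that $t=x/\phi'(w)$, $\rmd t=x(-\phi''(w))\phi'(w)^{-2}\,\rmd w$, and $w$ increases from $0$ to $\infty$ together with $t$. Writing $\Psi(w)=\phi(w)-w\phi'(w)=\int_0^w r(-\phi''(r))\,\rmd r$ and $q(t,x)=\bigl(t(-\phi''(w))\bigr)^{-1/2}\exp\{-t\Psi(w)\}$, a direct computation gives $t\Psi(w)=xg(w)$ with $g(w)=\phi(w)/\phi'(w)-w$ and $\tfrac{\rmd}{\rmd t}\bigl(t\Psi(w)\bigr)=\phi(w)$ along the curve $x=t\phi'(w)$. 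Since $-\phi''\in\WLSC{\alpha-2}{c}{x_0}$ forces $\alpha\le1$, Remark~\ref{rem:2} gives $\phi\in\WLSC{\alpha}{c_1}{x_0}$, hence $w\phi'(w)\approx\phi(w)$ by Proposition~\ref{prop:WLSC}; also $\tfrac12\vphi(w)\le\Psi(w)\le C\vphi(w)$ for $w>x_0$ (the lower bound because $-\phi''$ is nonincreasing, the upper one from Corollary~\ref{cor:3}), so $g(w)\approx f(w)$ and $g'(w)=\phi(w)(-\phi''(w))\phi'(w)^{-2}\approx f(w)/w$. With $w_*=f^{-1}(M/x)$ and $t_*=x/\phi'(w_*)$ the change of variables turns $Q(x):=\int_{t_*}^\infty q(t,x)\,\rmd t$ into
\[
Q(x)=\sqrt{x}\int_{w_*}^\infty\frac{\sqrt{f(w)}}{w\phi'(w)}\,e^{-xg(w)}\,\rmd w .
\]

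Next I would delimit the range in which Corollary~\ref{cor:1} is available: with $b=0$ its hypothesis reads $w>x_0$ and $t\vphi(w)>M_0$, and since $t\vphi(w)=xf(w)$ and $f(w)\ge c_0f^*(w)$ by \eqref{eq:74}, it applies for all $t\ge t_1:=x/\phi'(w_1)$, where $w_1=f^{-1}\bigl(\max\{M,M_0/c_0\}/x\bigr)$ and $x$ is small enough that $w_1>x_0$. The interval $(t_*,t_1]$ is nonempty only when $M<M_0/c_0$; there $w(t)$ is comparable to $w_*$ (because $f^{-1}$ is doubling), hence so is $\phi'(w(t))$, and one estimates $\int_{t_*}^{t_1}p(t,x)\,\rmd t\le(t_1-t_*)\sup_y p(t,y)\lesssim(t_1-t_*)\,\vphi^{-1}(1/t_*)$ by Theorem~\ref{thm:6}, the length $t_1-t_*=x\int_{w_*}^{w_1}(-\phi''(s))\phi'(s)^{-2}\,\rmd s$ being small enough to absorb the size of $\vphi^{-1}(1/t_*)$. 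Thus it suffices to estimate $\int_{t_1}^\infty p(t,x)\,\rmd t$, which by Corollary~\ref{cor:1} is comparable to $\sqrt{x}\int_{w_1}^\infty\frac{\sqrt{f(w)}}{w\phi'(w)}\,e^{-xg(w)}\,\rmd w$.

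For the lower bound I would keep only $w\in[w_1,2w_1]$: there $f(w)\approx f(w_1)$, $\phi'(w)\approx\phi'(w_1)$ (doubling of $-\phi''$) and, integrating $g'\approx f/w$, $xg(w)\approx xg(w_1)\approx xf(w_1)\approx\max\{M,M_0/c_0\}$, so the integrand is $\approx\sqrt{x}\,(w_1\phi'(w_1))^{-1}\sqrt{f(w_1)}$ over an interval of length $w_1$, whence $\int_{t_1}^\infty q\gtrsim c(M)/\phi'(w_1)\gtrsim 1/(x\phi(1/x))$ because $w_1\gtrsim 1/x$ and $\phi(1/x)\approx(1/x)\phi'(1/x)$. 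For the upper bound I would substitute $\rho=xg(w)$: using $\rmd\rho=xg'(w)\,\rmd w\approx\rho\,\rmd w/w$, $\sqrt{f(w)}\approx\sqrt{\rho/x}$ (since $xf(w)\approx xg(w)=\rho$) and $w\phi'(w)\approx\phi(w)$ one gets
\[
\int_{t_1}^\infty q(t,x)\,\rmd t\ \approx\ \int_{\rho_1}^\infty\frac{e^{-\rho}}{\sqrt{\rho}\,\phi'\bigl(f^{-1}(\rho/x)\bigr)}\,\rmd\rho,\qquad\rho_1\approx\max\{M,M_0/c_0\}.
\]
Then $\phi'\in\WLSC{\alpha-1}{c'}{x_0}$ and $f^{-1}\in\WUSC{1/\alpha}{C}{f^*(x_0)}$ give, for $\rho\ge\rho_1$, $\phi'(f^{-1}(\rho/x))\gtrsim(\rho/\rho_1)^{1-1/\alpha}\phi'(w_1)$, and since $1/\alpha-\tfrac32>-1$ the factor $\rho^{1/\alpha-3/2}$ is swallowed by $e^{-\rho}$, so $\int_{t_1}^\infty q\lesssim C(M)/\phi'(w_1)\lesssim 1/(x\phi(1/x))$.

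Finally, replacing $q$ by $p$ costs only the fixed constant of Corollary~\ref{cor:1} on $(t_1,\infty)$; the bound for $G$ is then immediate from $Q(x)\le G(x)$ and the lower estimate; and the case $x_0>0$ (so $1/x$ bounded) is dispatched, as elsewhere in the paper, by absolute continuity (Theorem~\ref{thm:3}) together with positivity and continuity of $(t,x)\mapsto p(t,x)$ on the relevant compact set. The main obstacle is the family of scaling comparisons underpinning both halves, above all $\phi'\bigl(f^{-1}(M/x)\bigr)\approx\phi'(1/x)\approx x\phi(1/x)$ and the estimate $\phi'(f^{-1}(\rho/x))\gtrsim(\rho/\rho_1)^{1-1/\alpha}\phi'(w_1)$: these have to be obtained from the lower scaling of $-\phi''$ alone, without the upper scaling that would make $f$ comparable to the identity and render them routine, and one must simultaneously keep the polynomial-in-$\rho$ correction small enough to be dominated by the Gaussian factor $e^{-\rho}$.
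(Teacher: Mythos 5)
Your computation reproduces, in substance, the paper's Claim inside the proof: after the change of variables $t=x/\phi'(w)$ and an application of Corollary \ref{cor:1} (plus a separate treatment of the short interval where $t\vphi(w)$ is between $M$ and $M_0$), one gets the two\-/sided comparison of the integral with $1/\phi'\big(f^{-1}(1/x)\big)$. The lower half of the statement then follows exactly as you say, since $f^*(y)\lesssim y$ gives $f^{-1}(1/x)\gtrsim 1/x$, hence $\phi'\big(f^{-1}(1/x)\big)\lesssim\phi'(1/x)\approx x\phi(1/x)$. Up to this point your argument is essentially the paper's (your sup-times-length bound for the middle piece is cruder than the paper's, but with the integral formula for $t_1-t_*$ and the doubling of $f^{-1}$ it can be pushed through).

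The genuine gap is the very last inequality of your upper bound, $C(M)/\phi'(w_1)\lesssim 1/\big(x\phi(1/x)\big)$, i.e.\ $\phi'\big(f^{-1}(1/x)\big)\gtrsim\phi'(1/x)$. You correctly identify this as ``the main obstacle,'' but you give no argument for it, and it is not a routine scaling estimate: under lower scaling of $-\phi''$ alone, $\vphi$ may be much smaller than $\phi$, so $f(w)=\vphi(w)/\phi'(w)$ may be far below $w$, $f^{-1}(1/x)$ may be far above $1/x$, and monotonicity of $\phi'$ then only yields the wrong direction. This is precisely where the paper's proof departs from the saddle-point computation: it invokes Lemma \ref{lem:4} to produce a \emph{complete} Bernstein function $\tilde\phi$ with $\tilde\phi\approx\phi$ and $-\tilde\phi''\approx-\phi''$, runs the same Claim for the subordinator with exponent $\tilde\phi$, and then uses the known two-sided Green function estimate for complete Bernstein functions \cite[Corollary 2.6]{MR2986850} to sandwich $\tilde I_M$ between multiples of $1/\big(x\tilde\phi(1/x)\big)$; comparability transfers back and yields $1/\phi'\big(f^{-1}(1/x)\big)\approx 1/\big(x\phi(1/x)\big)$ (the paper's \eqref{eq:58}). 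Without this (or some substitute for it), your proof establishes the comparison with $1/\phi'\big(f^{-1}(1/x)\big)$ and the lower Green function bound, but not the stated upper bound $\int_{x/\phi'(f^{-1}(M/x))}^{\infty}p(t,x)\,\rmd t\lesssim 1/\big(x\phi(1/x)\big)$.
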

\begin{proof}
	For $M > 0$ and $x > 0$ we set
	\[
		I_M(x)=\int_{\frac{x}{\phi'(f^{-1}(M/x))}}^{\infty} p(t,x) \rmd t.
	\]
	Let us first show that for each $M>0$ there are $A_M>0$ and $C \geq 1$ such that for all $x<A_M/x_0$, 
	\begin{align}
	\label{eq:153}
	C^{-1} \frac{1}{x\phi( 1/x)} 
	\leq \int_{\frac{x}{\phi'( f^{-1}(M/x))}}^{\infty} p(t,x) \rmd t 
	\leq 
	C\frac{1}{x\phi( 1/x)}.
	\end{align}
	Let
	\[
		A_M = \min \big\{ M, c_0^{-1} M_0 \big\} \cdot \min\bigg\{1, \frac{x_0}{f^*(x_0)}\bigg\}
	\]
	where $M_0$ is determined in Corollary \ref{cor:1}, and $c_0$ is taken from \eqref{eq:74}. We claim that the following
	holds true.
	\begin{claim}
		\label{clm:1}
		For each $M >0$ there is $C \geq 1$ so that for all $x < A_M/x_0$,
		\begin{equation}
			\label{eq:37}
			C^{-1} \frac{1}{\phi'\big(f^{-1}(1/x)\big)}
			\leq 
			I_M(x)
			\leq
			C \frac{1}{\phi'\big(f^{-1}(1/x)\big)}.
		\end{equation}
	\end{claim}
	Suppose that
	\begin{equation}
		\label{eq:144}
		t > \frac{x}{\phi'\big(f^{-1}(M_1/x)\big)}
	\end{equation}
	with $M_1 = c_0^{-1} M_0$. Notice that for $x < A_M/x_0$, we have $x < M_1/f^*(x_0)$. Hence,
	$x_0 \leq f^{-1}\big(M_1 /x\big)$, thus by monotonicity of $\phi'$, we obtain
	\begin{align}
		\nonumber
		\frac{x}{t} 
		&\leq 
		\phi'\big(f^{-1}(M_1/x)\big) \\
		\label{eq:141}
		&\leq
		\phi'(x_0).
	\end{align}
	Moreover, for $w = (\phi')^{-1}(x/t)$ the condition \eqref{eq:144} implies that
	\[
		f^*(w) \geq M_1/x,
	\]
	which together with \eqref{eq:74} gives
	\begin{align}
		\nonumber
		t \vphi(w) = x f(w) 
		&\geq
		c_0 x f^*(w) \\
		\label{eq:140}
		&\geq
		M_0.
	\end{align}
	Now, to justify the claim, let us first consider $M \geq M_1$. In view of \eqref{eq:141} and \eqref{eq:140}
	we can apply Corollary \ref{cor:1} to get
	\[
		I_M(x)
		\approx 
		\int_{\frac{x}{\phi'( f^{-1}(M/x))}}^{\infty} \frac{1}{\sqrt{t (-\phi'' (w))}} 
		\exp\Big\{-t \big(\phi(w) - w \phi'(w)\big)\Big\} \rmd t.
	\]
	Since, by Proposition \ref{prop:3} and Remark \ref{rem:1}, for all $w>x_0$,
	\begin{align*}
		\phi(w)-w\phi'(w) 
		&\approx h(1/w) \\
		&\approx K(1/w) \\
		& \approx w^2 \big( -\phi''(w) \big),
	\end{align*}
	after the change of variables $t = x / \phi'(s)$, we can find $C_2 \geq 1$ such that for all $x < A_M/x_0$,
	\begin{align}
		\label{eq:136}
		\int_{f^{-1}(M/x)}^{\infty} 
		\exp\{ -C_2 x f(s) \} \sqrt{xf(s)} \frac{{\rm d}s}{s\phi'(s)} 
		\lesssim 
		I_M(x) 
		\lesssim 
		\int_{f^{-1}(M/x)}^{\infty} 
		\exp\{ -C_2^{-1} xf(s) \} \sqrt{xf(s)} \frac{{\rm d}s}{s\phi'(s)}.
	\end{align}
	Recall that $f^{-1}$ has the doubling property on $(f^*(x_0),\infty)$. 
	Using now Proposition \ref{prop:WLSC} and \eqref{eq:74}, we get
	\begin{align}
		\nonumber
		I_M(x) 
		&\gtrsim \int_{f^{-1}(M/x)}^{2f^{-1}(M/x)} \exp\{-C_2xf^*(s)\}\sqrt{xf^*(s)} \frac{{\rm d}s}{\phi(s)} \\
		\nonumber
		&\gtrsim \frac{1}{\phi \big( f^{-1}(M/x) \big) } f^{-1}(M/x) \\
		\label{eq:32}
		&\gtrsim \frac{1}{\phi' \big( f^{-1}(M/x) \big)},
	\end{align}
	where the implicit constants may depend on $M$. Therefore, by monotonicity of $f^{-1}$ and $\phi'$, the estimate
	\eqref{eq:32} gives
	\begin{align}
		\label{eq:147}
		I_M(x) \gtrsim \frac{1}{\phi' \big( f^{-1}(1/x) \big)}.
	\end{align}
	This proves the first inequality in \eqref{eq:37}. 
	
	We next observe that \eqref{eq:138} entails that $f^{-1}(s) \gtrsim s$ for $s > f^*(x_0)$, 
	thus, by \eqref{eq:147},
	\begin{align}
		\nonumber
		G(x) \geq I_{M_1}(x)
		&\gtrsim
		\frac{1}{\phi' (1/x)} \\
		\label{eq:145}
		&\gtrsim
		\frac{1}{x \phi(1/x)}
	\end{align}
	where the last estimate follows by Proposition \ref{prop:WLSC}.
	
	We next show the second inequality in \eqref{eq:37}. By \eqref{eq:136}, Proposition \ref{prop:WLSC} and monotonicity
	of $\phi$,
	\begin{align*}
		I_M(x) 
		&\lesssim
		\int_{f^{-1}(M/x)}^{\infty} 
		\exp\big\{ -C_2^{-1} xf(s) \big\} \sqrt{xf(s)} \frac{{\rm d}s}{\phi(s)} \\
		&\leq 
		\frac{1}{\phi \big( f^{-1}(M/x) \big)}
		\int_{f^{-1}(M/x)}^{\infty} \exp\big\{ -C_2^{-1} xf(s) \big\} \sqrt{xf(s)} \rmd s \\
		&\leq
		\frac{1}{\phi \big( f^{-1}(M/x) \big)}
		\int_{f^{-1}(M/x)}^{\infty} \exp \Big\{ -\tfrac{1}{2 C_2} xf(s) \Big\} \rmd s
	\end{align*}
	where in the last inequality we have used
	\[
		\exp \big\{ -C_2^{-1} xf(s) \big\} \sqrt{xf(s)} \leq 
		\exp \Big\{ -\tfrac{1}{2 C_2} xf(s) \Big\}.
	\]
	Since $\vphi \in \WLSC{\alpha}{c}{x_0}$, by \cite[Lemma 16]{MR3165234},
	\[
		\int_\RR  \exp\big\{ -C_2^{-1} xf(\abs{s}) \big\}\rmd s \lesssim
		f^{-1}\big(M_1/ x \big).
	\]
	Finally, the doubling property of $f^{-1}$, monotonicity of $\phi$, and Proposition \ref{prop:WLSC} give
	\begin{align*}
		I_M(x)
		&\lesssim \frac{1}{\phi \big( f^{-1}(M/x) \big)} f^{-1}\big(M_1/x\big) \\
		&\lesssim \frac{1}{\phi' \big( f^{-1}(1/x) \big)}
	\end{align*}
	where the implied constant may depend on $M$. This finishes the proof of \eqref{eq:37} for $M \geq M_1$.

	We next consider $0 < M < M_1$. By monotonicity, the lower estimate remains valid for all $M > 0$.
	Therefore, it is enough to show that for each $0 < M < M_1$, there is $C \geq 1$ such that for all
	$x < A_M/x_0$,
	\begin{align*}
		\int_{\frac{x}{\phi'( f^{-1}(M/x))}}^{\frac{x}{\phi'( f^{-1}(M_1/x))}} 
		p(t,x) \rmd t 
		\leq C \frac{1}{\phi' \big( f^{-1}(1/x) \big)}.
	\end{align*}
	By \cite[Theorem 3.1]{GS2019}, there is $t_0 > 0$ such that for all $0 < t < t_0$, 
	\begin{align*}
		p(t,x) \lesssim \varphi^{-1}(1/t).
	\end{align*}
	If $x_0=0$ then $t_0=\infty$. Since $\vphi$ is almost increasing we have
	\begin{align*}
		\frac{x}{\phi' \big( f^{-1}(M_1/x) \big)} 
		&\leq \frac{M_1}{\vphi \big( f^{-1}(M_1 /x) \big)} \\
		&\lesssim \frac{M_1}{\vphi \big( f^{-1}(M_1 x_0 /A) \big)}.
	\end{align*}
	Hence, by continuity and positivity of $p(t,x)$ and $\vphi^{-1}(1/t)$, we can take
	\[
		t_0 \geq \frac{x}{\phi' \big( f^{-1}(M_1 / x ) \big)}.
	\]
	Therefore, by the change of variables $t = x/\phi'(s)$ we get
	\begin{align*}
		\int_{\frac{x}{\phi'( f^{-1}(M/x))}}^{\frac{x}{\phi'( f^{-1}(M_1/x))}} p(t,x) \rmd t 
		&\lesssim
		\int_{\frac{x}{\phi'( f^{-1}(M/x) )}}^{\frac{x}{\phi'( f^{-1}(M_1/x))}} \vphi^{-1}(1/t) \rmd t \\
		&= 
		x
		\int_{f^{-1}(M/x)}^{f^{-1}(M_1/x)} \vphi^{-1} \bigg( \frac{\phi'(s)}{x} \bigg) f(s) 
		\frac{{\rm d}s}{s^2 \phi'(s)}.
	\end{align*}
	Next, by monotonicity and the doubling property of $f^{-1}$ and $\phi'$, we obtain
	\begin{align}
		\nonumber
		\int_{\frac{x}{\phi'( f^{-1}(M/x))}}^{\frac{x}{\phi'( f^{-1}(M_1/x))}} p(t,x) \rmd t
		&\lesssim
		\frac{1}{\big(f^{-1}(M/x)\big)^2}
		\cdot
		\frac{1}{\phi' \big( f^{-1}(M_1/x) \big)}
		\int_{f^{-1}(M/x)}^{f^{-1}(M_1/x)} \vphi^{-1} \bigg( \frac{\phi'(s)}{x} \bigg) \rmd s \\
		\label{eq:137}
		&\lesssim
		\frac{1}{\big(f^{-1}(1/x)\big)^2}
		\cdot
		\frac{1}{\phi' \big( f^{-1}(1/x) \big)}
		\int_{f^{-1}(M/x)}^{f^{-1}(M_1/x)} \vphi^{-1} \bigg( \frac{\phi'(s)}{x} \bigg) \rmd s.
	\end{align}
	Since, by \eqref{eq:74}, for $s \geq f^{-1}(M/x)$ we have
	\[
		\frac{\phi'(s)}{x} = \frac{\vphi(s)}{x f(s)} \lesssim \vphi^*(s),
	\]
	by monotonicity of $\vphi^{-1}$, Proposition \ref{prop:7}, Remark \ref{rem:5} and the doubling property of
	$f^{-1}$ and $\vphi^{-1}$, we get
	\[
		\int_{f^{-1}(M/x)}^{f^{-1}(M_1/x)} \vphi^{-1} \bigg( \frac{\phi'(s)}{x} \bigg) \rmd s
		\lesssim
		\big(f^{-1}(1/x)\big)^2,
	\]
	which together with \eqref{eq:137} gives \eqref{eq:37} for $0 < M < M_1$. This completes the proof of
	Claim \ref{clm:1}.

	Our next task is to deduce \eqref{eq:153} from Claim \ref{clm:1}. By Lemma \ref{lem:4} and Proposition \ref{prop:WLSC},
	there is a complete Bernstein function $\tilde{\phi}$ such that $\tilde{\phi} \approx \phi$, and
	\[
		f(x) \approx \tilde{f}(x) = \frac{x^2 \big(-\tilde{\phi}''(x) \big)}{\tilde{\phi}'(x)}
	\]
	for all $x > x_0$. Let $\tilde{\bfT}$ be a subordinator with the Laplace exponent $\tilde{\phi}$. By
	$\tilde{p}(t, \: \cdot \: )$ we denote the density of the probability distribution of $\tilde{T}_t$. We set
	\[
		\tilde{G}(x) = \int_0^\infty \tilde{p}(t, x) \rmd t
	\]
	and
	\[
		\tilde{I}_M(x) = \int_{\frac{x}{\tilde{\phi}'(\tilde{f}^{-1}(M/x))}}^{\infty} \tilde{p}(t,x) \rmd t.
	\]
	Fix $M >0$. By Claim \ref{clm:1}, there is $A_M > 0$ such that for all $x < A_M/x_0$,
	\begin{equation}
		\label{eq:55}
		\begin{aligned}
		\tilde{I}_M(x) 
		&\approx \frac{1}{\tilde{\phi}' \big( \tilde{f}^{-1}(1/x) \big)} \\
		&\approx \frac{1}{\phi'\big(f^{-1}(1/x)\big)}.
		\end{aligned}
	\end{equation}
	On the other hand, since $\tilde{\phi}$ is the complete Bernstein function, by \eqref{eq:145} and
	\cite[Corollary 2.6]{MR2986850}, there is $C_3 \geq 1$ such that
	\[
		C_3^{-1} \frac{1}{x\tilde{\phi}(1/x)} 
		\leq 
		\tilde{I}_M(x) 
		\leq 
		\tilde{G}(x) \leq C_3 \frac{1}{x\tilde{\phi}(1/x)}.
	\]
	Therefore, by \eqref{eq:55}, for $x < A_M/x_0$,
	\begin{equation}
		\label{eq:58}
		\begin{aligned}
		\frac{1}{\phi'\big(f^{-1}(1/x)\big)}
		\approx \tilde{I}_M(x)
		&\approx \frac{1}{x\tilde{\phi}(1/x)} \\
		&\approx \frac{1}{x \phi(1/x)},
		\end{aligned}
	\end{equation}
	and \eqref{eq:153} follows for all $A \leq A_M$. Let us now consider $A > A_M$. Observe that the functions
	\begin{align*}
		\bigg[\frac{A_M}{x_0},\frac{A}{x_0}\bigg] \ni x &\mapsto \frac{1}{x\phi(1/x)}, \intertext{and}
		\bigg[\frac{A_M}{x_0},\frac{A}{x_0}\bigg] \ni x &\mapsto 
		\int_{\frac{x}{\phi'( f^{-1}(M/x))}}^{\infty} p(t,x) \rmd t, 
	\end{align*}
	are both positive and continuous, thus they are bounded for each $A$. Therefore, at the possible expense of worsening
	the constant we can conclude the proof of the proposition.
\end{proof}

\begin{proposition}\label{prop:10}
	Suppose that $b=0$, $-\phi'' \in \WLSC{\alpha-2}{c}{x_0}$ for some $c \in (0,1]$, $x_0 \geq 0$ and $\alpha>0$,
	and that the L\'{e}vy measure $\nu({\rm d}x)$ is absolutely continuous with respect to the Lebesgue measure with a
	monotone density $\nu(x)$. Then there is $\epsilon \in (0,1)$ such that for each $A>0$, there is $C \geq 1$ such that
	for all $x < A/x_0$,
	\[
		\int_0^{\frac{x}{\phi' ( f^{-1}(1/x) )}\epsilon}
		p(t,x) \rmd t 
		\leq C \frac{1}{x\phi (1/x)}.
	\]
\end{proposition}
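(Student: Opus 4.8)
The plan is to bound the small-time integral by the upper estimate of Theorem~\ref{thm:4}, combined with the comparison $\phi'\big(f^{-1}(1/x)\big) \approx x\phi(1/x)$ that was already established in the course of proving Proposition~\ref{prop:9} (see \eqref{eq:58}). Throughout, write $T = \tfrac{\epsilon x}{\phi'(f^{-1}(1/x))}$ for the upper limit of the integral, and $s_0 = f^{-1}(1/x)$.

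First I would reduce to a range $x < A_0/x_0$ with $A_0$ a fixed small constant, chosen so that $s_0 > x_0$ and $T < 1/\vphi(x_0)$; the extension to an arbitrary $A$ then follows from continuity and positivity of both sides on the compact interval $[A_0/x_0, A/x_0]$, exactly as at the end of the proof of Proposition~\ref{prop:9}. Since $f^*$ is continuous and $f \approx f^*$ on $(x_0,\infty)$ by \eqref{eq:74}, we have $c_0/x \le f(s_0) \le 1/x$, so $\phi'(s_0) = \vphi(s_0)/f(s_0) \ge x\vphi(s_0)$ and hence $1/T = \phi'(s_0)/(\epsilon x) \ge \vphi(s_0)/\epsilon$.

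The key step is to show that, for $\epsilon$ small enough depending only on the scaling constants, every $t \in (0,T)$ satisfies $x \ge 2et\phi'\big(\psi^{-1}(1/t)\big)$, so that \eqref{eq:97} of Theorem~\ref{thm:4} applies (recall $b=0$) and gives $p(t,x) \le Ct\eta(x)$. As $t\mapsto t\phi'(\psi^{-1}(1/t))$ is nondecreasing ($\psi^{-1}$ nondecreasing, $\phi'$ nonincreasing), it suffices to check this at $t=T$. From $1/T \ge \vphi(s_0)/\epsilon \gtrsim \vphi^*(s_0)/\epsilon \approx \psi^*(s_0)/\epsilon$ --- using \eqref{eq:33} and Proposition~\ref{prop:7} --- we get $1/T \ge \psi^*(s_0)$ once $\epsilon$ is below the implied constant, whence $\psi^{-1}(1/T) \ge \psi^{-1}(\psi^*(s_0)) \ge s_0$. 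Monotonicity of $\phi'$ then yields $T\phi'(\psi^{-1}(1/T)) \le T\phi'(s_0) = \epsilon x$, so $2et\phi'(\psi^{-1}(1/t)) \le 2e\epsilon x \le x$ for all $t\le T$ provided $\epsilon \le 1/(2e)$.

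It then remains to integrate. By the bound just obtained,
\[
\int_0^T p(t,x)\rmd t \le C\eta(x)\int_0^T t\rmd t = \frac{C\epsilon^2}{2}\cdot\frac{x^2\eta(x)}{\phi'(f^{-1}(1/x))^2} = \frac{C\epsilon^2}{2}\cdot\frac{x\zeta(x)}{\phi'(f^{-1}(1/x))^2},
\]
and since $\zeta(x) \le 2\phi(1/x)$ by \eqref{eq:20}, while $\phi'(f^{-1}(1/x)) \gtrsim x\phi(1/x)$ by \eqref{eq:58}, the right-hand side is $\lesssim \tfrac{1}{x\phi(1/x)}$, which is the claim. The main obstacle is the third paragraph: verifying uniformly in $x$ that the whole window $(0,T)$ lies in the large-deviation régime where Theorem~\ref{thm:4} delivers the clean bound $p(t,x)\lesssim t\eta(x)$; everything else is a one-line computation together with a quotation of estimates from the proof of Proposition~\ref{prop:9}.
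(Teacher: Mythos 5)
Your proposal is correct and follows essentially the same route as the paper: reduce via \eqref{eq:58}, verify that the whole window $t\le \epsilon x/\phi'(f^{-1}(1/x))$ satisfies $x\ge 2et\phi'(\psi^{-1}(1/t))$ for an $\epsilon$ fixed by the scaling constants so that \eqref{eq:97} gives $p(t,x)\lesssim t\eta(x)$, integrate, and extend by continuity and positivity. The only differences are cosmetic: you check the displacement condition at $t=T$ using monotonicity of $t\mapsto t\phi'(\psi^{-1}(1/t))$ and $\psi^{-1}(\psi^*(s_0))\ge s_0$, where the paper runs the chain through $\vphi^{-1}$ and the lower scaling of $\phi'$, and your closing algebra uses $\zeta\le 2\phi$ with \eqref{eq:58} instead of \eqref{eq:138} and \eqref{eq:59}.
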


\begin{proof}
	In view of \eqref{eq:58}, it is enough to show that for some $\epsilon \in (0, 1)$ and all $A>0$ there is
	$C \geq 1$, such that for all $x<A/x_0$,
	\begin{align}
		\label{eq:143}
		\int_0^{\frac{x}{\phi' ( f^{-1}(1/x) )}\epsilon}
		p(t,x) \rmd t 
		\leq C \frac{1}{\phi' \big( f^{-1}(1/x) \big)}.
	\end{align}
	Let $\epsilon \in (0, 1)$ and
	\[
		A = \min \bigg\{ 1,\frac{x_0}{f^*(x_0)} \bigg\}.
	\]
	Suppose that
	\begin{equation}
		\label{eq:142}
		t \leq \frac{x}{\phi'\big(f^{-1}(1/x) \big)} \epsilon,
	\end{equation}
	that is
	\[
		t \leq \frac{1}{\vphi^*\big(f^{-1}(1/x) \big)} \epsilon.
	\]
	Hence, by monotonicity of $\vphi^{-1}$ and $\phi'$,
	\begin{align*}
		x 
		\geq \frac{1}{f^*\big(\vphi^{-1}(\epsilon/t)\big)}
		&= \frac{t}{\epsilon} \phi' \big( \vphi^{-1}(\epsilon/t) \big) \\
		&\geq \frac{t}{\epsilon} \phi' \big( \vphi^{-1}(1/t) \big).
	\end{align*}
	By Proposition \ref{prop:7} and the scaling property of $\phi'$, there are $c \in (0,1]$ and $C \geq 1$ such that
	\begin{align*}
		x 
		&\geq \frac{t}{\epsilon} \phi' \big( C\psi^{-1}(1/t) \big) \\
		&\geq \frac{t}{\epsilon}cC^{\alpha-1} \phi' \big( \psi^{-1}(1/t) \big).
	\end{align*}
	Therefore, by taking $\epsilon = (2e)^{-1}cC^{\alpha-1}$, we get
	\[
		x \geq 2 e t \phi'\big(\psi^{-1}(1/t) \big).
	\]
	Since $\nu(x)$ is the monotone density of $\nu({\rm d} x)$, by Theorem \ref{thm:4} we get
	\begin{align*}
		\int_0^{\frac{x}{\phi'( f^{-1}(1/x))} \epsilon} 
		p(t,x) \rmd t 
		&\lesssim 
		\frac{\vphi(1/x)}{x} \bigg( \frac{x}{\phi'\big(f^{-1}(1/x)\big)} \bigg)^2.
	\end{align*}
	By \eqref{eq:138}, $f^{-1}(s) \gtrsim s$ for $s>f^*(x_0)$, thus using \eqref{eq:59},
	\[
		\frac{x\vphi(1/x)}{\phi' \big( f^{-1}(1/x) \big)} 
		\leq
		\frac{\vphi(1/x)}{\vphi\big( f^{-1}(1/x) \big)} \lesssim 1,
	\]
	which entails \eqref{eq:143}. The extension to arbitrary $A$ follows by continuity and positivity argument as in the
	proof of Proposition \ref{prop:9}.
\end{proof}

It is possible to get the same conclusion as in Proposition \ref{prop:10} without imposing the existence of the monotone
density of $\nu({\rm d} x)$, however instead we need to assume the weak upper scaling property in $-\phi''$.
\begin{proposition}\label{prop:12}
	Suppose that $b=0$ and $-\phi'' \in \WLSC{\alpha-2}{c}{x_0} \cap \WUSC{\beta-2}{C}{x_0}$ for some $c \in (0,1]$, 
	$C \geq 1$, $x_0 \geq 0$ and $\tfrac{1}{2}<\alpha\leq \beta<1$. Then there is $\epsilon \in (0, 1)$ such that
	for each $A>0$, there is $C_1 \geq 1$, so that for all $x < A/x_0$,
	\begin{align}
		\label{eq:152}
		\int_0^{\frac{x}{\phi' ( f^{-1}(1/x) )} \epsilon}
		p(t,x) \rmd t 
		\leq C_1 \frac{1}{x\phi (1/x)}.
	\end{align}
\end{proposition}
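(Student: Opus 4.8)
The plan is to follow the proof of Proposition \ref{prop:10} step by step, the only change being that the density-based upper estimate of Theorem \ref{thm:4} is no longer available and must be replaced by the density-free estimate of Theorem \ref{thm:6}. The price is an extra factor $\vphi^{-1}(1/t)$ under the time integral, and the hypothesis $\tfrac12<\alpha$ enters precisely to keep that integral finite. As in Proposition \ref{prop:10}, in view of \eqref{eq:58} it suffices to produce $\epsilon\in(0,1)$ such that for each $A>0$ there is $C\geq 1$ with $\int_0^{\epsilon x/\phi'(f^{-1}(1/x))}p(t,x)\rmd t\leq C/\phi'\big(f^{-1}(1/x)\big)$ for all $x<A/x_0$; the passage to arbitrary $A$ is then the usual continuity-and-positivity argument on a compact interval, exactly as at the end of the proofs of Propositions \ref{prop:9} and \ref{prop:10}.

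Fix $A=\min\{1,x_0/f^*(x_0)\}$, so that $x<A/x_0\leq x_0^{-1}$ and $1/x>f^*(x_0)$, and keep $\epsilon\in(0,1)$ to be chosen. The first half of the argument is identical to Proposition \ref{prop:10}: if $t\leq \epsilon x/\phi'(f^{-1}(1/x))\approx \epsilon/\vphi^*\big(f^{-1}(1/x)\big)$, then monotonicity of $\vphi^{-1}$ and $\phi'$ gives $x\geq \tfrac{t}{\epsilon}\phi'\big(\vphi^{-1}(\epsilon/t)\big)\geq \tfrac{t}{\epsilon}\phi'\big(\vphi^{-1}(1/t)\big)$, and then Proposition \ref{prop:7} together with the weak lower scaling of $\phi'$ yields $x\geq \tfrac{t}{\epsilon}cC^{\alpha-1}\phi'\big(\psi^{-1}(1/t)\big)$; choosing $\epsilon=(2e)^{-1}cC^{\alpha-1}$ gives $x\geq 2et\phi'\big(\psi^{-1}(1/t)\big)$. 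Since moreover $x\leq x_0^{-1}$, so that $\zeta(x)=\vphi^*(1/x)$, Theorem \ref{thm:6} applies and produces $p(t,x)\leq C\vphi^{-1}(1/t)\min\{1,t\zeta(x)\}\leq C\vphi^{-1}(1/t)\,t\,\vphi^*(1/x)$. One should also record that $T:=\epsilon x/\phi'(f^{-1}(1/x))\approx \epsilon/\vphi\big(f^{-1}(1/x)\big)\lesssim \epsilon/\vphi(x_0)$, so that for $\epsilon$ small enough $T<1/\vphi(x_0)$ and Theorem \ref{thm:6} is indeed valid on the whole range $(0,T)$.

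The new point, and the one I expect to be the heart of the matter, is the time integral. Substituting $s=1/t$ and invoking $\vphi^{-1}\in\WUSC{1/\alpha}{\cdot}{\cdot}$ (Proposition \ref{prop:7}), one has $\int_0^T t\vphi^{-1}(1/t)\rmd t=\int_{1/T}^\infty s^{-3}\vphi^{-1}(s)\rmd s\lesssim T^{1/\alpha}\vphi^{-1}(1/T)\int_{1/T}^\infty s^{1/\alpha-3}\rmd s$, and here the assumption $\tfrac12<\alpha$ forces $1/\alpha-3<-1$, so the last integral converges and equals a constant (depending on $\alpha$) times $T^{2-1/\alpha}$; consequently $\int_0^T t\vphi^{-1}(1/t)\rmd t\lesssim T^2\vphi^{-1}(1/T)$. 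This is the only place where $\alpha>\tfrac12$ is used; everything else is bookkeeping with the scaling relations of the preceding propositions.

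It remains to plug in and simplify. Put $r=f^{-1}(1/x)$. From $f^*(r)=1/x$ and \eqref{eq:74} we get $f(r)=\vphi(r)/\phi'(r)\approx 1/x$, hence $\phi'(r)\approx x\vphi(r)$, $T\approx \epsilon/\vphi(r)$, and therefore $\vphi^{-1}(1/T)\lesssim \vphi^{-1}\big(\vphi^*(r)/\epsilon\big)\lesssim \vphi^{-1}\big(\vphi^*(r)\big)\lesssim r$ by the upper scaling of $\vphi^{-1}$ (with the fixed factor $1/\epsilon$) and Remark \ref{rem:5}. Combining with the pointwise bound,
\[
\int_0^T p(t,x)\rmd t\lesssim \vphi^*(1/x)\,T^2\,\vphi^{-1}(1/T)\lesssim \vphi^*(1/x)\,\frac{r}{\vphi(r)^2}.
\]
Finally, $\vphi^*(1/x)\lesssim \vphi^*(r)\approx \vphi(r)$ because $r=f^{-1}(1/x)\gtrsim 1/x$ by \eqref{eq:138} and $\vphi^*$ is nondecreasing with doubling by \eqref{eq:85} (using also \eqref{eq:33} and Proposition \ref{prop:7} for $\vphi^*\approx\vphi$), while $r\phi'(r)\leq \phi(r)\approx \vphi(r)$ by \eqref{eq:20} and \eqref{eq:90}; hence $\vphi^*(1/x)\,r\lesssim \vphi(r)\cdot \vphi(r)/\phi'(r)$, so $\vphi^*(1/x)\tfrac{r}{\vphi(r)^2}\lesssim 1/\phi'(r)=1/\phi'\big(f^{-1}(1/x)\big)$, which by \eqref{eq:58} is $\approx 1/\big(x\phi(1/x)\big)$. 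This proves \eqref{eq:152} for $x<A/x_0$, and the extension to arbitrary $A$ is routine.
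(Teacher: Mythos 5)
Your proposal is correct and follows essentially the same route as the paper: reduce via \eqref{eq:58}, reuse the Proposition \ref{prop:10} argument to enter the regime $x \geq 2et\phi'(\psi^{-1}(1/t))$ where Theorem \ref{thm:6} gives $p(t,x)\lesssim \vphi^{-1}(1/t)\,t\,\vphi^*(1/x)$, then bound $\int_0^T t\vphi^{-1}(1/t)\rmd t\lesssim T^2\vphi^{-1}(1/T)$ using $\alpha>\tfrac12$ and finish with the scaling identities for $f$, $f^{-1}$ and $\vphi^{-1}$. The only (harmless) deviation is that you prove the key integral bound \eqref{eq:139} by a direct substitution using $\vphi^{-1}\in\WUSC{1/\alpha}{\cdot}{\cdot}$ instead of citing \cite[Theorem 3]{aa}, and you rearrange the final bookkeeping via $r\phi'(r)\lesssim\vphi(r)$ rather than $f^{-1}(s)\approx s$, which amounts to the same facts.
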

\begin{proof}
	Let
	\[
		A = \min\bigg\{1, \frac{x_0}{f^*(x_0)} \bigg\}.
	\]
	By repeating the same reasoning as in the proof of Proposition \ref{prop:10}, we can see that the condition 
	\[
		t \leq \frac{x}{\phi' ( f^{-1}(1/x) )} \epsilon,
	\]
	implies
	\[
		x \geq 2 e t \phi'\big(\psi^{-1}(1/t) \big),
	\]
	for $\epsilon = (2e)^{-1} c C^{\alpha-1}$. Therefore, we can apply Theorem \ref{thm:6} to get
	\begin{equation}
		\label{eq:57}
		\int_0^{\frac{x}{\phi'( f^{-1}(1/x))} \epsilon}
		p(t,x) \rmd t 
		\lesssim
		\vphi(1/x) \int_0^{\frac{x}{\phi'( f^{-1}(1/x))}\epsilon}  t \vphi^{-1}(1/t) \rmd t
	\end{equation}
	where the implied constant may depend on $\epsilon$. Since $\alpha > \tfrac{1}{2}$, by Proposition \ref{prop:7},
	\cite[Theorem 3]{aa} and the doubling property of $\vphi^{-1}$, we obtain
	\begin{align}
		\nonumber
		\int_0^{\frac{x}{\phi'( f^{-1}(1/x))}\epsilon} t\vphi^{-1}(1/t) \rmd t 
		&\lesssim
		\bigg( \frac{x}{\phi'\big( f^{-1}(1/x) \big)} \bigg)^2 \vphi^{-1} 
		\bigg( \frac{\phi' \big( f^{-1}(1/x) \big)}{ \epsilon x} \bigg) \\
		\label{eq:139}
		&\lesssim
		\bigg( \frac{x}{\phi'\big( f^{-1}(1/x) \big)} \bigg)^2 \vphi^{-1} 
		\bigg( \frac{\phi' \big( f^{-1}(1/x) \big)}{x} \bigg).
	\end{align}
	In view of \eqref{eq:74}, we have
	\begin{align*}
		\frac{\phi' \big( f^{-1}(1/x) \big)}{x} 
		&= \frac{\vphi^*\big(f^{-1}(1/x) \big)}{x f^*\big(f^{-1}(1/x)\big)} \\
		&\lesssim \vphi^* \big( f^{-1}(1/x) \big),
	\end{align*}
	thus, by Proposition \ref{prop:7} and Remark \ref{rem:5},
	\begin{align*}
		\frac{\vphi(1/x)x^2}{\phi'\big( f^{-1}(1/x) \big)} \vphi^{-1} 
		\bigg( \frac{\phi' \big( f^{-1}(1/x) \big)}{x} \bigg)
		&\lesssim
		\frac{\vphi^*(1/x) x^2}{\phi'\big( f^{-1}(1/x) \big)} f^{-1}(1/x) \\
		&= 
		xf^{-1}(1/x) \frac{\vphi^*(1/x)}{\vphi^*\big(f^{-1}(1/x)\big)}.
	\end{align*}
	In view of Propositions \ref{prop:WLSC} and \ref{prop:1}, we have $f(s) \approx s$ for $s > x_0$, thus,
	$f^{-1}(s) \approx s$, for $s > f^*(x_0)$. Hence,
	\[
		\frac{\vphi(1/x)x^2}{\phi'\big( f^{-1}(1/x) \big)} \vphi^{-1}
		\bigg( \frac{\phi' \big( f^{-1}(1/x) \big)}{x} \bigg) \lesssim 1.
	\]
	Therefore, by \eqref{eq:57} and \eqref{eq:139} we conclude that
	\[
		\int_0^{\frac{x}{\phi' ( f^{-1}(1/x) )} \epsilon}
		p(t,x) \rmd t 
		\lesssim \frac{1}{\phi' \big( f^{-1}(1/x) \big)},
	\]
	which, by Proposition \ref{prop:9} and \eqref{eq:58}, entails \eqref{eq:152}. The extension to arbitrary $A$ follows
	by positivity and continuity argument.
\end{proof}

\begin{theorem}
	\label{thm:8}
	Let $\bfT$ be a subordinator with the Laplace exponent $\phi$. Suppose that
	\[
		\phi \in \WLSC{\alpha}{c}{x_0} \cap \WUSC{\beta}{C}{x_0}
	\]
	for some $c \in (0,1]$, $C \geq 1$, $x_0 \geq 0$, and
	$0<\alpha\leq \beta<1$. We assume that one of the following conditions holds:
	\begin{enumerate}
		\item The L\'{e}vy measure $\nu({\rm d}x)$ is absolutely continuous with respect to the Lebesgue measure with
		monotone density $\nu(x)$, or
		\item $\alpha > \tfrac12$.
	\end{enumerate}
	Then for each $A > 0$ there is $C_1 \geq 1$ such that for all $x< A/x_0$,
	\[
		C_1^{-1} \frac{1}{x\phi(1/x)} \leq G(x) \leq C_1 \frac{1}{x\phi(1/x)}.
	\]
\end{theorem}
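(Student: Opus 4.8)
The plan is to compute $G(x)=\int_0^{\infty}p(t,x)\,\rmd t$ by cutting the time integral at the single scale $\epsilon\,x/\phi'(f^{-1}(1/x))$ and then combining Propositions \ref{prop:9}, \ref{prop:10} and \ref{prop:12}. First I would observe that the hypothesis $\phi\in\WUSC{\beta}{C}{x_0}$ with $\beta<1$ forces $b=0$ (otherwise $\phi(\lambda x)\geq b\lambda x\gtrsim\lambda\phi(x)$ for large $\lambda$, contradicting weak upper scaling), so the cited propositions all apply. Next, by Corollary \ref{cor:6} we have $-\phi''\in\WLSC{\alpha-2}{c'}{x_0}\cap\WUSC{\beta-2}{C'}{x_0}$; hence by Propositions \ref{prop:1} and \ref{prop:WLSC}, $\vphi\approx\phi\approx x\phi'$ on $(x_0,\infty)$, so $f(x)=\vphi(x)/\phi'(x)\approx x$ there and $f^{-1}(s)\approx s$ for $s>f^*(x_0)$; and by Proposition \ref{prop:WUSC} (combined with $\vphi\approx\phi\lesssim x\phi'$) the function $\phi'$ belongs to $\WUSC{\beta-1}{\cdot}{x_0}$.

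The lower bound requires nothing new: Proposition \ref{prop:9} applied with $M=1$ gives, for all $x<A/x_0$,
\[
	G(x)\ \geq\ \int_{x/\phi'(f^{-1}(1/x))}^{\infty}p(t,x)\,\rmd t\ \gtrsim\ \frac{1}{x\phi(1/x)}.
\]
For the upper bound, let $\epsilon\in(0,1)$ be the constant produced by Proposition \ref{prop:10} under hypothesis (i), respectively by Proposition \ref{prop:12} under hypothesis (ii) (note that (ii) assumes $\tfrac12<\alpha$, precisely the requirement of Proposition \ref{prop:12}), and write
\[
	G(x)=\int_0^{\epsilon x/\phi'(f^{-1}(1/x))}p(t,x)\,\rmd t\ +\ \int_{\epsilon x/\phi'(f^{-1}(1/x))}^{\infty}p(t,x)\,\rmd t.
\]
The first integral is $\lesssim(x\phi(1/x))^{-1}$ by Proposition \ref{prop:10} (case (i)) or Proposition \ref{prop:12} (case (ii)).

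For the second integral the idea is to absorb it into the tail controlled by Proposition \ref{prop:9}, by choosing the latter's cut-off small enough. Using $f^{-1}(s)\approx s$ and $\phi'\in\WUSC{\beta-1}{\cdot}{x_0}$ one obtains $\phi'(f^{-1}(M/x))\gtrsim M^{\beta-1}\phi'(f^{-1}(1/x))$, so there is a fixed $M>0$ (depending only on $\beta$, $\epsilon$, and the comparison constants for $f$ and $\phi'$) such that $\phi'(f^{-1}(M/x))\geq\epsilon^{-1}\phi'(f^{-1}(1/x))$, i.e.\ $x/\phi'(f^{-1}(M/x))\leq\epsilon x/\phi'(f^{-1}(1/x))$, at least for $x$ below a suitable multiple of $1/x_0$. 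Hence
\[
	\int_{\epsilon x/\phi'(f^{-1}(1/x))}^{\infty}p(t,x)\,\rmd t\ \leq\ \int_{x/\phi'(f^{-1}(M/x))}^{\infty}p(t,x)\,\rmd t\ \lesssim\ \frac{1}{x\phi(1/x)}
\]
by Proposition \ref{prop:9} with this $M$. Summing the two bounds yields $G(x)\lesssim(x\phi(1/x))^{-1}$; when $x_0>0$ the passage to an arbitrary $A>0$ is the usual positivity-and-continuity argument on a compact interval $[a/x_0,A/x_0]$ (on which both $G$ and $x\mapsto(x\phi(1/x))^{-1}$ are positive and continuous), and when $x_0=0$ it is vacuous. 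I expect the one genuinely delicate point to be this threshold-matching step — making precise, via $f^{-1}\approx\mathrm{id}$ and the upper scaling of $\phi'$, the choice of a single $M$ for which the Proposition \ref{prop:9} tail starting at $x/\phi'(f^{-1}(M/x))$ already swallows the range $[\epsilon x/\phi'(f^{-1}(1/x)),\infty)$; everything else is assembling earlier results together with routine scaling identities such as $\phi'(1/x)\approx x\phi(1/x)$.
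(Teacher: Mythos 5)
Your proposal is correct, and your reduction of the upper bound differs from the paper's. The paper splits the time integral into three pieces $[0,\epsilon T]$, $[\epsilon T,T]$, $[T,\infty)$ with $T=x/\phi'(f^{-1}(1/x))$: the outer pieces come from Propositions \ref{prop:10}/\ref{prop:12} and from Proposition \ref{prop:9} with $M=1$, while the middle window is estimated afresh via the on-diagonal bound $p(t,x)\lesssim\vphi^{-1}(1/t)$ together with Remark \ref{rem:5} and $f^{-1}\approx\mathrm{id}$, which yields the bound for all $x<A/x_0$ directly. You instead use a two-piece split and absorb everything above $\epsilon T$ into the tail of Proposition \ref{prop:9} by taking the parameter $M$ \emph{small}: since $f(s)\approx s$ on $(x_0,\infty)$ (Propositions \ref{prop:WLSC} and \ref{prop:1}, using that your observation $b=0$ follows from $\WUSC{\beta}{C}{x_0}$ with $\beta<1$ — a point the paper leaves implicit) and $\phi'\in\WUSC{\beta-1}{\cdot}{x_0}$, one indeed gets $\phi'(f^{-1}(M/x))\gtrsim M^{\beta-1}\phi'(f^{-1}(1/x))$, so a fixed small $M$ gives $x/\phi'(f^{-1}(M/x))\leq\epsilon x/\phi'(f^{-1}(1/x))$; this threshold-matching is sound. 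What your route buys is that no new estimate of the middle window is needed; what it costs is that the matching only holds a priori for $x$ below some $A_M/x_0$ (the comparisons $f^{-1}(s)\approx s$ and the scaling of $\phi'$ require $M/x$, $1/x$ above $f^*(x_0)$ and the base point above $x_0$), so when $x_0>0$ your final continuity-and-positivity extension must control the \emph{full} Green function $G$ on the compact $[A_M/x_0,A/x_0]$, whereas the paper's three-piece argument never needs boundedness of $G$ itself there. This is the same style of extension the paper uses at the end of Proposition \ref{prop:9}, and it can be justified (e.g.\ the head and the tail from $T$ are covered by Propositions \ref{prop:10}/\ref{prop:12} and \ref{prop:9} for all $x<A/x_0$, and on the remaining window one may use that $\sup_y p(t,y)$ is nonincreasing in $t$ and finite for small $t$ by Theorem \ref{thm:6}), but you should say a word about it rather than only citing continuity of $G$.
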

\begin{proof}
	By Corollary \ref{cor:6}, $-\phi'' \in \WLSC{\alpha-2}{c}{x_0} \cap \WUSC{\beta-2}{C}{x_0}$. Let $p(t, \: \cdot \:)$
	be the transition density of $T_t$. In view of Propositions \ref{prop:9}, \ref{prop:10} and \ref{prop:12}, and
	\eqref{eq:58}, it is enough to show that for each $A > 0$ and $\epsilon \in (0, 1)$ there is $C_1 > 0$ such that
	for all $x<A/x_0$,
	\begin{align}
		\label{eq:150}
		\int_{\frac{x}{\phi'( f^{-1}(1/x))}\epsilon}^{\frac{x}{\phi'( f^{-1}(1/x))}} 
		p(t,x) \rmd t 
		\leq C_1 \frac{1}{\phi' \big( f^{-1}(1/x) \big)}.
	\end{align}
	By \cite[Theorem 3.1]{GS2019}, there is $t_0 > 0$ such that for all $t \in (0, t_0)$,
	\[
		p(t, x) \lesssim \vphi^{-1}(1/t).
	\]
	If $x_0 = 0$ then $t_0 = \infty$. We can take
	\[
		t_0 \geq \frac{x}{\phi'\big(f^{-1}(1/x)\big)}.
	\]
	Therefore, by monotonicity of $\vphi^{-1}$, we get
	\begin{align*}
		\int_{\frac{x}{\phi'( f^{-1}(1/x))}\epsilon}^{\frac{x}{\phi'( f^{-1}(1/x))}} 
		p(t,x) \rmd t 
		&\lesssim 
		\int_{\frac{x}{\phi'( f^{-1}(1/x))} \epsilon}^{\frac{x}{\phi'( f^{-1}(1/x))}} 
		\vphi^{-1}(1/t) \rmd t \\ 
		&\leq \frac{x}{\phi' \big( f^{-1}(1/x) \big)} \vphi^{-1} 
		\bigg( \frac{\phi' \big( f^{-1}(1/x) \big)}{\epsilon x} \bigg).
	\end{align*}
	By the doubling property of $\vphi^{-1}$, definition of $f$ and Remark \ref{rem:5},
	\begin{align*}
		\vphi^{-1} \bigg( \frac{\phi' \big( f^{-1}(1/x) \big)}{\epsilon x} \bigg) 
		&\lesssim \vphi^{-1} \bigg( \frac{\phi' \big( f^{-1}(1/x) \big)}{x} \bigg) \\ 
		&\leq f^{-1}(1/x) \\ 
		&\lesssim \frac1x,
	\end{align*}
	since by the weak upper scaling property of $-\phi''$, $f(s) \approx s$ for all $s > f^*(x_0)$. Consequently, we
	obtain \eqref{eq:150} and the theorem follows.
\end{proof}

\end{document}